\numberwithin{equation}{section}
\theoremstyle{plain} 
\newtheorem{theorem}{Theorem}[section]
\newtheorem{lemma}[theorem]{Lemma}
\newtheorem{corollary}[theorem]{Corollary}
\newtheorem{proposition}[theorem]{Proposition}
\newtheorem{assumption}[theorem]{Assumption}
\newtheorem{remark}[theorem]{Remark}
\renewcommand{\Re}{\mathrm{Re}\,}
\renewcommand{\Im}{\mathrm{Im}\,}
\newcommand{\E}{{\mathbf E }}
\newcommand{\R}{{\mathbb R }}
\newcommand{\N}{{\mathbb N}}
\newcommand{\Z}{{\mathbb Z}}
\renewcommand{\P}{{\mathbf P}}
\newcommand{\C}{{\mathbb C}}
\newcommand{\X}{{\mathcal X}}
\newcommand{\s}{{\mathcal S}}
\newcommand{\LL}{{\mathcal L}}
\newcommand{\II}{{\mathcal I}}
\newcommand{\F}{{\mathfrak F}}
\newcommand{\ov}{\overline}
\DeclareMathOperator{\Tr}{Tr}
\DeclareMathOperator{\sgn}{sgn}
\DeclarePairedDelimiter{\abs}{\lvert}{\rvert}
\newcommand{\dif}{\operatorname{d}\!{}}
\DeclarePairedDelimiter{\norm}{\lVert}{\rVert}%
\newcommand{\ii}{\mathrm{i}}
\newcommand{\ee}{\mathrm{e}}
\newcommand{\dd}{\mathrm{d}}
\newcommand{\ie}{\emph{i.e., }}
\newcommand{\eg}{\emph{e.g., }}
\newcommand{\cf}{\emph{c.f., }}
\newcommand{\wt}{\widetilde}
\newcommand{\wh}{\widehat}
\newcommand{\gz}{G^{z}}
\newcommand{\uu}{\mathbf{u}}
\newcommand{\vv}{\mathbf{v}}
\newcommand{\ww}{\mathbf{w}}
\newcommand{\nc}{\normalcolor}
\newcommand{\bs}{\boldsymbol}
\def\ga{G^{z_1}}
\def\gb{G^{z_2}}
\def\Tr{\mathrm{Tr}}
\def\F{\mathcal{F}}
\def\one{\mathds{1}}
\def\Dim{\big[\mathfrak{D}_{E_0,\eta_0} \widetilde{\mathrm{Im}}\big]\,}
\def\<{\langle}
\def\>{\rangle}
\renewcommand{\mathbf}[1]{\bs{#1}}
\begin{document}

\begin{minipage}{0.85\textwidth}
		\vspace{2.5cm}
	\end{minipage}
	\begin{center}
		\large\bf Universality of extremal eigenvalues of large random  matrices. 
		
	\end{center}

	\renewcommand*{\thefootnote}{\fnsymbol{footnote}}	
	\vspace{0.5cm}
	
	\begin{center}
		\begin{minipage}{1.0\textwidth}
			\begin{minipage}{0.33\textwidth}
				\begin{center}
					Giorgio Cipolloni\footnotemark[1]\\
					\footnotesize 
					{University of Rome Tor Vergata}\\
					{\it cipolloni@axp.mat.uniroma2.it}
				\end{center}
			\end{minipage}
			\begin{minipage}{0.33\textwidth}
				\begin{center}
					L\'aszl\'o Erd\H{o}s\footnotemark[2]\\
					\footnotesize 
					{IST Austria}\\
					{\it lerdos@ist.ac.at}
				\end{center}
			\end{minipage}
			\begin{minipage}{0.33\textwidth}
				\begin{center}
					Yuanyuan Xu\footnotemark[2]\\
					\footnotesize 
					{AMSS,~CAS}\\
					{\it yyxu2023@amss.ac.cn}
				\end{center}
			\end{minipage}
		\end{minipage}
	\end{center}
	
	\bigskip
	
	\footnotetext[1]{\footnotesize{Partially supported by the MUR Excellence Department Project MatMod@TOV awarded to the Department of Mathematics, University of Rome Tor Vergata, CUP E83C18000100006.}}
	\footnotetext[2]{\footnotesize{Partially supported by ERC Advanced Grant "RMTBeyond" No.~101020331.}}

\renewcommand*{\thefootnote}{\arabic{footnote}}
	
	\vspace{5mm}

	\begin{center}
		\begin{minipage}{0.91\textwidth}\small{
				{\bf Abstract.}}
				We prove that the spectral radius of 
a large  random matrix $X$ with independent, identically distributed  complex
entries follows the Gumbel law irrespective 
of the distribution of the matrix elements. This solves a long-standing conjecture
of Bordenave and Chafa{\"\i}   and it  establishes
the first universality result for one of the most prominent extremal spectral statistics
in random matrix theory. 
Furthermore, we also prove that the argument of the largest eigenvalue is uniform on the unit circle and that the extremal eigenvalues of $X$ form a Poisson point process. \nc

		\end{minipage}
	\end{center}

	\vspace{5mm}
	
	{\footnotesize
		{\noindent\textit{Keywords}: Extremal statistics, Gumbel distribution, Ginibre ensemble, Dyson Brownian motion}\\
		{\noindent\textit{MSC number}:  60B20, 60G55, 60G70}\\
		{\noindent\textit{Date}:  \today \\
	}
	
	\vspace{2mm}

	\thispagestyle{headings}

\bigskip

\normalsize

\section{Introduction}

\subsection{The main result}
Let $X$ be a large $n\times n$ matrix with independent, identically distributed (i.i.d.) complex entries that have zero 
expectation and variance $1/n$. It is well known by the {\it Circular Law}
\cite{Girko1984,Bai1997,GT10,PZ10,TV10} 
that most eigenvalues lie in the unit disc, in fact their density becomes uniform as $n$ tends to infinity.
Denote by $\rho(X)$ 
the spectral radius of $X$,  i.e. the modulus of its largest eigenvalue.
Our main result is that the fluctuation of $\rho(X)$, after appropriate rescaling, follows the
{\it Gumbel Law}, in particular it is universal. More precisely,  we show that
\begin{equation}\label{informal}
  \rho(X) = 1+\sqrt{\frac{\gamma_n }{4n}}+\frac{\mathcal{G}_n}{\sqrt{4n \gamma_n}}, 
  \qquad \gamma_n:= \log n-2\log \log n-\log 2 \pi,
\end{equation}
where $\mathcal{G}_n$ is an asymptotically Gumbel random variable, i.e. $\P (\mathcal{G}_n\le r) \to \exp(-\ee^{-r})$
for any $r\in \R$
as $n\to\infty$.  This solves a long-standing conjecture by Bordenave and Chafa{\"\i} \cite{BC12}, \cite{Chblog}, see also \cite{RS14}
and \cite{CP14} for the log-gas counterpart of this question.
 We also show that  the argument of the largest eigenvalue of $X$ is uniform on $[0,2\pi]$,
  and that its  few largest (in modulus) eigenvalues form a {\it Poisson point process}.

\subsection{Related universality results}
We now place  this result in the broader context of random matrix theory.
Since the revolutionary discovery of E. Wigner \cite{wigner_annals}  on the universal behaviour of local
eigenvalue statistics for large random 
matrices, rigorous proofs aiming at verifying Wigner's vision 
have been in the forefront of mathematical research.
Motivated by quantum mechanical Hamiltonians, Wigner considered Hermitian random matrices
and predicted  that the local correlation functions depend only on the basic symmetry type 
but otherwise are insensitive to the details of the matrix ensemble.
In particular, real symmetric and complex Hermitian matrices exhibit somewhat
different but universal eigenvalue statistics. Mathematically this was formalised as the Wigner-Dyson-Gaudin-Mehta universality
conjecture in the 1960's   \cite{Metha}.  After important partial results 
 on the most prominent {\it Wigner
matrices}  \cite{Johansson} 
 and {\it sample covariance matrices} \cite{BP_CPAM},
  the conjecture was proven in full generality   in a series of papers by L. Erd{\H o}s, H.-T. Yau, and their collaborators,
 after they developed the theory of {\it Dyson Brownian motion (DBM)} for spectral universality in the {\it  bulk }
 of the spectrum, see  \cite{bull_american_math_society,book}  for a summary.  In a parallel development,
 around the same time, T. Tao and V. Vu  \cite{TV11} 
 also proved bulk universality for the complex symmetry class by a different method based upon  the {\it four
 moment matching} idea. The eigenvalue distribution near the {\it spectral edges} has been identified 
 in the early 1990's by C. Tracy and H. Widom \cite{TW1,TW2},
 and its universality was proven shortly afterwards by A. Soshnikov \cite{Soshnikov}. 
 The third, and last type of local spectral universality for Hermitian matrices, described by the {\it Pearcey process},
 occurring at the possible cusp singularities
 of the limiting eigenvalue density, was proven recently in  \cite{Cusp1,Cusp2}. This 
 largely completes the theory of Hermitian universality for the simplest mean field ensembles. However, 
much more work has been done and yet to be done on more general Hermitian ensembles,
 especially random band matrices \cite{Shcherbina1,Shcherbina2, CS22, CPSS, BE17, BEYY17,BYYY19, BYY18, EKYY, G22, S09, Sodin10,XYYY,YYY,YYY2,YY, DYYY25a, DYYY25b, ER25, YY25}, on adjacency matrices of sparse random graphs  \cite{sparse_bulk,sparse_edge,regular_bulk,regular_edge,ADK19,ADK20,ADK21,ADK23,HM23, HMY24}, and on L\'evy matrices \cite{ABL22, ALM21, ALM21b,  ABS09, BG14, BG13, BG17}, that we will not discuss here.

Non-Hermitian matrices exhibit similar universality patterns; although their mathematical theory is much 
less developed, the progress spectacularly accelerated  in the last  years. The local spectral 
universality for matrices with
i.i.d. entries at the 
spectral edge (the non-Hermitian analogue of Tracy-Widom regime) was proven in \cite{CES21}
both for real and complex matrices (see also \cite{CCEJ25, LZ25} for more general deformed i.i.d. matrices).
The universality in the bulk (analogue of  the Hermitian  Wigner-Dyson statistics) for the complex symmetry class
was solved very recently both in the complex \cite{bulk_complex} (see also \cite{ASS24, Z24}) and in the real case \cite{DY25, Osmanreal}.
The existence of the critical edge
(analogue of the Hermitian cusp regime) was proven in \cite{HongChang2023} for both symmetry classes, the
local statistics around these critical points were computed in  \cite{Liu2023}  for the complex case, and the critical edge universality was recently proven in \cite{CEJ25}.
 
In contrast to local spectral statistics, 
our main result \eqref{informal} 
 is the prime example of a universal {\it extremal} statistics arising in 
non-integrable random matrix theory. 
It has no natural Hermitian counterpart: the largest eigenvalue of a Hermitian matrix 
is always found around the endpoint of the support of the density and its distribution 
has already been identified by the Tracy-Widom edge universality. In contrast, the spectrum of
a non-Hermitian i.i.d matrix is essentially supported on the unit disc by the Circular Law, 
there are about $\sqrt{n}$  eigenvalues outside  but very close,
and around each reference point on the unit circle the eigenvalues locally form a strongly correlated
determinantal process \cite{Ginibre,BS09,CES21}. 
 Thus  the spectral radius is  realised as the maximum of $\sqrt{n}$ different eigenvalue moduli
 that themselves are correlated, albeit on  a short scale. Their tails decay fast but they are  not compactly
 supported hence, if they were independent, their extreme value distribution would naturally fall into the Gumbel class
 among the three possible limiting extreme value distributions given by the
 Fisher-Tippett-Gnedenko theorem  \cite{FT1928,Gnedenko} 
 for extrema of i.i.d. random variables. The correlation, however, prohibits us from using standard
 extreme value theory directly. Although our final conclusion coincides with the naive prediction
 based upon independence, in fact we even prove that the few largest eigenvalues form a 
 Poisson point process,   for the proof we need to follow a completely different path than these heuristics.
 
For completeness, 
we mention that another very prominent extremal statistics problem arises in 
 the celebrated Fyodorov-Hiary-Keating (FHK) conjecture \cite{FHK} that connects
 extrema of log-determinants of Haar unitary random matrices with the Riemann $\zeta$-function.
 In particular, the precise distribution of the maximum of the logarithm of the characteristic polynomial
 of the circular $\beta$-ensemble was identified very recently \cite{Zeitouni22}~(see previous papers \cite{ABB17,CMN18,Zeitouni18}), proving the random 
 matrix side of the FHK-conjecture. For the currently best result on  the number theory side of FHK-conjecture,
 see \cite{APM20,APM23} and references therein.  We also refer to \cite{CL25, L19} for recent progress in the computation of the leading order asymptotic in a non-Hermitian analog of the FHK conjecture for i.i.d. matrices. \nc
  Superficially, these results are reminiscent to \eqref{informal}, 
 but the underlying point process is much more correlated than 
 our extreme eigenvalues, in particular the limiting law is not given directly by the Gumbel distribution, and even the order $\log\log N$ term is different due to the log-correlated structure. 
 On the other hand,  for the circular $\beta$-ensemble \nc
 the structure of the underlying randomness is very different, and, as consequence, so are the proof methods. 
 For the circular $\beta$-ensemble the joint density of all eigenvalues is explicitly known, this
 serves as the fundamental starting point for the analysis.

 Returning to i.i.d. matrices,  if the original  random matrix  has independent Gaussian entries (called {\it Ginibre ensemble}; see the recent survey papers \cite{BF22a,BF22b}), then
 the limiting distribution of the spectral radius can be explicitly computed.
 The Gumbel law was obtained  in this way 
 for the complex Ginibre ensemble in \cite{R03} 
 and for the real Ginibre ensemble in  \cite{RS14} (see also \cite{Bender,AkemannP} for 
 similar calculations for {\it elliptic} Ginibre matrices). 
 Therefore our task  is to prove that this distribution for any i.i.d. matrix  is universal,
 hence it coincides with the Gumbel law for the Ginibre ensemble. 
 In this sense our proof is modelled by all previous universality proofs; the 
 core of the work is to show universality, then the 
limiting distribution is identified via explicit calculations performed
 on the distinguished Gaussian ensemble. In this paper we prove universality 
 for the complex ensemble. The core of our method
 also applies to the real case but the technical details are somewhat more tedious and will be left to future work.

 \subsection{New ingredients}
 Our proof relies on two 
 new inputs  compared with any previous work 
 on spectral statistics of non-Hermitian matrices, in particular compared with our previous
 papers \cite{maxRe,SpecRadius} that identified the correct order of magnitude
 of the spectral radius but failed to identify its distribution itself. We briefly outline them here.

Almost all mathematical works on the spectrum of non-Hermitian random matrices  
(with the notable exception of  \cite{BCCT18,BCG22,FG23}) are based upon 
{\it Girko's formula}  \cite{Girko1984},  see e.g.  \cite{Bai1997,TV15,BC12,BYY14_bulk, AEK19, maxRe}.
Modulo an irrelevant cutoff at infinity (see~\eqref{girko0}), this identity,
for any sufficiently regular test function $f:\C\to \R$, asserts that
\begin{equation}\label{girko1}
\sum_{\sigma\in \mbox{Spec}(X)} f(\sigma)=-\frac{1}{4 \pi} \int_{\C} \Delta_z f(z) A(z) \dd^2 z, \qquad
A(z):= \int_0^\infty \Im \Tr G^{z}(\ii \eta) \dd \eta ,
\end{equation}
where $G^z$  is a family of Hermitized resolvents, 
parametrized by a  family of spectral parameters \nc $z\in \C$:
\begin{align}\label{def_G}
	 G^{z}(w):=(H^{z}-w)^{-1}, \qquad H^{z}:=\begin{pmatrix}
		0  &  X-z  \\
		X^*-\overline{z}   & 0
	\end{pmatrix},\qquad w \in \C\setminus \R.
\end{align}
The advantage of \eqref{girko1} is that linear eigenvalue statistics of the  original non-Hermitian problem
are translated into a Hermitian problem, which is technically more tractable since
Hermitian matrices tend to be much more stable against perturbations. 
In particular the resolvent $G^z$ tends to concentrate around its deterministic approximation. 
There are two main disadvantages of this formula: i) it requires the understanding of several $H^z$'s for different $z$ simultaneously,
 ii) the Laplacian $\Delta f$ is very large if $f$ is chosen strongly localized to capture few 
 eigenvalues.
 Moreover,  the spectral parameter
$\ii\eta$ must be integrated down to the real axis which makes the $\eta$-integral logarithmically divergent
and it requires an extra probabilistic regularisation via an independently obtained lower tail bound 
on the lowest non-negative eigenvalue $\lambda_1^z$ of $H^z,$ noticing that $\| G^z(\ii\eta)\|\lesssim
 |\lambda_1^z+\eta|^{-1}$.
 Very crudely speaking, the behaviour of a single resolvent $G^z(\ii\eta)$ is governed by two different mechanisms.
 If $\eta$ is much smaller than the lowest eigenvalue $\lambda_1^z$, then $G^z(\ii\eta)$ is strongly fluctuating,
 mainly depending on the fluctuation on $\lambda_1^z$. If $\eta$ is much bigger than the typical size of $\lambda_1^z$,
 then $G^z(\ii\eta)$ tends to be deterministic -- such concentration results
are commonly called {\it local laws}. In the intermediate regime both effects are present and contribute to the answer.
 A meticulous analysis of \eqref{girko1} 
along these ideas were behind all previous results on eigenvalue statistics of $X$ on fine scales 
 \cite{BYY14_bulk,BYY14,TV15,AEK18, CES19,CES22,maxRe,SpecRadius}, with the notable exceptions \cite{DY25, bulk_complex, Osmanreal} (which are restricted to the bulk of the spectrum). \nc
 
Catching the spectral radius via Girko's formula is especially difficult. In fact, the universality of size and fluctuation of the spectral radius was not known even under the additional four moment matching assumption \cite{TV15}, showing the difficulty of grasping this extremal spectral statistics.  The main difficulty lies in the fact that the test function $f$ needs to be a very 
{\it anisotropic} function; it is essentially a smoothed characteristic function of a narrow annulus of
radius $1+\sqrt{\gamma_n/4n}\approx 1$ and width of order $1/\sqrt{n\log n}$ -- a domain
where the eigenvalue with largest modulus is typically located.
 For such function $\int |\Delta f|$
is  especially large compared with its integral $\int f$ thus the effect of  any error term  in  estimating $A(z)$
  in~\eqref{girko1} will be drastically magnified when integrated against the absolute value of $\Delta f$. 
  As our previous work \cite{SpecRadius} on the size of $\rho(X)$ demonstrated, 
  all additional technical refinements 
along these ideas were barely not sufficient to identify the distribution of $\rho(X)$, so we need to change 
the strategy. 

The \emph{first new input} is to 
exploit the underlying \emph{chiral symmetry} of $H^z$ after an integration by parts in \eqref{girko1}.
As already noticed in~\cite{He23}, one integration by parts leads to
\begin{equation}\label{girko4}
\sum_{\sigma\in \mbox{Spec}(X)} f(\sigma)
 = -\frac{1}{ \pi}  \int_{\C} \partial_{\bar z} f(z) 
 \int_0^\infty \Im \Tr \big[ G^{z}FG^z\big] (\ii \eta) \dd \eta  \dd^2 z  \qquad \mbox{with} \quad F:=
\begin{pmatrix}
	0  &  1 \\
	0   & 0
\end{pmatrix}.
\end{equation}
As it was done in~\cite[Eq (1.9)]{He23}, the $\eta$-integration can then be performed
\[ 
\int_{\eta_0}^\infty
\Tr [G^z(\ii\eta)]^2  F \dd \eta= \Tr G^z(\ii\eta_0)  F,
\]
if an $\eta_0$ cutoff is introduced, typically a bit below the relevant microscopic scale
(i.e. below the typical spacing of eigenvalues; in the edge  regime it is  $n^{-3/4}$).
 In the  small $\eta$-regime the quadratic singularity 
of $G^2$ may be regularized by the smallest singular value,
$\int \dd \eta/[\lambda_1^z+\eta]^2\sim 1/\lambda_1^z$, but the available tail estimates
on $\lambda_1^z$ are not strong enough to control this singularity. Therefore,  in all previous works (see \eg \cite{CES21,He23}),
the $\eta\le \eta_0$ regime was handled without integration by parts. The advantage is that the 
singularity from a single resolvent, $\int \dd \eta/[\lambda_1^z+\eta]\sim \log|\lambda_1^z|$, is much milder and manageable with available tail estimates. The disadvantage is that in the small $\eta$ regime the
bad bound on $\int |\Delta f|$ has to be used, which prevents us from identifying the Gumbel distribution of extremal eigenvalues in \cite{maxRe,SpecRadius}.

The new observation is that in many aspects $G^zFG^z$ behaves better than its trivial $1/[\lambda_1^z+\eta]^2$
upper bound indicates, thanks to the chiral symmetry of $H^z$; for example its off-diagonal elements 
exactly vanish at $\eta=0$ if $\lambda_1^z\ne 0$. This  cancellation effect becomes stronger as $\eta$
gets smaller! We thus find that the right hand side of~\eqref{girko4} 
is controllable for much smaller $\eta$’s than before, as long as $\lambda_1^z\ne 0$. 
This changes the main strategy of the proof: instead of  initially subdividing  the 
 $\eta$-integration regimes, we first introduce a cutoff $\one_{\lambda^z_1 > E_0}$
 with an appropriately chosen $E_0$ (we will pick $E_0$ slightly below the typical spacing $n^{-3/4}$) 
 and use the power of~\eqref{girko4} for the almost whole
 $\eta$-integration regime 
 (essentially for any $\eta\ge n^{-100}$); see \eqref{eq0} below. 
  Thus the bad bound on $\int |\Delta f|$ is less harmful as
 it is used only in the extremely small $\eta$ regime. However, 
 the (regularized) cutoff $\one_{\lambda^z_1 > E_0}$
 and its $z$-derivatives  generate several additional terms that need new estimates, that will be handled by our second new input below.

We remark that one can push this idea further and perform two integrations by parts giving
rise to
\begin{equation}\label{girko2}
\sum_{\sigma\in \mbox{Spec}(X)} f(\sigma)=-\frac{1}{4 \pi} \int_{\C}  f(z) 
 \int_0^\infty \Im \Tr \Delta_z G^{z}(\ii \eta) \dd \eta  \dd^2 z.
\end{equation}
The $z$-integration in $\int |f(z)|$
now becomes harmless, but 
\[
\Delta_z G^{z} = 4 G^z F G^z F^* G^z+4 G^z F^* G^z F G^z 
\]
looks even more  singular. Chiral symmetry, however, 
 implies
$G^z F G^z F^* =0$ exactly at $\eta=0$ if $\lambda^z_1 \neq 0$, which gives more room for
analysis in the extremely small $\eta$ regime.
The actual proof is much  more subtle due to the delicate condition $\lambda^z_1 \neq 0$.
In fact, if one naively ignores this condition, then 
\[
\int_0^\infty \mathrm{Tr} \Delta_z[G^z(\ii \eta)]\dd\eta=-4 \ii \Tr(G^z F G^z F^*)(\ii\eta)\big|_{\eta=0}^{\eta=\infty}=0, \qquad \mbox{if $\lambda_1^z\neq 0$},
\] 
would yield that \eqref{girko2} is always zero which is clearly wrong.
The correct procedure is to regularise the condition
 $\lambda^z_1 \neq 0$~(actually the cutoff function $\one_{\lambda^z_1 > E_0}$, see Section~\ref{sec:strategy_gft} for details) and 
consider the $z$-derivatives of this regularisation as well in the integration by parts~\eqref{girko2}; see (\ref{two_int_by_part})-(\ref{term_1}) later.
While  two integrations by parts is not used  for our main result, 
we demonstrate its mechanism when we prove that the (rescaled) spectral radius 
converges not only in distribution sense but also in all finite moment sense 
to the Gumbel distribution.

\medskip

Our  \emph{second new input} is to  understand the decorrelation mechanism of the resolvents $G^z(\ii \eta)$
and $G^{z’}(\ii \eta)$ at two different distant parameters $z, z’$ in the local law regime, i.e. to prove
a \emph{multi-resolvent local law} with an effective $|z-z’|$-decay. We need it in the regime where $z$ is at the edge
of the circular law, $|z|\approx 1$,  which corresponds to the density of states of $H^z$ having  a cusp singularity at 0. 
 We recall that to 
 understand the distribution of~\eqref{girko1}, we will need to control its higher moments,
 hence we need the joint distribution of the resolvent $G^z(\ii \eta)$ at several different $z$’s
 that are typically far away from each other. Since heuristically the correlation length of the
 non-Hermitian eigenvalues is $n^{-1/2}$, we expect that $G^z$ and $G^{z’}$ are almost independent
 if $|z-z’|\gg n^{-1/2}$. This fact needs to be exploited both in the local law regime $\eta\gg n^{-3/4}$
 and in the opposite regime where the resolvent is strongly fluctuating owing to 
 $\lambda_1^z$. 
 The latter case is more delicate, so we explain it first.

We  prove that the lower
tail distribution of the lowest eigenvalues $\lambda_1^{z_1}, \lambda_1^{z_2}, \ldots  ,\lambda_1^{z_k}$
are essentially independent whenever the pairwise distance of the $z_p$ parameters is bigger than 
the typical eigenvalue spacing, $|z_p-z_{p'}|\gg n^{-1/2}$ for any $p\ne p'$.  This  improves 
our analogous previous result  that proved a similar independence but only when $|z_p-z_{p'}|\ge n^{-\epsilon}$ 
\cite[Proposition 4.3]{SpecRadius}. Here we rely on the strong contraction effect 
of the coupled {\it Dyson Brownian motion (DBM)} 
that allows us to mimic the joint distribution of several $\lambda_1^z$ 
by a collection of {\it a priori} independent eigenvalues.  This idea has been exploited 
earlier \cite{CES19,CES22,maxRe,SpecRadius},  but the current precision down to
 the scale $|z_p-z_{p'}|\gg n^{-1/2}$ has only been done in the bulk and now we need it in
 the more delicate the edge regime. 
 In fact, we use exactly the same  DBM analysis as
  in \cite[Proposition 4.3]{SpecRadius} but its key input  is now stronger: we use
   that the eigenvectors of $H^z$
 belonging to $\lambda_1^{z}$ and $\lambda_1^{z'}$ are essentially orthogonal as long as $|z-z'|\gg n^{-1/2}$.
 This fact is proven by controlling products of resolvents  $G^zG^{z'}$ for spectral parameters
 $\eta$ slightly above the eigenvalue spacing.

In this paper we prove a local law for $\mbox{Tr} G^z(\ii\eta) [G^{z'}(\ii\eta)]^* $
with an effective $|z-z’|$--decay in the edge regime $|z|, |z’|\approx 1$.
In fact, 
 it is the  simplest prototype  of 
 a multi-resolvent local law  to understand the correlation decay 
 when $|z-z'|\gg n^{-1/2}$ since this quantity also governs the
 covariance of $\mbox{Tr} G^z(\ii\eta)$ and $\mbox{Tr}  G^{z'}(\ii\eta)$, directly
 needed in the $\eta\gg n^{-3/4}$ regime in~\eqref{girko1}.
 Strictly speaking, for the proof of our Gumbel result, the 
explicit rate of $|z-z’|$--decay  would  not
 be  necessary, we only need that some decay kicks in already from $|z-z’|\gg n^{-1/2}$,
 but our proof directly gives an effective decay without any additional effort.
 
 When $z=z'$, then 
 a simple resolvent identity (also called the {\it Ward-identity}
 in this context) shows that $G^z(\ii\eta) [G^{z}(\ii\eta)]^* = \eta^{-1}\Im G^z(\ii \eta)$,
 i.e. a product of two resolvents can be reduced to one resolvent. Roughly speaking, a multi-resolvent local law
 for $z=z'$ 
 can be reduced to a single resolvent  local law that is much better understood. However, for  $z\ne z'$ 
 the spectral resolutions of $H^z$ and $H^{z'}$ are different and such reduction is not possible;
 a new idea is needed that we will explain  in Section~\ref{sec:dyn}.
  
 In fact, we will also need another type of multi-resolvent local laws for $G^zF$, $G^zFG^z$ and $G^z F G^z F^*$ 
 to handle the larger $\eta$ regime in~\eqref{girko2} and the $z$-derivative of the regularized cutoff $\one_{\lambda^z_1 > E_0}$ with an improvement owing to 
 the special choice of $F$.
 The key phenomenon that $GAG$ 
 is much smaller if the deterministic $A$  is chosen in a special subspace
 has been first discovered in  \cite{CES21} for the resolvent $G$ of a Wigner matrix.
 Later it has been extended to more general
  situations \cite{Function_CLT,Optimal_local,Rank_uniform,CEHS23,CEH23} 
  by developing the method of (static)
 {\it master inequalities} for the fluctuations in the local law. However, here we 
  need to exploit this phenomenon  in the cusp regime for $G^z$ which would be especially cumbersome
  via the master inequalities.  A new dynamical approach to local laws, gradually developed very recently, also solves this problem in a short and elegant way.

 \subsection{Dynamical approach to multi-resolvent local laws}\label{sec:dyn}
 Multi-resolvent local laws have recently been derived in various situations; they are necessary to prove the 
 central limit theorem for linear eigenvalue statistics
  \cite{KKP96_CLT,LP_CLT,Shcherbina_CLT,HK17,LS20,LLS23,CES19,Function_CLT,CES22, meso34, LSX21, SW13, Vova1, Vova2}
 and the Eigenstate
 Thermalisation Hypothesis   \cite{CES21,CEH23,ADXY23}.
 Their canonical proof is to derive an approximate self-consistent equation, e.g. for   $\mbox{Tr} G^z[G^{z'}]^* $
 for definiteness, then to compute
 the fluctuation error in this equation by a cumulant expansion and invert the equation.  
 In certain cases, including  $\mbox{Tr} G^z[G^{z'}]^* $, the self-consistent equation is very unstable
 which is manifested by the smallness of the explicitly computable smallest eigenvalue of its linear stability operator.
 This instability is offset by a cancellation mechanism on the fluctuation side that 
 is effective exactly in the "worst direction" of the stability operator. Such cancellation is algebraically visible
 when Ward identity is available, but otherwise it remains very mysterious.  One expects an "effective Ward-like identity"
 operating in the background, but it has never been found in a sufficiently robust form.
 The interested reader is encouraged to work out the simplest instance of this phenomenon: prove the local law
 for $G(z)$ and for $G(z)G^*(z)$, where
 $G(z)= (W-z)^{-1}$ is the resolvent of the standard (Hermitian) Wigner matrix $W$ following the cumulant expansion
 strategy, e.g. in  \cite{KKP96_CLT,HK17,Optimal_local}.
  The procedure applied directly for $GG^*$ will give a bound that is off by a factor $1/|\Im z|\gg 1$. 
 If, however, one first uses the Ward identity,  $GG^* = \frac{1}{\Im z}\Im G$, and then applies the procedure for
 $\Im G$, then it gives the optimal result.
 
 In this paper we present a different solution to this {\it "missing Ward-identity"}
 problem  that does not require to invert the stability operator at all
 in its unstable direction:
  we apply the method of {\it characteristic flow}  for a critical multi-resolvent situation.
The basic idea  to estimate resolvents {\it dynamically}
 appeared first in~\cite{LeeSchnelli} in the context of edge universality.
  Here we consider a matrix valued Ornstein-Uhlenbeck process $X_t$
 with initial value $X_0=X$, denote by $G_t$ its Hermitized resolvent after a $z$-shift.
 We also let both spectral parameters $z$ and $\eta$ evolve  in time in such a way
 that the time derivative of $\mbox{Tr} G_t^{z_t}(\ii \eta_t) [G_t^{z_t'}(\ii \eta_t)]^* $ has
 an algebraic cancellation among its leading terms.
 The corresponding equation for the spectral 
 parameters is called the characteristic flow, the name originates from the fact that 
 in the simplest case of the single resolvent of a Wigner matrix this equation describes the characteristics of the underlying
 complex Burgers equation \cite{meso7, meso30, Past72, VS19}. 
 Along this flow $\ii\eta_t$ approaches the real axis, thus
  one can effectively compare the resolvents at small $\eta$ with resolvents at large $\eta$ where local 
  laws are much easier to prove since the stability operator has no unstable direction. The price
  one pays for this convenience is the added Gaussian component to $X_t$, but this can 
  later be removed by fairly standard {\it Green function comparison theorems (GFT)}.

 The method of characteristic flow has  been effectively
 used for  local laws and closely related quantities in various models in 
  \cite{meso30,meso1,LLS23,meso2,meso34,meso9,meso7,LeeSchnelli} 
  but only for a single resolvent situation
 where it eventually leads to a linear equation thanks to the Ward identity.
 At the moment this paper was first posted on arXiv, \nc two or more  resolvents with dynamical method 
 were considered only in  
  \cite[Theorem 3.3]{CES22}, \cite[Proposition 4.5]{meso9}, \cite[Proposition 3.3]{CEH23}, and more recently in
    \cite[Lemma 4.2]{StoneYangYin}. The latter three works focused on a different, less delicate aspects and
    exhibited no correlation decay that would have been the analogue of our $|z-z'|$-decay. 
      In~\cite{CES22}  a local law for
  $\mbox{Tr} G^{z}(\ii \eta)  G^{z'}(\ii \eta') $ was proven in the bulk regime (i.e. when $|z|, |z'|\le 1-\epsilon$)
  with an error term that exhibited  a small improvement when $|z-z'|\gg n^{-1/2}$. 
  Our  new  result handles the  more critical edge regime  (i.e. when $|z|, |z'|\approx 1$)
  and it proves  an effective decay in $|z-z'|$.
    It has been observed earlier \cite{AEK19}  that 
   in the edge regime, $|z|\approx 1$,  the eigenvalue density of the Hermitised matrix $H^z$ 
   has a cusp like singularity near the origin and the stability operator has {\it two} small eigenvalues,
   hence two "bad directions". Therefore our current analysis requires to monitor all four quantities
   of the form $\mbox{Tr} G^{z}(\ii \eta) A_1 G^{z'}(\ii \eta')  A_2$, where the deterministic matrices
   $A_1, A_2$ lie in one or the other bad direction. 
    This is the first systematic analysis of the local law for  general $G^zA_1G^{z’}A_2$ that
   fully exploits the underlying structure of all relevant instabilities. After the first 
   version of the current work appeared on arXiv, this approach inspired the proof of several other
   recent result, see e.g. \cite{CEH23, CEHK24, CEX26, O25, O26}.
   
   There are two key reduction steps in this analysis. First, 
   the evolution equation for a two-resolvent quantity, schematically like
    $\mbox{Tr} GAGA$, naturally involves a three- and four-resolvent quantities
    of the form $\mbox{Tr} GAGAGAGA$ (see~\eqref{eq:fulleqaasimp} and \eqref{eq:quadvarnew} later),
     whose own evolution equation involves eight resolvents etc.
   This would lead to an infinite hierarchy of equations, reminiscent to the well-known BBGKY-hierarchy 
   from statistical physics  for correlation functions of interacting particle systems. Such hierarchy is uncontrollable
   without a {\it closure} step, e.g. $\mbox{Tr} GAGAGAGA$ needs to be estimated by the square of
   $\mbox{Tr} GAGA$. For this purpose we rely on a \emph{reduction inequality} (see \eqref{eq:redinnew} below),  this leads to a closed coupled system of four  {\it nonlinear} 
   differential equations plus an error term (its linearised form  is given in \eqref{eq:4by4ode} later).
      
   Second,   the linearised  part
  of this system blows up exactly when the characteristic flow reaches the real axis and we need to compute the
  precise rate of the blow-up as it gives the optimal  estimate in the local law, which is in fact necessary to obtain the desired $|z-z'|$--gain.   We then need to 
  control the higher order nonlinear terms and show that they remain negligible in  the entire  
  range of  $\eta$'s above the eigenvalue spacing, i.e. where local laws are expected to hold.
  Eventually the nonlinear terms would prevail, so with the terminology of nonlinear evolution equations
  our problem is strictly speaking
   {\it supercritical}, but with an effective tiny cutoff provided by the underlying discreteness of 
  spectrum.

 In summary, the methodological novelty of our work on multi-resolvent local laws is to identify 
   the nonlinear system of equations
 that governs the evolution of the multi-resolvent local laws along the characteristic flow in the critical cusp regime. For brevity, we present
 its analysis in our special situation of two resolvents of the Hermitisation of a complex i.i.d. matrix
 that is needed to prove the Gumbel law, but the method is very general; it  is applicable to prove
 any multi-resolvent local law for any mean-field matrix ensemble.
 Special cases of multi-resolvent local laws have already played a key role in 
 several recent works with broader scope  \cite{bulk_complex, NST23, PR23}
  and we expect their impact will continue to grow as they become available for more general ensembles (see e.g. \cite{DYYY25a, DYYY25b, ER25,YY25} for recent breakthroughs for random band matrices)\nc.
  In particular, the necessary
 precise explicit calculations near the blow-up point can be done systematically based upon the 
 underlying {\it Dyson equation}~\eqref{MDE} and the corresponding characteristic flow~\eqref{eq:matchar}, 
  which hold for very general random matrix ensembles.

\medskip
	
\subsection*{Acknowledgement} We thank Oleksii Kolupaiev for several critical remarks about 
 the draft version of this paper.  We also thank the anonymous referees whose comments substantially improved the readability of this paper. \nc

\subsection*{Conventions and notations}

For integers $k,l\in\N$, with $k<l$, we use the notations $[k,l]:= \{k,\dots, l\}$ and $[k]:=[1,k]$. For any $z\in\C$ we use 
$\dd^2 z:= \frac{1}{2}\ii(\dd z\wedge \dd \overline{z})$ for the two dimensional area form on $\C$ and 
 $\Delta_z:=4\partial_z \partial_{\bar z}$ denotes the Laplace operator on $\C$. 
For positive quantities $f,g$ we write $f\lesssim g$ and $f\sim g$ if $f \le C g$ or $c g\le f\le Cg$, 
respectively, for some constants $c,C>0$ which depend only on the constants appearing 
in~\eqref{eq:hmb}. We set $f\wedge g= \min\{ f, g\}$ and $f \vee  g= \max\{ f, g\}$. 
Furthermore, for $n$--dependent positive
quantities $f_n,g_n$ we use the notation $f_n\ll g_n$ to denote that $\lim_{n\to\infty} (f_n/g_n)=0$. 
Throughout the paper $c,C>0$ denote small and large constants, respectively, which may change from line to line. We denote vectors by bold-faced lower case Roman letters $\bm{x},\bm{y}\in\C^d$, for some $d\in\N$. Vector and matrix norms, $\lVert\bm{x}\rVert$ and $\lVert A\rVert$, indicate the usual Euclidean norm and the corresponding induced matrix norm. For any $d \times d$ matrix $A$ we use $\langle A\rangle:= d^{-1}\mathrm{Tr} A$ to denote the normalized trace of $A$. Moreover, for vectors ${\bm x}, {\bm y}\in\C^d$ we define the usual scalar product 
$\langle \bm{x},\bm{y}\rangle:= \sum_{i=1}^d \overline{x_i} y_i. $
Furthermore,  
$\lVert f\rVert_\infty$ and  $\lVert f\rVert_1$  denote the $L^\infty$ and  $L^1$-norm of a function $f$, respectively. 
Throughout the paper we will use the notations $\P^{\mathrm{Gin}}$ and $\E^{\mathrm{Gin}}$ to denote the corresponding probability and expectation for $X$ being a complex Ginibre matrix. Additionally, for two matrices $A, B\in \C^d$ we use the notation $\Im AB:=\Im(A)B$, i.e. the imaginary part applies only to the first matrix.

\noindent

We define the following special $2n\times 2n$ matrices
\begin{equation}
\label{eq:defF}
F:=
\begin{pmatrix}
	0  &  1 \\
	0   & 0
\end{pmatrix}, 
\qquad\quad
E_1:=
\begin{pmatrix}
	1  &  0 \\
	0   & 0
\end{pmatrix}, \qquad\quad E_2=
\begin{pmatrix}
0  &  0 \\
0   & 1
\end{pmatrix}, \qquad
E_\pm:=E_1\pm E_2=\begin{pmatrix}
1  &  0 \\
0   & \pm 1
\end{pmatrix},
\end{equation}
in particular, $E_+$ is  the $2n$--dimensional identity matrix.

\noindent

We will often use the concept of ``with very high probability'' for an $n$-dependent event,
 meaning that for any fixed $D>0$ the probability of the event is bigger than $1-n^{-D}$ if $n\ge n_0(D)$. Moreover, we use the convention that $\xi>0$ denotes an arbitrary small positive exponent 
  which is independent of $n$. We recall the standard notion of \emph{stochastic domination}:  
     given two families of non-negative random variables
\[
X=\left(X^{(n)}(u) \,:\, n\in\N, u\in U^{(n)}\right)\quad\text{and}\quad Y=\left(Y^{(n)}(u) \,:\, n\in\N, u\in U^{(n)}\right)
\] 
indexed by $n$ (and possibly some parameter $u$  in some parameter space $U^{(n)}$), 
we say that $X$ is {\it stochastically dominated} by $Y$, if for any $\xi, D>0$ we have \begin{equation}
\label{stochdom}
\sup_{u\in U^{(n)}} \P\left[X^{(n)}(u)>n^\xi  Y^{(n)}(u)\right]\le n^{-D}
\end{equation}
for large enough $n\geq n_0(\xi,D)$. In this case we use the notation $X\prec Y$ or $X= O_\prec(Y)$
and we say that $X(u)\prec Y(u)$ {\it holds uniformly} in $u\in U^{(n)}$.

\section{Main results}\label{sec:thm}
 
 We consider \emph{complex i.i.d.  matrices} $X \in \C^{n\times n}$, i.e. $n\times n$ matrices with independent and identically distributed (i.i.d.) entries $x_{ab}\stackrel{\mathrm{d}}{=}n^{-1/2}\chi$, for some complex 
 random variable $\chi$.  If $\chi$ is a standard complex Gaussian random variable, then $X$ is called a {\it (complex) 
 Ginibre matrix}.
 In general, we make the following assumptions on $\chi$:
\begin{assumption}
	\label{ass:mainass}
	 We assume that $\E \chi=0$, $\E |\chi|^2=1$ and $\E \chi^2=0$. Furthermore, we require that higher moments exist\footnote{Here, for simplicity, we assume that all moment exists, but inspecting our proof it is clear that we need to formulate this assumption only for any $p\le p_0$, for a certain fixed $p_0\in\N$.}, i.e. that for any $p\in\N$
	  there exists a constant $C_p>0$ such that
	\begin{equation}
		\label{eq:hmb}
		\E|\chi|^p\le C_p.
	\end{equation}
\end{assumption}

We denote the eigenvalues of $X$ by $\{\sigma_i\}_{i \in [n]}$, and label them so that\footnote{We chose this labelling purely for notational convenience. All our result are insensitive to the labelling;
 in fact, we think of the eigenvalues as an unlabelled point process.}
\begin{align}\label{order}
	|\sigma_1| \geq |\sigma_2| \geq \cdots \geq |\sigma_n|.
\end{align}
\subsection{Spectral radius}
The spectral radius  of $X$ is given by $\rho(X)=|\sigma_1|$. 
 Note that $|\sigma_1|>|\sigma_2|$ with high probability (see Lemma \ref{lemma:unique} later),
 so the spectral radius comes from a unique eigenvalue. If $\chi$ has a density function, then
 $|\sigma_1|>|\sigma_2|$  holds even almost surely.

Our first main result is the universality of the joint fluctuations of $|\sigma_1|$ and its argument: 
\begin{theorem}[Spectral radius]\label{thm:gumbel}
Let $X$ be a complex i.i.d. matrix satisfying Assumption \ref{ass:mainass}. Set
	\begin{align}\label{gamma}
		\gamma_n:= \log n-2\log \log n-\log 2 \pi.
	\end{align} 
Then for any fixed $r\in \R$ and $\theta \in [0,2\pi)$, we have\footnote{We formulated the convergence as a limiting statement, but inspecting our proof we can also obtain 
an effective  speed of convergence $O((\log\log n)^2/(\log n))$ in~\eqref{theta_x_joint} uniformly in $(r,\theta)$  similarly
to    \cite[Theorem 1]{maxRe_Gin}. Optimal speed of convergence $O(\log\log n/(\log n))$ was later achieved by \cite{Ma2025}.} 
\begin{align}\label{theta_x_joint}
	\lim_{n\rightarrow \infty}\P\Big(  |\sigma_1|\leq 1+\sqrt{\frac{\gamma_n }{4n}}+\frac{r}{\sqrt{4n \gamma_n}},\quad 0\leq \arg(\sigma_1) \leq \theta \Big)=\frac{\theta}{2\pi}\ee^{-\ee^{-r}}.
\end{align}
\end{theorem}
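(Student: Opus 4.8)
The plan is to follow the standard universality paradigm: reduce \eqref{theta_x_joint} to the statement that the joint limiting law of $(|\sigma_1|,\arg\sigma_1)$ for a general complex i.i.d.\ matrix $X$ coincides with the one for a complex Ginibre matrix, and then compute the latter. For Ginibre the computation is essentially free: the law of $X$ is invariant under $X\mapsto\ee^{\ii\varphi}X$, so $(|\sigma_1|,\arg\sigma_1)\overset{\mathrm{d}}{=}(|\sigma_1|,\arg\sigma_1+\varphi)$ for every fixed $\varphi$, which forces $\arg\sigma_1$ to be exactly uniform on $[0,2\pi)$ and independent of $|\sigma_1|$ at every finite $n$; hence $\P^{\mathrm{Gin}}(|\sigma_1|\le t_r,\ \arg\sigma_1\le\theta)=\frac{\theta}{2\pi}\,\P^{\mathrm{Gin}}(|\sigma_1|\le t_r)$ with $t_r:=1+\sqrt{\gamma_n/4n}+r/\sqrt{4n\gamma_n}$, and the marginal $\P^{\mathrm{Gin}}(|\sigma_1|\le t_r)\to\ee^{-\ee^{-r}}$ with the normalisation \eqref{gamma} is Rider's computation \cite{R03} (equivalently a consequence of Kostlan's theorem that $\{n|\sigma_i|^2\}$ has the law of a set of independent $\mathrm{Gamma}(i,1)$ variables, or of the convergence of the rescaled extremal Ginibre eigenvalues to a Poisson point process on $\R\times[0,2\pi)$ of intensity $\ee^{-s}\,\dd s\otimes\frac{\dd\varphi}{2\pi}$, whence $\int_{-\infty}^r\ee^{-s}\ee^{-\ee^{-s}}\,\frac{\theta}{2\pi}\,\dd s=\frac{\theta}{2\pi}\ee^{-\ee^{-r}}$). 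Thus the entire burden is the universality transfer, for which rotation invariance is no longer available.

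For the transfer I would first express the event in \eqref{theta_x_joint} through finitely many linear eigenvalue statistics $N_f:=\sum_{\sigma}f(\sigma)$, with $f$ a mollified indicator of a subregion of the critical annulus obtained by partitioning the radial coordinate on scale $1/\sqrt{n\gamma_n}$ and the argument on scale $O(1)$ (the top eigenvalue is, with high probability, unique by Lemma~\ref{lemma:unique}, so $\arg\sigma_1$ is well-defined up to a negligible event). Since the number of eigenvalues in each such region is, with high probability, a bounded integer, the probability of the event is recovered from the joint factorial moments $\E\prod_j\binom{N_{f_j}}{k_j}$ by a convergent inclusion--exclusion series, so it suffices to prove universality of these mixed correlation functions -- including the $\arg$-dependence, which is automatic because $f$ in Girko's formula is an arbitrary test function. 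Each factor is expanded via Girko's formula \eqref{girko1}, where the \emph{first key idea} enters: two integrations by parts turn \eqref{girko1} into \eqref{girko2} after a careful regularisation of the constraint $\lambda_1^z\neq0$ (Section~\ref{sec:strategy_gft}), whose $z$-derivatives must be carried along. After this step the $z$-integral is harmless and the problem reduces to understanding the joint law of $\big(\Tr G^zFG^zF^*G^z\big)(\ii\eta)$ -- and, through the regularisation, of $\big(\Tr G^zFG^zF^*\big)(\ii\eta)$ at small but nonzero $\eta$ -- at a few parameters $z$ in the critical annulus and over the whole range of $\eta$ on the imaginary axis.

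To control these quantities I would split the $\eta$-range at the eigenvalue spacing $n^{-1/2}$. In the small-$\eta$ regime $G^z(\ii\eta)$ is dominated by the smallest non-negative eigenvalue $\lambda_1^z$ of $H^z$, and the decisive input is that the lower tails of $\lambda_1^{z_1},\dots,\lambda_1^{z_k}$ are asymptotically independent once $|z_p-z_{p'}|\gg n^{-1/2}$; this improves \cite[Proposition~4.3]{SpecRadius} and follows from the contraction of the coupled Dyson Brownian motion for the $H^{z_p}$, which itself rests on near-orthogonality of the lowest eigenvectors of $H^{z}$ and $H^{z'}$ for $|z-z'|\gg n^{-1/2}$. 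That orthogonality is the \emph{second key idea}: a multi-resolvent local law for $\Tr G^z(\ii\eta)[G^{z'}(\ii\eta)]^*$ -- and, for the large-$\eta$ regime, for $\Tr G^zFG^zF^*G^z$, exploiting the special subspace structure of $F$ -- with \emph{optimal} decay in $|z-z'|$ in the critical cusp regime $|z|,|z'|\approx1$, where (as observed in \cite{AEK19}) $H^z$ has a cusp-type density near the origin and its stability operator has two small eigenvalues. I would prove these local laws by the characteristic flow: run a matrix-valued Ornstein-Uhlenbeck evolution $X_t$ with $X_0=X$, evolve $z$ and $\eta$ along the characteristics so that the leading terms of $\partial_t\Tr(G_t^{z_t}A_1 G_t^{z_t'}A_2)$ cancel algebraically, use a reduction inequality to close the resulting BBGKY-type hierarchy into a coupled system of four nonlinear ODEs (one governing each $\Tr(G_t^{z_t}A_1 G_t^{z_t'}A_2)$ with $A_1,A_2$ ranging over the two bad directions), compute the precise blow-up rate of the linearised system as the flow reaches the real axis to read off the sharp $|z-z'|$-gain while showing the nonlinear corrections stay subleading down to the spacing scale, and finally remove the added Gaussian component by a Green function comparison argument. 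Feeding the resulting universality of the correlation functions back into the inclusion--exclusion series, together with the Ginibre computation above, yields \eqref{theta_x_joint}.

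The principal obstacle is the multi-resolvent local law in the critical regime with optimal $|z-z'|$-decay: the nonlinear ODE system along the characteristic flow is effectively supercritical, so the nonlinear corrections must be controlled uniformly down to the tiny cutoff provided by the discreteness of the spectrum and all the way up to the blow-up point; any loss there destroys the $|z-z'|$-gain, hence the eigenvector orthogonality, hence the asymptotic independence of the $\lambda_1^{z_p}$, and ultimately the Poissonian -- in particular Gumbel -- structure. A second, more technical, difficulty is the bookkeeping around the $\lambda_1^z\neq0$ regularisation in the double integration by parts \eqref{girko2}: without it the three-resolvent integrand $\int_0^\infty\Tr(G^zFG^zF^*G^z)(\ii\eta)\,\dd\eta$ would be divergent at $\eta=0$, and only the exact chiral identity $G^zFG^zF^*|_{\eta=0}=0$ (valid precisely when $\lambda_1^z\neq0$), together with the correct treatment of its $z$-derivatives, makes \eqref{girko2} meaningful -- while keeping enough uniformity in $z$ throughout.
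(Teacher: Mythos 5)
Your proposal is correct at the strategic level and invokes all the right machinery: Girko's formula with the double integration by parts and chiral regularisation of $\lambda_1^z\neq 0$, multi-resolvent local laws with optimal $|z_1-z_2|$-decay via the characteristic flow, asymptotic independence of the $\lambda_1^{z_p}$ through the Dyson Brownian motion coupling, a Green function comparison argument to remove the Gaussian component, and the explicit Ginibre computation (rotation invariance giving uniformity and independence of $\arg\sigma_1$ at finite $n$, Rider/Kostlan for the radial marginal). However, your reduction of the event~\eqref{theta_x_joint} to linear eigenvalue statistics differs from the paper's. You propose to partition the critical annulus, prove universality of all mixed factorial moments $\E\prod_j\binom{N_{f_j}}{k_j}$, and reconstruct the probability by a convergent inclusion--exclusion series; this requires controlling arbitrarily high factorial moments uniformly enough to justify convergence of the series, and each $\binom{w}{k}$ is unbounded, so it lies outside the bounded-$\F$ Green function comparison theorem and forces one through the polynomial-growth extension from the outset. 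The paper instead applies a single smooth, monotone, bounded cutoff $\Phi$ (with $\Phi\equiv 0$ on $[0,1/9]$ and $\Phi\equiv 1$ on $[2/9,\infty)$) directly to the counting function $\mathcal{N}_{\Omega_r}$; since $\mathcal{N}_{\Omega_r}$ is a non-negative integer, $\E[\Phi(\mathcal{N}_{\Omega_r})]=\P(\mathcal{N}_{\Omega_r}\ge 1)$ exactly, and sandwiching the indicator of $\Omega_r$ between two $C^2_c$ mollifications $f_r^{\pm}$ produces two-sided bounds that each fall under Theorem~\ref{main_thm} for bounded $\F$ with bounded derivatives. For the argument part the paper uses a sector partition $\{\Theta_p\}_{p\le k}$ and the rotational symmetry of the Ginibre limit under cyclic permutation of the sector thresholds, which is your rotation-invariance observation organised as a universality statement. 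The net effect is that the paper avoids any infinite series and any factorial-moment control, which buys a substantially shorter and more robust deduction; the moment-convergence statement you would need as input is instead recovered as a corollary, via Remark~\ref{rem:betterGFT}, rather than used as a prerequisite. Both routes are sound given the underlying local-law technology, but the paper's smooth-cutoff scheme is the lighter of the two.
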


Note that \eqref{theta_x_joint} proves the universality of the joint distribution of the largest eigenvalue
 (in absolute value) and its argument.  In particular, by \eqref{theta_x_joint} it follows that  $\rho(X)=|\sigma_1|$ 
 has Gumbel fluctuation:
\begin{align}\label{spectral_radius}
	\lim_{n\rightarrow \infty}\P \Big( |\sigma_1|\leq 1+\sqrt{\frac{\gamma_n }{4n}}+\frac{r}{\sqrt{4n \gamma_n}}\Big)=\ee^{-\ee^{-r}}.
\end{align}	
Furthermore, \eqref{theta_x_joint} also implies that  asymptotically the law of $\arg(\sigma_1)$ 
is uniform and independent of $|\sigma_1|$:
\begin{align}\label{theta_uniform}
	\lim_{n\rightarrow \infty}\P\Big( \arg(\sigma_1) \in [0,\theta]\Big) =\frac{\theta}{2\pi}.
\end{align}

\begin{remark}
\textit{For simplicity we stated \eqref{thm:gumbel} only for the largest eigenvalue in absolute value, however, inspecting our proof in Section~\ref{sec:proof_main}, it is clear that we can also prove universality of the joint distribution of the largest $k$ eigenvalues and their arguments for any finite $k\in\N$.}
\end{remark}

\begin{remark}[Convergence of moments]
\label{remark:mom}
Our method proves not only the convergence in distribution to Gumbel, but also the convergence of all  moments. 
 More precisely, define the rescaled quantity
\[
Y_n:=\sqrt{4n\gamma_n}\left[|\sigma_1|-1-\sqrt{\frac{\gamma_n}{4n}}\right],
\]
 then, for any fixed $p$, we have that $\E |Y_n|^p$ converges to the $p$--th moment of a standard Gumbel random variable. A similar result applies to all the joint moments of the first $k$ largest eigenvalues (in absolute value) and their arguments. More details will be given in Remark~\ref{rem:betterGFT} and in Section~\ref{sec:polyF}.

\end{remark}

Our second main result is to prove that the eigenvalues (in modulus) 
around $1+\sqrt{\gamma_n/4n}$ with the scaling $\sqrt{4n\gamma_n}$
form an inhomogeneous Poisson point process in the complex plane in the $n\rightarrow \infty$
limit,  and we also identify its intensity function. 
\begin{theorem}[Poisson Point Process]\label{thm:poisson}
Under the conditions of Theorem \ref{thm:gumbel}, we rescale the eigenvalues
\begin{equation}\label{scale_eig}
	\sigma_j=\ee^{\ii\theta_j} \Big(1+\sqrt{\frac{\gamma_n}{4n}} + \frac{r_j}{\sqrt{4\gamma_n n}}\Big), \qquad r_j \in \R, \quad \theta_j \in [0,2\pi).
\end{equation}
    Fix any $t\in\R$ and any non-negative
function $g\in C^2_c(\C)$ supported on $\{z=r\ee^{\ii\theta} : r \in [t,\infty), \theta \in [0,2\pi)\}$.  Then, we have 
\begin{align}\label{poisson}
		\lim_{n\rightarrow \infty}\E \Big[ \ee^{-\sum_{j=1}^n  g(r_j \ee^{\ii \theta_j})}\Big]
		= \exp\Bigl(- \int_{0}^{2\pi}\int_{\R} \big(1-\ee^{-g(r\ee^{\ii\theta})}\big) \frac{\ee^{-r}}{2\pi}\dd r\dd \theta\Bigr).
\end{align}
\end{theorem}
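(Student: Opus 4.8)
The plan is to prove \eqref{poisson} by recognizing its right-hand side as the Laplace functional of the inhomogeneous Poisson point process on the region $\{r\ee^{\ii\theta}:r\in\R,\ \theta\in[0,2\pi)\}$ with intensity $\frac{\ee^{-r}}{2\pi}\dd r\dd\theta$, and then verifying the convergence of the Laplace functionals of the rescaled eigenvalue process. The collection of functionals $\exp(-\sum_j g(r_j\ee^{\ii\theta_j}))$, as $g$ ranges over the nonnegative functions in $C^2_c(\C)$ supported in $\{r\ge t\}$ for some $t\in\R$, is convergence-determining for the relevant point processes, so \eqref{poisson} is equivalent to the asserted Poisson convergence. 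The argument has two parts: (i) a truncation-plus-universality step that reduces the i.i.d. Laplace functional to the complex Ginibre one; (ii) an explicit computation for the Ginibre ensemble.

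\emph{Part (i): truncation and universality.} Fix $t\in\R$ and $g\in C^2_c(\C)$ supported in $\{r\ee^{\ii\theta}:r\ge t\}$. Then $\sum_{j=1}^n g(r_j\ee^{\ii\theta_j})$ only involves eigenvalues with $|\sigma_j|\ge 1+\sqrt{\gamma_n/4n}+t/\sqrt{4\gamma_n n}$; let $N_n(t)$ be the number of such eigenvalues. The edge estimates developed in \cite{SpecRadius} and recalled in the present paper furnish a first-moment bound $\E N_n(t)\le C_t$ uniform in $n$, and hence $\P(N_n(t)>K)\le\epsilon_t(K)$ with $\epsilon_t(K)\to 0$ as $K\to\infty$, uniformly in $n$; the same bound holds verbatim for the Ginibre ensemble. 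On $\{N_n(t)\le K\}$ the remaining terms of the sum vanish, so
\[\E\Big[\ee^{-\sum_{j=1}^n g(r_j\ee^{\ii\theta_j})}\Big]=\E\Big[\ee^{-\sum_{j=1}^K g(r_j\ee^{\ii\theta_j})}\Big]+O(\epsilon_t(K)),\]
and the analogous identity holds for $\E^{\mathrm{Gin}}$ with the same error bound. The map $(r_1,\theta_1,\dots,r_K,\theta_K)\mapsto\exp(-\sum_{j=1}^K g(r_j\ee^{\ii\theta_j}))$ is bounded and continuous, so by the extension of Theorem~\ref{thm:gumbel} to the joint distribution of the $K$ largest eigenvalues and their arguments (the Remark following Theorem~\ref{thm:gumbel}), $\E[\exp(-\sum_{j=1}^K g(r_j\ee^{\ii\theta_j}))]$ and its Ginibre counterpart have the same limit as $n\to\infty$. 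Sending $n\to\infty$ and then $K\to\infty$ yields $\lim_n\E[\ee^{-\sum_j g(r_j\ee^{\ii\theta_j})}]=\lim_n\E^{\mathrm{Gin}}[\ee^{-\sum_j g(r_j\ee^{\ii\theta_j})}]$ whenever either limit exists.

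\emph{Part (ii): the Ginibre computation.} For the complex Ginibre ensemble the eigenvalues form a rotationally invariant determinantal point process, and by the classical observation of Kostlan the squared moduli $\{|\sigma_i|^2\}_{i\in[n]}$ are distributed as $\{Z_i/n\}_{i\in[n]}$ with $Z_i$ independent, $Z_i$ following the $\mathrm{Gamma}(i,1)$ distribution. A standard extreme-value analysis of these (essentially independent) variables — the very computation underlying the Gumbel law of \cite{R03,RS14} — shows that under the rescaling \eqref{scale_eig} the radial point process $\{r_i\}$ converges to a Poisson point process on $\R$ with intensity $\ee^{-r}\dd r$. Rotational invariance forces the planar limit to be rotationally invariant; combined with the convergence of the determinantal correlation functions of the Ginibre ensemble at the edge (whose off-diagonal kernel entries are negligible once the two spectral points are separated by more than $n^{-1/2}$), this shows the limiting correlation functions factorize as $\prod_i\frac{\ee^{-r_i}}{2\pi}$, so the planar limit is the Poisson point process with intensity $\frac{\ee^{-r}}{2\pi}\dd r\dd\theta$, whose Laplace functional is precisely the right-hand side of \eqref{poisson}. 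Together with part (i) this proves the theorem.

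\emph{Main obstacle.} Granting Theorem~\ref{thm:gumbel} and its $k$-eigenvalue refinement, the delicate point is the uniform-in-$n$ tightness of the edge eigenvalue count $N_n(t)$ that legitimizes the truncation at $K$ eigenvalues in part (i), and in particular the fact that this estimate is uniform across the i.i.d. and Ginibre models so the two truncation errors cancel in the limit; the remaining Ginibre computation in part (ii) is classical.
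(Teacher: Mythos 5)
Your argument is logically sound in outline, but it takes a genuinely different route from the paper's, and it leaves the most delicate step unfinished.

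The paper proves Theorem~\ref{thm:poisson} by applying the Green function comparison Theorem~\ref{main_thm} \emph{directly to the Laplace functional}: one sets $\F(w)=\ee^{-w}$ (which has uniformly bounded derivatives on $\R_+$, as required by \eqref{F_cond}) and $f(z)=g(h(z))$ with $h(z)=\sqrt{4n\gamma_n}\big(|z|-1-\sqrt{\gamma_n/4n}\big)z/|z|$, so that $f(\sigma_i)=g(r_i\ee^{\ii\theta_i})$ and $f$ satisfies \eqref{f_norm}--\eqref{f_cond}. A single application of \eqref{error} (with $k=1$) then transports $\E[\ee^{-\sum_i f(\sigma_i)}]$ to its Ginibre value up to $O(n^{-\epsilon})$, with no truncation and no reference to the top $K$ eigenvalues; the remaining Ginibre computation is Corollary~\ref{cor:Ginibre}. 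This is the cleanest path precisely because Theorem~\ref{main_thm} was designed for unbounded $\F$ with bounded derivatives — no tightness control of the edge eigenvalue count is needed, since the GFT acts on the whole linear statistic at once.

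Your route instead cuts at $K$ eigenvalues and invokes the Remark following Theorem~\ref{thm:gumbel} (universality of the joint distribution of the $K$ largest moduli and arguments). This can be made to work, but it replaces one application of the GFT with two unproven ingredients: (a) a uniform-in-$n$ bound $\E N_n(t)\le C_t$ for the i.i.d.\ model, which you state but do not derive — it does not follow from the cited local laws alone, and in fact establishing it again requires comparing with Ginibre via the same GFT (e.g.\ applying Theorem~\ref{main_thm} with $\F(w)=w$ to a regularization of $\one_{\Omega_t}$); and (b) the exact content of the Remark, which the paper asserts "by inspection of the proof" but does not write out. You correctly flag (a) as the delicate point but do not close it. In short: your proposal is valid modulo these gaps, but it is a detour — by first applying GFT to produce (a) and (b), then recombining them by truncation, one arrives at the same conclusion that the paper obtains in one shot by choosing $\F(w)=\ee^{-w}$. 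Your Ginibre computation in Part (ii) (Kostlan plus rotational invariance plus kernel decay) is in the same spirit as the paper's Remark~\ref{rem:kostlan}, Proposition~\ref{prop_K} and Corollary~\ref{cor:Ginibre}, though you leave the Fredholm-determinant control $\|K\|_2\to 0$ (the paper's \eqref{off_K}) implicit.
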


\subsection{Rightmost eigenvalue}

Motivated by the long time behavior of the linear system of ODE's $\dot{\bold{u}}=X\bold{u}$, in \cite{maxRe}
 we computed the precise size of the real part of the rightmost eigenvalue of $X$. However, in \cite{maxRe} we were not able to identify the fluctuations of this quantity around its deterministic limit. Similarly to 
 Theorems~\ref{thm:gumbel} and \ref{thm:poisson} now we also prove the following universality result:
\begin{theorem}[Rightmost eigenvalue]\label{thm:maxRe}
	Under the conditions of Theorem \ref{thm:gumbel}, for any fixed $x\in \R$ we~have
\begin{align}\label{maxRe}
	\lim_{n\rightarrow \infty}\P \Big( \max_{j \in [n]} \Re \sigma_j\leq 1+\sqrt{\frac{\gamma'_n }{4n}}+\frac{x}{\sqrt{4n \gamma'_n}}\Big)=\ee^{-\ee^{-x}},
\end{align}
where $\gamma'_n$ is given by
\begin{equation}
\label{gamma_prime}
\gamma'_n:= \frac{\log n-5\log \log n-\log (2 \pi)^4}{2}.
\end{equation}
Furthermore, rescaling the eigenvalues 
\[
\sigma_j= 1+\sqrt{\frac{\gamma_n'}{4n}} + \frac{x_j}{\sqrt{4\gamma'_n n}} + \frac{\ii y_j}{(\gamma'_n n)^{1/4}},
\qquad x_j, y_j\in \R, 
\]
and fixing any $t\in\R$ and any non-negative function $g\in C^2_c(\C)$ supported on $[t,\infty)\times\ii\R$, 
 we have 
\begin{equation}
	\label{maxRe_ppp}
	\lim_{n\rightarrow \infty}\E \Big[ \ee^{-\sum_{j=1}^n  g(x_j+\ii y_j)}\Big]= \exp\left(-\int_\R\int_\R \big(1-\ee^{-g(x+\ii y)}\big) \frac{\ee^{-x-y^2}}{\sqrt{\pi}}\dd y\dd x\right).
\end{equation}
\end{theorem}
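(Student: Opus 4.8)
The plan is to follow the same two-step route as for Theorems~\ref{thm:gumbel} and~\ref{thm:poisson}: first establish the statement for the complex Ginibre ensemble by an explicit computation, and then transfer it to a general i.i.d.\ matrix $X$ by the universality mechanism. As with \eqref{theta_x_joint} versus \eqref{poisson}, the distributional identity \eqref{maxRe} is a degenerate case of the Laplace-functional identity \eqref{maxRe_ppp}: taking $g=\Lambda\chi$ with $\chi$ a smooth approximation of the indicator of $\{\Re z> x\}$ in the rescaled variable and supported in $[t,\infty)\times\ii\R$ for some $t<x$, and letting $\Lambda\to\infty$, the left side of \eqref{maxRe_ppp} converges to $\P\big(\max_j\Re\sigma_j\le 1+\sqrt{\gamma'_n/4n}+x/\sqrt{4n\gamma'_n}\big)$ and the right side to $\exp\big(-\int_x^\infty\ee^{-x'}\dd x'\cdot\int_\R\ee^{-y^2}\pi^{-1/2}\dd y\big)=\ee^{-\ee^{-x}}$, the last step because $\int_\R\ee^{-y^2}\pi^{-1/2}\dd y=1$. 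So it suffices to prove \eqref{maxRe_ppp}.

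\emph{Step 1 (Ginibre).} For the complex Ginibre ensemble the eigenvalues form a determinantal point process with an explicit correlation kernel $K_n$, whose behaviour near the rightmost point $z=1$ of the spectrum is captured, after the anisotropic rescaling of the statement (radial scale $(4\gamma'_n n)^{-1/2}$, imaginary scale $(\gamma'_n n)^{-1/4}$), by the complementary-error-function edge kernel. A saddle-point analysis of $K_n$ about $z=1$, together with the standard fact that a determinantal process whose rescaled kernel converges to a diagonal multiplication kernel converges to the Poisson process with that diagonal as its intensity, yields that $(x_j,y_j)$ converge to a Poisson point process with intensity $\ee^{-x-y^2}\pi^{-1/2}$. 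Heuristically, $\ee^{-x}$ is the exponential decay of the radial edge profile, exactly as in Theorem~\ref{thm:poisson}, while $\ee^{-y^2}$ appears because $|\sigma|\approx\Re\sigma+(\Im\sigma)^2/2$ turns an imaginary displacement of order $(\gamma'_n n)^{-1/4}$ into a radial cost of order $(\gamma'_n n)^{-1/2}$, which is precisely the radial scale; keeping track of the normalisation of this Gaussian (Laplace-type) integral over the angular window of width $\sim(\gamma'_n n)^{-1/4}$ --- effectively $\sim n^{1/4}$ relevant eigenvalues, rather than the $\sim\sqrt n$ of the spectral-radius problem --- produces the additional $-\tfrac32\log\log n-\tfrac32\log 2\pi$ shift that relates $\gamma'_n$ to $\gamma_n$. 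This computation is essentially the one carried out in \cite{maxRe_Gin}.

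\emph{Step 2 (Universality).} The argument here runs in parallel with the proof of Theorem~\ref{thm:poisson}. One expands the Laplace functional $\E\,\ee^{-\sum_j g(\sigma_j)}$ (equivalently, the joint moments of $\sum_j g(\sigma_j)$) via Girko's formula in the twice-integrated-by-parts form \eqref{girko2}, now with a test function $f$ localised \emph{both} radially, at scale $\sim n^{-1/2}(\log n)^{-1/2}$, \emph{and} angularly, at scale $\sim n^{-1/4}$, around the single point $z=1$, rather than around the whole unit circle as for $\rho(X)$. This reduces the problem to the joint law of the resolvent quantities $\Tr G^{z_p}FG^{z_p}F^*$ at finitely many parameters $z_1,\dots,z_k$ in this neighbourhood and at small spectral parameter $\ii\eta$, with the condition $\lambda_1^{z_p}\neq0$ regularised as in Section~\ref{sec:strategy_gft}. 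The two new ingredients of the paper supply exactly what is needed: (i) the multi-resolvent local law with optimal $|z_p-z_{p'}|$-decay in the edge regime --- applicable because all $z_p$ lie near $z=1$, where $|z|\approx1$ --- controls the two-resolvent quantities and hence, through eigenvector orthogonality, gives (ii) the asymptotic independence of $\lambda_1^{z_1},\dots,\lambda_1^{z_k}$ whenever $|z_p-z_{p'}|\gg n^{-1/2}$, which holds for the angularly separated parameters that dominate. The coupled Dyson Brownian motion argument then matches the joint law of these quantities for $X$ plus a small Gaussian component with that of a family of a priori independent eigenvalues, hence with Ginibre, and a final Green function comparison removes the Gaussian component. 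Combined with Step~1, this gives \eqref{maxRe_ppp}.

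\emph{Main obstacle.} The conceptually new machinery --- the edge multi-resolvent local law with optimal $|z-z'|$-decay, the $\gg n^{-1/2}$ independence of the $\lambda_1^{z_p}$, and the characteristic-flow analysis behind them --- is identical to that developed for the spectral radius, so the technical core is already available. The genuinely new work is of two kinds: first, the explicit Ginibre edge asymptotics of Step~1, where the anisotropic geometry of the extremal region forces the extra $\log\log n$ corrections in $\gamma'_n$ and produces the Gaussian factor $\ee^{-y^2}$, and getting every constant right requires a careful saddle-point analysis of $K_n$ near $z=1$; and second, on the universality side, verifying that the Girko-formula bookkeeping and the error estimates imported from \cite{SpecRadius} survive when the test function is additionally localised to an angular window of width $\sim n^{-1/4}$ instead of a full annulus. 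I expect the second point to be essentially routine given the estimates already in place, so I view the precise Ginibre computation of Step~1 as the main obstacle.
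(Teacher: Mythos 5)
Your proposal is correct and follows essentially the same route as the paper: translate the problem to linear eigenvalue statistics over a rectangle of the anisotropic size $n^{-1/2+\tau}\times n^{-1/4+\tau/2}$ around $z=1$, apply the Green function comparison theorem (the paper formulates this as Remark~\ref{rem:maxre}, where the support condition~\eqref{f_cond} is replaced by~\eqref{f_cond_2}), and then invoke the explicit Ginibre edge asymptotics from \cite{maxRe_Gin}. Two small remarks. First, the paper proves~\eqref{maxRe} and~\eqref{maxRe_ppp} separately by applying the GFT with $\F=\Phi$ and $\F(w)=\ee^{-w}$, respectively, whereas you propose to derive~\eqref{maxRe} from~\eqref{maxRe_ppp} by letting $g=\Lambda\chi$ and $\Lambda\to\infty$; that works, but since $g$ must lie in $C^2_c(\C)$ you should first truncate the approximate indicator to a compact set and use the concentration estimates of \cite[Theorem 2.1]{AEK19} to control the complement (you cannot simply take $g$ supported on the whole half-plane $[t,\infty)\times\ii\R$). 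Second, your identification of the Ginibre computation as the main obstacle somewhat misallocates the weight: those saddle-point asymptotics (including the $\gamma'_n$ normalisation and the $\ee^{-y^2}$ factor) are already proven in \cite[Theorems 1 and 2]{maxRe_Gin} and are simply cited here; what does require a small check, and which the paper does carry out explicitly after~\eqref{n3}, is that in the GFT argument the weaker single-$z$ tail bound $\P(\lambda_1^z\le E_0)\lesssim n^{-1/4-2\epsilon}$ valid on the support~\eqref{f_cond_2} is exactly compensated by the smaller volume $|\mathrm{supp}(f)|\lesssim n^{-3/4+\tau}$ of the rectangle, so the same error estimate as for the full annulus survives.
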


We point out that a similar result holds for the largest eigenvalue in any given fixed direction, we stated only the result for the eigenvalue with largest real part because it is the most relevant direction for the application 
to the stability theory of the ODE $\dot{\bold{u}}=X\bold{u}$.

\section{Main technical results: Multi--resolvent local laws at the spectral edge of $X$}
\label{sec:tools}

In this section we present the main technical results which are needed to prove the results in Section~\ref{sec:thm}. 
For any $z\in \C$, recall the definition of the resolvent $G^z$  of the matrix $H^z$ at spectral parameter $w$ 
from~\eqref{def_G}.
The $2n\times 2n$ matrix $H^z$ is known as the \emph{Hermitization} of $X-z$. The $2 \times 2$ block structure (\emph{chiral symmetry}) of $H^{z}$ yields a  spectrum that is symmetric with respect to zero, i.e. the eigenvalues of $H^z$ can be labelled as $\{\lambda^z_{\pm i}\}_{i \in [n]}$ such that  $0\leq \lambda_1^z \leq \lambda^z_2 \leq \cdots \leq \lambda^z_n$ and $\lambda^z_{-i}=-\lambda^z_{i}$. Note that $\{\lambda^z_i\}_{i \in [n]}$ are the singular values of $X-z$. Denote by $\bm{w}_{i}^z$ and $\bm{w}_{-i}^z$
 the orthonormal eigenvector of $H^z$ corresponding to the eigenvalue $\lambda_{i}^z$ and $\lambda_{-i}^z$,
then, as a consequence of the chiral symmetry, 
the eigenvectors naturally split into two $n$-vectors  in the form
\[
\bm{w}_{\pm i}^z=\left(\begin{matrix}
\bm{u}_i^z \\
\pm\bm{v}_i^z\end{matrix}\right), \qquad i\in[n],
\]
where $\bm{u}_i^z, \bm{v}_i^z$ are the left and right  singular vectors of $X-z$, respectively, normalized so 
that $\|\uu^z_i\|^2=\|\vv^z_i\|^2=1/2$. Furthermore, for spectral parameter $w=\ii \eta$, $\eta\in \R\setminus\{0\}$,
 on the imaginary axis, we have
\[
\gz_{vv}(\ii \eta)=\ii \Im \gz_{vv}(\ii \eta), \qquad \eta\Im \gz_{vv}(\ii \eta)>0, \qquad v\in[1,2n].
\]

In this section we prove {\it local laws}, i.e. concentration 
statements asserting that single resolvents and certain products of them
become deterministic in the large $n$ limit even if the spectral parameter $\eta$ is very small. 
All results are formulated in the {\it cusp regime}, i.e. for $|z|\approx 1$, where the limiting
density of states of $H^z$ has a cusp-like singularity at the origin. The cusp is technically
the most complicated regime but this is necessary to analyse the spectral properties of $X$ 
near its spectral edge, the unit circle.
 In Section~\ref{sec:singleG} we recall the {\it averaged} and {\it isotropic} single resolvent laws for $G^z$ from  \cite[Theorem 3.1]{SpecRadius}. In Section~\ref{sec:gaing1g2} 
 we pick $A_1,A_2\in \{ E_+,E_-\}$, with $E_\pm$ being defined in \eqref{eq:defF}, and
 we prove a new local law for $\langle G^{z_1}A_1G^{z_2}A_2\rangle$ 
  with spectral parameters on the imaginary axis,
 exhibiting that  the size of $\langle G^{z_1}A_1G^{z_2}A_2\rangle$ 
 decays as $|z_1-z_2|$ increases. Then in Section~\ref{sec:gaingfgf} we consider another mechanism
 to improve local laws:  
 we show that if $A_1,A_2\in \{F,F^*\}$, with $F$ defined in \eqref{eq:defF}, then
  $\langle G^zA_1\rangle$ and $\langle G^zA_1G^zA_2\rangle$ 
 are considerably  smaller than for  general $A_1, A_2$.
 The fact that the resolvent tested against certain matrices  is smaller than expected was first observed in \cite{CES21} for Wigner matrices and later extended to more general models in the bulk of the spectrum 
 (see e.g. \cite{ADXY23, CEHK23, CEHS23}) and at the spectral edges (see \cite{CEH23}). However, Theorem~\ref{theorem_F} below is the first result when we exhibit this effect at the cusp singularity
 of the spectral density. Finally, in Section~\ref{sec:smallsingval} we first show that the smallest singular values of $X-z_1,X-z_2,\dots, X-z_k$, with a fixed $k\in \N$, are asymptotically independent as long as $|z_i-z_j|\gg n^{-1/2}$, and then we recall a probabilistic tail bound
 for the smallest singular value of $X-z$ when $z$ is outside of the spectrum (i.e. $|z|>1$) from \cite{SpecRadius}.

\subsection{Local law for $G^{z}$}
\label{sec:singleG}

We first  define the deterministic approximation of $G^z(w)$ as follows:
\begin{align}\label{Mmatrix}
	M^{z}(w):=\begin{pmatrix}
		m^{z}(w)  &  -zu^z(w)  \\
		-\overline{z}u^z(w)   & m^{z}(w)
	\end{pmatrix},\quad \mbox{with} \qquad  u^z(w):=\frac{{m}^z(w)}{w+ {m}^z(w)},
\end{align}
where ${m}^z(w)$ is the unique solution of the scalar cubic equation with side condition
\begin{align}
	\label{m_function}
	-\frac{1}{{m}^z(w)}=w+{m}^z(w)-\frac{|z|^2}{w+{m}^z(w)}, \qquad\quad \mathrm{Im}[m^z(w)]\mathrm{Im}w>0.
\end{align}
We consider $M^z(w)$ as a $(2n)\times (2n)$ matrix which is block constant, 
i.e. it consists of four $n\times n$ blocks and each of them
 is a constant multiple of the $n\times n$ identity matrix. 
In fact, $M^z$ is the unique solution of  the {\it Matrix Dyson equation (MDE)} (see e.g. \cite{MDE})
 \begin{equation}\label{MDE}
- [M^z(w)]^{-1} = w + Z+\mathcal{S}[ M^z(w)], \qquad Z:=\left(\begin{matrix}
0 & z \\
\overline{z} & 0
\end{matrix}\right),
\end{equation}
with the side condition $\Im M^z(w) \cdot \Im w >0$. Here 
\begin{equation}\label{cov}
\mathcal{S}[\cdot]:=\langle \cdot E_+\rangle E_+-\langle \cdot E_-\rangle E_-, \qquad \mathcal{S}: \C^{2n\times 2n}
\to \C^{2n\times 2n},
\end{equation}
is the {\it covariance operator} of the Hermitization of $X$. 
Additionally, we define the {\it self-consistent eigenvalue density}  
 of $H^z$ by (see e.g. \cite{AEK18})
\begin{align}\label{rho_0}
	\rho^z(E):=\lim_{\eta\to 0^+}\frac{1}{\pi} \Im \<M^z(E+\ii \eta)\> , \quad E\in \R.
\end{align}
In the following we collect  properties of the density $\rho^z(E)$ as summarized in \cite[Section 3.1]{SpecRadius}
 (based upon results from \cite{Cusp1, AEK19, AEK20, MDE}):
\begin{enumerate}
	\item[1)] if $|z|<1$, then the density $\rho^z(E)$ has a local
minimum at $E=0$ of height $\rho^z(0) \sim (1-|z|^2)^{1/2}$; 

\item[2)] if $|z|=1$, then the density has an exact cubic cusp singularity at $E=0$; 

\item[3)]  if $|z|>1$, then there exists a small gap in the support of the symmetric density $\rho^z(E)$,  denoted by $[-\frac{\Delta}{2},\frac{\Delta}{2}]$ with $\Delta \sim (|z|^2-1)^{3/2}$; see Figure \ref{fig:M1}.  
\end{enumerate}
Moreover, we also recall the following quantitative properties of  the density $ \rho^z(w):=\pi^{-1} |\Im \<M^z(w)\>|$ extended to the complex plane, $w\in \C$.  If $|z|\le 1$, then for any $w=E+\ii\eta$, we have
$$ \rho^z(w)\sim (1-|z|^2)^{1/2} +(|E|+\eta)^{1/3}, \qquad |E|\leq c,\quad 0\leq \eta\leq 1,
$$
for some small $c>0$, 
while in the complementary regime
$|z|> 1$, for any $w=\frac{\Delta}{2}+\kappa+\ii \eta$, we have 
\begin{align}\label{rho_E}
	\rho^z(w) \sim \begin{cases}
		(|\kappa|+\eta)^{1/2} (\Delta+|\kappa|+\eta)^{-1/6}, &\quad \kappa \in [0,c]\\ 
		\frac{\eta}{(\Delta+|\kappa|+\eta)^{1/6}(|\kappa|+\eta)^{1/2}} , &\quad  \kappa \in [-\Delta/2,0]
	\end{cases},
	\quad 0\leq \eta\leq 1.  
\end{align}
Furthermore,  on the imaginary axis $m^{z}(\ii \eta)=\<M^z(\ii \eta)\>$ 
is purely imaginary  and $u^z(\ii \eta)$ defined in (\ref{Mmatrix}) is real. 
We may often drop the superscript $z$ and the argument $w=\ii \eta$ for brevity, \ie  we write $m=m^z(\ii\eta)$, and
\begin{align} \label{rho}
	\rho:=\frac{|\Im m|}{\pi} \sim \begin{cases}
		\eta^{1/3}+|1-|z|^2|^{1/2} , &\qquad |z| \leq 1\\
	\frac{\eta}{|1-|z|^2|+\eta^{2/3}}, &\qquad |z| > 1
	\end{cases},
	\qquad  0\leq \eta\leq 1.
\end{align}

\begin{figure}
	\centering
	\begin{tikzpicture}
		\draw[black] (0, 3.5) node{\large Spec $X$};
		\draw[black] [->] (-3, 0)--(3, 0);
		\draw[black] [->](0, -3)--(0, 3);
		\draw[black] (0, 0) circle (2);
		\path[fill=gray] (-0.58, -1.57) circle (0.07);
		\path[fill=gray] (1.16, 1.46) circle (0.07);
		\path[fill=gray] (-0.86, -0.05 )circle (0.07);
		\path[fill=gray] (-1.08, 1.76) circle (0.07);
		\path[fill=gray] (1.02, -1.38) circle (0.07);
		\path[fill=gray] (-0.18, -1.15) circle (0.07);
		\path[fill=gray] (-0.90, -1.87) circle (0.07);
		\path[fill=gray] (-1.94, 0.11) circle (0.07);
		\path[fill=gray] (-1.43, -1.08) circle (0.07);
		\path[fill=gray] (1.42, 0.64) circle (0.07);
		\path[fill=gray] (-1.49, 0.37) circle (0.07);
		\path[fill=gray] (-1.64, -0.36) circle (0.07);
		\path[fill=gray] (-0.55, 1.57) circle (0.07);
		\path[fill=gray] (-0.98, 1.04) circle (0.07);
		\path[fill=gray] (-0.37, 0.15) circle (0.07);
		\path[fill=gray] (-0.57, 0.59) circle (0.07);
		\path[fill=gray] (1.51, 0.07) circle (0.07);
		\path[fill=gray] (0.64, 1.91) circle (0.07);
		\path[fill=gray] (0.14, 1.40) circle (0.07);
		\path[fill=gray] (0.32, 0.93) circle (0.07);
		\path[fill=gray] (1.62, -0.46) circle (0.07);
		\path[fill=gray] (1.16, -0.44) circle (0.07);
		\path[fill=gray] (0.93, 0.32) circle (0.07);
		\path[fill=gray] (0.04, 0.21) circle (0.07);
		\path[fill=gray] (1.07, -1.01) circle (0.07);
		\path[fill=gray] (1.89, -0.51) circle (0.07);
		\path[fill=gray] (0.58, -0.77) circle (0.07);
		\path[fill=gray] (0.80, -0.24) circle (0.07);
		\path[fill=gray] (0.12, -0.34) circle (0.07);
		\path[fill=gray] (0.47, -1.85) circle (0.07);
		\path[fill=gray] (0.60, -1.34) circle (0.07);
		\path[fill=gray] (-0.82, -0.95) circle (0.07);
		\path[fill=gray] (-0.44, -0.38) circle (0.07);
		\path[fill=gray] (-1.59, 0.98) circle (0.07);
		\path[fill=gray] (, ) circle (0.07);
		\path[fill=gray] (, ) circle (0.07);
		\path[fill=gray] (, ) circle (0.07);
		\path[fill=red] (2, 1) circle (0.09);
		\draw[black](-2, 2) node{\large $|z|=1$};
		\draw(2, 1.05) node[right]{\large $z$};
	\end{tikzpicture}
	\qquad\qquad
	\raisebox{1cm}{
		\begin{tikzpicture}
			\draw[black] (0, 4.5) node{\large Spec $H^{z}$};
			\draw foreach\s in{1,...,3}{(0.5+0.15*\s+0.01*rand, -0.1) to+(0, 0.2)}; 
			\draw foreach\s in{1,...,15}{(1+0.1*\s+0.01*rand, -0.1) to+(0, 0.2)}; 
			\draw[black] (0.8, 0.3) node{$\lambda^z_1$};
			\draw[black][->] (-3,0) -- (3,0);
			\draw[black][->] (0, 0)--(0, 3.7);
			\draw[line width=1pt, red, domain=0:1.3 ,smooth,variable=\t]
			plot ({\t*\t*\t+0.5},{2*\t});
			\draw[black] (0.5, 0)--(0.5, -0.4);
			\draw[black][->] (0, -0.2)--(0.5, -0.2);
			\begin{scope}[xscale=-1]
				\draw[line width=1pt, red, domain=0:1.3 ,smooth,variable=\t]
				plot ({\t*\t*\t+0.5},{2*\t});
				\draw[black] (0.5, 0)--(0.5, -0.4);
				\draw[black][->] (0, -0.2)--(0.5, -0.2);
				\draw[black] (0,-0.7) node{\large $\Delta \sim (|z|^2-1)^{3/2}$};
				\draw foreach\s in{1,...,3}{(0.5+0.15*\s+0.01*rand, -0.1) to+(0, 0.2)}; 
				\draw foreach\s in{1,...,15}{(1+0.1*\s+0.01*rand, -0.1) to+(0, 0.2)}; 
			\end{scope}
			\draw[black] (2, 3.1) node{$\rho^z=\pi^{-1}\mathrm{Im}~m^{z}$};
		\end{tikzpicture}
	}
	\caption{
		For $z$ in the regime $|z|>1$ slightly beyond the unit disk, the picture on the right shows the
		corresponding density $\rho^z$ of $H^z$, which exhibits a small gap around zero of size $\Delta\sim (|z|^2-1)^{3/2}$; see the paragraph below \cite[Eq. (5.2)]{AEK19}.
	}
	\label{fig:M1}
\end{figure}
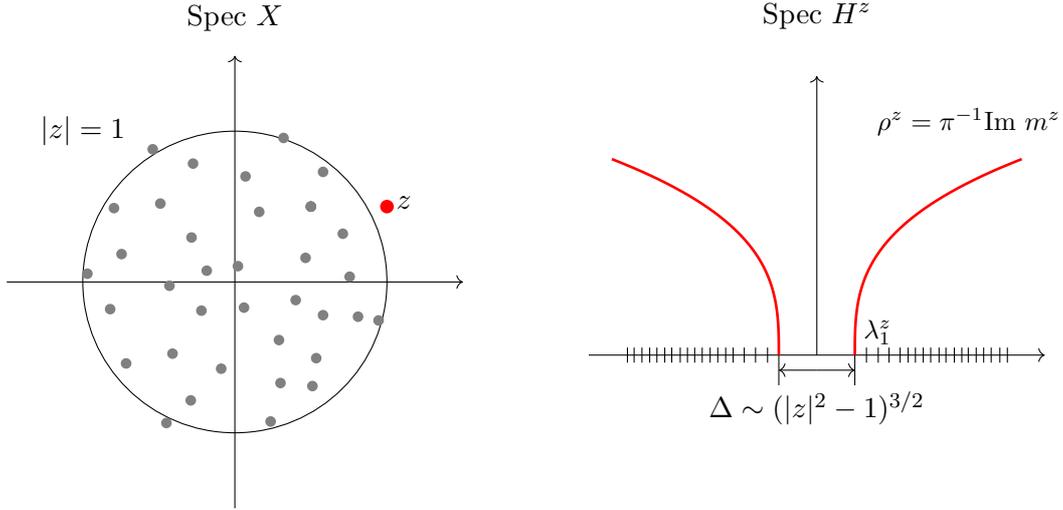

We are now ready to recall the  following optimal local law for the resolvent $G^z$ near the cusp.
\begin{theorem}[Theorem 3.1 in \cite{SpecRadius}]		\label{local_thm}
	There are sufficiently small constants $c,c'>0$ such that for any $z\in\C$ with $\big| |z|-1\big| \leq c$   and 
	for any $w=E+\ii\eta$ with $|E|\le c'$ we have
	\begin{align}
		\big| \langle \mathbf{x},  (G^{z}(w)-M^{z}(w)) \mathbf{y} \rangle\big| \prec \; & \|\mathbf{x}\| \|\mathbf{y}\| \left( \sqrt{\frac{\rho^z(w)}{n|\eta|}}+\frac{1}{n|\eta|}\right),\label{entrywise}\\
		\big|\big\<\big( G^{z}(w)-M^{z}(w)\big)A\big\>\big| \prec \; & \frac{\|A\|}{n|\eta|},
		\label{average}
	\end{align}
	uniformly in spectral parameter $|\eta|>0$, deterministic vectors $\mathbf{x}, \mathbf{y}\in\C^{2n}$, and matrices $A\in \C^{2n \times 2n}$.
\end{theorem}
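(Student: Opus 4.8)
The plan is to run the standard self-consistent--equation scheme for Wigner-type matrices, specialised to the chiral Hermitisation $H^z$ in its cusp regime, using the structure of $M^z$ and the shape of $\rho^z$ recalled above as the analytic input. Abbreviate $G=G^z(w)$, $M=M^z(w)$, $w=E+\ii\eta$, and write $W:=H^z-\E H^z$ for the Hermitisation of the centred matrix $X$, a Wigner-type matrix with variance operator $\mathcal S$. Combining the resolvent identity $(H^z-w)G=I$ with the matrix Dyson equation~\eqref{MDE} gives, after elementary algebra,
\[
G-M = M\,\mathcal S[G-M]\,G \;-\; M\bigl(WG+\mathcal S[G]G\bigr) .
\]
Linearising the term quadratic in $G-M$ and inverting the stability operator $\mathcal B:=1-M\mathcal S[\,\cdot\,]M$ yields $G-M=-\mathcal B^{-1}\bigl[M\bigl(WG+\mathcal S[G]G\bigr)\bigr]+(\text{higher order})$. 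One then (i) bounds the renormalised fluctuation term $D:=WG+\mathcal S[G]G$, (ii) controls $\mathcal B^{-1}$ acting on it, and (iii) closes everything by a bootstrap/continuity argument in $\eta$.

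\textbf{Stability at the cusp (the main obstacle).} The core of the proof is a sharp bound on $\mathcal B^{-1}$. On the imaginary axis $M^z$ is block-constant and given explicitly through the scalar cubic equation~\eqref{m_function}, which allows one to diagonalise $M\mathcal S[\,\cdot\,]M$ and locate its eigenvalue nearest to $1$. The resulting spectral gap of $\mathcal B$ is comparable to a quantity of the form $\rho^z\bigl(\rho^z+|1-|z|^2|+\eta^{2/3}\bigr)$, i.e.\ it degenerates exactly according to the cubic-cusp profile~\eqref{rho_E}--\eqref{rho}, and in the (almost) cusp regime \emph{two} directions become nearly unstable at once. The blow-up of $\|\mathcal B^{-1}\|$ in these unstable directions must be offset by the \emph{cusp fluctuation averaging} mechanism: when $D$ is tested against the relevant left near-eigenvectors of $\mathcal B$, its typical size improves by precisely the power needed to cancel the degeneracy. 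Proving this cancellation quantitatively and \emph{uniformly} --- across $|z|=1$, $|z|$ slightly below $1$ (local minimum of $\rho^z$), and $|z|$ slightly above $1$ (small gap $[-\Delta/2,\Delta/2]$ with its two square-root edges and its interior) --- is the chiral-ensemble counterpart of the Erd\H{o}s--Kr\"uger--Schr\"oder cusp analysis and is where essentially all the difficulty lies.

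\textbf{Fluctuation bounds.} The quantity $D=WG+\mathcal S[G]G$ is the renormalisation of $WG$: a cumulant expansion in the entries of $X$ shows that the second-order part of $\E[WG]$ is exactly $-\mathcal S[G]G$ to leading order, so $D$ has negligible expectation. The remaining second-order off-diagonal terms and all higher cumulants are estimated via the Ward identity $G(\ii\eta)G(\ii\eta)^*=\eta^{-1}\Im G(\ii\eta)$, which reduces $\|G\bm x\|^2=\langle\bm x,G^*G\bm x\rangle$ to $\eta^{-1}\Im\langle\bm x,G\bm x\rangle$, together with the single-resolvent a priori bound available from the previous step of the iteration. This yields $|\langle DA\rangle|\prec\|A\|\,(n\eta)^{-1/2}\bigl(\Im\langle G\rangle/(n\eta)\bigr)^{1/2}$ up to lower order, with an extra gain when $A$ points in an unstable direction, and an analogous isotropic bound for $\langle\bm x,D\bm y\rangle$. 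Inserting these into the stability estimates and using $\Im\langle G\rangle\sim\rho^z$ (from~\eqref{rho}) reproduces the target error sizes $\sqrt{\rho^z/(n\eta)}+1/(n\eta)$ in~\eqref{entrywise} and $1/(n\eta)$ in~\eqref{average}.

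\textbf{Bootstrap.} Finally one assembles master inequalities for $\langle G-M\rangle$, $\langle(G-M)E_\pm\rangle$, $\langle\bm x,(G-M)\bm y\rangle$, and for the projections onto the unstable directions (which have to be tracked separately, since they obey a better bound), each controlled by its target size plus a term of strictly lower order in the same quantities. Starting at $\eta\sim1$, where $\mathcal B$ is trivially invertible and a crude estimate holds, a continuity/bootstrap argument in $\eta$ propagates the local law down to every $\eta>0$ --- the statement simply becoming weaker, and eventually trivial, once $\eta$ falls below the typical singular-value spacing of $X-z$. Beyond the uniform cusp stability of the second step, every ingredient here is a now-standard adaptation; the decisive new input is keeping the stability bounds uniform through the regimes $|z|<1$, $|z|=1$, $|z|>1$ and with the sharp $\rho^z$-dependence required for the optimal exponents.
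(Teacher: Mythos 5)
The present paper does not prove Theorem~\ref{local_thm}; it is quoted verbatim from \cite{SpecRadius}, which in turn is built on the cusp local-law machinery of \cite{AEK19,Cusp1}. Your sketch --- writing $G-M$ via the matrix Dyson equation and the stability operator $\mathcal B=1-M\mathcal S[\cdot]M$, a sharp spectral analysis of $\mathcal B$ that tracks its \emph{two} near-unstable directions through the three sub-regimes $|z|<1$, $|z|=1$, $|z|>1$, cusp fluctuation averaging for the renormalised error $D=WG+\mathcal S[G]G$ in those directions, and a continuity/bootstrap in $\eta$ from the global regime --- is precisely that static framework, and it is the route taken in the cited reference. Your identification of the two unstable directions as the genuine novelty relative to the non-cusp case matches what the present paper itself emphasises in its overview (cf.\ the remark in Section~1.3 that $\mathcal B_{12}$ has two small eigenvalues at the edge), so the proposal is a correct high-level account of the standard proof; it is just not carried out in, nor checkable against, this particular document.
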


By standard arguments, see e.g. \cite[Corollary 1.10-1.11]{AEK17}, using the local law
 in \eqref{average} as an input, we readily conclude the following rigidity estimate for the eigenvalues $\lambda_i^z$. For this purpose we define the \emph{quantiles} $\gamma_i^z$ of $\rho^z$ implicitly by the relation
\begin{equation}\label{quantile}
\int_0^{\gamma_i^z}\rho^z(x)\,\dd x=\frac{i}{2n}, \qquad\quad \mathrm{for}\quad i\in [n].
\end{equation}
For negative indices we set $\gamma_{-i}^z:=-\gamma_i^z$, with $i\in [n]$.

	\begin{corollary}[Corollary 3.2 in \cite{SpecRadius}]\label{cor:rigidity}
	Fix a small constant $c>0$ and pick $z$ such that  $0\le |z|-1\le c$. Then there exists a small $c'>0$ such that 	
		\begin{equation}\label{rigidity}
			|\lambda_i^z-\gamma_i^z|\prec \max\left\{\frac{1}{n^{3/4}|i|^{1/4}}, \frac{\Delta^{1/9}}{n^{2/3}|i|^{1/3}}\right\}, \qquad \quad  \mathrm{for}\quad |i|\le c' n,
		\end{equation}
				with $\Delta$ denoting the size of the gap around zero
		in the support of $\rho^z$.
		 In addition there exists a small $c''>0$ such that, for any $E_1<E_2$ with $\max\{|E_1|,|E_2|\} \leq c''$, we have
		\begin{equation}\label{rigidity3}
			\Big| \#\{j: E_1 \leq \lambda^z_j \leq E_2\}-2n \int_{E_1}^{E_2} \rho^z(x) \dd x \Big| \prec 1.
		\end{equation}
	\end{corollary}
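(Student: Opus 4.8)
This is the standard passage from an averaged local law to rigidity, in the spirit of \cite[Corollary 1.10--1.11]{AEK17}, the only model-specific subtlety being the cusp/gap geometry of $\rho^z$. The starting point is to specialize the averaged local law \eqref{average} to $A=E_+$, the $2n$-dimensional identity: since $\langle M^z(w)\rangle=m^z(w)$, this gives $|m_n^z(w)-m^z(w)|\prec (n|\eta|)^{-1}$ for the Stieltjes transform $m_n^z(w):=\langle G^z(w)\rangle=\frac{1}{2n}\sum_{i}(\lambda_i^z-w)^{-1}$ of the empirical spectral measure of $H^z$, uniformly in $w=E+\ii\eta$ with $|E|\le c'$ and $\eta\neq 0$ (the range $\eta<0$ coming from the symmetry $\ov{G^z(w)}=G^z(\ov w)^{*}$). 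The next step is to convert this into control of the counting function $N^z(E):=\#\{i\in[-n,n]\setminus\{0\}:\lambda_i^z\le E\}$ against its deterministic profile $2n\int_{-\infty}^{E}\rho^z(x)\,\dd x$. I would run the usual argument: write the difference of counting functions as a contour integral of $\Im m_n^z-\Im m^z$ (equivalently, via a Helffer--Sj\"ostrand representation of a smoothed indicator of $(-\infty,E]$), choose the imaginary part of the spectral parameter at the local spacing scale $\eta\sim(n\rho^z)^{-1}$, and feed in the bound $|m_n^z-m^z|\prec(n|\eta|)^{-1}$ together with the monotonicity of $N^z$. For $|E|\le c''$ this yields $|N^z(E)-2n\int_{-\infty}^{E}\rho^z|\prec 1$, which is exactly \eqref{rigidity3}, and, applied to both endpoints, $|N^z(E_2)-N^z(E_1)-2n\int_{E_1}^{E_2}\rho^z|\prec 1$.

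It remains to translate this counting control into the pointwise bound \eqref{rigidity}, and here the quantitative shape of $\rho^z$ from \eqref{rho_E}--\eqref{rho} enters. Since $|N^z(\lambda_i^z)-2n\int^{\lambda_i^z}\rho^z|\prec 1$ and, by definition \eqref{quantile}, $2n\int^{\gamma_i^z}\rho^z=|i|$, the deviation $|\lambda_i^z-\gamma_i^z|$ is controlled by $\prec(n\rho^z)^{-1}$ evaluated near $\gamma_i^z$, i.e. by the local eigenvalue spacing there. If $|z|=1$ the density has an exact cubic cusp, $\rho^z(E)\sim|E|^{1/3}$, so $\int_0^{\gamma}\rho^z\sim\gamma^{4/3}$, hence $\gamma_i^z\sim(|i|/n)^{3/4}$, $\rho^z(\gamma_i^z)\sim(|i|/n)^{1/4}$, and the spacing is $\sim n^{-3/4}|i|^{-1/4}$, the first term in the maximum. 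If $|z|>1$ there is a gap $[-\Delta/2,\Delta/2]$ with $\Delta\sim(|z|^2-1)^{3/2}$, and by \eqref{rho_E} a square-root edge $\rho^z(\Delta/2+\kappa)\sim\kappa^{1/2}\Delta^{-1/6}$ valid for $0\le\kappa\lesssim\Delta$; integrating gives $\gamma_i^z-\Delta/2\sim\Delta^{1/9}(|i|/n)^{2/3}$ with $\rho^z(\gamma_i^z)\sim\Delta^{-1/9}(|i|/n)^{1/3}$, hence spacing $\sim\Delta^{1/9}n^{-2/3}|i|^{-1/3}$ for the small indices whose quantiles sit within $\Delta$ of the gap edge. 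For larger $|i|$ the quantile has left the square-root window and $\rho^z$ matches the $|E|^{1/3}$ cusp profile, so the first term takes over again; taking the maximum of the two regimes over all $|i|\le c'n$ gives \eqref{rigidity}.

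The contour/Helffer--Sj\"ostrand step is entirely routine once \eqref{average} is available. The only place requiring care is the deterministic analysis of $\rho^z$: one must treat uniformly in $z$ the crossover between the square-root edge of width $\Delta\sim(|z|^2-1)^{3/2}$ (for $|z|>1$) and the cubic cusp (at $|z|=1$), matching the two asymptotic profiles in \eqref{rho_E} and tracking how the resulting quantile spacings combine into the $\max$ in \eqref{rigidity}. Since these profiles are already recorded, this bookkeeping is the main — and only — obstacle, and it is a mild one.
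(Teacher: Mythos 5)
Your proposal is correct and takes essentially the same route the paper takes: the paper simply cites the standard argument of \cite[Corollary 1.10--1.11]{AEK17} with the averaged local law \eqref{average} as input, which is exactly the local-law $\to$ Stieltjes transform $\to$ Helffer--Sj\"ostrand counting $\to$ quantile-spacing pipeline you describe, and your bookkeeping with the density profile \eqref{rho_E} (square-root edge of the gap giving the $\Delta^{1/9}n^{-2/3}|i|^{-1/3}$ term, cubic cusp giving $n^{-3/4}|i|^{-1/4}$) correctly reproduces the maximum in \eqref{rigidity}.
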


\subsection{Local law for $G^{z_1}AG^{z_2}$ near the cusp}
\label{sec:gaing1g2}

We now consider the product of two resolvents $G^{z_1}AG^{z_2}$, with a deterministic matrix  $A\in\C^{2n\times 2n}$
in between, for $z_1\ne z_2$ close to but
outside of the unit disk and
with spectral parameters on the imaginary axis. This is the situation used in Girko's formula. 
Proving a local law for this quantity is quite challenging, 
because $G^{z_1}$ and $G^{z_2}$ belong to two different spectral families,
 i.e. no resolvent identity is available to linearize their product even if $A$ were the identity.
 Before stating the local law for $G^{z_1}AG^{z_2}$, 
 we define its deterministic approximation, 
 which is {\it not} simply the product $M^{z_1}AM^{z_2}$ as one might guess at first. 
 The correct formula is given by
 \begin{align}
\label{eq:defM12}
		M_{12}^A=M^A_{12}(\eta_1,z_1,\eta_2,z_2):= \mathcal{B}^{-1}_{12}\Big[M^{z_1}(\ii \eta_1) A M^{z_2}(\ii \eta_2)\Big],
	\end{align}
 where the {\it two--body stability operator}  $\mathcal{B}_{12}:\C^{2n\times 2n}\to\C^{2n\times 2n}$ is defined by
\begin{equation}
\label{eq:defstabop}
\mathcal{B}_{12}[\cdot]:=1-M^{z_1}(\ii\eta_1)\mathcal{S}[\cdot] M^{z_2}(\ii\eta_2).
\end{equation}
Furthermore, we have the following optimal  bound (proven in
 Appendix~\ref{sec:det})
\begin{align}\label{local_2g_M}
		\Big\|M_{12}^A(\eta_1,z_1,\eta_2,z_2)\Big\| \lesssim& \frac{\|A\|}{|z_1-z_2|+\frac{|\eta_1|}{\rho_1}+\frac{|\eta_2|}{\rho_2}},
\end{align}
using the short--hand notation 
$m_i:=m^{z_i}(\ii\eta_i)$ and $\rho_i:=\pi^{-1}|\Im m_i|$ 
for any $\eta_1,\eta_2\ne 0$ and $z_1,z_2\in\C$. We stress that we work in the regime where $0\le |z_i|-1\ll1$  and thus  the eigenvalue density $\rho^{z_i}$  
has a cusp-like singularity. This fact strongly influences the form of~\eqref{local_2g_M} and its optimality.
With our current method, a very similar (in fact easier) analysis would yield an optimal two-resolvent local law
  in the non-Hermitian bulk regime, $|z_i|\le 1-\epsilon$,
as it was done non-optimally in \cite{CES19, CES22, real_CLT}.  In this paper, however, we focus on
the  regime $|z_i|\ge 1$ needed for the results in Section~\ref{sec:thm}.

We are now ready to formulate our first multi--resolvent local law, which has two parts.
The first part is for random matrices with a substantial Gaussian component. 
The second part is for arbitrary random i.i.d. matrices, but instead of $\ga, \gb$ we consider only their imaginary parts 
and we test $\Im\ga A\Im\gb$ only against $A$. In both parts the test matrices are $A\in \{E_+,E_-\}$. 
This will be sufficient for our purposes and it simplifies the proof.  
In Section~\ref{sec:opt12} we will also comment on the optimality  and  extensions of this theorem.

\begin{theorem}[Local law  with $|z_1-z_2|$-decorrelation]\label{thm:2G}
Fix any small $c,\epsilon>0$.  Then, for any $z_1,z_2$ with $0\leq |z_i|-1\leq c$, the following statements hold uniformly in spectral parameters $\eta_1, \eta_2$ satisfying $n\ell \ge n^\epsilon$, where $\ell:=\min_i(|\eta_i|\rho_i)$.
\begin{enumerate}
\item[\bf Part A:] 
Let $X$ be a random matrix satisfying Assumption \ref{ass:mainass}, and let $X^{\mathrm{Gin}}$ be a complex Ginibre 
matrix independent of $X$. Fix any small $\mathfrak{s}>0$, consider a matrix of the form $X_{\mathfrak{s}}:=\sqrt{1-\mathfrak{s}^2}X+\mathfrak{s} X^{\mathrm{Gin}}$, and denote by $G_{\mathfrak{s}}^{z}(w)$ the resolvent of the Hermitization of $X_{\mathfrak{s}}-z$ defined as in \eqref{def_G}. 
Then we have		
\begin{align}
	\label{local_2g}
		\Big|\Big\< \big(\ga_{\mathfrak{s}}( \ii \eta_1 ) A_1 \gb_{\mathfrak{s}}(\ii \eta_2)-M_{12}^{A_1} \big)A_2\Big\>\Big| \prec& \left(\frac{1}{(n\ell)^{1/2}}\frac{1}{|z_1-z_2|+\frac{|\eta_1|}{\rho_1}+\frac{|\eta_2|}{\rho_2}}\right) 
		\wedge \left(\frac{1}{n|\eta_1\eta_2|}\right) \nc,
	\end{align}
for deterministic matrices $A_1,A_2\in \{E_+,E_-\}$, with $M_{12}^{A_1}=M^{A_1}_{12}(\eta_1,z_1,\eta_2,z_2)$ defined in~(\ref{eq:defM12}).

\item[\bf Part B:] Consider any i.i.d. random matrix $X$ satisfying Assumption \ref{ass:mainass}. Then we have	
\begin{align}
	\label{local_2g_im}
	\Big|\Big\< \big( \Im \ga( \ii \eta_1 ) A\Im \gb(\ii \eta_2)-\wh M_{12}^A \big)A\Big\>\Big| \prec& \left(\frac{1}{(n\ell)^{1/2}}\frac{1}{|z_1-z_2|+\frac{|\eta_1|}{\rho_1}+\frac{|\eta_2|}{\rho_2}}\right) 
	\wedge \left(\frac{1}{n|\eta_1\eta_2|}\right) \nc,
\end{align}
uniformly in comparable spectral parameters $|\eta_1|\sim |\eta_2|$, $\rho_1\sim \rho_2$, 
and deterministic matrix $A\in \{E_+,E_-\}$. Here the deterministic approximation 
$\wh{M}_{12}^A$  is naturally defined as a fourfold linear combination of $M^A_{12}(\pm \eta_1, z_1, \pm \eta_2, z_2)$, 
using the identity $\Im G=\frac{1}{2\ii}(G-G^*)$ twice.
\end{enumerate}	
\end{theorem}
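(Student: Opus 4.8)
\textbf{Proof proposal for Theorem~\ref{thm:2G}.}

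The plan is to prove Part A via the characteristic flow outlined in Section~\ref{sec:dyn}, and then to deduce Part B from Part A together with a Green function comparison argument (GFT). For Part A, I would first set up the matrix-valued Ornstein--Uhlenbeck flow $X_t$ with $X_0 = X$, whose Hermitization resolvent $G_t^{z}$ I want to track; simultaneously I would let the spectral parameters evolve along the characteristics $\dot z_t, \dot\eta_t$ chosen so that the leading-order terms in the time derivative of $\langle G_t^{z_{1,t}}(\ii\eta_{1,t}) A_1 G_t^{z_{2,t}}(\ii\eta_{2,t}) A_2\rangle$ cancel algebraically. Because $|z_i|\approx 1$, the density $\rho^{z_i}$ has a cusp at the origin, so the two-body stability operator $\mathcal{B}_{12}$ from~\eqref{eq:defstabop} has \emph{two} small eigenvalues; consequently I must monitor all four quantities $\langle G^{z_1} A_1 G^{z_2} A_2\rangle$ with $A_1, A_2 \in \{E_+, E_-\}$ simultaneously. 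The self-consistent equation for these four quantities, after subtracting $M_{12}^{A_1}$ and linearizing, becomes a coupled $4\times4$ system of ODEs (schematically the linearized form in~\eqref{eq:4by4ode}) whose coefficient matrix degenerates precisely as $\ii\eta_t$ reaches the real axis; the rate of this blow-up must be computed exactly from the Dyson equation~\eqref{MDE} since it dictates the optimal $|z_1-z_2|$-gain. The strategy is then: start the flow at a large initial $\eta$ where a single-resolvent local law (Theorem~\ref{local_thm}) plus a cheap two-resolvent bound suffices, and propagate the estimate down to the target small $\eta$ along the characteristic, using a Grönwall-type argument on the $4\times4$ system.

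The first key reduction is the \emph{closure} step. The evolution equation for $\langle GAGA\rangle$-type quantities naturally produces three- and four-resolvent chains $\langle GAGAGAGA\rangle$, which would generate an uncontrollable BBGKY-type hierarchy. I would close the system with a reduction inequality (cf.~\eqref{eq:redinnew}) bounding $\langle GAGAGAGA\rangle$ by the square of $\langle GAGA\rangle$ (up to controllable error), leaving a closed system of four \emph{nonlinear} ODEs plus a stochastic error term. The stochastic error is controlled by a cumulant expansion of the quadratic variation of the martingale part of the evolution; here the fluctuation-averaging cancellations are built into the flow rather than recovered by inverting $\mathcal{B}_{12}$ in its unstable directions --- this is the whole point of the "missing Ward-identity" circumvention. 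The second reduction is to analyze the nonlinear system near the blow-up point: the linearized part blows up at a precise algebraic rate as $\eta_t\downarrow$ the spectral scale, and I must show the nonlinear corrections remain subleading throughout the range $n\ell \ge n^\epsilon$. Formally the equation is supercritical, so the argument must exploit the discreteness cutoff of the spectrum to stop the nonlinear terms from dominating before $\eta$ reaches its target. The bound~\eqref{local_2g_M} on $\|M_{12}^A\|$ feeds in as the deterministic input at every step. Finally, the added Gaussian component in $X_t$ is removed --- for the statement with the fixed Gaussian fraction $\mathfrak s$ in Part A, only partial removal is needed, so a standard GFT (resolvent expansion matching the first few moments of $\chi$, using Assumption~\ref{ass:mainass}) transfers the local law from $X_t$ with the "right" amount of Gaussian to $X_{\mathfrak s}$.

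For Part B, I would not re-run the flow: instead I observe that $\Im G^z(\ii\eta) = \frac{1}{2\ii}(G^z(\ii\eta) - G^z(\ii\eta)^*) = \frac{1}{2\ii}(G^z(\ii\eta) - G^z(-\ii\eta))$, so $\Im G^{z_1} A \Im G^{z_2}$ expands into a fourfold linear combination of $G^{z_1}(\pm\ii\eta_1) A G^{z_2}(\pm\ii\eta_2)$, with deterministic approximation the corresponding combination $\wh M_{12}^A$ of $M_{12}^A(\pm\eta_1,z_1,\pm\eta_2,z_2)$. The subtlety is that Part A has the Gaussian fraction $\mathfrak s$, whereas Part B allows arbitrary i.i.d. $X$; I would bridge this by a GFT comparison between $X$ and $X_{\mathfrak s}$ for \emph{this specific self-adjoint quantity} $\langle \Im G^{z_1} A \Im G^{z_2} A\rangle$. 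Working with the imaginary parts is what makes the GFT viable with only moment-matching control: the quantity is real and its derivatives in the Lindeberg swapping produce resolvent chains that are controlled by the already-established single-resolvent law~\eqref{average}, the two-resolvent bound~\eqref{local_2g_M}, and Part A itself. The restriction to comparable parameters $|\eta_1|\sim|\eta_2|$, $\rho_1\sim\rho_2$ and to testing against a single $A \in \{E_+,E_-\}$ is exactly what keeps this comparison clean.

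\textbf{Main obstacle.} The hardest part is the second reduction: extracting the precise blow-up rate of the linearized $4\times4$ system as the characteristic hits the real axis, and then showing the nonlinear terms in the closed system stay negligible over the \emph{entire} admissible range of $\eta$ rather than just near the starting point. This is where the sharp $|z_1-z_2|$-decay in~\eqref{local_2g} is won or lost --- a suboptimal treatment here (as in~\cite{CES19,CES22}) gives a far weaker decay that would be insufficient for the eigenvector orthogonality needed downstream for the Gumbel law. The cusp geometry, which forces the genuinely two-dimensional unstable subspace and hence the $4\times4$ (rather than scalar) system, is what makes this delicate compared to all previous single-resolvent characteristic-flow arguments.
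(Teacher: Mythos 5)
Your Part A outline matches the paper's proof in structure: the characteristic flow \eqref{eq:matchar} with the full $4\times4$ system for $A_1,A_2\in\{E_+,E_-\}$, the reduction inequality \eqref{eq:redinnew} to close the hierarchy, a stochastic Gr\"onwall estimate (Lemma~\ref{lem:matrixgronwall}), and the precise blow-up computation via Lemma~\ref{lem:nclfp} are all present in Proposition~\ref{pro:mainpro} and its proof. However, one remark: you state that a GFT is used to ``transfer the local law from $X_t$ with the `right' amount of Gaussian to $X_{\mathfrak{s}}$.'' No such GFT is needed, nor would it make sense --- the whole point of the paper's argument is to choose the stopping time $t(\mathfrak{s})$ so that $\mathfrak{s}=(1-e^{-t/2})^{1/2}$, whence $X_t$ and $X_{\mathfrak{s}}$ are \emph{equal in distribution} by the elementary property of the Ornstein--Uhlenbeck flow. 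There is nothing to remove or compare; Part~A is already formulated for the matrix that the flow produces. This is precisely why Part~A is stated with an explicit Gaussian fraction.

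The genuine gap is in your argument for Part B. You assert ``I would not re-run the flow'' and that a direct Lindeberg/GFT comparison between $X$ and $X_{\mathfrak{s}}$ for the quantity $\langle \Im\ga A\Im\gb A\rangle$ suffices. This fails when $\eta_1\sim\eta_2\sim\eta\lesssim n^{-1/2}$. The differential inequality underlying the GFT (Proposition~\ref{prop_zag_im}) contains a factor $\bigl(1+(\sqrt{n}\eta)^{-1}\bigr)$ on the right-hand side, so Gr\"onwall can only be run over a time interval of length $O(\sqrt{n}\eta)$ without the exponential factor blowing up. For $\eta\lesssim n^{-1/2}$ this interval is $o(1)$, while the Gaussian fraction $\mathfrak{s}$ you need to remove is of order one; the interpolation cannot bridge the gap. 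The paper resolves this by a two-stage argument in Section~\ref{sec:proof}: first GFT at a \emph{larger} scale $\eta_0\gtrsim n^{-1/2}$ (where the interpolation over an order-one interval is affordable) to establish the estimate for the pure i.i.d. matrix $X$ there, then run the \emph{characteristic} flow Proposition~\ref{pro:mainpro} for time $T\sim\sqrt{\eta/\rho}$ from that initial condition down to the target $\eta$, and finally GFT once more, but now removing only the small Gaussian component of size $\sim T$ accumulated along the way (which is affordable since $T\lesssim\sqrt{n}\eta$ exactly when $n\ell\gtrsim 1$). So the characteristic flow \emph{must} be re-run for Part~B in the small-$\eta$ regime; omitting it, as your sketch does, leaves the most delicate regime uncovered --- and this is precisely the regime needed downstream for the optimal eigenvector overlap bound in Corollary~\ref{eigenvector_overlap}.
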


In particular,  the error term in the local law \eqref{local_2g}  for $G_{\mathfrak{s}}^{z_1}A_1G_{\mathfrak{s}}^{z_2}$ is always smaller than the bound \eqref{local_2g_M} for the deterministic term $M_{12}^{A_1}$ since $n\ell\gg 1$. 
Note that $n|\eta_i|\rho_i$ is roughly the number of eigenvalues of $H^{z_i}$ in a window of size $\eta_i$ 
about the origin, thus the condition $n\ell\gg 1$ guarantees that we are in the local law regime for both resolvents
$G_i= G^{z_i}(\ii\eta_i)$, i.e. when
concentration of $G_1AG_2$ around a deterministic quantity  is expected.
Note that both estimates~\eqref{local_2g}--\eqref{local_2g_im} involve the minimum of two 
terms; the first one exhibits a $|z_1-z_2|$-decay, the second one is independent of $z_1, z_2$.

As a consequence of Theorem \ref{thm:2G}, we obtain the following upper bound on the overlaps between the eigenvectors $\ww^z_{i}=(\uu^z_i,\vv^z_i)$ for different $z_1,z_2$; for 
simplicity we state this result only for $z_1,z_2$ in the regime $0\le  |z_i|-1 \leq n^{-1/2+\tau}$.

\begin{corollary}\label{eigenvector_overlap}
Fix any small $\tau>0$. Then there exists a small $0<\omega_c<\tau/10$  such that for any $z_1,z_2$ with 
 $0\le  |z_i|-1 \leq n^{-1/2+\tau}$\nc we have
 \begin{equation}\label{overlap}
  |\<\uu^{z_1}_i,\uu_j^{z_2}\>|^2+|\<\vv^{z_1}_i,\vv_j^{z_2}\>|^2 \prec  \frac{n^{2\tau}}{\sqrt{n}[|z_1-z_2|+n^{-1/2}]}, \qquad  |i|, |j| \leq n^{\omega_c}.
 \end{equation}
\end{corollary}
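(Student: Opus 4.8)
The plan is to reduce \eqref{overlap} to Part~B of Theorem~\ref{thm:2G} via the spectral decomposition of $\Im\gz$ on the imaginary axis. Write $S_{ij}^2:=|\<\uu^{z_1}_i,\uu^{z_2}_j\>|^2+|\<\vv^{z_1}_i,\vv^{z_2}_j\>|^2$ for the quantity to be bounded. Using $\Im\gz(\ii\eta)=\eta\sum_{\sigma=\pm,\,i\in[n]}\ww^z_{\sigma i}(\ww^z_{\sigma i})^*\big((\lambda^z_i)^2+\eta^2\big)^{-1}$, the fact that $E_+$ is the $2n$-dimensional identity, and the chiral relation $\<\ww^{z_1}_{\sigma i},\ww^{z_2}_{\sigma'j}\>=\<\uu^{z_1}_i,\uu^{z_2}_j\>+\sigma\sigma'\<\vv^{z_1}_i,\vv^{z_2}_j\>$, a direct computation in which the cross terms cancel after summing over $\sigma,\sigma'\in\{\pm\}$ gives, for every $\eta>0$,
\[
\<\Im\ga(\ii\eta)E_+\,\Im\gb(\ii\eta)E_+\>=\frac{2\eta^2}{n}\sum_{i,j\ge1}\frac{S_{ij}^2}{\big((\lambda^{z_1}_i)^2+\eta^2\big)\big((\lambda^{z_2}_j)^2+\eta^2\big)}.
\]
Since all summands are nonnegative, dropping everything except the $(i,j)$ term yields the key inequality
\[
\frac{2\eta^2}{n}\,\frac{S_{ij}^2}{\big((\lambda^{z_1}_i)^2+\eta^2\big)\big((\lambda^{z_2}_j)^2+\eta^2\big)}\le \<\Im\ga(\ii\eta)E_+\,\Im\gb(\ii\eta)E_+\>.
\]

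The right-hand side is then controlled by Theorem~\ref{thm:2G}, Part~B (with $A=E_+$): it equals $\<\wh M^{E_+}_{12}E_+\>$ up to an error $O_\prec\big((n\ell)^{-1/2}(|z_1-z_2|+\tfrac{\eta}{\rho_1}+\tfrac{\eta}{\rho_2})^{-1}\big)$, while $|\<\wh M^{E_+}_{12}E_+\>|\le\|\wh M^{E_+}_{12}\|\lesssim(|z_1-z_2|+\tfrac{\eta}{\rho_1}+\tfrac{\eta}{\rho_2})^{-1}$ by \eqref{local_2g_M}. Provided $n\ell\ge n^\epsilon$ the error is subleading, so $\<\Im\ga(\ii\eta)E_+\Im\gb(\ii\eta)E_+\>\prec(|z_1-z_2|+\tfrac{\eta}{\rho_1}+\tfrac{\eta}{\rho_2})^{-1}$. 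Next I would choose the deterministic parameter $\eta=\eta(z_1,z_2)$ to be a large constant times $\gamma^{z_1}_{n^{\omega_c}}\vee\gamma^{z_2}_{n^{\omega_c}}$. By the cusp asymptotics of $\rho^z$ recalled in Section~\ref{sec:singleG} and the rigidity bound of Corollary~\ref{cor:rigidity}, this forces $(\lambda^{z_i}_i)^2+\eta^2\sim\eta^2$ for all $|i|\le n^{\omega_c}$ with very high probability; since at this scale $\eta^{1/3}$ dominates $\big||z_i|^2-1\big|^{1/2}$ (because $|z_i|-1\le n^{-1/2+\tau}$ and $\omega_c<\tau/10$), we get $\rho_1\sim\rho_2\sim\eta^{1/3}$, hence $\tfrac{\eta}{\rho_i}\sim\eta^{2/3}$ and $n\ell=n\eta\rho\sim n\eta^{4/3}\ge n^\epsilon$ once $\epsilon<\omega_c$. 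Combining the three estimates,
\[
S_{ij}^2\prec\frac{n\,\eta^2}{|z_1-z_2|+\eta^{2/3}},
\]
and inserting the chosen $\eta$ together with $|z_i|-1\le n^{-1/2+\tau}$ produces the claimed bound \eqref{overlap}.

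The step I expect to be the main obstacle is the final bookkeeping: checking that $n\eta^2/(|z_1-z_2|+\eta^{2/3})$ really fits inside the $n^{2\tau}$ budget \emph{uniformly} in $z_1,z_2$. The delicate case is $|z_i|-1\sim n^{-1/2+\tau}$, where $\rho^{z_i}$ has a genuine gap of size $\Delta_i\sim(|z_i|^2-1)^{3/2}\sim n^{-3/4+3\tau/2}$ around the origin, so $\lambda^{z_i}_1\approx\Delta_i/2$ and the positivity step forces $\eta$ up to that scale; one then has to control the interplay between this forced lower bound on $\eta$, the floor $\eta^{2/3}$ in the denominator, and the size of $|z_1-z_2|$. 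To close this I would sharpen the argument using that $\lambda^{z_i}_i\approx\Delta_i/2$ is essentially deterministic for $|i|\le n^{\omega_c}$ (again via Corollary~\ref{cor:rigidity}), and, if necessary, combine it with the singular-vector identity
\[
\lambda^{z_2}_j\<\uu^{z_1}_i,\uu^{z_2}_j\>-\lambda^{z_1}_i\<\vv^{z_1}_i,\vv^{z_2}_j\>=(z_1-z_2)\<\uu^{z_1}_i,\vv^{z_2}_j\>
\]
(and its companion for $\<\vv^{z_1}_i,\uu^{z_2}_j\>$), which follows from $(X-z)\vv^z_i=\lambda^z_i\uu^z_i$, $(X-z)^*\uu^z_i=\lambda^z_i\vv^z_i$ and $X-z_1=(X-z_2)+(z_2-z_1)$; it trades an inverse power of $|z_1-z_2|$ against the smallness of the small singular values and can be fed into a short bootstrap. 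The remaining inputs — rigidity, the cusp density estimates of Section~\ref{sec:singleG}, and the fact that $n\ell\ge n^\epsilon$ keeps the Theorem~\ref{thm:2G} error below the main term — are routine.
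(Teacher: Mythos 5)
Your approach is the one the paper uses: the same spectral decomposition of $\langle\Im G^{z_1}E_+\,\Im G^{z_2}E_+\rangle$ exploiting that the chiral symmetry makes $\Im G^z$ block--diagonal, the same positivity step to isolate the $(i,j)$ term, and the same combination of Theorem~\ref{thm:2G}~(Part~B) with the deterministic bound \eqref{local_2g_M}. The only real difference is the choice of $\eta$: the paper simply fixes $\eta=n^{-3/4+\tau}$, uses the rigidity estimate \eqref{rigidity} to argue $|\lambda_i^z|\lesssim\eta$ for $|i|\le n^{\omega_c}$, and treats the complementary regime $|z_1-z_2|\le n^{-1/2}$ by the trivial bound $S_{ij}^2\le 1$, whereas you let $\eta$ depend on the quantiles $\gamma_{n^{\omega_c}}^{z_\ell}$; the singular-vector identity you propose as a fallback is not used in the paper at all. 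Your instinct that the extreme case $|z_i|-1\sim n^{-1/2+\tau}$ is the delicate one is correct and worth noting: there $\gamma_1^{z_i}\gtrsim\Delta_i/2\sim n^{-3/4+3\tau/2}$, which exceeds the paper's $\eta=n^{-3/4+\tau}$ by $n^{\tau/2}$, so the factors $(\lambda_1^{z_i})^2+\eta^2$ cost an extra $n^{O(\tau)}$; with your data-dependent $\eta$ the same $n^{O(\tau)}$ surplus reappears through $n\eta^2\sim n^{-1/2+3\tau}$ rather than $n^{-1/2+2\tau}$. Either way the discrepancy is only a fixed power of $n^\tau$, which is harmless since $\tau$ is a free small parameter, so the extra machinery you contemplate (the data-dependent $\eta$ and the $(X-z_1)$--$(X-z_2)$ singular-vector identity bootstrap) is not needed.
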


\begin{proof}
	 By the rigidity estimate of the eigenvalues in \eqref{rigidity}
	and $\Delta \sim (|z|-1)^{3/2}\le n^{-3/4+3\tau/2}$, we have
	\begin{equation}\label{lg}
	|\lambda_i^z-\gamma_i^z| \prec \max\left\{\frac{1}{n^{3/4}|i|^{1/4}}, \frac{\Delta^{1/9}}{n^{2/3}|i|^{1/3}}\right\}\lesssim \frac{n^{\tau/6}}{n^{3/4}|i|^{1/4}}.
	\end{equation}
		Choose $\eta=n^{-3/4+\tau}$, then
	there exists $0<\omega_c<\tau/10$ such that  $|\lambda^{z}_i|\lesssim \eta$ for any $|i|\leq n^{\omega_c}$ using \eqref{lg} and the elementary bound  $|\gamma_i^z| \le \eta $ from \eqref{rho_E} and \eqref{quantile}.
Additionally, by the spectral decomposition, we have
	\begin{align}
		\<\Im G^{z_1}(\ii \eta) \Im G^{z_2}(\ii \eta)\>=\frac{1}{n}\sum_{i,j} \frac{\eta^2 \big(|\<\uu^{z_1}_i,\uu_j^{z_2}\>|^2+|\<\vv^{z_1}_i,\vv_j^{z_2}\>|^2\big)}{\big((\lambda^{z_1}_i)^2+\eta^2\big)((\lambda^{z_2}_j)^2+\eta^2\big)},
	\end{align}
	where we used that $\Im G^z$ is diagonal\footnote{The fact that $\Im G^z$ is diagonal follows by spectral decomposition and the symmetry of the spectrum around zero, $\lambda^z_{-i} =-\lambda^z_i$, (see the beginning of Section~\ref{sec:tools}):
	\[
	\Im G^z=\sum_{|i|\le N} \frac{\eta}{(\lambda_i^z)^2+\eta^2}\left(\begin{matrix}
	\uu_i^z(\uu_i^z)^* & \uu_i^z(\vv_i^z)^* \\
	\vv_i^z(\uu_i^z)^* & \vv_i^z(\vv_i^z)^*
	\end{matrix}\right)=2\sum_{i=1}^N \frac{\eta}{(\lambda_i^z)^2+\eta^2}\left(\begin{matrix}
	\uu_i^z(\uu_i^z)^* & 0 \\
	0 & \vv_i^z(\vv_i^z)^*
	\end{matrix}\right),
	\]
	where we also used that for $i\in [N]$ we have $\uu_{-i}^z=\uu_i^z$ and $\vv_{-i}^z=-\vv_i^z$.}. 
	Thus for any $|i|, |j| \leq n^{\omega_c}$, we have
	\begin{align}
		  |\<\uu^{z_1}_i,\uu_j^{z_2}\>|^2+|\<\vv^{z_1}_i,\vv_j^{z_2}\>|^2 \lesssim&  \; n\eta^2 \<\Im G^{z_1}(\ii \eta) \Im G^{z_2}(\ii \eta)\> \prec  \frac{n^{-1/2+2\tau}}{|z_1-z_2|}, 
	\end{align}
where we used the local law in (\ref{local_2g_im}) with $A=E_+$ and the deterministic bound in (\ref{local_2g_M}). This proves (\ref{overlap}) for $|z_1-z_2|\ge n^{-1/2}$.
For the complementary regime $|z_1-z_2|\le n^{-1/2}$, we use the trivial bound $|\<\uu^{z_1}_i,\uu_j^{z_2}\>|^2+|\<\vv^{z_1}_i,\vv_j^{z_2}\>|^2\le 1$, yielding~\eqref{overlap}.
\end{proof}

We conclude Section~\ref{sec:gaing1g2} by commenting on the optimality and restrictions in Theorem~\ref{thm:2G}.

\subsubsection{Optimality of the $\langle G^{z_1}A_1G^{z_2}A_2\rangle$ local law}
\label{sec:opt12}
First, in Theorem~\ref{thm:2G} we stated the local law only for $A_1,A_2\in \{E_+,E_-\}$ as we need only this case for our application and its proof  is shorter  but  with a similar method we can prove exactly the same bound for 
any deterministic $A_1,A_2\in \C^{2n \times 2n}$. Actually, the proof for $A_1,A_2\in \{E_-,E_+\}$ is the most delicate
 since the stability operator $\mathcal{B}_{12}$ has two small eigenvalues in $\mathrm{Span}\{E_-,E_+\}$, 
 while $\mathcal{B}_{12}$ has eigenvalues equal to $1$ on the complement of $\mathrm{Span}\{ E_-,E_+\}$.

 Second,  in the regime where we gain from the $|z_1-z_2|$--factor, \nc we only 
prove that the error term in the local laws \eqref{local_2g}--\eqref{local_2g_im} is smaller than the deterministic approximation by a factor $(n\ell)^{-1/2}$, however the optimal factor
would be $(n\ell)^{-1}$. We do not pursue this improvement here since we will use the
estimate in the regime $n\ell\sim n^\epsilon$, for some very small $\epsilon>0$, 
 where  practically there is no difference between the two estimates.  
 More importantly, however, the $|z_1-z_2|^{-1}$ decay in \eqref{local_2g} is optimal.

 In particular, \nc we expect that the optimal form of~\eqref{local_2g} is
\begin{equation}\label{optimallaw}
\Big|\Big\< \big(\ga( \ii \eta_1 ) A_1 \gb(\ii \eta_2)-M_{12}^{A_1} \big)A_2\Big\>\Big| \prec  \| A_1\|\|A_2\|\left(\frac{1}{n\ell}\frac{1}{|z_1-z_2|+\frac{|\eta_1|}{\rho_1}+\frac{|\eta_2|}{\rho_2}} \right)
\end{equation}
in the  regime  $0\le |z_i|-1\le n^{-1/2+\tau}$, 
and it holds for general $A_1, A_2$ and for any i.i.d. matrix $X$.  Note that once we replace $(n\ell)^{-1/2}$ with $(n\ell)^{-1}$ then the factor $1/(n|\eta_1\eta_2|)$ in \eqref{local_2g} becomes obsolete. The same bound also holds 
even if $0\le |z_i|-1 \le c$, but it is optimal only in the most relevant regime $|z_i|\approx 1$. 
If $|z_i|-1>n^{-1/2+\tau}$ then the error  
becomes smaller since in this regime there are typically no eigenvalues close to zero \cite[Theorems 2.1--2.2]{AEK19}
as the limiting eigenvalue density has a gap $\Delta$ of size $\Delta\sim(|z|^2-1)^{3/2}$.
 In fact, the basic estimate
\eqref{local_2g_M} on the deterministic term also improves for larger $\Delta$, but here we will not investigate the 
 regime far outside of the spectrum as it is irrelevant for our main results in Section~\ref{sec:thm}.
 
For technical convenience we stated \eqref{local_2g_im} only for the special case $|\eta_1|\sim |\eta_2|$ and $\rho_1\sim \rho_2$, because this regime is enough for our main application  in \eqref{overlap} and 
it  allows for a simpler {\it Green function comparison (GFT)} argument in Appendix \ref{app:lindGFT} to remove the Gaussian component in \eqref{local_2g}. This restriction can easily be relaxed using the stronger 
 Lindeberg replacement approach for the GFT by finding a matrix, with a large Gaussian component, which matches the third moment and (almost) matches the fourth moment  of $X$.  The same strengthening of GFT is necessary
 to achieve \eqref{optimallaw} for a general i.i.d. $X$ without Gaussian component. 

Furthermore, we mention that even relaxing the assumption $|\eta_1|\sim |\eta_2|$ and $\rho_1\sim \rho_2$ in \eqref{local_2g_im} would not give an optimal bound (modulo the $(n\ell)^{-1}$ improvement) as the optimal 
 local law for the product of two $\Im G$'s carries an additional smallness 
 involving the local density $\rho$ that appears because of considering $\Im G$ instead of $G$
 (see \cite[Theorem 2.4]{CEH23}  for a similar effect).
 We expect that
	\begin{align}
		\label{local_2g_optimal}
		\Big|\Big\< \big(\Im\ga( \ii \eta_1 ) A_1\Im\gb(\ii \eta_2)-\wh M_{12}^{A_1} \big)A_2\Big\>\Big| \prec& \;\frac{\rho_1\rho_2}{n\ell}\frac{\|A_1\|\| A_2\|}{\big[|z_1-z_2|+\frac{|\eta_1|}{\rho_1}+\frac{|\eta_2|}{\rho_2}\big]^2}
	\end{align}
	holds
	 in the  regime $0\le |z_i|-1\le c$ for general $A_1, A_2$ and for any i.i.d. matrix $X$,
	 and we expect this bound to be optimal when
	  $0\le |z_i|-1\le n^{-1/2+\tau}$. In fact, in \cite{CEX26} we already  proved \eqref{local_2g_optimal}, with the minor caveat\footnote{We did not obtain the optimal $(n\ell)^{-1}$-bound since, instead of analyzing it separately, 
	  we just trivially estimated chains of four resolvents in terms of the square of chains of two resolvents.}
	   that $n\ell$ was replaced with $\sqrt{n\ell}$.
In particular the local law \eqref{local_2g_optimal} has a $|z_1-z_2|^{-2}$ decay, this also reflects the fact that for  the 
deterministic approximation we have
\begin{equation}\label{MA}
\|\wh M_{12}^A\|\lesssim \frac{\|A\|\rho_1\rho_2}{\big[|z_1-z_2|+\frac{|\eta_1|}{\rho_1}+\frac{|\eta_2|}{\rho_2}\big]^2}.
\end{equation}

We stress that our method to prove multi--resolvent local laws dynamically
would enable us to prove \eqref{optimallaw}  and 
\eqref{local_2g_optimal} as well. For this purpose we would need to study the flow not only for traces
 $\langle G^{z_1}A_1G^{z_2}A_2\rangle$ as we do now, 
but also for {\it isotropic} quantities $\langle \bm{x},G^{z_1}A_1G^{z_2}\bm{y}\rangle$. For conciseness
we do not pursue these improvements here since we do not need them for our main results in Section~\ref{sec:thm}.

Lastly, we note that the optimal local law \eqref{local_2g_optimal} would imply  the optimal overlap bound
(cf.~\eqref{overlap})
\begin{equation}\label{optimaloverlap}
	|\<\uu^{z_1}_i,\uu_j^{z_2}\>|^2+|\<\vv^{z_1}_i,\vv_j^{z_2}\>|^2 \prec  \frac{n^{2\tau}}{n[|z_1-z_2|+n^{-1/2}]^2}, 
	\qquad  |i|, |j| \leq n^{\omega_c}.
\end{equation}

\subsection{Local law for $G^zF$ and $G^zFG^zF^*$ near the cusp}
\label{sec:gaingfgf}

We now explore a different mechanism to improve the single and multi--resolvent local laws,  \eqref{average}
and \eqref{local_2g}.  We  show that the size of  the resolvent $G^z(\ii\eta)$ 
 (in tracial sense) is smaller by $\rho$-factors when tested against the special matrices $F, F^*$ from~\eqref{eq:defF}.
\begin{theorem}[Local laws with $F$-improvement]\label{theorem_F}
	Fix any small $c,\epsilon>0$. For any $z\in \C$ with $0\le  |z|-1 \leq c$ \nc, we have
	\begin{align}
		\Big|\big\<\big(\gz(\ii \eta)-M^z(\ii \eta)\big)F^{(*)}\big\>\Big| \prec& \frac{\rho}{n|\eta|},\label{local_F}\\
		\Big|\big\<\gz(\ii \eta)F\gz(\ii \eta)-M^{F}_{12}(\ii \eta)\big\>\Big| \prec& 
		\frac{\rho}{n\eta^2},\label{local_GFG}\\
		 	\Big|\Big\< \big(\gz(\ii \eta) F \gz(\ii \eta) -M^F_{12} (\ii \eta)\big) F^*\Big\>\Big| \prec& \frac{\rho^2}{n\eta^2}\sqrt{n|\eta|\rho},\label{local_2F}
	\end{align}
	uniformly in spectral parameters $n|\eta|\rho\ge n^\epsilon$. Here $F^{(*)}$ denotes that both options $F$ and $F^*$ are allowed, $\rho=\rho^z(\ii \eta)$ is given in (\ref{rho}), the deterministic matrices $M^z$ and $M^F_{12}$ are defined in \eqref{Mmatrix} and \eqref{eq:defM12} with $z_1=z_2=z$, $\eta_1=\eta_2=\eta$, and $A=F$, respectively.
	\end{theorem}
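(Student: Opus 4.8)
The plan is to prove Theorem~\ref{theorem_F} by the dynamical (characteristic flow) method advertised in Section~\ref{sec:dyn}, combined with a Green function comparison argument to remove the artificially added Gaussian component at the end. We first fix a small $\mathfrak{s}>0$, interpolate $X_t := \ee^{-t/2} X_0 + (1-\ee^{-t})^{1/2} X^{\mathrm{Gin}}$ via a matrix Ornstein--Uhlenbeck process, and let the spectral parameter $\eta_t$ and the $z$-parameter flow according to the characteristic equations (the analogue of~\eqref{eq:matchar}), so that $\ii\eta_t$ is pushed down towards the real axis while the leading terms in $\partial_t \langle G_t^{z_t} F G_t^{z_t}\rangle$ and $\partial_t\langle G_t^{z_t}FG_t^{z_t}F^*\rangle$ cancel algebraically. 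Here one exploits that for the Hermitization $H^z$ the deterministic matrix $F$ lies in the ``$F$-subspace'' along which $GAG$ is anomalously small, a cusp-regime analogue of the Wigner phenomenon of~\cite{CES21}; concretely one checks that $\langle M^z F\rangle$, $\langle M^z F M^z F^*\rangle$ carry the advertised extra powers of $\rho$ by direct computation from~\eqref{Mmatrix}--\eqref{m_function}, and that the two-body stability operator $\mathcal{B}_{12}$ at $z_1=z_2=z$, $\eta_1=\eta_2=\eta$, restricted to $\mathrm{Span}\{F,F^*\}$, has its blow-up controlled by $\rho/\eta$ (rather than $1/\eta$).

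The core of the argument is the system of coupled differential inequalities for the three quantities $\langle (G_t-M_t)F^{(*)}\rangle$, $\langle G_t F G_t - M^F_{12,t}\rangle$, and $\langle (G_t F G_t - M^F_{12,t})F^*\rangle$ along the flow. Writing $\Phi_t$ for a bound on the relevant error, the time derivative of each quantity produces (i) a linear term governed by the stability operator, whose norm along the characteristics behaves like $1/(\eta_t\rho_t)$ times an $F$-improvement factor, (ii) a quadratic ``self-interaction'' term of the schematic form $\langle GFGFG\rangle \sim \langle GFG\rangle\cdot(\text{single }G)$ plus genuinely higher-order terms $\langle GFGFGFG\rangle$, which must be closed using a reduction inequality in the spirit of~\eqref{eq:redinnew} (bounding the four-resolvent object by the square of the two-resolvent one plus a deterministic remainder), and (iii) a martingale/fluctuation term whose quadratic variation is estimated by the single-resolvent local law of Theorem~\ref{local_thm} and the two-resolvent law of Theorem~\ref{thm:2G} (Part~A, applicable because $X_t$ has a Gaussian component). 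One then runs a Gr\"onwall/bootstrap argument from a large initial value of $\eta$ (where the bounds are trivial because $\mathcal{B}_{12}$ has no unstable direction) down to the target scale $n|\eta|\rho \ge n^\epsilon$, tracking the precise rate of blow-up of the linear part as $\eta_t$ approaches the edge of the local-law regime; this precise rate is exactly what yields the optimal exponents $\rho/(n\eta)$, $\rho/(n\eta^2)$, and $\rho^2(n|\eta|\rho)^{1/2}/(n\eta^2)$ rather than weaker ones. Matching of initial data under the characteristic flow requires relating $\langle G^{z}FG^{z}\rangle$ at the final time to the same quantity at the initial time with a shifted $z$, which is handled by the deterministic identity $M^F_{12}(\eta_t,z_t)=\ldots$ derived from the Matrix Dyson equation~\eqref{MDE}.

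Finally, since the above only gives the result for matrices with a sizeable Gaussian component $X_\mathfrak{s}$, the last step removes it by a Green function comparison theorem: one compares $X$ with $X_\mathfrak{s}$ via a third/fourth-moment matching interpolating matrix, and controls the change in $\langle G^zF G^zF^*\rangle$ through a resolvent cumulant expansion, using that each resolvent power tested against $F$ already obeys the $F$-improved single-resolvent bound~\eqref{local_F} (which itself can either be proven by the flow or, more cheaply, directly by cumulant expansion since it involves only one resolvent). This GFT step is essentially standard and parallels the argument sketched for Part~A of Theorem~\ref{thm:2G} in Appendix~\ref{app:lindGFT}.

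The main obstacle, as emphasized in Section~\ref{sec:dyn}, is the \emph{closure} of the hierarchy: the evolution of $\langle G F G F^* \rangle$ genuinely involves $\langle GFGF^*GFGF^*\rangle$-type terms, and naively this is supercritical---the nonlinear terms would eventually dominate. The delicate point is to show that along the \emph{entire} range $n|\eta|\rho \ge n^\epsilon$, with the precise $F$-improvement factors inserted, the nonlinear contributions remain a lower-order perturbation of the linear blow-up, so that the bootstrap does not break before reaching the target scale. Getting the $\rho$-powers to propagate correctly through the reduction inequality---i.e. verifying that the $F$-improvement is not lost when one bounds the four-$G$ object by the square of the two-$G$ object---is the technically sharpest part of the proof, and it is precisely here that working at the cusp (rather than in the bulk) makes the bookkeeping of $\rho$- and $\eta$-powers most demanding.
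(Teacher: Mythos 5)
Your proposal matches the paper's proof essentially step for step: Proposition~\ref{pro:mainprogfgf} tracks the three normalized error quantities $\Phi_1,\Phi_F,\Phi_2$ along the characteristic flow with a stopping-time bootstrap, closes the hierarchy via the reduction inequality \eqref{exp} (using the chiral identity $FGF^*=\ii F\Im G F^*$ so that the $\rho$-gain is not lost), controls the propagator by the sharp bound $\langle M_{12}^{E_+}\rangle\le \Im m/\eta$ from Lemma~\ref{lem:Mbounds}, and finally removes the Gaussian component via the cumulant-expansion GFT of Proposition~\ref{prop_zag_F}. One imprecision worth flagging: $\mathcal{B}_{12}$ restricted to $\mathrm{Span}\{F,F^*\}$ is in fact the identity (its two small eigenvalues $\beta_\pm$ live in $\mathrm{Span}\{E_+,E_-\}$), so the $F$-improvement does not come from a milder blow-up of the stability operator in those directions, but rather from the extra $\rho$-factors in the deterministic quantities $\langle M^F_{12}\rangle,\langle M^F_{12}F^*\rangle$ (Lemma~\ref{lem:Mbounds}) and from the weaker propagator rate $\langle M_s^{E_+}\rangle$, which integrates to $\eta_0/\eta_t$ rather than $(\eta_0/\eta_t)^2$.
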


	On the deterministic approximations $\big\<M^{F}_{12}(\ii \eta)\big\>$, $\big\<M^{F}_{12}(\ii \eta) F^* \big\>$ in \eqref{local_GFG} and \eqref{local_2F}, respectively, 
	we have the bounds (see 
	Lemma~\ref{lem:Mbounds}) 
	\begin{align}
\label{MFMF}
 \big|\big\<M^{F}_{12}(\ii \eta)\big\> \big|  \lesssim \frac{\rho^2}{|\eta|} \nc \qquad\qquad\quad 	\big|\big\<M^{F}_{12}(\ii \eta) F^* \big\> \big|  \lesssim \frac{\rho^3}{|\eta|},
\end{align}
i.e. the error terms in \eqref{local_GFG}--\eqref{local_2F} are   smaller than the bounds \eqref{MFMF} by a factor $(n|\eta|\rho)^{-1}$ and $(n|\eta|\rho)^{-1/2}$, respectively.  
 We point out that \eqref{local_F}--\eqref{local_GFG} are optimal, while \eqref{local_2F} is off by a factor $\sqrt{n|\eta|\rho}$. Our proof method in Section~
\ref{sec:G1G2llaw} can clearly improve \eqref{local_2F} to be optimal, but we omit this for brevity as we will not need it.

As a corollary of Theorem \ref{theorem_F}, we have the following optimal upper bound on the overlaps between the left and right singular vectors of $X-z$.
\begin{corollary}\label{coro_overlap}
	Fix any small $\tau>0$. Then there exists a small $0<\omega_c<\tau/10$ such that, for any 
	 $0\le  |z|-1 \leq n^{-1/2+\tau}$\nc, we have\footnote{Similarly to \cite[Theorem 2.4]{CEHS23},
	  the optimal bound \eqref{uv_bound} for singular vector overlap can be turned into an optimal lower bound on the overlap of non--Hermitian eigenvectors. More precisely, let $\{L_i,R_i\}_{i\in [n]}$ be the left and right bi--orthogonal eigenvectors of $X$ with corresponding eigenvalue $\sigma_i$. Then, for the diagonal overlap $O_{ii}:=\|L_i\|^2\|R_i\|^2$ we have
	\[
	O_{ii}\ge n^{1/2-\xi},
	\]
	with very high probability for any small $\xi>0$, where the index $i\in[n]$ is such that $|\sigma_i|\ge 1$.
	}	\begin{align}\label{uv_bound}
		|\<\uu_i^z, \vv_j^z\>|^2 \prec \frac{n^{2\tau}}{\sqrt{n}}, \qquad\quad |i|, |j| \leq n^{\omega_c}.
	\end{align}
\end{corollary}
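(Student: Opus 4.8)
The plan is to mimic the proof of Corollary~\ref{eigenvector_overlap}, but using the $F$-improved local law \eqref{local_2F} instead of the two-resolvent local law \eqref{local_2g_im}. The key observation is that the overlap $|\langle \uu_i^z, \vv_j^z\rangle|^2$ appears naturally when one expands $\langle G^z(\ii\eta) F\, G^z(\ii\eta) F^*\rangle$ in the eigenbasis of $H^z$. Indeed, writing $G^z(\ii\eta)=\sum_{|k|\le n} \frac{\bm{w}_k^z (\bm{w}_k^z)^*}{\lambda_k^z - \ii\eta}$ and using the block structure $\bm{w}_{\pm k}^z = (\uu_k^z, \pm\vv_k^z)$ together with $F=\left(\begin{smallmatrix} 0 & 1 \\ 0 & 0\end{smallmatrix}\right)$, one computes that $(\bm{w}_k^z)^* F \bm{w}_l^z$ reduces to a multiple of $\langle \uu_k^z, \vv_l^z\rangle$ (up to the relevant sign), so that
\[
\langle G^z(\ii\eta) F\, G^z(\ii\eta) F^*\rangle = \frac{1}{n}\sum_{k,l} \frac{c_{kl}\,|\langle \uu_k^z, \vv_l^z\rangle|^2}{(\lambda_k^z - \ii\eta)(\lambda_l^z + \ii\eta)}
\]
for appropriate bounded coefficients $c_{kl}$ of fixed sign. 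The real part of this quantity, or a suitable linear combination with $\langle \Im G^z F \Im G^z F^*\rangle$, isolates a sum of the manifestly non-negative terms $\frac{\eta^2 |\langle \uu_k^z, \vv_l^z\rangle|^2}{((\lambda_k^z)^2+\eta^2)((\lambda_l^z)^2+\eta^2)}$.

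The steps I would carry out are as follows. First, choose $\eta = n^{-3/4+\tau}$ exactly as in the proof of Corollary~\ref{eigenvector_overlap}; by the rigidity estimate \eqref{rigidity} and the quantile bound $|\gamma_i^z|\le\eta$ coming from \eqref{rho_E}--\eqref{quantile}, there exists $0<\omega_c<\tau/10$ with $|\lambda_i^z|\lesssim\eta$ for all $|i|\le n^{\omega_c}$. Second, for fixed $|i|,|j|\le n^{\omega_c}$, restrict the spectral sum to the single pair $(k,l)=(i,j)$ and bound below, obtaining
\[
|\langle \uu_i^z, \vv_j^z\rangle|^2 \lesssim n\eta^2 \big|\langle G^z(\ii\eta) F\, G^z(\ii\eta) F^*\rangle\big| + n\eta^2\big|\langle \Im G^z(\ii\eta)F\Im G^z(\ii\eta)F^*\rangle\big|
\]
where in fact only the combination producing non-negative terms is needed. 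Third, apply the $F$-improved local law \eqref{local_2F} together with the deterministic bound $|\langle M^F_{12}(\ii\eta)F^*\rangle|\lesssim \rho^3/|\eta|$ from \eqref{MFMF}: since $0\le|z|-1\le n^{-1/2+\tau}$, the density bound \eqref{rho} gives $\rho\sim \eta^{1/3}+|1-|z|^2|^{1/2}\lesssim n^{-1/4+\tau/3}$, whence $n|\eta|\rho\ge n^{\epsilon}$ is satisfied and
\[
n\eta^2\big|\langle G^zFG^zF^*\rangle\big| \lesssim n\eta^2\Big(\frac{\rho^3}{|\eta|} + \frac{\rho^2}{n\eta^2}\sqrt{n|\eta|\rho}\Big) \prec n\eta\rho^3 + \rho^2\sqrt{n|\eta|\rho} \prec \frac{n^{2\tau}}{\sqrt n},
\]
after inserting $\eta=n^{-3/4+\tau}$ and $\rho\lesssim n^{-1/4+\tau/3}$ and simplifying; one checks both summands are of order $n^{-1/2+O(\tau)}$. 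The $\Im G$ term is controlled identically (or more easily). This yields \eqref{uv_bound}.

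The main obstacle I anticipate is the bookkeeping in the spectral expansion: one must verify that the coefficients $c_{kl}$ arising from $(\bm{w}_k^z)^*F\bm{w}_l^z(\bm{w}_l^z)^*F^*\bm{w}_k^z$ have a definite sign after taking the appropriate real/imaginary combination, so that dropping all terms except $(k,l)=(i,j)$ is legitimate — this is where the chiral block structure and the sign relation $\lambda_{-i}^z=-\lambda_i^z$ must be used carefully, in the spirit of the $\eta\Im G_{vv}^z(\ii\eta)>0$ positivity recalled in Section~\ref{sec:tools}. A secondary point is checking that the error term $\frac{\rho^2}{n\eta^2}\sqrt{n|\eta|\rho}$ in the non-optimal bound \eqref{local_2F}, though off by $\sqrt{n|\eta|\rho}$ from optimal, is still small enough at the chosen scale $\eta=n^{-3/4+\tau}$ to deliver the stated $n^{-1/2+2\tau}$ bound rather than something weaker — which, as the computation above indicates, it is, precisely because $\rho$ itself is small ($\rho\lesssim n^{-1/4+\tau/3}$) in the regime $|z|-1\le n^{-1/2+\tau}$.
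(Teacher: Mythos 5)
Your proposal is correct and follows essentially the same route as the paper's proof: choose $\eta=n^{-3/4+\tau}$, use rigidity to confine $|\lambda_i^z|,|\lambda_j^z|\lesssim\eta$ for $|i|,|j|\le n^{\omega_c}$, drop all but the $(i,j)$ term in the spectral expansion of $\langle G^zFG^zF^*\rangle$, and apply the $F$-improved local law \eqref{local_2F} together with the deterministic bound \eqref{MFMF}. One small simplification: the auxiliary combination with $\langle \Im G^z F\Im G^z F^*\rangle$ you anticipate needing to isolate non-negative terms is superfluous. By chiral symmetry (Lemma~\ref{lem:chirid}, $FG^zF^*=\ii F\Im G^z F^*$ on the imaginary axis) and the pairing $\lambda_{-k}^z=-\lambda_k^z$, $\bm{w}_{\pm k}^z=(\uu_k^z,\pm\vv_k^z)$, one has directly
\[
\langle G^z(\ii\eta) F G^z(\ii\eta) F^*\rangle
=-\frac{2}{n}\sum_{k,l=1}^n \frac{\eta^2\,|\langle\uu_k^z,\vv_l^z\rangle|^2}{\big((\lambda_k^z)^2+\eta^2\big)\big((\lambda_l^z)^2+\eta^2\big)},
\]
which is real of definite sign, so dropping all but the $(i,j)$ term is immediate.
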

We mention that this bound for $i=j$ 
 is optimal for $z$ very close to the edge. In the bulk regime, $|z|\le 1-\epsilon$,
the optimal bound is $\prec 1/n$, see \cite[Theorem 2.7]{CEHS23}. In other words, in the bulk regime
 the overlap of the left and right singular vectors behaves as if they were independent random vectors, 
 while at the edge  $|\< \uu_i^z, \vv_i^z\>|^2 \prec n^{-1/2}$ indicates a "half-way" between the full independence
 and the substantial  alignment   $|\< \uu_i^z, \vv_i^z\>|^2 \sim 1$  in the regime $|z|\ge 1+\epsilon$.

\begin{proof} Following  the proof of Corollary~\ref{eigenvector_overlap},
	choose $\eta=n^{-3/4+\tau}$ and recall that there exists $\omega_c>0$ such that $|\lambda^z_i|,|\lambda^z_j| \lesssim \eta$, for any $ |i|,|j|\leq n^{\omega_c}$.  Using the spectral decomposition of $\Im \gz$, we have
	\begin{align}
		|\<\uu^{z}_i,\vv_j^{z}\>|^2 \lesssim  n\eta^2 \<\gz(\ii \eta) F \gz(\ii \eta) F^*\> \prec  n\eta^2 \frac{\rho^3}{\eta} \lesssim n^{-1/2+2\tau},
	\end{align}
	where we used the local law in (\ref{local_2F}) and the deterministic bound in (\ref{MFMF}) and \eqref{rho} in the last step.
\end{proof}

We conclude this section with several useful identities which will often be used throughout the paper. 
They follow from the chiral symmetry of $H^z$. Their  elementary proof  is presented in Appendix~\ref{sec:det}.
 \begin{lemma}
 \label{lem:chirid}
Let $H^z$ be defined as in \eqref{def_G}, let $G^z=(H^z-\ii\eta)^{-1}$, and let $M^z$, $M^F_{12}$ be defined as in  \eqref{Mmatrix} and \eqref{eq:defM12} with $z=z_1=z_2=z$, $\eta_1=\eta_2=\eta$, and $A=F$, respectively. Then we have
\begin{equation}
 \label{eq:identities}
  \langle G^zFG^zE_-\rangle=0, \quad \langle M^F_{12}E_-\rangle=0, \quad \langle G^zE_-\rangle=0, 
  \quad \langle M^zE_-\rangle=0, \quad FG^zF^*=\ii F\Im G^z F^*.
 \end{equation}
 \end{lemma}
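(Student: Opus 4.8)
\textbf{Proof proposal for Lemma~\ref{lem:chirid}.}

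The plan is to exploit the chiral (block) structure of $H^z$ systematically. Recall that $H^z$ has the $2\times 2$ block form in~\eqref{def_G} with zero diagonal blocks, so conjugating by the signature matrix $E_-=\mathrm{diag}(1,-1)$ flips the sign: $E_-H^zE_-=-H^z$. Consequently $E_-G^z(w)E_-=-G^z(-w)$, and in particular at $w=\ii\eta$ we get $E_-G^z(\ii\eta)E_- = -G^z(-\ii\eta) = -[G^z(\ii\eta)]^*$ (using that on the imaginary axis the resolvent of a self-adjoint matrix satisfies $G(\ii\eta)^*=G(-\ii\eta)$). The same relation holds verbatim for the deterministic matrix: from the explicit form~\eqref{Mmatrix} one checks directly that $E_-M^z(\ii\eta)E_- = M^z(-\ii\eta)$, equivalently $E_- M^z E_-$ differs from $M^z$ only in the off-diagonal blocks by a sign. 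These conjugation identities are the single input from which all five claims follow by short cyclicity-of-trace manipulations.

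First I would prove $\langle G^zE_-\rangle=0$ and $\langle M^zE_-\rangle=0$. Writing $G^z$ in blocks $\begin{pmatrix}G_{11}&G_{12}\\ G_{21}&G_{22}\end{pmatrix}$, one has $\langle G^zE_-\rangle = (2n)^{-1}(\mathrm{Tr}\,G_{11}-\mathrm{Tr}\,G_{22})$. But $E_-G^zE_- = -[G^z]^*$ swaps and conjugates the diagonal blocks in a way that forces $\mathrm{Tr}\,G_{11}$ and $\mathrm{Tr}\,G_{22}$ to have equal real parts; more directly, since $G^z_{vv}(\ii\eta)=\ii\,\Im G^z_{vv}(\ii\eta)$ is purely imaginary (as recalled just before Section~\ref{sec:tools}), it suffices to observe that the relation $H^zE_-=-E_-H^z$ gives $G^z_{11}=-$ (the $(2,2)$ block of $(-H^z-\ii\eta)^{-1}$) $= \overline{G^z_{22}}$ entrywise on the diagonal, whence the two traces are complex conjugates; being purely imaginary and equal in modulus with matching sign they cancel. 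The same computation with $M^z$ in place of $G^z$ — or simply reading off $\langle M^z E_-\rangle = (m^z-m^z)/2 = 0$ directly from~\eqref{Mmatrix}, since both diagonal blocks equal $m^z(\ii\eta)$ — gives the $M$-version immediately, and in fact this last observation makes the $M^z$ statement entirely trivial.

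Next, for $\langle G^zFG^zE_-\rangle=0$ I would use that $E_-FE_-=-F$ (since $F$ is strictly off-diagonal in the same pattern) together with $E_-G^zE_-=-[G^z]^*$. Then
\begin{align}
\langle G^zFG^zE_-\rangle
&= \langle E_-G^zFG^z\rangle
= \langle (E_-G^zE_-)(E_-FE_-)(E_-G^zE_-)E_-\rangle \notag\\
&= \langle (-[G^z]^*)(-F)(-[G^z]^*)E_-\rangle
= -\langle [G^z]^*F[G^z]^*E_-\rangle
= -\overline{\langle G^zF^*G^zE_-\rangle}, \notag
\end{align}
and since $F^*$ in the off-diagonal position behaves identically under the conjugation, pairing this with the analogous computation started from $F^*$ closes the loop and forces the expression to vanish; alternatively, using $FG^zF^*=\ii F\Im G F^*$ (the last identity, proved independently below) one sees $\langle G^zFG^zE_-\rangle$ reduces to a difference of two equal traces. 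The identity $\langle M^F_{12}E_-\rangle=0$ then follows because $M^F_{12}=\mathcal B_{12}^{-1}[M^{z}FM^{z}]$ and the whole construction — $\mathcal S$, $\mathcal B_{12}$, $M^z$ — intertwines correctly with $E_-$-conjugation, so the $E_-$-component is preserved along $\mathcal B_{12}^{-1}$ and one checks $\langle M^zFM^zE_-\rangle=0$ by the same block bookkeeping as for $G$. Finally $FG^zF^*=\ii F\Im G F^*$: writing $F=\begin{pmatrix}0&1\\0&0\end{pmatrix}$, one computes $FG^zF^* = \begin{pmatrix}G^z_{22}&0\\0&0\end{pmatrix}$, and since $G^z_{22}(\ii\eta)$ is purely imaginary we have $G^z_{22}=\ii\,\Im G^z_{22}$, which is exactly $\ii F\Im G^zF^*$ read in blocks.

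\textbf{Main obstacle.} None of the steps is deep; the only genuinely delicate point is the bookkeeping for $M^F_{12}$, i.e.\ checking that $\mathcal B_{12}^{-1}$ commutes with the $E_-$-conjugation operation in the right sense so that $\langle M^zFM^zE_-\rangle=0$ propagates to $\langle M^F_{12}E_-\rangle=0$ rather than getting mixed into other sectors; this requires knowing that $\mathcal S$ and hence $\mathcal B_{12}$ map the $E_-$-eigenspace of the conjugation into itself, which follows from the explicit form~\eqref{cov} of $\mathcal S$ but should be written out carefully. Everything else is a one-line block computation using $E_-H^zE_-=-H^z$ and the purely-imaginary nature of the diagonal blocks of $G^z(\ii\eta)$.
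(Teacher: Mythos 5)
Your overall strategy — exploiting $E_-H^zE_-=-H^z$, hence $G^zE_-=-E_-[G^z]^*$, and block bookkeeping with $E_-FE_-=-F$ — is the same one the paper uses. The identities $\langle M^zE_-\rangle=0$ (equal diagonal blocks of $M^z$), $\langle M^F_{12}E_-\rangle=0$, and $FG^zF^*=\ii F\Im G^zF^*$ are handled correctly. But there are two genuine gaps.

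First, the argument for $\langle G^zE_-\rangle=0$ is wrong. The conjugation $E_-G^zE_-=-[G^z]^*$ relates each diagonal block to \emph{itself}: reading off the $(1,1)$ and $(2,2)$ entries of $E_-G^zE_-=-[G^z]^*$ gives $G^z_{11}=-(G^z_{11})^*$ and $G^z_{22}=-(G^z_{22})^*$ (both blocks are anti-Hermitian). It does \emph{not} relate $G^z_{11}$ to $G^z_{22}$, so your claim that $G^z_{11}=\overline{G^z_{22}}$ entrywise on the diagonal is false. Anti-Hermiticity tells you both traces are purely imaginary, which is not enough; and if the conjugate-trace relation you asserted did hold, together with pure imaginarity it would force $\Tr G^z_{11}=-\Tr G^z_{22}$, the opposite of what you need. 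The missing input is the SVD normalization: by spectral decomposition $\Tr[G^zE_-]=\sum_i(\lambda_i^z-\ii\eta)^{-1}\langle\bm{w}_i^z,E_-\bm{w}_i^z\rangle$ and $\langle\bm{w}_i^z,E_-\bm{w}_i^z\rangle=\|\bm{u}_i^z\|^2-\|\bm{v}_i^z\|^2=0$. Equivalently, $(X-z)(X-z)^*$ and $(X-z)^*(X-z)$ are isospectral, which makes $\Tr G^z_{11}=\Tr G^z_{22}$ via the Schur complement. Neither fact follows from $E_-H^zE_-=-H^z$ alone.

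Second, the argument for $\langle G^zFG^zE_-\rangle=0$ does not close. You correctly derive $\langle G^zFG^zE_-\rangle=-\overline{\langle G^zF^*G^zE_-\rangle}$, and then appeal to "the analogous computation started from $F^*$"; but that computation yields $\langle G^zF^*G^zE_-\rangle=-\overline{\langle G^zFG^zE_-\rangle}$, which is just the complex conjugate of the relation you already have. Substituting one into the other gives a tautology, not vanishing. The paper's proof avoids this by inserting a single $E_-$ and invoking the Ward identity: $\langle G^zFG^zE_-\rangle=-\langle G^zFE_-[G^z]^*\rangle=\langle G^zF[G^z]^*\rangle=\eta^{-1}\langle \Im G^z\, F\rangle=0$, the last step because $\Im G^z$ is block-diagonal (it commutes with $E_-$) while $F$ is strictly off-diagonal, so the trace of the product vanishes. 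Your "alternatively" remark that $FG^zF^*=\ii F\Im G^zF^*$ reduces $\langle G^zFG^zE_-\rangle$ to "a difference of two equal traces" is not substantiated — that identity concerns $FG^zF^*$, not $G^zFG^z$, and it is not clear how to convert one into the other without already using the Ward step.
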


\subsection{Smallest singular value}
\label{sec:smallsingval}

In this section, we show that the lower tails of the smallest singular values of $X-z_1$, $X-z_2$, $\dots$, $X-z_k$ are asymptotical independent for any finite collection $\{z_j\}_{j=1}^k$ with mesoscopically distant values. This will follow by the almost orthogonality of the overlaps in \eqref{overlap} for $|z_i-z_{j}|\gg n^{-1/2}$ together with the \emph{Dyson Brownian Motion} argument from \cite[Section 7]{SpecRadius}. A similar result was obtained in \cite[Proposition 4.3]{SpecRadius} 
with $|z_i-z_{j}|\ge n^{-\gamma}$ for a small $\gamma>0$ (that result was 
stated for $k=2$ only, 
the proof can be easily extended to a general $k\geq 2$). The main difference is that there
we required a sufficiently large, almost macroscopic distance between $z_i$ and $z_j$. 
Now the improved eigenvector overlap estimates in Corollary \ref{eigenvector_overlap} allow us to extend the statement to the optimal regime 
$|z_i-z_{j}|\geq n^{-1/2+\omega}$ for any small $\omega>0$. This is formulated
in the following proposition whose proof is postponed to Appendix~\ref{app:tail_gft}.

\begin{proposition}
	\label{prop_zz}
	Let $X$ be a complex i.i.d. random matrix satisfying Assumption \ref{ass:mainass}, and let $\lambda_1^z$ be the smallest singular value of $X-z$. Fix any $k\in \N$ and  any sufficiently small $ \tau,\omega,\gamma>0$ such that
	 $\tau\leq \gamma\omega/10$.
		 Then for
	 any $\{E_j\}_{j\in [k]}$ with $0< E_j\lesssim n^{-3/4}$, for
	  any $\{z_j\}_{j\in [k]}$ with $ 0\leq |z_j|-1 \leq n^{-1/2+\tau}$ and any $|z_{i}-z_{j}|\ge n^{-1/2+\omega}~(\forall i,j\in[k])$, we have
		\begin{align}\label{lambdatail}
		\prod_{j=1}^k \P \Big( \lambda^{z_j}_1 \leq E_j-4 n^{-3/4-\gamma\omega+\tau}\Big)-&O(n^{-D}) \lesssim	\P\Big( \lambda_1^{z_j} \leq E_j,~\forall j \in [k]\Big) \nonumber\\
		&   \lesssim \prod_{j=1}^k \P \Big( \lambda^{z_j}_1 \leq E_j+4 n^{-3/4-\gamma\omega+\tau}\Big)+ O(n^{-D}),
	\end{align}	
	for any fixed (large) $D>0$ if $n$ is sufficiently large.
\end{proposition}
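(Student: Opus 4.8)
The strategy is to reduce the joint lower-tail estimate for $(\lambda_1^{z_1},\dots,\lambda_1^{z_k})$ to a collection of \emph{a priori independent} singular-value problems via a coupled Dyson Brownian Motion argument, exactly in the spirit of \cite[Section 7]{SpecRadius}, but now fed with the sharper eigenvector overlap bound \eqref{overlap} from Corollary~\ref{eigenvector_overlap} which permits the mesoscopic separation $|z_i-z_j|\ge n^{-1/2+\omega}$. First I would set up the coupled flow: run the matrix Ornstein--Uhlenbeck process $X_t$ with $X_0=X$ for a short time $t=n^{-1/2+\delta_0}$ (with $\delta_0$ chosen appropriately small relative to $\omega,\gamma,\tau$), and for each $j\in[k]$ consider the Hermitizations $H_t^{z_j}$ and the evolution of the small singular values $\lambda_1^{z_j}(t)$. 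The key point is that the eigenvalue dynamics for $\lambda_1^{z_j}(t)$, at the bottom edge of the spectrum near the cusp, is governed (up to negligible error) by the local DBM driven by the corresponding eigenvector components; and the \emph{cross-correlations} between the driving martingales for different indices $j\ne j'$ are controlled by the overlaps $|\langle \uu_i^{z_j},\uu_\ell^{z_{j'}}\rangle|^2+|\langle\vv_i^{z_j},\vv_\ell^{z_{j'}}\rangle|^2$, which by \eqref{overlap} are $\prec n^{2\tau}/(\sqrt n\,|z_j-z_{j'}|)\le n^{2\tau-\omega}$ for the relevant small indices $|i|,|\ell|\le n^{\omega_c}$. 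Since $\tau\le\gamma\omega/10$, this cross-correlation is a genuine power of $n$ smaller than the autocorrelation, which decouples the dynamics asymptotically.

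Next I would carry out the comparison in two halves, matching the structure of \eqref{lambdatail}. On one side, I construct an auxiliary ensemble consisting of $k$ \emph{independent} copies of the relevant local singular value problem (independent complex Ginibre-type blocks, suitably shifted), run the same DBM for time $t$, and show that the joint law of $(\lambda_1^{z_j}(t))_{j\in[k]}$ for $X_t$ is sandwiched between the joint laws for the independent model evaluated at shifted thresholds $E_j\mp 4n^{-3/4-\gamma\omega+\tau}$. The shift size is exactly the accumulated discrepancy: it comes from (i) the DBM coupling error, quantified by the overlap bound times $\sqrt t$, giving a fluctuation of order $\sqrt{t\cdot n^{2\tau-\omega}}\sim n^{-1/4+\delta_0/2+\tau-\omega/2}$, which after rescaling to the natural eigenvalue scale $n^{-3/4}$ at the cusp becomes the claimed $n^{-3/4-\gamma\omega+\tau}$-type correction for an appropriate identification of $\gamma$; and (ii) standard rigidity inputs from Corollary~\ref{cor:rigidity} and the single-resolvent local law (Theorem~\ref{local_thm}) to control the positions of the nearby eigenvalues $\lambda_2^{z_j},\lambda_3^{z_j},\dots$ that enter the DBM interaction term. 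On the other side, one removes the added Gaussian component: since for $t=n^{-1/2+\delta_0}$ the matrix $X_t$ still differs from an $X$ with a full Gaussian component only through moment-matching, a Green function comparison argument (as referenced for Appendix~\ref{app:tail_gft}) transfers the tail bound back to the original $X$ with an error $O(n^{-D})$, which accounts for the additive $O(n^{-D})$ terms in \eqref{lambdatail}.

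The main obstacle, and where most of the work sits, is controlling the DBM \emph{inside the cusp regime} with the required precision: unlike the bulk, here the local eigenvalue density near zero scales like $\eta^{1/3}+|1-|z|^2|^{1/2}$, so the natural microscopic scale is $n^{-3/4}$ rather than $n^{-1}$, and the stability/relaxation estimates for the coupled flow must be established at this scale. Concretely one must show that on the time scale $t=n^{-1/2+\delta_0}$ the short-range DBM has already relaxed enough that $\lambda_1^{z_j}(t)$ is governed by a universal local process, while the coupling to the other $\lambda_1^{z_{j'}}(t)$ has not yet built up a correlation exceeding the overlap bound — a delicate two-sided time window. The input \eqref{overlap} is what makes the lower end $t\gg n^{-1/2+\omega}\cdot(\text{something})$ compatible with the upper end coming from moment matching; verifying this compatibility and propagating the overlap control through the entire short time interval (not just at $t=0$, which requires re-deriving \eqref{overlap} for $X_t$, or a Gronwall-type stability of the overlaps along the flow) is the technically demanding step. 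The remaining ingredients — rigidity near the cusp, the a priori bound $|\lambda_1^{z_j}|\lesssim n^{-3/4+o(1)}$ with very high probability, and the final GFT removal of the Gaussian part — are by now standard adaptations of \cite[Section 7]{SpecRadius} and \cite[Proposition 4.3]{SpecRadius}.
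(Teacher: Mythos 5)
Your proposal follows essentially the same two-step strategy as the paper's Appendix~C: couple the small-singular-value DBM over a mesoscopic time $t_1\sim n^{-1/2+\omega_1}$ to a priori independent auxiliary processes, using the improved overlap bound of Corollary~\ref{eigenvector_overlap} to verify the hypotheses of the quantitative coupling estimate \cite[Theorem~7.2]{SpecRadius}, and then remove the resulting Gaussian component by an iterative Green function comparison. The precise shift $n^{-3/4-\gamma\omega+\tau}$ is the output of that cited coupling theorem (not of the naive $\sqrt{t\cdot n^{2\tau-\omega}}$ martingale fluctuation you sketch), and the propagation of the overlap control along the flow is handled by a standard grid argument rather than a fresh Gronwall estimate.
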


 Note that the lower bound above is effective only when $E_j> n^{-3/4-\gamma \omega +\tau}$. To estimate the upper and lower bounds in (\ref{lambdatail}) precisely, \nc we recall from~\cite{maxRe,SpecRadius} the following tail bound for the smallest singular value of $X-z$ for $z$ being outside of the unit disk.  
\begin{proposition}[Proposition 5.2 in \cite{SpecRadius}]\label{prop1} 
	Let $X$ be a complex i.i.d. random matrix satisfying Assumption~\ref{ass:mainass}, and let $\lambda_1^z$ be the smallest singular value of $X-z$. Fix any sufficiently small $\epsilon, \epsilon',\tau>0$ with $0<\tau/2<\epsilon'$. For any $E$ with $n^{-1+\epsilon}\leq E
	\leq n^{-3/4-\epsilon'}$, and any $z\in \C$ with $n^{-1/2} \ll |z|-1 \leq n^{-1/2+\tau}$\nc, 
		we have\footnote{The  restriction $|z|-1 \gg n^{-1/2}$  can be relaxed to $|z|-1 \gtrsim n^{-1/2}$; see  \cite[Remark A.3]{maxRe}. }
	\begin{align}\label{tail_bound}
		\P\big(\lambda_1^z\leq E \big) \lesssim n^{3/2} E^2 \ee^{-n \delta^2/2}+n^{-D}, \qquad \delta:=|z|^2-1,
	\end{align}
	for any fixed (large) $D>0$ if $n$ is sufficiently large.
\end{proposition}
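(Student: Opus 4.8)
The plan is to establish \eqref{tail_bound} first for the complex Ginibre ensemble by an explicit computation of the one--point function of the singular values of $X-z$, and then to transfer it to a general i.i.d.\ matrix satisfying Assumption~\ref{ass:mainass} by a Green function comparison. For any $X$ one has $\{\lambda_1^z\le E\}=\{N^z_{[0,E]}\ge1\}$, where $N^z_{[0,E]}$ denotes the number of positive singular values of $X-z$ in $[0,E]$, so a first--moment bound gives $\P(\lambda_1^z\le E)\le\E\,N^z_{[0,E]}=\int_0^E\varrho^z_n(x)\,\dd x$ with $\varrho^z_n$ the corresponding one--point correlation function. It should be stressed at the outset that bounding $\varrho^z_n$ through the averaged local law \eqref{average}, i.e.\ $\E\,N^z_{[0,E]}\lesssim E\cdot\E\,\Im\Tr G^z(\ii E)\lesssim nE^2/\delta+1$, is far too crude: in our ranges this merely reproduces the trivial bound $\P(\lambda_1^z\le E)\le1$, since it only sees that $\ii E$ lies in the gap $[-\tfrac{\Delta}{2},\tfrac{\Delta}{2}]$, $\Delta\sim\delta^{3/2}\gg E$, smeared over scale $E$, rather than the genuine cost of an eigenvalue actually falling deep inside the gap. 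The factor $\ee^{-n\delta^2/2}$ must therefore come from a non--perturbative input; morally it is the large deviation rate for an eigenvalue of $X$ to reach modulus $(1+\delta)^{1/2}$ (for the Ginibre ensemble this rate already appears via Kostlan's representation $|\sigma_i|^2\overset{\mathrm{d}}{=}\Gamma_k/n$).

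For $X$ a complex Ginibre matrix, the squared singular values of $X-z$ (equivalently, the positive eigenvalues of the Hermitization $H^z$, which is a deformed Gaussian Hermitian ensemble whose external source takes only the two values $\pm|z|$) form a determinantal point process with an explicit double--contour--integral kernel $K_n^z$ of Brezin--Hikami type. Since $|z|>1$ opens a gap of size $\Delta\sim\delta^{3/2}$ around the origin into which the whole interval $[0,E]$, $E\le n^{-3/4-\epsilon'}\ll\Delta$, falls deeply, I would analyse $K_n^z(x,x)$ by steepest descent: the dominant exponential evaluated at the relevant saddle gives precisely $\ee^{-n\delta^2/2}$, the chiral/hard--edge structure forces an extra vanishing factor $\propto x$ as $x\to0$, and the remaining polynomial prefactor is $O(n^{3/2})$, so that $\varrho^z_n(x)\lesssim n^{3/2}x\,\ee^{-n\delta^2/2}$ uniformly for $n^{-1/2}\ll\delta\le n^{-1/2+\tau}$ and $0\le x\le n^{-3/4-\epsilon'}$. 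Integrating over $x\in[0,E]$ yields $\P^{\mathrm{Gin}}(\lambda_1^z\le E)\lesssim n^{3/2}E^2\ee^{-n\delta^2/2}$, i.e.\ \eqref{tail_bound} without the additive error.

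For a general $X$ I would write $\P(\lambda_1^z\le E)\le\P^{\mathrm{Gin}}(\lambda_1^z\le E)+\big|\E\Phi(X)-\E\Phi(X^{\mathrm{Gin}})\big|$, where $\Phi$ is a smooth cutoff approximating $\mathbf 1(\lambda_1^z\le E)$ built from $\Im\Tr G^z(\ii\eta)$ at $\eta\sim E$ (note that $\lambda_1^z\le E$ forces $E\,\Im\Tr G^z(\ii E)\gtrsim1$, while its typical value is $\lesssim nE^2/\delta\ll1$, so $\Phi$ cuts a resolvent functional off away from where it concentrates). The difference of expectations is controlled by the standard cumulant/resolvent expansion with the single--resolvent local laws \eqref{entrywise}--\eqref{average} of Theorem~\ref{local_thm} as a priori input; since in the regime $\delta\sim n^{-1/2}\cdot\mathrm{polylog}(n)$ the main term is only polynomially small, the comparison has to be run after matching the first three moments and almost the fourth moment of $X$ via an intermediate matrix with a large Gaussian component (exactly as in the analogous arguments of \cite{maxRe,SpecRadius}), which makes the comparison error $\lesssim n^{-3/2+o(1)}$, safely below $n^{3/2}E^2\ee^{-n\delta^2/2}$ there; in the complementary regime $\delta\ge n^{-1/2+\omega}$ for a fixed $\omega>0$ the main term is super--polynomially small and $\P(\lambda_1^z\le E)\le n^{-D}$ follows directly from the rigidity bound \eqref{rigidity} (which forces $|\lambda_1^z-\gamma_1^z|\gtrsim\Delta\sim\delta^{3/2}\gg n^{-3/4+\omega/2}$, much beyond the rigidity scale). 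The relaxation of $|z|-1\gg n^{-1/2}$ to $|z|-1\gtrsim n^{-1/2}$ is handled as in \cite[Remark A.3]{maxRe}.

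The main obstacle is the uniform steepest--descent analysis of $K_n^z(x,x)$ through the gap: locating the saddle points of the relevant phase, showing that the dominant contribution is exactly $\ee^{-n\delta^2/2}$ with the correct powers of $x$ and $n$, and keeping everything uniform as $\delta\downarrow n^{-1/2}$, when the gap closes, the two inner spectral edges merge at the origin, and $x$ simultaneously approaches the hard edge --- it is this confluence of scales that is delicate. The comparison step, though technically involved because of the required fourth--moment matching, is by now routine once one allows an additive $n^{-D}$ rather than a multiplicative error.
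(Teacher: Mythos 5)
Your two--step scheme---an explicit Ginibre computation producing the rate $\ee^{-n\delta^2/2}$, followed by a Green function comparison to transfer the bound to general i.i.d.\ $X$---matches the overall architecture of the proof in \cite{SpecRadius}, to which this paper delegates Proposition~\ref{prop1}. In the Ginibre step, \cite{maxRe,SpecRadius} derive the explicit one--point density via a supersymmetric/contour--integral representation of $\E\,\Tr G^z$ rather than the Brezin--Hikami determinantal kernel plus steepest descent that you propose; both routes are in principle viable, and you are right that the uniform asymptotics as $\delta\downarrow n^{-1/2}$, where the gap closes and the hard edge merges with the bulk, is where all the work lies. Your observation that the averaged local law \eqref{average} only reproduces the trivial bound and that the exponential factor must come from a genuinely non-perturbative Ginibre input is correct and well made.

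The gap is in the transfer step. You partition the parameter range into $\delta\sim n^{-1/2}\cdot\mathrm{polylog}(n)$, where you invoke a single moment--matched GFT with a claimed comparison error $\lesssim n^{-3/2+o(1)}$, and $\delta\geq n^{-1/2+\omega}$ with fixed $\omega>0$, where rigidity \eqref{rigidity} gives $n^{-D}$. These two regimes do not exhaust $n^{-1/2}\ll\delta\leq 2n^{-1/2+\tau}$: for $\delta$ with $\log n \ll n\delta^2$ but $\delta=n^{-1/2+o(1)}$, the main term $n^{3/2}E^2\ee^{-n\delta^2/2}$ is already superpolynomially small, rigidity fails because the gap $\Delta\sim\delta^{3/2}$ is not separated from the rigidity error $\Delta^{1/9}n^{-2/3+\xi}$ by any fixed power of $n$, and a \emph{fixed} comparison error of size $n^{-3/2+o(1)}$ dominates the desired $n^{-D}$ for every $D>3/2$. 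A single Lindeberg step with fourth--moment matching can never produce an additive error of order $n^{-D}$ for arbitrary $D$. The mechanism used in \cite{SpecRadius} (and rerun in this paper's Appendix~\ref{app:tail_gft} for the closely related Proposition~\ref{prop_zz}, see the iteration around \eqref{iterate_m}--\eqref{steps}) is a \emph{self--consistent, iterated} GFT: one shows that the comparison error is itself bounded by a small negative power of $n$ times a quantity of the form $\P(\lambda_1^z\leq E+\text{shift})$, and iterating this self--improvement $O(D)$ times yields the $n^{-D}$ tail. Without this bootstrap, your argument proves the stated bound only for $D\leq 3/2$, not for every fixed $D$.
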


\bigskip

\section{Proofs of Theorems~\ref{thm:gumbel}, \ref{thm:poisson} and \ref{thm:maxRe} }
\label{sec:proof_main}

For the purpose of proving our main universality results, Theorems~\ref{thm:gumbel}, \ref{thm:poisson}
and   \ref{thm:maxRe},
 we will compare the statistics of the spectral radius and of the eigenvalue with the largest real part 
of general i.i.d. matrices directly with the ones of the complex Ginibre ensemble, which are
explicitly known. The key technical input  is the following Green function comparison theorem.
 Its proof  will be given in Sections \ref{sec:proof_gft}-\ref{sec:L_0}.

\begin{theorem}[Green function comparison]\label{main_thm}
	Fix any integer $k\geq 1$ and any sufficiently small $\epsilon,\tau,\nu>0$ with $\max\{\tau,\nu\}<\epsilon/100$. 	For 
	each $1\leq p\leq k$, let $f_p(z):\C \rightarrow \R$ be a  
 $C^2$-function such that 
	\begin{align}\label{f_norm}
		\|f_p\|_\infty \lesssim 1, \qquad \|\partial_z f_p(z)\|_\infty+\|\partial_{\bar z} f_p(z)\|_\infty \lesssim n^{1/2+\nu}, \qquad	\|\Delta_z f_p(z)\|_\infty \lesssim n^{1+2\nu},
	\end{align}
and assume that $f_p$ is compactly supported on the annulus 
	\begin{align}\label{f_cond}
		&\sqrt{\frac{\gamma_n}{4n}}-\frac{C_n}{\sqrt{4n \gamma_n}} \leq |z|-1 \leq \frac{n^{\tau}}{\sqrt{n}},
			\end{align}
	 with $\gamma_n$ given in (\ref{gamma}) and $C_n/\sqrt{\log n} \rightarrow 0$ as $n \rightarrow \infty$.   \nc
	Furthermore, let $\F(w_1,\cdots,w_k): \R^k \rightarrow \R$ be any smooth (but not necessarily bounded) function with uniformly bounded partial derivatives, \ie for some constants $C_{\bm{\alpha}}>0$, 
	\begin{align}\label{F_cond}
		\big|\partial^{\bm{\alpha}} \F(w_1,\cdots, w_k)\big| \leq C_{\bm{\alpha}}, \qquad \bm{\alpha}:=(\alpha_1,\cdots,\alpha_k), \qquad 1\leq |\bm{\alpha}|\leq 10.\nc
	\end{align}
	Then, we have
	\begin{align}\label{error}
		\left|\E\Big[\F\Big(\sum_{i=1}^n f_1(\sigma_i), \cdots, \sum_{i=1}^n f_k(\sigma_i)\Big)\Big] -\E^{\mathrm{Gin}}\Big[\F\Big(\sum_{i=1}^n f_1(\sigma_i), \cdots, \sum_{i=1}^n f_k(\sigma_i)\Big)\Big]\right|=O(n^{-\epsilon}).
	\end{align}
\end{theorem}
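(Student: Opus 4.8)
The plan is to prove the Green function comparison theorem by running a continuous interpolation (Lindeberg-type swapping / Ornstein–Uhlenbeck flow) between an arbitrary i.i.d.\ matrix $X$ and the complex Ginibre matrix, and controlling the change of $\E[\F(\sum_i f_1(\sigma_i),\dots,\sum_i f_k(\sigma_i))]$ along the flow. The first step is to rewrite each linear statistic $\sum_i f_p(\sigma_i)$ via Girko's formula, but---following the \emph{first key new idea} advertised in the introduction---after \emph{two} integrations by parts in $z$, so that
\[
\sum_{\sigma\in \mathrm{Spec}(X)} f_p(\sigma)=-\frac{1}{4\pi}\int_{\C} f_p(z)\int_0^\infty \Im\,\Tr\,\Delta_z G^z(\ii\eta)\,\dd\eta\,\dd^2 z,
\]
with the delicate regularisation of the condition $\lambda_1^z\neq 0$ (via the lower-tail bounds of Propositions~\ref{prop_zz} and \ref{prop1}) handled as in Section~\ref{sec:strategy_gft}. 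Because $f_p$ is bounded and only its derivatives are large, the $z$-integration is now harmless, and the quantity whose distribution we must track is the functional of $\Tr\,G^zFG^zF^{(*)}$ (and its $z$-derivatives) integrated in $\eta$ and $z$. The whole argument is then: (i) express everything as a smooth functional of traces of products of resolvents of the Hermitizations $H^z$; (ii) show this functional changes by $O(n^{-\epsilon})$ when the entry distribution is swapped from $\chi$ to complex Gaussian.

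The core estimate is the standard GFT telescoping: introduce $X_t=\sqrt{1-t}\,X+\sqrt{t}\,X^{\mathrm{Gin}}$ (or swap entries one at a time), differentiate $\E[\F(\cdots)]$ in $t$, apply a cumulant/It\^o expansion, and use that the first two moments match while the third moment vanishes by $\E\chi^2=0$ and Assumption~\ref{ass:mainass}. The contributions from the third-order terms cancel, and the fourth- and higher-order terms come with an extra $n^{-1/2}$ (or better) per matrix entry; summing over the $n^2$ entries leaves a net smallness. The subtle point is that each derivative of $\F$ produces, through the chain rule and Girko's formula, \emph{products of several resolvents} $G^{z_1}\cdots G^{z_k}$ at possibly distinct spectral parameters $z_p$ and $\ii\eta$, integrated down to $\eta\to 0$; to make the error terms summable one needs precisely the multi-resolvent local laws of Section~\ref{sec:tools}: the $|z_1-z_2|$-decorrelation bound \eqref{local_2g}--\eqref{local_2g_im} of Theorem~\ref{thm:2G}, and the $F$-improvement bounds \eqref{local_F}--\eqref{local_2F} of Theorem~\ref{theorem_F}, together with the single-resolvent law of Theorem~\ref{local_thm} and rigidity (Corollary~\ref{cor:rigidity}). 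The $\rho$-gains from $F,F^*$ are exactly what tame the a priori quadratically divergent $\int\dd\eta/\eta^3$, and the $|z_1-z_2|^{-1}$-decay is what makes the $\dd^2 z_1\,\dd^2 z_2$ integrals of the expansion terms convergent and of the right order.

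In detail I would: (1) fix the annulus cutoff in $z$ and the cutoff in $\eta$ at some $\eta_*\sim n^{-1+\delta}$ at the lower end and $\eta\sim n^{100}$ at the upper end, absorbing the tails using $\|G^z(\ii\eta)\|\lesssim|\lambda_1^z+\eta|^{-1}$ and the probabilistic lower-tail bounds; (2) split each $\eta$-integral into the ``small-$\eta$'' regime $\eta\lesssim\lambda_1^z$, where $G^z$ fluctuates and is governed by $\lambda_1^z$, and the ``large-$\eta$'' regime, where the multi-resolvent local laws give concentration around the deterministic $M$-terms; (3) perform the swapping/flow estimate, writing the $t$-derivative as a sum over entries of terms of the form (derivative of $\F$)$\times$(cumulant)$\times$(trace of resolvent products), and bound each using the local laws from Section~\ref{sec:tools} plus the deterministic bounds \eqref{local_2g_M}, \eqref{MFMF}; (4) check that all the resulting $n$-powers combine to $O(n^{-\epsilon})$ using $\max\{\tau,\nu\}<\epsilon/100$. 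The \textbf{main obstacle} is step (3) in the small-$\eta$ regime near $\eta\approx\lambda_1^z$: there the resolvents are neither deterministic nor negligible, the condition $\lambda_1^z\neq 0$ must be carefully regularised (as explained after \eqref{two_int_by_part}), and one must show that the joint law of the few smallest singular values $\lambda_1^{z_1},\dots,\lambda_1^{z_k}$ at mesoscopically separated $z_p$'s is, up to the errors in Proposition~\ref{prop_zz}, that of independent copies---this asymptotic independence, powered by the optimal eigenvector-overlap bound \eqref{overlap}, is the heart of the matter and the reason the multi-resolvent local laws had to be pushed down to the scale $|z_p-z_{p'}|\gg n^{-1/2}$.
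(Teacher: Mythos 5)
Your proposal takes essentially the same approach as the paper: Girko's formula, a probabilistic regularisation of the condition $\lambda_1^z\neq 0$ based on the tail bound of Proposition~\ref{prop1} and the asymptotic independence of Proposition~\ref{prop_zz}, an Ornstein--Uhlenbeck interpolation combined with a cumulant expansion, and the multi-resolvent/$F$-improved local laws of Theorems~\ref{thm:2G} and~\ref{theorem_F} to control the resulting resolvent traces. One organisational discrepancy is worth flagging: in the actual proof the two integrations by parts in~\eqref{girko2} are used only \emph{conceptually} (and concretely only in the extension to polynomially growing $\F$ in Section~\ref{sec:polyF}); the quantity that is actually propagated along the flow in Theorem~\ref{GFT} is the regularised $\wh\LL_f$ from~\eqref{L_0}, which still contains $\Delta_z f$ together with the smooth cutoff $q_z$, and the single crucial $z$-integration by parts happens only inside the derivative estimate of Lemma~\ref{lemma_key} (equation~\eqref{int_by_part}), exactly where the chiral cancellation $(G^zFG^z(\ii\eta_c))_{aB}\approx 0$ of Lemma~\ref{lemma_algebraic_cancel} is exploited. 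Starting directly from the doubly-integrated-by-parts formula, as your sketch suggests, runs into the issue the paper itself warns about below~\eqref{girko2} (the naive version collapses to zero); the paper circumvents this precisely by keeping the regularisation $q_z$ attached to the non-integrated-by-parts expression and only unwinding one $z$-derivative at the moment when the indicator $\one_{\lambda_1^z>E_0}$ is effectively in force.
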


\begin{remark}
\label{rem:maxre}
\textit{Theorem~\ref{main_thm} also holds true if instead of \eqref{f_cond}, $\{f_p\}_{p=1}^k$ are supported on the set
	\begin{align}\label{f_cond_2}	&\sqrt{\frac{\gamma'_n}{4n}}-\frac{C_n}{\sqrt{4n \gamma'_n}} \leq |z|-1 \leq \frac{n^{\tau}}{\sqrt{n}},  \qquad |\mathrm{arg}(z)| \leq \frac{n^{\tau/2}}{n^{1/4}},
	\end{align}
	 with $\gamma_n'$ given in (\ref{gamma_prime}) and $C_n/\sqrt{\log n} \rightarrow 0$ as $n \rightarrow \infty$. The proof is analogous and will be omitted.
	 The support~\eqref{f_cond_2}
	 is tailored  for proving the universality of the eigenvalue with the largest real part in Theorem~\ref{thm:maxRe}.
	 In both cases, $\gamma_n$ and $\gamma_n'$ are chosen such that 
	 if $C_n= O(1)$ then typically there are finitely ($n$-independent) many eigenvalues
	 in the sets \eqref{f_cond} and \eqref{f_cond_2}, respectively, hence $\sum_{i=1}^n f(\sigma_i)$ in
	 the argument of $\F$ is typically finite. }
\end{remark}

Note that in this Green function comparison theorem we use only that the first and second moments
of the two ensembles match, no higher moment matching is used. Furthermore, we stated Theorem~\ref{main_thm} for the class of functions $\mathcal{F}$ in \eqref{F_cond} to keep the proof simpler and because this is enough for
 proving convergence of the (shifted and rescaled) spectral radius to Gumbel
 in distribution. 
 However, the same result holds  for a larger class of test functions, that, in particular, allows us
to conclude the convergence  in   any finite moment sense 
(Remark~\ref{remark:mom}). 

	\begin{remark}
	\label{rem:betterGFT}
	\textit{
		Theorem \ref{main_thm}  holds true for any  $\F$ with derivatives of polynomial growth, i.e. 
		\begin{align}\label{F_gen}
		\big|\partial^{\bm{\alpha}} \F(w_1,\cdots, w_k)\big| \leq C_{\bm{\alpha}} \prod_{p=1}^k \big(1+|w_p|\big)^{C\alpha_p}, \qquad 1\leq |\bm{\alpha}|\leq 10
	\end{align}
	with some $C>0$.
Later in Section \ref{sec:polyF}  we will  briefly discuss the modifications needed to extend Theorem \ref{main_thm} to this larger class of functions.
}
\end{remark}

Using Theorem~\ref{main_thm}, we now prove the main results stated in Section \ref{sec:thm}, as well as the fact that with high probability there is a unique eigenvalue such that $|\sigma_1|$ is equal to the spectral radius~(\ie Lemma \ref{lemma:unique} below). For pedagogical reasons, we first present the proof of \eqref{spectral_radius} and \eqref{theta_uniform}, and then explain the minor modifications needed to prove Theorem~\ref{thm:gumbel} in full generality. Then we prove Lemma \ref{lemma:unique} and Theorem~\ref{thm:poisson}. 
 In this way we can first present the main ideas of the proofs without obfuscating them with  technical complications. Lastly, at the end of this section, using Remark~\ref{rem:maxre} we prove Theorem~\ref{thm:maxRe}.

\subsection{Proof of \eqref{spectral_radius}}
\label{sec:sp} 
Recall the labelling of the eigenvalues $|\sigma_1| \geq |\sigma_2| \geq \cdots \geq |\sigma_n|$. From 
\cite[Theorem 2.1]{AEK19}, we have a strong concentration estimate for the spectral radius $|\sigma_1|$, \ie for any small $\tau>0$
	\begin{align}\label{prec_sp}
		1-\frac{n^{\tau}}{\sqrt{n}} \leq |\sigma_1|\leq 1+\frac{n^{\tau}}{\sqrt{n}},
	\end{align}
	with a very high probability. For any set $\Omega \subset \C$, we let
	\begin{align}
		\mathcal{N}_{\Omega}:=\# \big\{ \sigma_i: ~ \sigma_i \in \Omega, \quad i\in[n]\big\} =
		\sum_{i=1}^n \one_{\Omega}(\sigma_i)
	\end{align}
 denote the number of eigenvalues within  $\Omega$, where $\one_{\Omega}$ is the indicator function of $\Omega$.  Then for any fixed\footnote{Inspecting the proof, 
  the same result holds for any $|r|\ll\sqrt{\log n}$.}  
  $r \in \R$ , using the upper bound in \eqref{prec_sp}, we have
 	\begin{align}\label{link}
		\P\left( |\sigma_1|\geq 1+\sqrt{\frac{\gamma_n }{4n}}+\frac{r}{\sqrt{4n \gamma_n}}\right)
		=\P\Big( \mathcal{N}_{\Omega_r} \geq 1\Big)+O(n^{-D})
		=\E\Big[\Phi\big(\mathcal{N}_{\Omega_r}\big)\Big]+O(n^{-D}),
	\end{align}
	where $\Omega_{r}\subset \C$ is a centered annulus given by
	\begin{align}\label{omega}
		\Omega_{r}:=\Big\{ z \; : \; 1+\sqrt{\frac{\gamma_n }{4n}}+\frac{r}{\sqrt{4n \gamma_n}}  \leq |z| \leq 1+\frac{n^{\tau}}{n^{1/2}}\Big\},
	\end{align}
	and $\Phi: \R_+ \rightarrow [0,1]$ is a smooth and non-decreasing cut-off function such that
	\begin{equation}\label{F_function}
		\Phi(w)=0, \quad \mbox{if} \quad 0 \leq w \leq 1/9; \qquad \Phi(w)=1, \quad \mbox{if} \quad w \geq 2/9.
	\end{equation}

	Next, we regularize the indicator function of $\Omega_{r}$
		 as follows. For any small $\nu>0$, choose two test functions $f^{\pm}_{r} \in \C^{2}_c(\C)$ such that
	\begin{align}\label{test}
		\one_{\Omega_{r+n^{-\nu}}} \leq f^{-}_{r} \leq \one_{\Omega_{r}} \leq f^{+}_{r} \leq \one_{\Omega_{r-n^{-\nu}}},
	\end{align}
	and they both satisfy the $L^{\infty}$ norm bounds in (\ref{f_norm}) as well as the support condition 
	 (\ref{f_cond}). Thus, using Theorem \ref{main_thm} for $k=1$, $\F(w)=\Phi(w)$ and $f(z)=f_r^\pm(z)$, we have
	\begin{align}\label{gft_estimate}
		\left| \E\Big[\Phi\big(\sum_{i=1}^n f^\pm_{r}(\sigma_i)\big)\Big]-\E^{\mathrm{Gin}}\Big[\Phi\big(\sum_{i=1}^n f^\pm_{r}(\sigma_i)\big)\Big]  \right|=O(n^{-\epsilon}).
	\end{align}
	Since $\Phi$ is a non-decreasing function,  the last two inequalities in (\ref{test}) implies that
	\begin{align}
		&\E\Big[\Phi\Big(\mathcal{N}_{\Omega_{r}}\Big)\Big]\leq \E\Big[\Phi\big(\sum_{i=1}^n f^{+}_{r}(\sigma_i)\big)\Big] \leq \E\Big[\Phi\Big(\mathcal{N}_{\Omega_{r-n^{-\nu}}}\Big)\Big]\label{linear_stat_eq}
	\end{align}
and the same holds for $\E$ replaced with $\E^{\mathrm{Gin}}$.

	 Next, using the first inequality in (\ref{linear_stat_eq}), the GFT estimate in (\ref{gft_estimate}) for $f^+_r$, and the second inequality in (\ref{linear_stat_eq}) for the Ginibre ensemble, we have
	\begin{align}\label{upper}
		\E\Big[\Phi\big(\mathcal{N}_{\Omega_{r}}\big)\Big]\leq \E\Big[\Phi\big(\sum_{i=1}^n f^{+}_{r}(\sigma_i)\big)\Big] \leq& \E^{\mathrm{Gin}}\Big[\Phi\big(\sum_{i=1}^n f^{+}_{r}(\sigma_i)\big)\Big]+O(n^{-\epsilon})\leq \E^{\mathrm{Gin}}\Big[\Phi\Big(\mathcal{N}_{\Omega_{r-n^{-\nu}}}\Big)\Big]+O(n^{-\epsilon}).
	\end{align}
	One can also obtain a similar lower bound using the first two inequalities
	in (\ref{test}) and (\ref{gft_estimate}) for $f^-_r$, \ie
	\begin{align}\label{lower}
		\E\Big[\Phi\big(\mathcal{N}_{\Omega_{r}}\big)\Big] \geq 
		\E^{\mathrm{Gin}}\Big[\Phi\Big(\mathcal{N}_{\Omega_{r+n^{-\nu}}}\Big)\Big]-O(n^{-\epsilon}).
	\end{align}
	Combining (\ref{upper}) and (\ref{lower}) with (\ref{link}), we hence obtain that
	\begin{align}\label{uu}
		\P^{\mathrm{Gin}}\Big(|\sigma_1|\geq 1+\sqrt{\frac{\gamma_n }{4n}}+\frac{r+n^{-\nu}}{\sqrt{4n \gamma_n}}\Big)-Cn^{-\epsilon} &\leq \P\Big(|\sigma_1|\geq 1+\sqrt{\frac{\gamma_n }{4n}}+\frac{r}{\sqrt{4n \gamma_n}}\Big)\nonumber\\
		& \leq \P^{\mathrm{Gin}}\Big( |\sigma_1|\geq 1+\sqrt{\frac{\gamma_n }{4n}}+\frac{r-n^{-\nu}}{\sqrt{4n \gamma_n}}\Big)+Cn^{-\epsilon},
	\end{align}
	for some constant $C>0$. From the  explicit Ginibre calculation in Corollary \ref{cor:Ginibre} (or \cite[Theorem 1]{R03}) for the complex Ginibre ensemble\nc, for any $r\in \R$ we have
     \begin{align}\label{spectral_radius_gin}
     	\lim_{n\rightarrow \infty}\P^{\mathrm{Gin}} \Big( |\sigma_1|\leq 1+\sqrt{\frac{\gamma_n }{4n}}+\frac{r}{\sqrt{4n \gamma_n}}\Big)=\ee^{-\ee^{-r}}.
     \end{align}	
       Since the limiting function in \eqref{spectral_radius_gin} is bounded and Lipschitz continuous on $[-C, \infty)$, we hence finish the proof of~(\ref{spectral_radius}) from~\eqref{uu}.

     \subsection{Proof of \eqref{theta_uniform}}
     \label{sec:arg}
     In this section we prove the uniform law \eqref{theta_uniform} for the argument of $\sigma_1$. 
     For any integer $k\geq 1$, we divide $\C$ into $k$ congruent angular sectors
     $$\Theta_p:=\Big\{ z \in \C:\quad  \frac{2\pi (p-1)}{k} \leq \arg(z) < \frac{2\pi p}{k}\Big\}, \qquad p\in [k].$$
     Then for any  fixed $(r_1,\ldots,r_k)\in \R^{k}$, we have
     \begin{align}\label{link_theta}
     	\P\left( \max_{\sigma_i\in\Theta_p}|\sigma_i|\geq 1+\sqrt{\frac{\gamma_n }{4n}}+\frac{r_p}{\sqrt{4n \gamma_n}},~\forall \,\, p\in [k] \right)
     	=&\E\left[\prod_{p=1}^k \Phi\big(\mathcal{N}_{\Omega_{r_p} \cap \Theta_p}\big)\right]+O(n^{-D}).
      \end{align}
     with $\Omega_{r_p}$ is defined as in (\ref{omega}) and $\Phi$ is given in (\ref{F_function}). 
     We define the regularisation of the indicator function of $\Omega_{r_p} \cap \Theta_p~(p\in [k])$ as in (\ref{test}), denoted by $f_{r_p}^\pm$, so that the conditions in (\ref{f_norm}) and (\ref{f_cond}) are both satisfied. Therefore for any $k$, using Theorem \ref{main_thm} for 
     \begin{align}\label{F_choice}
     	\F(w_1, \ldots, w_k)=\prod_{p=1}^k \Phi(w_p), \qquad f_p=f_{r_p}^\pm, \quad p\in [k],
     \end{align}
 with $\Phi$ defined in (\ref{F_function}),  we conclude that
     \begin{equation}
     \label{eq:limdistrun}
     \lim_{n\to\infty}\P\left( \max_{\sigma_i\in \Theta_p}|\sigma_i|\geq 1+\sqrt{\frac{\gamma_n }{4n}}+\frac{r_p}{\sqrt{4n \gamma_n}},~\forall p\in [k]\right)
     \end{equation}
     coincides with its Ginibre counterpart.  Note that all correlation functions  of the
      Ginibre eigenvalues are rotational invariant (see (\ref{det_pp}) later), which implies that the probability  in (\ref{eq:limdistrun}), even before the limit,  is a symmetric function in $(r_1,r_2,\ldots,r_k)$ under cyclic permutations\nc{\footnote{  In fact, more information is available about (\ref{eq:limdistrun}).      
      First, for each fixed $1\leq p\leq k$, the limiting law of the largest eigenvalue in the $p$-th angular sector $\Theta_p$
       is given by $\exp{\big(-\frac{1}{k}\ee^{-r_p}\big)}$ with  the rescaling in (\ref{eq:limdistrun}),
       see (\ref{k=1} later).
        By further computations using (\ref{det_pp}) and Proposition \ref{lemma_K},  the joint limiting distribution in (\ref{eq:limdistrun}) is indeed a symmetric function given by $\prod_{p=1}^k \exp{\big(-\frac{1}{k}\ee^{-r_p}\big)}$.}}.    
       We also note that
     \[
     |\sigma_1|=\max_{p\in [k]} \left\{\max_{\sigma_i\in \Theta_p}|\sigma_i| \right\}.
     \]
     This implies that $\P(\sigma_1 \in \Theta_p)$ is independent of the index $p$, hence $\P(\sigma_1 \in \Theta_p)=\frac{1}{k}$ for each $p\in [k]$. Repeating this argument for any integer $k\geq 1$, we obtain that the argument of $\sigma_1$ is uniformly distributed on $[0,2\pi)$ in the limit as $k$ increases. This proves~\eqref{theta_uniform}.
    
    \medskip

     Now we are ready to prove Theorem~\ref{thm:gumbel} in full generality. 
     
     \begin{proof}[Proof of Theorem~\ref{thm:gumbel}]
     
       For any fixed $-\infty <r_1<r_2<\infty$ and $0\leq \theta_1<\theta_2\leq 2\pi$, we aim to show that the limit of the joint distribution
     \begin{align}\label{joint}
     	\P\left( \sqrt{4n \gamma_n}\Big( |\sigma_1|-1-\sqrt{\frac{\gamma_n }{4n}} \Big) \in [r_1,r_2),\quad  \arg(\sigma_1) \in [\theta_1,\theta_2) \right)
     \end{align}	
  can be identified similarly to the proof of \eqref{spectral_radius}--\eqref{theta_uniform} using Theorem \ref{main_thm} by choosing an appropriate
   $\F$ function and test functions $\{f_p\}$ as in (\ref{F_choice}), together with
  an  inclusion-exclusion argument. This argument is standard and follows  the same spirit of the above proofs, so we 
  omit the details. In short, Theorem \ref{main_thm} implies that  the limit of the joint distribution in (\ref{joint})  
     coincides with its Ginibre counterpart. This, together with Corollary \ref{cor:Ginibre} in the 
     Appendix, concludes the proof.  
     \end{proof}

 We conclude this section with the following lemma:
 \begin{lemma}\label{lemma:unique}
 	Let $X$ be a complex i.i.d. random matrix satisfying Assumption \ref{ass:mainass}. Then for any (small) $\nu>0$, 
 	$$\lim_{n\rightarrow \infty}\P\big( \sqrt{4 n \gamma_n} \big(|\sigma_1|-|\sigma_2|\big) \leq n^{-\nu}\big)=0.$$
 \end{lemma}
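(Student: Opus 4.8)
The plan is to reduce the statement, via the Green function comparison theorem (Theorem~\ref{main_thm}), to a counting estimate for the \emph{complex Ginibre} ensemble, where the explicit determinantal structure makes the coincidence of two edge eigenvalues exponentially unlikely. Write $r_j:=\sqrt{4n\gamma_n}\bigl(|\sigma_j|-1-\sqrt{\gamma_n/4n}\bigr)$, so that the event in question is $\{r_1-r_2\le n^{-\nu}\}$. First I would localise $|\sigma_1|$: by the Gumbel convergence already established in \eqref{spectral_radius}, for every $\delta>0$ there is $M=M(\delta)$ with $\P(r_1\notin[-M,M])<\delta$ for all large $n$. On the complementary event, $\{r_1-r_2\le n^{-\nu}\}$ forces $r_1,r_2\in[-M-1,M]$ with $|r_1-r_2|\le n^{-\nu}$, so it suffices to bound the probability that some short radial window near the unit circle carries two eigenvalues.

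Next I would fix an auxiliary exponent $0<\nu'<\nu$ and cover $[-M-1,M]$ by $L=O(Mn^{\nu'})$ overlapping subintervals $J_\ell$ of length $2n^{-\nu'}$ whose left endpoints are spaced $n^{-\nu'}$ apart; then any set of diameter $\le n^{-\nu}\le n^{-\nu'}$ lies in some $J_\ell$. Let $A_\ell\subset\C$ be the centered annulus with radial coordinate $r(|z|)\in J_\ell$, and $\widetilde A_\ell\supset A_\ell$ the annulus obtained by enlarging $J_\ell$ by $n^{-\nu'}$ on each side; both have radial width of order $n^{-\nu'}/\sqrt{4n\gamma_n}$. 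Thus $\P(r_1-r_2\le n^{-\nu})\le\delta+\sum_{\ell=1}^{L}\P(\mathcal N_{A_\ell}\ge 2)+o(1)$. For each $\ell$ I would pick $f_\ell\in C^2_c(\C)$ with $\one_{A_\ell}\le f_\ell\le\one_{\widetilde A_\ell}$ varying on scale $n^{-\nu'}/\sqrt{4n\gamma_n}$; since $\nu'<\nu$, this leaves room to absorb the $\sqrt{\log n}\asymp\sqrt{\gamma_n}$ factor from the rescaling, so $f_\ell$ satisfies the bounds \eqref{f_norm} with the parameters of Theorem~\ref{main_thm} chosen consistently with $\max\{\tau,\nu\}<\epsilon/100$ and $\epsilon>100\nu$, and $f_\ell$ is supported in \eqref{f_cond} once the free parameter $C_n$ there (which we may let grow arbitrarily slowly) exceeds $M+2$. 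Taking a fixed smooth $\Psi:\R\to[0,1]$ with $\Psi\equiv0$ on $(-\infty,5/4]$ and $\Psi\equiv1$ on $[7/4,\infty)$ and using $\mathcal N_{A_\ell}\le\sum_i f_\ell(\sigma_i)\le\mathcal N_{\widetilde A_\ell}$, we get $\P(\mathcal N_{A_\ell}\ge 2)\le\E[\Psi(\sum_i f_\ell(\sigma_i))]$; Theorem~\ref{main_thm} with $k=1$ replaces $\E$ by $\E^{\mathrm{Gin}}$ up to $O(n^{-\epsilon})$, and since $\Psi$ vanishes on $(-\infty,5/4]$ and $\mathcal N_{\widetilde A_\ell}$ is an integer this is at most $\P^{\mathrm{Gin}}(\mathcal N_{\widetilde A_\ell}\ge 2)+O(n^{-\epsilon})$.

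It remains to estimate $\P^{\mathrm{Gin}}(\mathcal N_{\widetilde A_\ell}\ge 2)$. Here I would use that the complex Ginibre eigenvalues form a determinantal point process, so $\P^{\mathrm{Gin}}(\mathcal N_{\widetilde A_\ell}\ge 2)\le\tfrac12\E^{\mathrm{Gin}}[\mathcal N_{\widetilde A_\ell}(\mathcal N_{\widetilde A_\ell}-1)]\le\tfrac12(\E^{\mathrm{Gin}}\mathcal N_{\widetilde A_\ell})^2$, the second inequality being the negative-correlation bound $\varrho_2(z,w)=\varrho_1(z)\varrho_1(w)-|K_n(z,w)|^2\le\varrho_1(z)\varrho_1(w)$ for the correlation functions. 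From the explicit Ginibre edge asymptotics recorded in Corollary~\ref{cor:Ginibre} (the same input behind \eqref{spectral_radius_gin}), the eigenvalue intensity in the $r$-coordinate is bounded by a constant depending only on $M$ on $[-M-2,M]$, whence $\E^{\mathrm{Gin}}\mathcal N_{\widetilde A_\ell}\lesssim_M n^{-\nu'}$ uniformly in $\ell$. Summing over the $L=O(Mn^{\nu'})$ annuli gives $\sum_{\ell=1}^{L}\P(\mathcal N_{A_\ell}\ge 2)\lesssim_M n^{\nu'}\bigl(n^{-2\nu'}+n^{-\epsilon}\bigr)=n^{-\nu'}+n^{\nu'-\epsilon}\to 0$ because $\epsilon>100\nu>\nu'$. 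Combining with the first step, $\limsup_{n\to\infty}\P(\sqrt{4n\gamma_n}(|\sigma_1|-|\sigma_2|)\le n^{-\nu})\le\delta$, and letting $\delta\downarrow0$ proves Lemma~\ref{lemma:unique}.

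The main obstacle is the bookkeeping in the last two steps: detecting a coincidence at scale $n^{-\nu}$ forces us to split the unit-width edge window into $\sim n^{\nu'}$ sub-annuli and union-bound over all of them, so the per-annulus comparison error $O(n^{-\epsilon})$ must be beaten by taking $\epsilon>\nu'$ (allowed by the freedom in Theorem~\ref{main_thm}), and the cutoff $\Psi$ must be engineered so that $\Psi(\sum_i f_\ell(\sigma_i))$ simultaneously dominates $\one\{\mathcal N_{A_\ell}\ge 2\}$ before the comparison and, after it, still forces $\mathcal N_{\widetilde A_\ell}\ge 2$ on the Ginibre side. This last point is exactly what makes the sharp determinantal estimate $\varrho_2\le\varrho_1\otimes\varrho_1$ applicable, keeping the Ginibre contribution at size $(\E^{\mathrm{Gin}}\mathcal N_{\widetilde A_\ell})^2\sim n^{-2\nu'}$ rather than $\E^{\mathrm{Gin}}\mathcal N_{\widetilde A_\ell}\sim n^{-\nu'}$, which would be too large to survive the union bound. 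A shorter but less self-contained alternative would be to read the statement off the Poisson point process convergence of Theorem~\ref{thm:poisson}.
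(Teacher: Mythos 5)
Your proposal is correct, and it follows the same high‑level skeleton as the paper's proof (localize $r_1$, cover the relevant $r$‑window by $O(n^{\nu'})$ sub‑intervals, apply Theorem~\ref{main_thm} on each one, union‑bound), but the two ingredients that make the argument close are genuinely different from what the paper does.

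First, the event you feed into the GFT is $\{\mathcal{N}_{A_\ell}\ge 2\}$, captured through a single test function $f_\ell$ and a fixed cutoff $\Psi$, so you only need Theorem~\ref{main_thm} with $k=1$. The paper instead bounds the joint event $\{r_1\in[a_k,a_{k+1}),\, r_2\in[a_k-n^{-\nu},a_{k+1}+n^{-\nu})\}$ directly and invokes ``an appropriate $\mathcal{F}$'' of the form \eqref{F_choice}; expressing that joint event through linear statistics is slightly more delicate (it really encodes $\mathcal{N}_{C_k}=0$ and $\mathcal{N}_{B_k\cup C_k}\ge 2$), and your reduction to a one‑parameter counting event is cleaner in this respect. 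Second, on the Ginibre side the paper exploits Kostlan's identity \eqref{kostlan}, turning the ordered moduli into order statistics of independent $\chi_{2k}$ random variables, and then asserts the $n^{-2\nu}$ bound by a ``direct computation'' on this explicit product structure. You instead use the determinantal negative‑correlation inequality $\varrho_2\le\varrho_1\otimes\varrho_1$, giving $\P^{\mathrm{Gin}}(\mathcal{N}_{\widetilde A_\ell}\ge 2)\le\tfrac12(\E^{\mathrm{Gin}}\mathcal{N}_{\widetilde A_\ell})^2$, together with the intensity bound coming from \eqref{K_zz}. Kostlan is extremely slick but is special to the radial marginal of the Ginibre process; the determinantal bound is more robust (it would also handle angular sectors or the rightmost‑eigenvalue geometry with no change) at the cost of a marginally longer computation. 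Your bookkeeping of the exponents — introducing $\nu'<\nu$, choosing the GFT parameter $\nu_{\mathrm{GFT}}\ge\nu'$ to absorb the $\sqrt{\gamma_n}$ from the rescaling, and noting that the constraint $\epsilon>100\,\nu_{\mathrm{GFT}}>\nu'$ lets the per‑window error $n^{-\epsilon}$ beat the $n^{\nu'}$ windows — is exactly the point where the union bound could have failed, and you handle it correctly.
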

\begin{proof}[Proof of Lemma \ref{lemma:unique}]
	Fix small $\epsilon,\nu>0$ with $\nu \leq \epsilon/100$. Normalize the moduli of eigenvalues as
	\begin{align}\label{norm_x}
		r_i:=\sqrt{4n\gamma_n }\Big(|\sigma_i|-1-\sqrt{\frac{\gamma_n}{4n}}\Big).
	\end{align}
Then from \cite[Theorem 2.2]{SpecRadius}, for any sequence $C_n \rightarrow \infty$, we have 
	$$\lim_{n\rightarrow \infty} \P\big(|r_1| \geq C_n\big)=0.$$ 
\nc
Thus we have
	\begin{align}\label{inqab}
	\P\big( \sqrt{4 n \gamma_n} \big(|\sigma_1|-|\sigma_2|)\leq n^{-\nu}\big)&=\P\big( |r_1-r_2| \leq n^{-\nu}\big)=\P\big( | r_1-r_2| \leq n^{-\nu},|r_1| \leq C_n \big)+o_n(1)\nonumber\\
	\leq &\sum_{k=0}^{2 \lceil C_n \rceil n^{\nu}} \P\Big( r_1 \in[ a_k, a_{k+1}),~   r_2 \in [a_k -n^{-\nu}, a_{k+1}+n^{-\nu}) \Big)+o_n(1),
	\end{align}	
 with $a_k:=-C_n+kn^{-\nu}$, for $0\leq k\leq 2 \lceil C_n \rceil n^{\nu}$, 
 and $[-C_n,C_n] \subset \bigcup_{k} [a_k,a_{k+1}]$. Choose the sequence $C_n \ll \sqrt{\log n}$ sufficiently slowly 
 diverging  
  and choose an appropriate function $\F$ and functions $f^\pm$ as in (\ref{F_choice}) satisfying the conditions in (\ref{f_norm})-(\ref{F_cond}). By Theorem \ref{main_thm}, the right side of (\ref{inqab}) coincides with its Ginibre counterpart up to an error term $n^{-\epsilon}$.  
 Note that for Ginibre matrices
  Kostlan's observation~\cite{Kostlan92}  (see Remark \ref{rem:kostlan} below) implies that, for any $1\leq k\leq n$,
\begin{align} 
 (r_1,r_2, \ldots, r_k)\stackrel{\mathrm{d}}{=} (y_1,y_2,\ldots, y_k), \qquad y_k:=\sqrt{4n\gamma_n }\Big(x_k-1-\sqrt{\frac{\gamma_n}{4n}}\Big),
\end{align}
where $x_1\ge x_2\ge \ldots \ge x_n$  is the ordered statistics
of $n$ independent chi-distributed random variables $\{\chi_{2k}\}_{k\in[n]}$. By a direct computation we have
\begin{align}
	&\P^{\mathrm{Gin}}\Big( r_1 \in[ a_k, a_{k+1}),~   r_2 \in  [a_k -n^{-\nu}, a_{k+1}+n^{-\nu}) \Big)\nonumber\\
	=&\P\Big( y_1+C_n \in \big[ kn^{-\nu}, (k+1)n^{-\nu}\big),~   y_2+C_n \in
	 \big[ (k-1)n^{-\nu} , (k+2)n^{-\nu}\big) \Big) \lesssim  n^{-2 \nu},
\end{align}
with $C_n \ll \sqrt{\log n}$. Summing up the terms with $0\leq k\leq 2 \lceil C_n \rceil n^{\nu}$ in (\ref{inqab}), the RHS of \eqref{inqab} goes to zero, hence
we conclude Lemma~\ref{lemma:unique}. 
\end{proof}

\subsection{Proof of Theorem~\ref{thm:poisson}}\label{sec:ppp}
By the rescaling of the eigenvalues as in \eqref{scale_eig}, we have
\begin{align}\label{ppprove}
	r_i=\sqrt{4\gamma_n n} \Big(|\sigma_i|-1-\sqrt{\frac{\gamma_n}{4n}}\Big), \qquad\quad \theta_i=\arg(\sigma_i).
\end{align}
Then the test function $f(z):=g(h(z))$ with $g\in C^2_c(\C)$ and $h(z)=\sqrt{4 n \gamma_n} \big(|z|-1-\sqrt{\frac{\gamma_n}{4n}} \big) \frac{z}{|z|}$~(which is chosen so that $f(\sigma_i)=g(r_i \ee^{\ii \theta_i})$) satisfies the $L^\infty$ norm bounds in (\ref{f_norm}) and the support condition in (\ref{f_cond}). Hence Theorem \ref{thm:poisson} follows directly by using Theorem \ref{main_thm} for $k=1$, choosing $\F(w)=\ee^{-w}$ for $w\in \R_+$, in combination with the explicit Ginibre computations in Corollary \ref{cor:Ginibre} in the Appendix. 

\qed

\subsection{Proof of Theorem~\ref{thm:maxRe}}\label{sec:rightmost}

In this section we present the minor modifications needed to prove the universality of the rightmost eigenvalue. 
The local laws in  \cite[Theorem 1.2]{BYY14} and \cite[Theorem 2.1]{AEK19}
 imply that, for any small $\tau>0$ there exists an eigenvalue in the rectangle
     \begin{align}\label{rect}
     	\left[ 1-\frac{n^{\tau}}{n^{1/2}},  1+\frac{n^{\tau}}{n^{1/2}}\right] \times \left[ -\frac{n^{\tau/2}}{n^{1/4}},\frac{n^{\tau/2}}{n^{1/4}} \right]
     \end{align}
with very high probability, according to the curvature of the boundary. Combining this with the strong concentration estimate in \eqref{prec_sp}, we know that the rightmost eigenvalue is indeed located in the rectangle given in (\ref{rect}). Thus, instead of the annulus  $\Omega_r$ in \eqref{omega}, we choose a rectangle given by
     \begin{align}\label{omega_rec}
     	\Omega'_x:= \left[ 1+\sqrt{\frac{\gamma'_n }{4n}}+\frac{x}{\sqrt{4n \gamma'_n}},  1+n^{-1/2+\tau}\right] \times \left[ -\frac{n^{\tau/2}}{n^{1/4}},\frac{n^{\tau/2}}{n^{1/4}} \right],
     \end{align}
     with $\gamma'_n$ in (\ref{gamma_prime}). Choosing similar test functions as in (\ref{test}), they both satisfy the $L^\infty$ norm bounds in (\ref{f_norm}) and the support condition in (\ref{f_cond_2}). Thus using Remark \ref{rem:maxre} in combination with \cite[Theorem 1]{maxRe_Gin} for the Ginibre ensemble, one can prove the Gumbel law in (\ref{maxRe}) for the rightmost eigenvalue as in Section \ref{sec:sp}. The proof of the Poisson point process in \eqref{maxRe_ppp} is analogous to Section~\ref{sec:ppp}, but with the different scaling in \eqref{gamma_prime}, in combination with \cite[Theorem 2]{maxRe_Gin} for the Ginibre ensemble, and so omitted.

\section{Proof of the local laws, Theorems~\ref{thm:2G} and  \ref{theorem_F} \nc}

\label{sec:G1G2llaw}

Consider the  following matrix valued stochastic differential equation,
the Ornstein--Uhlenbeck flow,
\begin{equation}\label{OU}
\dd X_t=-\frac{1}{2}X_t\dd t+\frac{\dd B_t}{\sqrt{n}},
\end{equation}
with $B_t$ being an $n\times n$
 matrix whose entries are i.i.d. standard complex Brownian motions. Set
 \begin{equation}
 \label{eq:bigstoch}
   W_t:=\left(\begin{matrix}
0 & X_t \\
X_t^* & 0
\end{matrix}\right)
\end{equation}
 to be the Hermitization of $X_t$. Note that $W_t$ solves
 \begin{equation}
 \label{eq:bigflow}
 \dd W_t=-\frac{1}{2}W_t\dd t+\frac{\dd \mathfrak{B}_t}{\sqrt{n}},
 \end{equation}
 with $\mathfrak{B}_t$ being the Hermitization of $B_t$. Define a 
matrix $\Lambda\in \C^{2n\times 2n}$ containing   the spectral parameters, written in a 
$2\times 2$ block form  as follows
\begin{equation}
\label{def:Lambda}
\Lambda:=Z+\ii\eta, \qquad
Z=\left(\begin{matrix}
0 & z \\
\overline{z} & 0
\end{matrix}\right), \qquad \eta \in  \R, \qquad z \in \C. 
\end{equation}
The main object of interest of this section is the evolution of the resolvent $(W_t-\Lambda_t)^{-1}$ (and of certain products of two such resolvents) along the {\it characteristics  flow} of the spectral parameters (see also \cite[Eq. (5.3)]{CES22}
\begin{equation}
\label{eq:matchar}
\partial_t \Lambda_t=-\mathcal{S}[M(\Lambda_t)]-\frac{\Lambda_t}{2}, \qquad t\ge 0,
\end{equation}
where $\mathcal{S}[\cdot]$ is the covariance operator \eqref{cov} and $M(\Lambda_t):=M^{z_t}(\ii\eta_t)$.
We point out that if the initial condition $\Lambda_{i,0}$ of \eqref{eq:matchar} is constant diagonal in each of its four $n\times n$ blocks, as in our case from \eqref{Mmatrix},  
then this structure persists for $\Lambda_t$ for any $t\ge 0$ and \eqref{eq:matchar}  simplifies
to two coupled scalar equations:
\begin{equation}
\label{eq:scalchar}
\partial_t\eta_t=-\Im m^{z_t}(\ii\eta_t)-\frac{\eta_t}{2}, \qquad\quad \partial_t z_t=-\frac{z_t}{2}.
\end{equation}
Notice that the evolution of $\eta_t$ depends on $z_t$ through $\Im m^{z_t}$, but the evolution of $z_t$ is independent of $\eta_t$. We will often use the fact that $\Lambda_t$ and the pair $(z_t, \eta_t)$ carry the same information.
We note that both the matrix Dyson equation \eqref{MDE} and the characteristic flow~\eqref{eq:matchar} literally
hold true for arbitrary Hermitian {\it deformation matrix} $Z$, but then neither $\Lambda_t$ nor $M(\Lambda_t)$
is constant block diagonal.

Throughout this section we will always consider the characteristics \eqref{eq:matchar} for times smaller than the maximal time
\begin{equation}
T^*(\eta_0,z_0):=\sup\{ t : \mathrm{sgn}(\eta_t)= \mathrm{sgn}(\eta_0)\},
\end{equation}
for which the diagonal part of the characteristics $\ii \eta_{T^*}$ reaches the real axis. 
In  this section we will always consider initial conditions $\eta_0,z_0$ such that $|\eta_0|+|z_0|\lesssim 1$ (even if not stated explicitly); this implies that $\lVert\Lambda_0\rVert\lesssim 1$. \nc
Note that $T^*\lesssim 1$ as a consequence of $\lVert \Lambda_0 \rVert\lesssim 1$.  
Furthermore, since $\Im m^{z_t}(\ii\eta_t)\cdot \eta_t >0$ (see \eqref{m_function}), 
 $t\to |\eta_t|$ is strictly decreasing,
 while $z_t$ moves towards the origin of $\C$ by~\eqref{eq:scalchar}.
 In the following lemma we summarize several trivially checkable properties of the characteristics \eqref{eq:scalchar}:
\begin{lemma}
\label{lem:usinf}
Let $\Lambda_t$ be the solution of \eqref{eq:matchar} with  some
initial condition $\Lambda_0$ and let $M_t:=M^{z_t}(\ii\eta_t)$, $\rho_t:=\pi^{-1}|\Im m_t|$, then for any 
$0\le s\le t\le T^*(\eta_0, z_0)$ we have 
\begin{equation}
\label{eq:relchar}
z_t=e^{-(t-s)/2}z_s, \qquad M_t=e^{(t-s)/2}M_s, 
\qquad  \frac{|\eta_t|}{\rho_t}=e^{s-t}\frac{|\eta_s|}{\rho_s}-\pi(1-e^{s-t});
\end{equation}
in particular  $t\mapsto |\eta_t|/\rho_t$ is a decreasing function. Additionally, for $0\le s\le t\le T^*(\eta_0, z_0)$, we have
\begin{equation}
\label{eq:addetarho}
|\eta_t|\rho_t=|\eta_s|\rho_s-\pi(1-e^{s-t})\rho_t^2.
\end{equation}
 Furthermore, for any $\alpha> 1$ there is a $C_\alpha$ such that 
\begin{equation}
\label{eq:relineta}
\int_s^t \frac{1}{|\eta_r|^\alpha}\,\dd r\le 
 \frac{C_\alpha}{|\eta_t|^{\alpha-1}\rho_t}, \qquad \mbox{and} \qquad\quad \int_s^t \frac{\rho_r}{|\eta_r|}\,\dd r= \frac{1}{\pi}\left[\log\left(\frac{\eta_s}{\eta_t}\right)-\frac{t-s}{2}\right].
\end{equation}
\end{lemma}
\begin{proof}
The first relation in \eqref{eq:relchar} immediately follows by solving the second equation in \eqref{eq:scalchar}. To obtain the second relation in \eqref{eq:relchar} we rewrite the MDE in \eqref{MDE} using (\ref{def:Lambda}) (see also \cite[Lemma A.1]{CES22}):
\[
-\frac{1}{M_t}=\Lambda_t+\mathcal{S}[M_t],
\]
with $\mathcal{S}$ from \eqref{cov}. Differentiating this equation in time and we obtain (here the dot denotes the time derivative, e.g. $\dot{M}_t:=\dd M_t/\dd t$)
\[
M_t^{-1}\dot{M}_tM_t^{-1}=\dot{\Lambda}_t+\mathcal{S}[\dot{M}_t]=-\mathcal{S}[M_t]-\frac{\Lambda_t}{2}+\mathcal{S}[\dot{M}_t]=-\frac{1}{2}\mathcal{S}[M_t]-\frac{M_t^{-1}}{2}+\mathcal{S}[\dot{M}_t].
\]
We point out that in the first equality we used \eqref{eq:matchar}. Multiplying both sides by $M_t$ we obtain $\mathcal{B}[\dot{M}_t]=\frac{1}{2}\mathcal{B}[M_t]$, with $\mathcal{B}=\mathcal{B}_{12}$ from (\ref{eq:defstabop}) with $z_1=z_2, \eta_1=\eta_2$. Inverting the stability operator $\mathcal{B}$ we obtain the desired result. Next, using the second relation in \eqref{eq:relchar} together with the first equation in \eqref{eq:scalchar}, and that $\rho_t=\pi^{-1} \langle \Im M_t\rangle$, we obtain
\[
\partial_t \frac{\eta_t}{\rho_t}=-\pi-\frac{\eta_t}{\rho_t},
\]
which after integration implies the third relation in \eqref{eq:relchar}. The proof of \eqref{eq:addetarho} is completely analogous and so omitted.

We now turn to the proof of \eqref{eq:relineta}. We only present the proof of the first bound, as the second relation can be obtained in a completely analogous way. By \eqref{eq:scalchar} it follows (assume that $\eta_r>0$ for simplicity of notation)
\[
\int_s^t \frac{\pi\rho_r}{\eta_r^\alpha}\,\dd r=-\int_s^t \frac{\partial_r \eta_r}{\eta_r^\alpha}\,\dd r+O\left(\frac{1}{\rho_s\eta_s^{\alpha-2}}+\frac{1}{\rho_t\eta_t^{\alpha-2}}\right)
\le \frac{C_\alpha}{\eta_t^{\alpha-1}},
\]
where in the last inequality we used that $t\mapsto \eta_t$ is decreasing and that $\eta_t\le 1$. This implies the first bound in \eqref{eq:relineta} by noticing that $\rho_r\sim\rho_t$ for any $r\in [0,t]$ by the second relation in \eqref{eq:relchar}.
\end{proof}

 Additionally, by standard ODE theory we have (see \cite[Lemma 5.2]{CES22} for an analogous proof):
\begin{lemma}\label{lem:ode}
Fix $0<T\le 1$, and pick $\Lambda$ of the form~\eqref{def:Lambda} 
 with $|\eta|>0$, $|z|\ge 1$ and $\|\Lambda\|\lesssim 1$. 
 Then there exists an initial condition $\Lambda_0$, with $T^*(\eta_0,z_0)> T$, $\|\Lambda_0\|\lesssim 1$,
 such that the solution $\Lambda_t$ of  \eqref{eq:matchar} with this initial condition at time $T$
  satisfies $\Lambda_T=\Lambda$. 
  Furthermore, there exist constants $c_1,c_2>0$ such that $|z_0|^2-1\ge c_1 T$ and $|\eta_0|/\rho_0\ge c_2 T$.
\end{lemma}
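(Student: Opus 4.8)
The plan is to construct $\Lambda_0$ by running the characteristic flow \eqref{eq:scalchar} \emph{backwards} in time, starting from the prescribed endpoint $(\eta,z)$ at time $T$ and flowing down to time $0$. Since \eqref{m_function} and \eqref{eq:scalchar} are odd under $\eta\mapsto-\eta$, we may assume $\eta>0$. The $z$-component of \eqref{eq:scalchar} is explicitly solvable, $z_t=e^{-t/2}z_0$, which forces the choice $z_0:=e^{T/2}z$; then $|z_t|=e^{(T-t)/2}|z|$ satisfies $1\le|z_t|\lesssim1$ along the whole trajectory, and in fact $|z_t|>1$ for $t<T$, so the flow stays in the gapped regime (the cusp $|z|=1$ can occur only at the terminal time $t=T$, if $|z|=1$). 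The $\eta$-component is then the only genuine degree of freedom: setting $s:=T-t$, $\tilde\eta_s:=\eta_{T-s}$, $\tilde z_s:=e^{s/2}z$, it solves $\partial_s\tilde\eta_s=\Im m^{\tilde z_s}(\ii\tilde\eta_s)+\tfrac12\tilde\eta_s$ with $\tilde\eta_0=\eta>0$.

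First I would establish existence, positivity and an a priori bound for this backward scalar ODE. The right-hand side is smooth in $\tilde\eta_s>0$ (away from the cusp, which the backward flow never revisits since $\tilde\eta_s$ increases) and strictly positive, so by standard ODE theory the solution is unique, strictly increasing, and stays in $(\eta,\infty)$; using $|\Im m^z(\ii\eta)|\lesssim1$ uniformly for $|z|\lesssim1$ one has $\partial_s\tilde\eta_s\le C+\tfrac12\tilde\eta_s$, and Grönwall on $[0,T]\subseteq[0,1]$ yields $\tilde\eta_s\lesssim1$. Put $\eta_0:=\tilde\eta_T$ and $\Lambda_0:=Z_0+\ii\eta_0$ with $Z_0$ as in \eqref{def:Lambda}; then $0<\eta_0\lesssim1$ and $\|\Lambda_0\|\le|z_0|+\eta_0\lesssim1$, and by construction the \emph{forward} solution of \eqref{eq:scalchar} started at $\Lambda_0$ satisfies $\Lambda_T=\Lambda$. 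Moreover $\eta_t=\tilde\eta_{T-t}>0$ on $[0,T]$ and $\eta_T=\eta>0$, so the forward solution extends past $T$ with $\eta_t$ of the same sign; hence $T^*(\eta_0,z_0)>T$.

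It then remains to verify the two quantitative lower bounds. For the first, $|z_0|^2-1=e^{T}|z|^2-1\ge e^{T}-1\ge T$, so $c_1=1$ works. For the second, I would apply the exact identity \eqref{eq:relchar} of Lemma~\ref{lem:usinf} with $s=0$ and $t=T\le T^*$, which gives $|\eta_0|/\rho_0=e^{T}\,|\eta_T|/\rho_T+\pi(e^{T}-1)\ge\pi(e^{T}-1)\ge\pi T$, recalling $\rho_T=\pi^{-1}\Im m^{z}(\ii\eta)>0$ since $\eta>0$; so $c_2=\pi$ works. The one point that needs (mild) care is well-posedness of the scalar flow where $\Im m^z(\ii\eta)$ is merely Hölder rather than Lipschitz — namely near $\eta=0$ and near $|z|=1$ — but this is exactly what flowing \emph{backwards} avoids, since then $\tilde\eta_s$ is monotone increasing and bounded away from $0$ while $|\tilde z_s|>1$ for all $s>0$; everything else is routine ODE theory combined with the already established properties of the characteristics.
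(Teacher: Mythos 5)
Your proof is correct, and since the paper defers this lemma to ``standard ODE theory'' without writing out the argument, your write-up supplies exactly the content that phrase is standing in for. The key elements are all present and sound: the time-reversal reduces the two-point boundary problem to a backward initial value problem; the $z$-component is explicitly inverted; the backward $\eta$-ODE has a strictly positive right-hand side, so $\tilde\eta_s$ is increasing and stays away from the degenerate region $\eta=0$, where $\Im m^z(\ii\eta)$ loses Lipschitz regularity; Gr\"onwall with $|\Im m|\lesssim 1$ gives $\|\Lambda_0\|\lesssim 1$; and, pleasantly, you get the two quantitative lower bounds for free from explicit algebra rather than estimates, namely $|z_0|^2-1=e^T|z|^2-1\ge e^T-1\ge T$ and, via the exact identity in Lemma~\ref{lem:usinf}, $|\eta_0|/\rho_0=e^T|\eta_T|/\rho_T+\pi(e^T-1)\ge\pi T$. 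One micro-point worth stating explicitly if you expand this: $m^z(\ii\cdot)$ is in fact real-analytic on $(0,\infty)$ (it is a Stieltjes transform), so the forward flow is also uniquely solvable along the constructed trajectory, guaranteeing that the forward solution from $\Lambda_0$ really is the time-reversal of the backward one and hence hits $\Lambda$ at time $T$; your monotonicity remark already makes this clear, but it is the uniqueness of the \emph{forward} flow that closes the loop.
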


From now on we will consider the evolution of two different characteristics $\Lambda_{1,t}, \Lambda_{2,t}$
with maximal times $T^*_i:= T^*(\eta_{i,0}, z_{i,0})$, $i=1,2$.
Note that the first two relations in \eqref{eq:relchar} imply that
\begin{equation}
\label{eq:z1z2eq}
|z_{1,s}-z_{2,s}|\sim |z_{1,t}-z_{2,t}| \quad\mbox{and} \quad \rho_{i, s}\sim \rho_{i,t} \quad 
\mathrm{for} \quad s,t\in [0, \min_i T_i^*).
\end{equation}
 
 For $i=1,2$, define the "resolvent" of $W_t$ with the generalised spectral parameter $\Lambda_{i,t}$:
 \begin{equation}\label{def:Gt}
 G_{i,t}:=G_t^{z_{i,t}}(\ii\eta_{i,t})=(W_t-\Lambda_{i,t})^{-1}.
\end{equation}
We now describe the time evolution of $ \langle G_{1,t}A_1G_{2,t}A_2\rangle$ for general,
time independent deterministic matrices $A_1,A_2\in\C^{2n\times 2n}$.
 By It\^{o}'s formula and~\eqref{def:Gt}  (see Appendix~\ref{sec:rand} for its derivation), we have
\begin{equation}
\label{eq:flowbefchar}
\begin{split}
\dd \langle G_{1,t}A_1G_{2,t}A_2\rangle&=\sum_{a,b=1}^{2n}\partial_{ab}\langle G_{1,t}A_1G_{2,t}A_2\rangle\frac{\dd \mathfrak{B}_{ab,t}}{\sqrt{n}}+\langle G_{1,t}A_1G_{2,t}A_2\rangle\dd t \\
&\quad+2\sum_{i\ne j} \langle G_{1,t}A_1G_{2,t}E_i\rangle\langle G_{2,t}A_2G_{1,t}E_j\rangle\dd t \\
&\quad+ \langle (G_{1,t}-M_{1,t})\rangle\langle G_{1,t}^2A_1G_{2,t}A_2\rangle\dd t + \langle (G_{2,t}-M_{2,t})\rangle\langle G_{2,t}^2A_2G_{1,t}A_1\rangle\dd t,
\end{split}
\end{equation}
where the sum $\sum_{i\ne j}$ in the second line indicates a summation
over the matrix pairs $(E_i,E_j)\in \{(E_1,E_2), (E_2,E_1)\}$ (this notation will be often used in 
 this section even if not stated explicitly). 
Here $\partial_{ab}$ denotes the directional derivative $\partial_{w_{ab}}$,  where $w_{ab}$ are the entries of the Hermitized matrix $W_t$. 

The deterministic approximation of  $G_{1,t}AG_{2,t}$,
for any deterministic $A\in\C^{2n\times 2n}$, is given by
\begin{equation}
\label{eq:M12}
M_{12,t}^A:=M^{A}(\eta_{1,t}, z_{1,t},\eta_{2,t},z_{2,t}):=\mathcal{B}_{12,t}^{-1}[M_{1,t}AM_{2,t}].
\end{equation}
 Here the two--body stability operator $\mathcal{B}_{12,t}$ is 
 defined exactly as in \eqref{eq:defstabop} with $M^{z_i}(\ii\eta_i)$ replaced with 
 its time--dependent version $M^{z_{i,t}}(\ii\eta_{i,t})$. 
 The following lemma  borrowed from \cite{CES22} shows that $\partial_t \langle M_{12,t}^{A_1}A_2\rangle$ exactly
 cancels the deterministic approximations of the second and third terms in the
  rhs. of \eqref{eq:flowbefchar}: 
\begin{lemma}[Lemma 5.5 \cite{CES22}]
For any matrices $A_1,A_2\in\C^{2n\times 2n}$, we have
\begin{equation}
\label{eq:evolM12}
\partial_t \langle M_{12,t}^{A_1}A_2\rangle=\langle M_{12,t}^{A_1}A_2\rangle+\langle \mathcal{S}[M_{12,t}^{A_1}]M_{21,t}^{A_2}\rangle.
\end{equation}
\end{lemma}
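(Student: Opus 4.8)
The plan is to establish \eqref{eq:evolM12} by a purely deterministic computation: differentiate the explicit formula $M_{12,t}^{A_1}=\mathcal{B}_{12,t}^{-1}[M_{1,t}A_1M_{2,t}]$ in $t$ and then rearrange the result using only self-adjointness of $\mathcal{S}$ and the defining equations of $M_{12,t}^{A_1}$ and $M_{21,t}^{A_2}$. The single external input is the behaviour of $M_{i,t}$ along the characteristics: differentiating the relation $M_t=e^{(t-s)/2}M_s$ from Lemma~\ref{lem:usinf} at $s=t$ gives $\partial_t M_{i,t}=\tfrac12 M_{i,t}$ for $i=1,2$.

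First I would feed $\partial_t M_{i,t}=\tfrac12 M_{i,t}$ into the two building blocks of $M_{12,t}^{A_1}$. For $P_t:=M_{1,t}A_1M_{2,t}$ the product rule gives $\partial_t P_t=P_t$. For the stability operator $\mathcal{B}_{12,t}[X]=X-M_{1,t}\mathcal{S}[X]M_{2,t}$, differentiating term by term and recognising $\mathcal{B}_{12,t}[X]-X=-M_{1,t}\mathcal{S}[X]M_{2,t}$ gives the operator identity $\partial_t\mathcal{B}_{12,t}=\mathcal{B}_{12,t}-\mathrm{Id}$, whence, differentiating $\mathcal{B}_{12,t}\mathcal{B}_{12,t}^{-1}=\mathrm{Id}$, one obtains $\partial_t\mathcal{B}_{12,t}^{-1}=-\mathcal{B}_{12,t}^{-1}(\mathcal{B}_{12,t}-\mathrm{Id})\mathcal{B}_{12,t}^{-1}=\mathcal{B}_{12,t}^{-2}-\mathcal{B}_{12,t}^{-1}$. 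Combining,
\[
\partial_t M_{12,t}^{A_1}=\big(\partial_t\mathcal{B}_{12,t}^{-1}\big)[P_t]+\mathcal{B}_{12,t}^{-1}[\partial_t P_t]=\big(\mathcal{B}_{12,t}^{-2}-\mathcal{B}_{12,t}^{-1}\big)[P_t]+\mathcal{B}_{12,t}^{-1}[P_t]=\mathcal{B}_{12,t}^{-1}\big[M_{12,t}^{A_1}\big].
\]

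It then remains to prove the algebraic identity $\langle\mathcal{B}_{12,t}^{-1}[X]A_2\rangle=\langle XA_2\rangle+\langle\mathcal{S}[X]M_{21,t}^{A_2}\rangle$ for arbitrary $X$; applying it with $X=M_{12,t}^{A_1}$ (and using that $A_2$ is time-independent) turns the last display into \eqref{eq:evolM12}. I would argue via the adjoint of $\mathcal{B}_{12,t}$ with respect to the bilinear pairing $(A,B)\mapsto\langle AB\rangle$. Since $\mathcal{S}$ is self-adjoint for this pairing, $\langle\mathcal{S}[X]Y\rangle=\langle X\mathcal{S}[Y]\rangle$ (immediate from $\mathcal{S}[X]=\langle XE_+\rangle E_+-\langle XE_-\rangle E_-$ and cyclicity of the trace), a short computation gives $\langle\mathcal{B}_{12,t}[W]Z\rangle=\langle W\,\mathcal{B}_{12,t}^{*}[Z]\rangle$ with $\mathcal{B}_{12,t}^{*}[Z]=Z-\mathcal{S}[M_{2,t}ZM_{1,t}]$, and hence $\langle\mathcal{B}_{12,t}^{-1}[X]A_2\rangle=\langle X\,(\mathcal{B}_{12,t}^{*})^{-1}[A_2]\rangle$. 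Finally I would check that $(\mathcal{B}_{12,t}^{*})^{-1}[A_2]=A_2+\mathcal{S}[M_{21,t}^{A_2}]$: for this it suffices to verify $\mathcal{B}_{12,t}^{*}\big[A_2+\mathcal{S}[M_{21,t}^{A_2}]\big]=A_2$, which follows once one expands and invokes $\mathcal{B}_{21,t}[M_{21,t}^{A_2}]=M_{2,t}A_2M_{1,t}$, i.e. $M_{21,t}^{A_2}=M_{2,t}A_2M_{1,t}+M_{2,t}\mathcal{S}[M_{21,t}^{A_2}]M_{1,t}$, to collapse the inner $\mathcal{S}$-term. Substituting back and using self-adjointness of $\mathcal{S}$ once more yields $\langle\mathcal{B}_{12,t}^{-1}[X]A_2\rangle=\langle XA_2\rangle+\langle X\mathcal{S}[M_{21,t}^{A_2}]\rangle=\langle XA_2\rangle+\langle\mathcal{S}[X]M_{21,t}^{A_2}\rangle$.

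There is no genuine analytic difficulty here --- everything is finite-dimensional linear algebra once $\partial_t M_{i,t}=\tfrac12 M_{i,t}$ is available. The only point requiring care is the non-commutative bookkeeping around the non-self-adjoint operator $\mathcal{B}_{12,t}$: one must keep $M_{1,t}$ on the left and $M_{2,t}$ on the right throughout, so that passing to the adjoint side naturally produces $\mathcal{B}_{21,t}$, and hence $M_{21,t}^{A_2}$, rather than $\mathcal{B}_{12,t}$ again. If one prefers to avoid the adjoint entirely, the identity $\langle\mathcal{B}_{12,t}^{-1}[X]A_2\rangle=\langle XA_2\rangle+\langle\mathcal{S}[X]M_{21,t}^{A_2}\rangle$ can be checked directly: set $Y=\mathcal{B}_{12,t}^{-1}[X]$, substitute $X=Y-M_{1,t}\mathcal{S}[Y]M_{2,t}$ into the right-hand side, use cyclicity and self-adjointness of $\mathcal{S}$, and collapse the remaining terms with the same defining relation for $M_{21,t}^{A_2}$.
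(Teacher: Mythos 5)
Your proof is correct. Since the paper does not prove this lemma itself but only cites it as Lemma 5.5 of \cite{CES22}, there is no in-paper argument to compare against; your derivation — obtaining $\partial_t M_{i,t}=\tfrac12 M_{i,t}$ from the second relation in Lemma~\ref{lem:usinf}, deducing the operator identity $\partial_t\mathcal{B}_{12,t}=\mathcal{B}_{12,t}-\mathrm{Id}$ and hence $\partial_t M_{12,t}^{A_1}=\mathcal{B}_{12,t}^{-1}[M_{12,t}^{A_1}]$, and then resolving $\langle\mathcal{B}_{12,t}^{-1}[X]A_2\rangle$ via the adjoint $\mathcal{B}_{12,t}^*[Z]=Z-\mathcal{S}[M_{2,t}ZM_{1,t}]$ together with the defining equation for $M_{21,t}^{A_2}$ — is precisely the natural self-contained route, and all the algebra checks out.
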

Recall the definition of $\mathcal{S}[\cdot]$ from (\ref{cov}). Combining \eqref{eq:flowbefchar} and \eqref{eq:evolM12}, we just
derived a stochastic differential
 equation which describes the time evolution of $ \langle (G_{1,t}A_1G_{2,t}-M_{12,t}^{A_1})A_2\rangle$:
 \begin{equation}
\label{eq:fulleqaasimp}
\begin{split}
\dd \langle (G_{1,t}A_1G_{2,t}-M_{12,t}^{A_1})A_2\rangle&=\sum_{a,b=1}^{2n}\partial_{ab}\langle G_{1,t}A_1G_{2,t}A_2\rangle\frac{\dd \mathfrak{B}_{ab,t}}{\sqrt{n}}+\langle (G_{1,t}A_1G_{2,t}-M_{12,t}^{A_1})A_2\rangle\dd t \\
&\quad+2\sum_{i\ne j}\langle (G_{1,t}A_1G_{2,t}-M_{12,t}^{A_1})E_i\rangle\langle M_{21,t}^{A_2}E_j\rangle\dd t \\
&\quad+2\sum_{i\ne j}\langle M_{12,t}^{A_1} E_i\rangle\langle (G_{2,t}A_2G_{1,t}-M_{21,t}^{A_2})E_j\rangle\dd t \\
&\quad+2\sum_{i\ne j}\langle (G_{1,t}A_1G_{2,t}-M_{12,t}^{A_1})E_i\rangle\langle (G_{2,t}A_2G_{1,t}-M_{21,t}^{A_2})E_j\rangle\dd t \\
&\quad+ \langle (G_{1,t}-M_{1,t})\rangle\langle G_{1,t}^2A_1G_{2,t}A_2\rangle\dd t + \langle (G_{2,t}-M_{2,t})\rangle\langle G_{2,t}^2A_2G_{1,t}A_1\rangle\dd t.
\end{split}
\end{equation}
This equation is the starting point of our analysis to propagate a two-resolvent local law from 
the global regime, where $|\eta_{i,0}|\sim 1$, to the local regime, where  $\ii\eta_{i,t}$
is very close to the real axis.

In the next two sections we focus on  two different effects that improve the two resolvent local law
and eventually yield Theorems~\ref{thm:2G} and  \ref{theorem_F}.
 In Section~\ref{sec:g1g2} we consider the case $z_1\ne z_2$, i.e. we want exploit the gain in the two resolvent local law when $|z_1-z_2|$ is fairly large, whilst in Section~\ref{sec:gfgf} we consider $z_1=z_2=z$ and $\eta_{1,t}=\eta_{2,t}=\eta_t$ and exploit the additional smallness of $ \langle (G_tA_1G_t-M_t^{A_1})A_2\rangle$, with $M_t^A=M_{12}^A(\eta_t, z_t,\eta_t,z_t,)$, due to the special choice  $A_1=F$, $A_2=F^*$.  We remark that the gain from both effects simultaneously is not present. In particular, for $G_{1,t}FG_{2,t}$ it would not be possible to gain smallness both from the $F$ observable and from the $|z_1-z_2|$-decay.

\subsection{Gain from $|z_1-z_2|$ in the local law for $G^{z_1}AG^{z_2}$ (Theorem~\ref{thm:2G})}
\label{sec:g1g2}

The main result of this section shows that if initially we have a good control of $G_{1,0}AG_{2,0}$ 
in terms of  $|z_1-z_2|$ then this is preserved for $G_{1,t}AG_{2,t}$ along the characteristics \eqref{eq:matchar}.

\begin{proposition}
\label{pro:mainpro}
Fix  sufficiently small $n$--independent
constants $\epsilon,\omega_d,\omega_1,c>0$, and for $i=1,2$ fix  spectral parameters 
\begin{equation}
\label{eq:defincond}
\Lambda_{i,0}:=\left(\begin{matrix}
\ii\eta_{i,0} & z_{i,0} \\
\overline{z_{i,0}} & \ii\eta_{i,0}
\end{matrix}\right),
\end{equation}
with $0\le |z_{i,0}|-1\le c$, $|z_{1,0}-z_{2,0}|\le \omega_d$, and $0<|\eta_{i,0}|\le \omega_1$. 
 For $0\le t<T^*(\eta_{i,0},z_{i,0})$,
 let $\Lambda_{i,t}$ be the solution of \eqref{eq:matchar} with initial condition $\Lambda_{i,0}$. Recall
 the resolvent $G_{i,t}=(W_t-\Lambda_{i,t})^{-1}$ and 
  $M_{12,t}^A$,  the deterministic approximation of $G_{1,t}AG_{2,t}$,  from  \eqref{eq:M12}.  Define
  the following control parameters and their time dependent versions:
 \begin{equation}
 \label{eq:defellbetas}
 \begin{split}
\ell&=\ell(\eta_1,z_1,\eta_2,z_2):=\min_{i=1,2} |\eta_i|\rho^{z_i}(\ii\eta_i), \qquad\qquad\quad \ell_t:=\ell(\eta_{1,t},z_{1,t},\eta_{2,t},z_{2,t}), \\
 \gamma& =\gamma(\eta_1,z_1,\eta_2,z_2):=|z_1-z_2|+\frac{|\eta_1|}{\rho_1}+\frac{|\eta_2|}{\rho_2}, \nc\qquad\quad \gamma_t:=\gamma(\eta_{1,t},z_{1,t},\eta_{2,t},z_{2,t}).
\end{split}
 \end{equation}
Assume that for some small $0<\xi\le \epsilon/10$, with very high probability, it holds
 \begin{equation}
\label{eq:inasspart1}
\big|\langle (G_0^{z_{1,0}}(\ii\eta_1)A_1G_0^{z_{2,0}}(\ii\eta_2)-M_{12}^{A_1}(\eta_1, z_{1,0},\eta_2,z_{2,0}))A_2\rangle\big|\lesssim 
  \frac{n^\xi}{\sqrt{n\ell_0}\gamma_0}\wedge\frac{n^\xi}{n|\eta_{1,0}\eta_{2,0}|}\nc,
\end{equation}
for any choice $A_1,A_2\in  \{E_+,E_-\}$ and for any $\eta_i$ with $|\eta_i|\in [|\eta_{i,0}|,  n^{100}]$, where $\ell_0$ and $\gamma_0$ are evaluated at the spectral parameters $(\eta_1,z_{1,0}, \eta_2, z_{2,0})$.
Then
\begin{equation}
\label{eq:usllaw}
\big|\langle (G_{1,t}A_1G_{2,t}-M_{12,t}^{A_1})A_2\rangle\big|\lesssim  \frac{n^{2\xi}}{\sqrt{n\ell_t}\gamma_{t}}\wedge\frac{n^{2\xi}}{n|\eta_{1,t}\eta_{2,t}|}\nc,
\end{equation}\
with very high probability uniformly in all $t\le \min_i T^*(\eta_{i,0},z_{i,0})$ such that $n\ell_t\ge n^\epsilon$.
\end{proposition}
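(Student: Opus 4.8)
The plan is to run a Gr\"onwall-type argument along the characteristic flow \eqref{eq:matchar}, integrating the stochastic differential equation \eqref{eq:fulleqaasimp} from time $0$ to time $t$ for the four choices of $(A_1,A_2)\in\{E_+,E_-\}^2$ simultaneously. First I would introduce the normalized error quantities $\Psi_t^{(A_1,A_2)}:=\gamma_t\sqrt{n\ell_t}\,|\langle(G_{1,t}A_1G_{2,t}-M_{12,t}^{A_1})A_2\rangle|$ and run a continuity/bootstrap argument: assume $\Psi_s^{(A_1,A_2)}\le n^{2\xi}$ holds for all $s\le t$ and all four index pairs, and show that this self-improves to $\Psi_t^{(A_1,A_2)}\le n^{2\xi}/2$ (say), so the set of times where the bound holds is open, closed and nonempty. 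The initial condition \eqref{eq:inasspart1} supplies the base case, and also handles the $t=0$ slice of the $|\eta_i|\ge|\eta_{i,0}|$ range that is needed because the $\eta$-integration in the martingale term ranges over $[|\eta_{i,t}|,|\eta_{i,0}|]$.

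The key steps, in order, are: (i) write \eqref{eq:fulleqaasimp} in integrated form; (ii) bound the martingale term $\int_0^t\sum_{ab}\partial_{ab}\langle G_{1,s}A_1G_{2,s}A_2\rangle\,n^{-1/2}\dd B_{ab,s}$ using a Burkholder–Davis–Gundy inequality, where the quadratic variation is controlled by a product of the $\langle G A G A\rangle$-type quantities evaluated via the single-resolvent local law (Theorem \ref{local_thm}) and the deterministic bound \eqref{local_2g_M}, together with the reduction inequality \eqref{eq:redinnew} promised in the introduction to close the hierarchy; (iii) estimate the "tadpole" terms $\langle(G_{i,s}-M_{i,s})\rangle\langle G_{i,s}^2A_iG_{j,s}A_j\rangle$ using \eqref{average} for the first factor ($\prec 1/(n|\eta_{i,s}|)$) and the a priori bound on $\langle GAGA\rangle$ plus a resolvent identity $G^2=-\partial_\eta G/\ii$ to handle the extra resolvent for the second factor; (iv) treat the linear-in-error terms $2\sum_{i\ne j}\langle(G_{1,s}A_1G_{2,s}-M_{12,s}^{A_1})E_i\rangle\langle M_{21,s}^{A_2}E_j\rangle$ by inserting the inductive bound for the first factor and the deterministic bound \eqref{local_2g_M} for $M_{21,s}^{A_2}$; and (v) treat the quadratic error term $2\sum_{i\ne j}\langle(G_{1,s}A_1G_{2,s}-M_{12,s}^{A_1})E_i\rangle\langle(G_{2,s}A_2G_{1,s}-M_{21,s}^{A_2})E_j\rangle$, which is negligible by the bootstrap hypothesis since it is quadratically small. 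Crucially, I would use the scaling identities of Lemma \ref{lem:usinf} — in particular $z_{1,t}-z_{2,t}=e^{-(t-s)/2}(z_{1,s}-z_{2,s})$, $\rho_{i,t}\sim\rho_{i,s}$, and $|\eta_t|/\rho_t$ decreasing — so that $\gamma_s\sim\gamma_t$ throughout, and the integrals $\int_0^t\dd s/|\eta_{i,s}|$ and $\int_0^t\rho_{i,s}/|\eta_{i,s}|\,\dd s$ can be evaluated via \eqref{eq:relineta}, converting the $\dd s$-integrals into the target $1/(\sqrt{n\ell_t}\gamma_t)$ form with at most an $n^\xi\to n^{2\xi}$ loss from the Gr\"onwall factor.

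The main obstacle — and this is the genuinely delicate point that distinguishes the edge/cusp regime from the bulk — is establishing the precise blow-up rate of the linearized stability problem as $\ii\eta_t$ approaches the real axis, and showing that the nonlinear (quadratic) corrections remain subleading throughout the full range $n\ell_t\ge n^\epsilon$ rather than prevailing earlier. Concretely, the hierarchy does not truly close: $\langle GAGA\rangle$ couples to $\langle GAGAGAGA\rangle$, and the reduction inequality \eqref{eq:redinnew} only bounds the latter by (essentially) the square of the former plus local-law errors; one must verify that the resulting coupled system of four nonlinear ODEs (schematically \eqref{eq:4by4ode}) has its linear part blowing up exactly like $\gamma_t^{-1}$ near $T^*$ — matching the $|z_1-z_2|^{-1}$ optimality — and that the quadratic terms contribute a strictly smaller power of $(n\ell_t)^{-1}$. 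Since the problem is formally supercritical, this works only because of the effective cutoff $n\ell_t\ge n^\epsilon$ from spectral discreteness; I would isolate this as a separate lemma analyzing the ODE system, and the bulk of the remaining work is the bookkeeping in steps (ii)–(v) to feed the correct $\eta_s$- and $\rho_s$-powers into that lemma. Finally, Part B (the $\Im G$ statement) follows by writing $\Im G=\frac{1}{2\mathrm{i}}(G-G^*)$ and applying the same argument to the fourfold linear combination, using $|\eta_1|\sim|\eta_2|$, $\rho_1\sim\rho_2$ to keep all the scaling relations aligned, while the removal of the Gaussian component $X_\mathfrak{s}\to X$ in Part A is deferred to the GFT argument of Appendix \ref{app:lindGFT}.
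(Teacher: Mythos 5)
Your proposal follows essentially the same route as the paper: a stopping-time/bootstrap along the characteristic flow, a martingale inequality for the quadratic variation, the reduction inequality to close the hierarchy, and a linearized $4\times 4$ system controlled by a Gronwall argument. However, there is a genuine error in the one step you assert as a fact rather than defer: the claim ``$\gamma_s\sim\gamma_t$ throughout''. This is false. From Lemma~\ref{lem:usinf} one only gets that $t\mapsto\gamma_t$ is \emph{decreasing}, i.e. $\gamma_s\ge\gamma_t$. When $|z_{1,0}-z_{2,0}|$ is very small the ratio $\gamma_s/\gamma_t\sim(|\eta_s|/\rho_s)/(|\eta_t|/\rho_t)$ can be as large as a positive power of $n$ as $\eta_t$ shrinks toward the spacing scale. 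If you used $\gamma_s\sim\gamma_t$, the propagator of the linearized system would be bounded by $O(1)$ and the Gronwall step would be trivial — but the actual propagator estimate is $\exp\bigl(\int_s^t 2a_r\,\dd r\bigr)\lesssim\gamma_s^2/\gamma_t^2$, which is large. The non-trivial content is showing that the resulting forcing integral $\int_0^t\alpha_s^2\gamma_s^3/\gamma_t^4\,\dd s$ is still $\lesssim\alpha_t^2$; this requires splitting the time integral at the crossover $s_*$ between the regime where $\gamma_s\sim|z_{1,s}-z_{2,s}|$ (there $\gamma_s\sim\gamma_t$ and your reasoning works) and the regime where $\gamma_s$ is dominated by $\eta_{i,s}/\rho_{i,s}$ (there the $\gamma_s^3$ blows up and must be offset by the decaying forcing). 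Deferring this to ``a separate lemma analyzing the ODE system'' is fine as a placeholder, but asserting $\gamma_s\sim\gamma_t$ as an input to that lemma is precisely what would make it fail.

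A secondary point: you attribute the need for the expanded $\eta$-range $|\eta_i|\in[|\eta_{i,0}|,n^{100}]$ in the hypothesis to ``the $\eta$-integration in the martingale term''. In fact the reason is that the reduction inequality produces quantities like $\langle\Im G_1A|G_2|A\rangle$ with an absolute value $|G_2|$, and one then uses the integral representation $|G(\ii\eta)|=\tfrac{2}{\pi}\int_0^\infty \Im G(\ii\sqrt{v^2+\eta^2})/\sqrt{v^2+\eta^2}\,\dd v$, which requires the two-resolvent bound at \emph{all} larger spectral parameters. This is also what forces the stopping time to be defined via a supremum over initial data $\widetilde\eta_{i,0}$, not just over the fixed trajectory. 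Your bootstrap as stated bounds only the quantities along the single trajectory $\eta_{i,t}$, which is not sufficient to close the reduction-inequality step.

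Also, as a small correction of terminology: Proposition~\ref{pro:mainpro} itself has no Part A or Part B — the split into $G$-statement (Gaussian component) and $\Im G$-statement (general i.i.d.) belongs to Theorem~\ref{thm:2G}, and the passage to general $X$ uses the GFT in Appendix~\ref{app:lindGFT}, not just $\Im G=\tfrac{1}{2\ii}(G-G^*)$.
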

 Note that by the strict monotonicity of $t\to\eta_{i,t}/\rho_{i,t}$, and by $|z_{1,t}-z_{2,t}|=e^{-(t-s)/2}|z_{1,0}-z_{2,0}|$, it follows that $\gamma_s\ge \gamma_t$ for any $0\le s\le t\le T^*$, i.e. $t\mapsto \gamma_t$ is strictly decreasing.\nc

Before presenting the proof of Proposition~\ref{pro:mainpro} we first conclude the proof of  Part A of
Theorem~\ref{thm:2G}  for matrices with a large Gaussian component. The proof for general i.i.d. matrices (Part B) 
will be given in Appendix~\ref{app:lindGFT}.

\begin{proof}[Proof of Theorem~\ref{thm:2G} (Part A) with Gaussian component]

 For any small $c^*>0$, \nc by the global law outside of the non-Hermitian spectrum 
 \cite[Theorem 5.2]{CES19}, for any  small $\xi>0$ we have 
\begin{equation}
\label{eq:global}
\big|\langle (G^{z_1}(\ii \eta_1)A_1G^{z_2}(\ii \eta_2)-M_{12}^{A_1})A_2\rangle\big|\lesssim \frac{n^\xi}{n},
\end{equation}
with very high probability, uniformly in spectral parameters
such that $ 1+c^*\nc\le |z_i|\le 10$, $|\eta_i|\le n^{100}$, and deterministic matrices $\| A_1\|+\| A_2\|\lesssim 1$. 
Define $t=t(\mathfrak{s})$ such that $\mathfrak{s} = (1-e^{-t/2})^{1/2}$, then by the property of the Ornstein-Uhlenbeck process~\eqref{OU}, $G_t$ from~\eqref{def:Gt}
and $G_{\mathfrak{s}}^z$ defined in Theorem~\ref{thm:2G} have the same distribution.
Fix any small $\omega_1,\omega_d,c$, and  pick any $z_1,z_2$ such that $1\le |z_i|\le 1+c/2$ and
$|z_1-z_2|\le\omega_d/2$, and pick any $\eta_i$ such that $|\eta_i|\le \omega_1/2$ and $n\ell\ge n^\epsilon$.  Then, by Lemma~\ref{lem:ode} and that fact that $\mathfrak{s}$ can be chosen small in Part A of Theorem~\ref{thm:2G},
there exist $z_{i,0}$ and $\eta_{i,0}$ with $|z_{1,0}-z_{2,0}|\le \omega_d$, $|\eta_{i,0}|\le \omega_1$,
such that $z_{i,t}=z_i$, $\eta_{i,t}=\eta_i$, $c_*\le |z_{i,0}|-1\le c$,
 for some small $c_*=c_*(\mathfrak{s})>0$. Choosing these parameters sufficiently small so that \eqref{eq:global} holds, and noticing that $1/n\le 1/(n\ell\gamma)$, this \nc guarantees that 
  the assumption \eqref{eq:inasspart1} is satisfied.
  We thus conclude from \eqref{eq:usllaw},
   using $z_{i,t}= z_i$, $\eta_{i,t}=\eta_i$, hence $\ell=\ell_t$ and $\gamma=\gamma_t$,
   that (here $A_1,A_2\in\{E_+,E_-\}$)
\begin{equation}
\big|\langle (G_t^{z_{1}}(\ii \eta_{1})A_1G_t^{z_{2}}(\ii \eta_{2})-M_{12}^{A_1})A_2\rangle\big|\lesssim \frac{n^{2\xi}}{\sqrt{n\ell}\gamma}+\frac{n^{2\xi}}{n|\eta_1\eta_2|}\nc.
\end{equation}
Since $G_t \stackrel{{\rm d}}{=} G_{\mathfrak{s}}^z$,  
  by the choice of $t$, we concluded
\eqref{local_2g}. 
\end{proof}

\begin{proof}[Proof of Proposition~\ref{pro:mainpro}] By \cite[Theorem 3.1]{SpecRadius} we have $|\langle G_{i,t}-M_{i,t}\rangle|\lesssim n^\xi/(n|\eta_{i,t}|)$ 
with very high probability for any fix $t$, but in fact also implies 
$\sup_{0\le t\le T^*} (n|\eta_{i,t}|)|\langle G_{i,t}-M_{i,t}\rangle|\lesssim n^\xi$, i.e. that the local law holds with very high probability simultaneously for any $0\le t\le T^*$. This follows by a simple procedure
called the {\it grid argument}. It consists of choosing equidistant
intermediate times  $t_1,t_2,\dots, t_{K_n}\in [0,T^*]$, for some large enough 
$K_n$ (say $K_n=n^{10}$), using the fact that for each $t=t_i$ the local law holds and 
then extending the bound to any intermediate 
$t\in [0,T^*]$ by  the $(1/2)^-$-H\"older continuity of $G_{i,t}-M_{i,t}$ (with a H\"older constant which may depend on $n$). 
The probability of the 
exceptional sets where the bound does not hold for some $t_j$ can be estimated by a standard union bound
since there are only polynomially many of them. 
We will use a similar
procedure also for local laws for products of different resolvents. We also point out that throughout this proof, even if not stated explicitly, all the inequalities hold with very high probability. Additionally, to keep the notation simpler we assume that $\eta_{1,t}$ and $\eta_{2,t}$ are both positive, all the other cases are analogous and so omitted.

 Fix any $\epsilon>0$, and define the time,
\begin{equation}
\label{Teps}
T_i^\epsilon=T^\epsilon(\eta_{i,0},z_{i,0}):=\min\{t>0 : n\eta_{i,t}\rho_{i,t}=n^\epsilon \},
\end{equation}
such that $\eta_{i,t}\rho_{i,t}$ reaches the critical threshold $n\eta_{i,T_i^\epsilon}\rho_{i,T_i^\epsilon}=n^\epsilon$. Notice that,  by \eqref{eq:scalchar} and by taking the trace in the second relation in \eqref{eq:relchar}, it follows that \nc
\[
\partial_t(\eta_{i,t}\rho_{i,t})=-\pi\rho_{i,t}^2.
\]
Then, by the uniqueness of the solution of this simple ODE,  together with the strict monotonicity\footnote{This
 monotonicity follows from a direct calculation of   the derivative of $m=m^{z_{i,0}}(\ii \eta)$
\[
\partial_\eta \Im m=\Im m\frac{1-2\Im m(\eta+\Im m)}{2(\Im m)^2(\eta+\Im m)+\eta}>0,
\]
where  we used that $2\Im m(\eta+\Im m)\le C\omega_1^{2/3}$ and that 
 $\omega_1$ is chosen sufficiently small.} 
of the map $\eta\mapsto \eta\rho^{z_{i,0}}(\ii\eta)$,  it follows that for a fixed $z_{i,0}$ the map $\eta_{i,0}\mapsto T^\epsilon(\eta_{i,0}, z_{i,0})$ is increasing.

 We start defining
\begin{equation}\label{defY}
  Y_{\sigma, \sigma' , t} (\eta_1, z_1, \eta_2, z_2): =
   \langle \big(G_t^{z_{1}}(\ii\eta_1)E_{\sigma} G_t^{z_{2}}(\ii {\eta}_2)-
   M_{12}^{E_{\sigma}}({\eta}_{1},z_{1},{\eta}_{2},z_{2})\big)E_{\sigma'}\rangle, \qquad \sigma, \sigma' \in \{+, -\}.
 \end{equation}
 Additionally, to keep the notation short we introduce the control parameter 
 \begin{equation}\label{alphadef}
 \alpha(\eta_1,z_1,\eta_2,z_2):=\frac{1}{\sqrt{n\ell(\eta_1,z_1,\eta_2,z_2)}\gamma(\eta_1,z_1,\eta_2,z_2)}
 \wedge \frac{1}{n|\eta_1\eta_2|}, \qquad \alpha_t := \alpha(\eta_{1,t},z_{1,t},\eta_{2,t},z_{2,t}), \nc
  \end{equation}
 with $\ell, \gamma$ being defined in \eqref{eq:defellbetas}.
 \nc
 Within the whole proof, for any given $\eta_{i,0}, z_{i,0}$, $i=1,2$, we will consider the flow \eqref{eq:fulleqaasimp} up to times $t\le \tau$, with $\tau=
\tau (\eta_{1,0}, z_{1,0}, \eta_{2,0}, z_{2,0})$ 
being the stopping time defined by\footnote{Here the stopping time $\tau$ should not be confused with the small (deterministic) exponent used often starting from Section~\ref{sec:tools} to denote $|z|\le 1+ n^{-1/2+\tau}$.}
\begin{equation}
\label{eq:deftildetau}
\tau:=\inf\left\{t\ge 0: \max_{i=1,2} \;
\sup^*_{ \widetilde{\eta}_{i,0}} 
\max_{\sigma, \sigma'\in\{+,-\}}   \alpha(\widetilde{\eta}_{1,t},z_{1,t},\widetilde{\eta}_{2,t},z_{2,t})^{-1}\nc \big|
Y_{\sigma, \sigma' , t} (\widetilde\eta_{1,t}, z_{1,t}, \widetilde\eta_{2,t}, z_{2,t})
\big|=n^{2\xi} \right\}\wedge  T_1^\epsilon\wedge T_2^\epsilon\nc,
\end{equation}
\nc for some $0< \xi\le\epsilon/10$. The supremum $\sup^*$ is taken over all $\widetilde{\eta}_{i,0}$ satisfying\footnote{Strictly speaking, to avoid problems with taking the supremum $\sup^*$ over 
a continuum, one should discretize $ [\eta_{i,0}, \omega_1]$ over a 
very finite mesh $\mathcal{P}$ of spacing $N^{-100}$ and use $\sup$ over $\widetilde{\eta}_{i,0}\in \mathcal{P}$
in the definition of $\tau$.
To show that if the desired bounds hold for $\widetilde{\eta}_{i,0}\in\mathcal{P}$ then they hold simultaneously for all $\widetilde{\eta}_{i,0}$’s, one then uses the Lipschitz continuity of the resolvent in $\eta$ and the regularity of the characteristics $\eta_{i,t}$ on the initial data (see e.g. the paragraph above \eqref{Teps} for the details of a \emph{grid argument}). 
Practically, the bound on $Y$ for any $t\le\tau$ encoded in the stopping time will hold for every 
$\widetilde{\eta}_{i,0}\in  [\eta_{i,0}, \omega_1]$ with a slightly worse factor than $n^{2\xi}$.
We omit these minor details and with a slight abuse of notation we wrote $\sup^*$ over a continuum set.
\label{disc}}
$\widetilde{\eta}_{i,0}\in [\eta_{i,0}, \omega_1]$, with $\omega_1$ given in Proposition~\ref{pro:mainpro}.
 \nc Here $\widetilde{\eta}_{i,t}$ is the solution to the diagonal part of \eqref{eq:matchar} with initial conditions $z_{i,0}$, $\widetilde{\eta}_{i,0}$. Note that the initial condition $z_{i,0}$, hence $z_{i,t}=e^{-t/2}z_{i,0}$ along the tilde-flow,
 is always the same, irrespective of $\widetilde{\eta}_{i,0}$.  Furthermore, by \eqref{eq:inasspart1},
  together with a standard grid argument, 
   it follows that $\tau>0$ with very high probability.

Additionally, by the definition of the stopping time $\tau$, it follows that for any $t\le \tau$ we have
\begin{equation}
\label{eq:ufhig}
\sup_{\eta_{i,t}\le\widehat{\eta}_{i}\le \omega_1/2} \max_{\sigma, \sigma'\in\{+,-\}}    \alpha(\widehat{\eta}_1,z_{1,t},\widehat{\eta}_2,z_{2,t})^{-1}\nc
\big|Y_{\sigma, \sigma' , t}(\widehat{\eta_1},z_{1,t},\widehat{\eta_2},z_{2,t})\big| \nc\le n^{2\xi},
\end{equation}
with very high probability. 
We remark that to obtain \eqref{eq:ufhig} we also used that for any fixed $t\le T_1^\epsilon\wedge T_2^\epsilon$, 
by Lemma~\ref{lem:ode}, any 
$\widehat{\eta}_{i}\in [\eta_{i,t},\omega_1/2]$ can be achieved as the solution $\widetilde{\eta}_{i,t}$ of \eqref{eq:scalchar} 
with an appropriately chosen initial condition $\widetilde{\eta}_{i,0}\in [\eta_{i,0},\omega_1]$, i.e. 
$\widehat{\eta}_{i} = \widetilde{\eta}_{i,t}$.
 Note that in \eqref{eq:ufhig} the supremum is taken only over $\widehat{\eta}_{i}$'s, whilst the $z_{i,t}$'s are fixed. 
 
 The supremum over a broader set of initial conditions $\widetilde{\eta}_{i,0}$ in \eqref{eq:deftildetau},
  as well as over $\widehat{\eta}_{i}$ in \eqref{eq:ufhig},
   needs to be considered for a minor technical reason:  at some point  we will also need
   to consider quantities of the form $GA|G|$ instead of $GAG$, and we use an integral representation
   of $|G|$ in terms of $G$ that requires integration over an entire segment of spectral parameters 
   (see later in Appendix~\ref{sec:rand}).
   The reader can safely ignore all tildes  and hats at
   first reading and focus on the special case $\widetilde\eta_{i,t}=\eta_{i,t}$ and $\widehat{\eta_i}=\eta_{i,t}$
   in these suprema.

Before proceeding with the estimate of the various terms in \eqref{eq:fulleqaasimp}, we state some bounds for the functions $\langle M_{12,t}^{A_1}A_2\rangle$ in the special case when $A_1,A_2\in \{E_+, E_-\}$. The proof of this lemma is postponed to Appendix~\ref{sec:det}.
\begin{lemma}
\label{lem:propertms}
Let $\gamma(\eta_1,z_1,\eta_2,z_2)$ be defined in \eqref{eq:defellbetas}. There exists an explicit real function $b(\eta_1,z_1,\eta_2,z_2)\in\R$ such that
\begin{equation}
\label{eq:rel12}
\begin{split}
\big|\langle M_{12}^{E_+} E_+\rangle\big| &\lesssim \frac{1}{\gamma(\eta_1,z_1,\eta_2,z_2)}, \qquad\qquad\quad 
\big|\langle M_{12}^{E_-} E_-\rangle\big| \lesssim \frac{1}{\gamma(\eta_1,z_1,\eta_2,z_2)}, \\
\langle M_{12}^{E_+} E_-\rangle=-\langle M_{12}^{E_-} E_+\rangle&=\ii b(\eta_1, z_1,\eta_2, z_2), \qquad\qquad\quad 
\big|b(\eta_1,z_1,\eta_2,z_2)\big|\lesssim \frac{1}{\gamma(\eta_1,z_1,\eta_2,z_2)}. 
\end{split}
\end{equation}
Additionally, $\langle M_{12}^{E_+}E_+\rangle$, $\langle M_{12}^{E_-}E_-\rangle$ are real and they satisfy
\begin{equation}
\label{eq:neednow}
\begin{split}
\langle M_{12}^{E_+} E_+\rangle&\le  -\langle M_{12}^{E_-} E_-\rangle \qquad \mathrm{for} \quad \eta_1\eta_2>0, \\
\langle M_{12}^{E_+} E_+\rangle&\ge  -\langle M_{12}^{E_-} E_-\rangle \qquad \mathrm{for} \quad \eta_1\eta_2<0.
\end{split}
\end{equation}\nc
\end{lemma}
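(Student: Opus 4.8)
The plan is to reduce the statement to an explicit $2\times2$ linear algebra computation, using that the covariance operator $\mathcal{S}$ from \eqref{cov} is supported on $\mathrm{Span}\{E_+,E_-\}$. Writing $M_i:=M^{z_i}(\ii\eta_i)$, the defining relation $M_{12}^{E_\sigma}=\mathcal{B}_{12}^{-1}[M_1E_\sigma M_2]$ from \eqref{eq:defM12}--\eqref{eq:defstabop} is equivalent to $M_{12}^{E_\sigma}=M_1E_\sigma M_2+M_1\mathcal{S}[M_{12}^{E_\sigma}]M_2$, and since $\mathcal{S}[M_{12}^{E_\sigma}]=\langle M_{12}^{E_\sigma}E_+\rangle E_+-\langle M_{12}^{E_\sigma}E_-\rangle E_-$, this equation closes on the two scalars $a_\sigma:=\langle M_{12}^{E_\sigma}E_+\rangle$ and $b_\sigma:=\langle M_{12}^{E_\sigma}E_-\rangle$. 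Using the block form \eqref{Mmatrix} together with the fact, recalled just above \eqref{rho}, that $m_i$ is purely imaginary and $u_i$ real on the imaginary axis, I would compute the four coefficients
\[
\big\langle M_1E_{+}M_2E_{+}\big\rangle=p+qx,\quad \big\langle M_1E_{+}M_2E_{-}\big\rangle=\ii qy,\quad \big\langle M_1E_{-}M_2E_{+}\big\rangle=-\ii qy,\quad \big\langle M_1E_{-}M_2E_{-}\big\rangle=p-qx,
\]
with $p:=m_1m_2$, $q:=u_1u_2$, $x:=\Re(z_1\overline{z_2})$, $y:=\Im(z_1\overline{z_2})$. Taking $\langle\,\cdot\,E_\pm\rangle$ of the equation for $M_{12}^{E_\sigma}$ then yields the linear system with coefficient matrix
\[
\mathcal{M}=\begin{pmatrix}1-p-qx & -\ii qy\\ -\ii qy & 1+p-qx\end{pmatrix},\qquad D:=\det\mathcal{M}=(1-qx)^2-p^2+q^2y^2,
\]
and $D\in\R$ because $p=m_1m_2=-(\Im m_1)(\Im m_2)$ is real.

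Solving the system then gives all the identities. For $\sigma=+$ the right-hand side is $(p+qx,\,\ii qy)$, so inverting $\mathcal{M}$ gives $b_+=\langle M_{12}^{E_+}E_-\rangle=\ii qy/D$ and $a_+=\langle M_{12}^{E_+}E_+\rangle=D^{-1}\big[(1+p-qx)(p+qx)-q^2y^2\big]$; for $\sigma=-$ the right-hand side is $(-\ii qy,\,p-qx)$, so $a_-=\langle M_{12}^{E_-}E_+\rangle=-\ii qy/D$ and $b_-=\langle M_{12}^{E_-}E_-\rangle=D^{-1}\big[q^2y^2+(1-p-qx)(p-qx)\big]$. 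Since $q,y,D\in\R$, this proves the second line of \eqref{eq:rel12} with $b:=qy/D$. The upper bounds in the first line of \eqref{eq:rel12}, as well as $|b|\lesssim1/\gamma$, require no computation: they follow from the deterministic bound \eqref{local_2g_M} already in hand, via $|\langle M_{12}^{E_\sigma}E_{\sigma'}\rangle|\le\|M_{12}^{E_\sigma}\|\,\|E_{\sigma'}\|\lesssim1/\gamma$ since $\|E_\pm\|=1$.

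For the sign relation \eqref{eq:neednow}, I would add the expressions for $a_+$ and $b_-$: the $q$--dependent terms cancel and $(1+p-qx)(p+qx)+(1-p-qx)(p-qx)=2p$, which gives the clean identity
\[
\langle M_{12}^{E_+}E_+\rangle+\langle M_{12}^{E_-}E_-\rangle=\frac{2m_1m_2}{D}.
\]
By the side condition in \eqref{m_function}, $(\Im m_i)\eta_i>0$, so $m_1m_2=-(\Im m_1)(\Im m_2)$ has sign opposite to $\eta_1\eta_2$, and \eqref{eq:neednow} follows once $D>0$ is known. To establish $D>0$, note first that $\mathcal{B}_{12}$ is invertible on the whole admissible parameter range $0\le|z_i|-1\le c$, $\eta_i\ne0$ (this is what \eqref{local_2g_M} asserts, $\gamma>0$ there), hence $D=\det\mathcal{M}\ne0$ throughout. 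On each of the four connected components of this parameter set determined by the signs of $\eta_1$ and $\eta_2$, the continuous real function $D$ therefore has a constant sign, which is identified as $+$ by letting $|\eta_1|,|\eta_2|\to\infty$, where $M_i\to0$, $\mathcal{M}\to I$ and $D\to1$. Alternatively, $D>0$ follows from the identity $|z_i|^2u_i=1-(\Im m_i)^2-\eta_i\Im m_i$, a direct consequence of \eqref{m_function}.

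I expect the positivity $D>0$ — equivalently, positivity of the determinant of the restriction of $\mathcal{B}_{12}$ to the two-dimensional ``bad'' subspace $\mathrm{Span}\{E_+,E_-\}$ — to be the only genuinely delicate point, since $D$ is not manifestly positive once $z_1\ne z_2$; the connectedness argument above avoids an unpleasant direct positivity computation. The remainder is the elementary, though sign-sensitive, bookkeeping of the four coefficients $\langle M_1E_\alpha M_2E_\beta\rangle$ and the inversion of the $2\times2$ matrix $\mathcal{M}$.
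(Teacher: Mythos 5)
Your $2\times2$ reduction is correct and is essentially the same computation the paper carries out (they apply $\langle\,\cdot\,E_\pm\rangle$ to $M_{12}^{E_\sigma}=\mathcal{B}_{12}^{-1}[M_1E_\sigma M_2]$ by writing out $(\mathcal{B}_{12}^{-1})^*[E_\pm]$ explicitly, which produces the same four scalars and the same determinant $D=1+|z_1z_2|^2u_1^2u_2^2-m_1^2m_2^2-2u_1u_2\Re[z_1\bar z_2]$, which they identify as $\beta_+\beta_-$). Your identity $a_++b_-=2m_1m_2/D$ and the sign bookkeeping $-\sgn(m_1m_2)=\sgn(\eta_1\eta_2)$ also match the paper.

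However, there is a genuine circularity in how you close the bounds. The four estimates $|a_\pm|,|b|\lesssim1/\gamma$ are part of Lemma~\ref{lem:propertms}, and you justify them by the deterministic bound \eqref{local_2g_M}. But \eqref{local_2g_M} is \emph{proven} in Appendix~\ref{app:addtechlem} by invoking Lemma~\ref{lem:propertms}: its proof opens with \emph{``In Lemma~\ref{lem:propertms} we already prove bounds $|\langle M_{12}^A E_\pm\rangle|\lesssim 1/\gamma$ for $A\in\{E_+,E_-\}$''} and then extends this to all $A$. So \eqref{local_2g_M} is downstream of the lemma you are trying to prove, and cannot be used as input. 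The same issue contaminates your continuity argument for $D>0$, which again leans on \eqref{local_2g_M} to conclude that $\mathcal{B}_{12}$ is invertible. The correct input is Lemma~\ref{lem:strongerb}, which is proven independently (it only uses \eqref{m_function} and elementary algebra) and gives $|\beta_\pm|\sim\gamma$ as well as the lower bound \eqref{eq:rel} on $\beta_+\beta_-$, hence $D=\beta_+\beta_->0$ and $|D|\sim\gamma^2$. With that in hand, your explicit formulas close the bounds directly: e.g.\ $|b|=|u_1u_2\Im(z_1\bar z_2)|/D\lesssim|z_1-z_2|/\gamma^2\le1/\gamma$; for $a_\pm$, after rewriting $1-qx=(\beta_++\beta_-)/2$, the remaining term is $|m_1m_2|/D\lesssim\rho_1\rho_2/\gamma^2$, and one still needs the non-trivial observation $\rho_1\rho_2/\gamma^2\lesssim1/\gamma$, which uses $\gamma\gtrsim\sqrt{|\eta_1\eta_2|/(\rho_1\rho_2)}$ together with $\rho_i\lesssim|\eta_i|^{1/3}$ (this is spelled out in the paper's proof and is not subsumed by a norm bound).

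Finally, your ``alternative'' argument that $D>0$ follows from $|z_i|^2u_i=1-(\Im m_i)^2-\eta_i\Im m_i$ is in fact a workable route — writing $D=|1-u_1u_2z_1\bar z_2|^2-(\Im m_1\Im m_2)^2\ge(1-u_1u_2|z_1z_2|)^2-(\Im m_1\Im m_2)^2$ and using $u_i|z_i|^2<1$, $|\Im m_i|<|\eta_i|+|\Im m_i|$ together with AM--GM gives $1-u_1u_2|z_1z_2|>|\Im m_1\Im m_2|$ whenever $\eta_1,\eta_2\ne0$ — but as stated it is just an assertion and needs to be spelled out; in the paper the positivity is instead a free by-product of the lower bound \eqref{eq:rel}.
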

   
We now present the estimate of the various terms in the rhs. of \eqref{eq:fulleqaasimp} one by one. 

\underline{\textbf{Estimate of martingale term:}} We now proceed with the estimate of the quadratic variation of the stochastic term in \eqref{eq:fulleqaasimp} (see Appendix~\ref{sec:rand} for the detailed computation):
\begin{equation}
\label{eq:quadvarnew}
\begin{split}
&\frac{1}{n^2} \sum_{i\ne j}
\Bigg[\langle G_{1,t}A_1G_{2,t}A_2G_{1,t}E_iG_{1,t}^*A_2G_{2,t}^*A_1G_{1,t}^*E_j\rangle+\langle G_{2,t}A_2G_{1,t}A_1G_{2,t}E_iG_{2,t}^*A_1G_{1,t}^*A_2G_{2,t}^*E_j\rangle \\
&\qquad\qquad\quad+2\Re \langle G_{1,t}A_1G_{2,t}A_2G_{1,t}E_iG_{2,t}^*A_1G_{1,t}^*A_2G_{2,t}^*E_j
\rangle\Bigg]\,\dd t.
\end{split}
\end{equation}
Here we used that $A_1, A_2$ are Hermitian, since  $A_1, A_2\in\{ E_+,E_-\}$.
We present the estimate only for the first term in \eqref{eq:quadvarnew}, the the second and third terms are
handled completely analogously. To estimate this term we use
\begin{equation}
\label{eq:ward}
\begin{split}
&\langle G_{1,t}A_1G_{2,t}A_2G_{1,t}E_iG_{1,t}^*A_2G_{2,t}^*A_1G_{1,t}^*E_j\rangle \\
&\lesssim \langle G_{1,t}A_1G_{2,t}A_2G_{1,t}E_iG_{1,t}^*A_2G_{2,t}^*A_1G_{1,t}^*\rangle^{1/2} \langle G_{1,t}^*A_2G_{2,t}^*A_1G_{1,t}^*E_jG_{1,t}A_1G_{2,t}A_2G_{1,t}\rangle^{1/2}\\
&\lesssim \lVert E_i\rVert\lVert E_j\rVert \langle G_{1,t}A_1G_{2,t}A_2G_{1,t}G_{1,t}^*A_2G_{2,t}^*A_1G_{1,t}^*\rangle^{1/2} \langle G_{1,t}^*A_2G_{2,t}^*A_1G_{1,t}^*G_{1,t}A_1G_{2,t}A_2G_{1,t}\rangle^{1/2}\\
& \lesssim \frac{1}{\eta_{1,t}^2}\langle \Im G_{1,t}A_1G_{2,t}A_2\Im G_{1,t} A_2G_{2,t}^*A_1\rangle,
\end{split}
\end{equation}
where to go from the first to the second line we used a Schwarz inequality to separate $G_{1,t}A_1G_{2,t}A_2G_{1,t}E_i$ from the rest, in the second inequality we used the matrix inequality $\langle QEQ^*\rangle\le\lVert E\rVert \langle QQ^*\rangle$, and in the last inequality we used $\lVert E_i\rVert+\lVert E_j\rVert\lesssim 1$ and the Ward identity $GG^*=\Im G/\eta$. 
We now face the problem that even if we are studying the evolution of the product of two resolvents the quadratic variation consists of traces of products of four resolvents that needs to be reduced to traces with two resolvents 
in order to have a closed equation.
For this purpose we will give two different estimates for  \eqref{eq:ward}. 
One estimate uses  the deterministic bound $\| G\|\le 1/\eta$ \nc
to reduce the number of resolvents at the price of $1/\eta$  factor, reminiscent to the Ward identity.
 This procedure 
loses the  natural factor $1/\gamma_t$ originating from every instance a factor $G_{1,t}AG_{2,t}$ appears. 
Note that the factor
 $1/\gamma_t$ is much better than $1/\eta_t$,
in particular the former carries the optimal $|z_1-z_2|$ decay.
The second estimate, instead, will preserve the number of $1/\gamma_t$'s
(hence the $|z_1-z_2|$-decay) at the price of losing a factor $\sqrt{n\ell_t}$. In particular, the factor $\sqrt{n\ell_t}$ is lost by the use of the reduction inequality \eqref{eq:redinnew} below (see also \eqref{eq:need0} below).

We start with this latter estimate. We rely on the following elementary \emph{reduction inequality}
\begin{equation}
\label{eq:redinnew}
\langle \Im G_{1,t} A_1 G_{2,t} A_2\Im G_{1,t} A_2 G_{2,t}^* A_1\rangle
\le n \langle\Im G_{1,t}A_1|G_{2,t}|A_1\rangle\langle\Im G_{1,t}A_2|G_{2,t}|A_2\rangle,
\end{equation}
 which easily follows by spectral decomposition 
\begin{equation}
\begin{split}
&\langle \Im G_{1,t} A_1 G_{2,t} A_2\Im G_{1,t} A_2 G_{2,t}^* A_1\rangle \\
&\qquad\quad=\frac{\eta_1^2}{n}\sum_{ijkl} \frac{\langle \bm{u}_i^{z_1}(t),A_1\bm{u}_j^{z_2}(t)\rangle\langle \bm{u}_j^{z_2}(t),A_2\bm{u}_k^{z_1}(t)\rangle\langle \bm{u}_k^{z_1}(t),A_2\bm{u}_l^{z_2}(t)\rangle\langle \bm{u}_l^{z_2}(t),A_1\bm{u}_i^{z_1}(t)\rangle}{[(\lambda_i^{z_1}(t))^2+\eta_{1,t}^2](\lambda_j^{z_2}(t)-\ii\eta_{2,t})[(\lambda_k^{z_1}(t))^2+\eta_{1,t}^2](\lambda_l^{z_2}(t)-\ii\eta_{2,t})} \\
&\qquad\quad\lesssim n \langle\Im G_{1,t}A_1|G_{2,t}|A_1\rangle\langle\Im G_{1,t}A_2|G_{2,t}|A_2\rangle  ,
\end{split}
\end{equation}
where to go from the first to the second line we used the Schwarz inequality in the form (omitting the time dependence of the eigenvectors)
\[
\big|\langle \bm{u}_i^{z_1},A_1\bm{u}_j^{z_2}\rangle\langle \bm{u}_j^{z_2},A_2\bm{u}_k^{z_1}\rangle\langle \bm{u}_k^{z_1},A_2\bm{u}_l^{z_2}\rangle\langle \bm{u}_l^{z_2},A_1\bm{u}_i^{z_1}\rangle\big|\le |\langle \bm{u}_i^{z_1},A_1\bm{u}_j^{z_2}\rangle\langle \bm{u}_k^{z_1},A_2\bm{u}_l^{z_2}\rangle|^2+|\langle \bm{u}_j^{z_2},A_2\bm{u}_k^{z_1}\rangle\langle \bm{u}_l^{z_2},A_1\bm{u}_i^{z_1}\rangle|^2.
\]
\nc
To estimate the rhs. of~\eqref{eq:redinnew}, we claim that
\begin{equation}
\label{eq:absvalbound}
\langle \Im G_{1,t}A_1|G_{2,t}|A_1\rangle\lesssim \frac{\log n}{\gamma_{t}}+\frac{n^{2\xi}\log n}{\sqrt{n\ell_t}\gamma_{t}},
\end{equation}
i.e. that the bound \eqref{eq:ufhig} with the choice $\widehat\eta_t=\widetilde\eta_{i,t}$
still essentially holds when one of the $G$'s is replaced by its absolute value. The proof of 
 \eqref{eq:absvalbound} is presented in Appendix~\ref{sec:rand}. 
 For notational simplicity, in the remainder of the proof we will always omit $(\log n)$--factors. 
 
  For $t\le\tau$, we thus estimate the integral of the quadratic variation of stochastic term in \eqref{eq:fulleqaasimp} as follows
\begin{equation}
\label{eq:need0}
\begin{split}
\int_0^t \frac{1}{n^2\eta_{1,s}^2} \langle \Im G_{1,s} A_1 G_{2,s} A_2\Im G_{1,s} A_2 G_{2,s}^* A_1\rangle\, \dd s&\le \int_0^t \frac{1}{n\eta_{1,s}^2}  \langle\Im G_{1,s}A_1|G_{2,s}|A_1\rangle\langle\Im G_{1,s}A_2|G_{2,s}|A_2\rangle\, \dd s\\
&\lesssim \int_0^t \frac{1}{n\eta_{1,s}^2} \left(\frac{1}{\gamma_s}+\frac{n^{2\xi}}{\sqrt{n\ell_s}\gamma_s}\right)^2\, \dd s \\
&\lesssim \int_0^t \frac{1}{n\eta_{1,s}^2\gamma_s^2}\, \dd s \lesssim \frac{1}{n\ell_t\gamma_t^2}
\end{split}
\end{equation}
where in the first inequality we used \eqref{eq:redinnew}, in the second inequality we used \eqref{eq:absvalbound}, in the third inequality we used that $n^{4\xi}\le n\ell_s$, since $\xi\le \epsilon/10$ (we will often use these facts in the remainder of this section even if not stated explicitly), and in the last inequality we used  the first relation in \eqref{eq:relineta} together with 
 $\gamma_{s}\gtrsim \gamma_{t}$.

We now present the other estimate for the integral of the quadratic variation which loses any $1/\gamma_t$ but it is better in terms of $1/n$--powers:
\begin{equation}
\begin{split}
\label{eq:estquadvar}
\int_0^t \frac{1}{n^2\eta_{1,s}^2}\langle \Im G_{1,s} A_1 G_{2,s} A_2\Im G_{1,s} A_2 G_{2,s}^* A_1\rangle\,\dd s&\le \int_0^t \frac{1}{n^2\eta_{1,s}^2}\lVert  A_1 G_{2,s} A_2\Im G_{1,s} A_2 G_{2,s}^* A_1\rVert \langle \Im G_{1,s}\rangle\,\dd s\\
&\lesssim \int_0^t \frac{\rho_{1,s}}{n^2\eta_{1,s}^3\eta_{2,s}^2}\, \dd s \lesssim \frac{1}{(n\eta_{1,t}\eta_{2,t})^2},
\end{split}
\end{equation}
where in the first inequality we tacitly used the cyclicity of the trace
 and the positivity of $\Im G$ to write $\langle \Im G_{1} A_1 G_{2} A_2\Im G_{1} A_2 G_{2}^* A_1\rangle=
\langle [\Im G_{1}]^{1/2} A_1 G_{2} A_2\Im G_{1} A_2 G_{2}^* A_1[\Im G_{1}]^{1/2}\rangle$ before
 the operator inequalities $\|A_i\|\lesssim 1$, $\|G_i\|\le 1/\eta_i$ were  applied. Furthermore, in the second inequality we also used the single resolvent local law  
 $|\langle G_{i,s}-M_{i,s}\rangle|\lesssim n^\xi/(n\eta_{i,s})$
in the form $\langle \Im G_{i,s}\rangle \lesssim \rho_{i,s} + n^\xi/(n\eta_{i,s}) \lesssim  \rho_{i,s}$
 since $n\ell_s\ge n^\epsilon$. Here we set $\rho_{i,s}:=\pi^{-1}|\Im m^{z_{i,s}}(\ii\eta_{i,s})|$.
 In the last step we used $\eta_{2,t}\le \eta_{2,s}$ by monotonicity of $t\to\eta_t$
 and the first relation in \eqref{eq:relineta} together with the fact that $\rho_{i,s}\sim \rho_{i,t}$, for any $s,t$, by the second relation in \eqref{eq:relchar} (note that $\rho_{i,s}=\pi^{-1}\langle \Im M_{i,s}\rangle$). We will use the fact that $\rho_{i,s}\sim \rho_{i,t}$ throughout this section, even if not stated explicitly.

  Then, combining the two inequalities \eqref{eq:need0} and \eqref{eq:estquadvar},
    we conclude the following  very high probability  bound for the  quadratic variation of the 
    It\^{o} stochastic term in \eqref{eq:fulleqaasimp} (recall that by $[\cdot]_t$ we denote the quadratic variation process):
    \begin{equation}
    \label{stoc}
\left(\left[\int_0^{\cdot}\partial_{ab}\langle G_{1,s}A_1G_{2,s}A_2\rangle\frac{\dd B_{ab,s}}{\sqrt{n}}\right]_{t\wedge \tau}\right)^{1/2}\lesssim  \frac{n^\xi}{\sqrt{n\ell_{t\wedge\tau}}\gamma_{t\wedge \tau}}\wedge \frac{n^\xi}{n\eta_{1,t\wedge \tau}\eta_{2,t\wedge\tau}}=n^\xi\alpha_{t\wedge \tau}.
\end{equation}

\underline{\textbf{Estimate of forcing terms:}} We now continue the estimates of the other terms in~\eqref{eq:fulleqaasimp}.
The second term in the first line of \eqref{eq:fulleqaasimp} can be incorporated in the lhs. of \eqref{eq:fulleqaasimp} by considering the evolution of $e^{-t} \langle (G_{1,t}A_1G_{2,t}-M_{12,t}^{A_1})A_2\rangle$; we thus neglect this term from the analysis.

Next, we consider the terms in the last line of \eqref{eq:fulleqaasimp}. These terms, using the single resolvent local law $|\langle G_{1,t}-M_{1,t}\rangle|\lesssim n^\xi/(n\eta_{1,t})$, for $t\le T_1^\epsilon\wedge T_2^\epsilon$,
 are bounded by (we present the bound for only one of them) 
\begin{equation}
\label{eq:need1}
\begin{split}
&\int_0^t\big|\langle G_{1,s}-M_{1,s}\rangle\big| \big|\langle G_{1,s}^2A_1G_{2,s}A_2\rangle\big| \, \dd s \\
&\qquad\quad \lesssim \int_0^t \big|\langle (G_{1,s}-M_{1,s})\rangle\big| \frac{1}{\eta_{1,s}}\langle \Im G_{1,s}A_1|G_{2,s}|A_1\rangle^{1/2} \langle \Im G_{1,s}A_2|G_{2,s}|A_2\rangle^{1/2} \, \dd s \\
&\qquad\quad\lesssim \int_0^t \frac{n^\xi}{n\eta_{1,s}^2}
 \left(\frac{1}{\gamma_s}+\frac{n^{2\xi}}{\sqrt{n \ell_s}\gamma_s}\right)\,\dd s \lesssim \frac{n^\xi}{n\ell_t\gamma_t}.
\end{split}
\end{equation}
Here in the first inequality we used the bound
\begin{equation}
\label{eq:need2}
\begin{split}
\big|\langle G_1^2A_1G_2A_2\rangle\big|&\lesssim \sum_{i,j}\frac{\big|\langle \ww_i^{z_1}, A_1\ww_j^{z_2}\rangle\langle \ww_j^{z_2}, A_2\ww_j^{z_1}\rangle \big|}{|\lambda_i^{z_1}-\ii\eta_1|^2|\lambda_j^{z_2}-\ii\eta_2\big|}\ \\
&\lesssim \left(\sum_{i,j}\frac{|\langle \ww_i^{z_1}, A_1\ww_j^{z_2}\rangle|^2}{|\lambda_i^{z_1}-\ii\eta_1|^2|\lambda_j^{z_2}-\ii\eta_2\big|}\right)^{1/2}\left(\sum_{i,j}\frac{|\langle \ww_j^{z_2}, A_2\ww_j^{z_1}\rangle|^2}{|\lambda_i^{z_1}-\ii\eta_1|^2|\lambda_j^{z_2}-\ii\eta_2\big|}\right)^{1/2}\ \\
&= \frac{1}{\eta_1}\langle \Im G_1A_1|G_2|A_1\rangle^{1/2} \langle \Im G_1A_2|G_2|A_2\rangle^{1/2},
\end{split}
\end{equation}
while in second inequality we used \eqref{eq:absvalbound}. In the last step, we used $n^{2\xi}\le \sqrt{n\ell_s}$, 
$\gamma_t\le \gamma_s$, and ~\eqref{eq:relineta}. Similarly to the estimate for the martingale term above, we now give another estimate for the term in \eqref{eq:need1}:
\begin{equation}
\begin{split}
&\int_0^t\big|\langle G_{1,s}-M_{1,s}\rangle\big| \big|\langle G_{1,s}^2A_1G_{2,s}A_2\rangle\big| \, \dd s \\
&\qquad\quad \lesssim \int_0^t \big|\langle (G_{1,s}-M_{1,s})\rangle\big| \frac{1}{\eta_{1,s}}\langle \Im G_{1,s}\rangle^{1/2} \langle \Im G_{1,s}A_1G_{2,s}A_2^2G_{2,s}^*A_1\rangle^{1/2} \, \dd s \\
&\qquad\quad\lesssim \int_0^t \frac{n^\xi}{n\eta_{1,s}^2}\cdot \frac{\rho_{1,s}}{\eta_{2,s}}\,\dd s \lesssim \frac{n^\xi}{n\eta_{1,t}\eta_{2,t}},
\end{split}
\end{equation}
where in the first inequality we used a simple Schwarz inequality, and in the second inequality we used a norm bound, as in \eqref{eq:estquadvar}, to estimate the last trace in the second line.\nc

Then, for $t\le\tau$, writing $E_i,E_j$ as a linear combination
of  $E_+,E_-$ (using the definitions in \eqref{eq:defF}) so that we could use ~\eqref{eq:deftildetau},
 we estimate the term in the fourth line of \eqref{eq:fulleqaasimp} by
 \begin{equation}
\label{eq:need34}
\begin{split}
\int_0^t \big|\langle (G_{1,s}A_1G_{2,s}-M_{12,s}^{A_1})E_i\rangle\big|\big|\langle (G_{2,s}A_2G_{1,s}-M_{21,s}^{A_2})E_j\rangle \big|\, \dd s&\lesssim \int_0^t  n^{4\xi}\left(\frac{1}{n\ell_s\gamma_s^2}\wedge\frac{1}{n^2\eta_{1,s}^2\eta_{2,s}^2}\right)\, \dd s \\
&\lesssim n^{4\xi}\left(\frac{1}{n \ell_t\gamma_t}\wedge \frac{1}{(n\ell_t)(n\eta_{1,t}\eta_{2,t})}\right).
\end{split}
\end{equation}
In the last step, to estimate the term $1/(n\ell_s\gamma_s^2)$, we used again $\gamma_s\ge\gamma_t$
and the second formula of~\eqref{eq:relineta} for the bound
\begin{equation}
\label{eq:usefintused}
\frac{1}{\gamma_t}\int_0^t \frac{1}{\ell_s}\,\dd s\le \frac{1}{\gamma_t}\int_0^t\left(\frac{1}{\eta_{1,s}\rho_{1,s}}+\frac{1}{\eta_{2,s}\rho_{2,s}}\right)\,\dd s\lesssim \frac{1}{\gamma_t}\left(\frac{1}{\rho_{1,s}^2}+\frac{1}{\rho_{2,s}^2}\right)\lesssim \frac{1}{\ell_t},
\end{equation}
where in the last inequality we used that $\gamma_t\ge \max_i (\eta_{i,t}/\rho_{i,t})$. 
The estimate for $1/(n\eta_{1,s}\eta_{2,s})^2$ is simpler, and it follows by the monotonicity $\eta_{i,s}\ge \eta_{i,t}$, together with $\int 1/\eta_{i,s}^2\le1/\ell_t$. \nc

\underline{\textbf{Summary of previous bounds:}} Collecting the bounds~\eqref{stoc}, \eqref{eq:need1} and \eqref{eq:need34} and
writing again $E_1,E_2$ in terms of $E_+,E_-$, we are thus left with  
\begin{equation}
\label{eq:critical}
\begin{split}
&\dd \langle (G_{1,t}A_1G_{2,t}-M_{12,t}^{A_1})A_2\rangle \\
&\qquad\qquad\quad= \langle G_{1,t}A_1G_{2,t}-M_{12,t}^{A_1}\rangle\langle M_{21,t}^{A_2}\rangle\dd t -\langle (G_{1,t}A_1G_{2,t}-M_{12,t}^{A_1})E_-\rangle\langle M_{21,t}^{A_2}E_-\rangle\dd t\\
&\qquad\qquad\quad\quad+\langle M_{12,t}^{A_1} \rangle\langle G_{2,t}A_2G_{1,t}-M_{21,t}^{A_2}\rangle\dd t-\langle M_{12,t}^{A_1} E_- \rangle\langle (G_{2,t}A_2G_{1,t}-M_{21,t}^{A_2})E_-\rangle\dd t+ h_t\dif t\nc +\dd e_t,
\end{split}
\end{equation}
with $h_t$ being a forcing term, and  $e_t$ being a martingale \nc term such that
\[
\int_0^{t\wedge \tau} \big|h_s\big|\,\dd s+\left[\int_0^{\cdot}\dif e_s\right]_{t\wedge \tau}^{1/2}\lesssim n^\xi\alpha_{t\wedge\tau},
\]
with very high probability.

\underline{\textbf{Solving \eqref{eq:critical} via Gr\"onwall:}} With the four choices $ A_1= E_\pm$, $A_2=E_\pm$, we view \eqref{eq:critical} a 
coupled system of four ODE's modulo a small error. 
Recalling the definition~\eqref{defY}, we introduce the short--hand notations 
\begin{equation}
Y_{\sigma,\sigma' ,t}:=Y_{\sigma, \sigma' , t}(\eta_{1,t},z_{1,t},\eta_{2,t},z_{2,t}), \qquad \sigma, \sigma' \in\{ +, -\},
\end{equation}
for these four components. Furthermore, we define the  $4$--dimensional \nc column vector
\begin{equation}
\mathcal{Y}_t:=\left(\begin{matrix}
Y_{+,+,t} \\
Y_{+,-,t} \\
Y_{-,+,t} \\
Y_{-,-,t}
\end{matrix}\right) \in \C^4,
\end{equation}
and the $4$ by $4$ matrices
\begin{equation}
\begin{split}
\widehat{\mathcal{M}}_t:&=\left(\begin{matrix}
2a_{+,t} & \ii b_t & -\ii b_t & 0\\
\ii b_t & a_{+,t}+a_{-,t} & 0 & -\ii b_t \\
-\ii b_t & 0 & a_{+,t}+a_{-,t} & \ii b_t \\
0 & -\ii b_t & \ii b_t & 2a_{-,t}
\end{matrix}\right), \\
\mathcal{D}_t:&=  \left(\begin{matrix}
2(a_t-a_{+,t}) & 0 & 0 & 0\\
0 & 2a_t-a_{+,t}-a_{-,t} & 0 & 0 \\
0 & 0 & 2a_t-a_{+,t}-a_{-,t} & 0 \\
0 & 0 & 0 & 2(a_t-a_{-,t}) 
\end{matrix}\right) 
\end{split}
\end{equation}
with
$$
a_{+,t}=a_+(\eta_{1,t}, z_{1,t}, \eta_{2,t}, z_{2,t}):=\langle M_{12,t}^{E_+} E_+\rangle,
\qquad a_{-,t}=a_-(\eta_{1,t}, z_{1,t}, \eta_{2,t}, z_{2,t}):=-\langle M_{12,t}^{E_-} E_-\rangle
$$ 
and\footnote{In this proof we assumed that $\eta_{i,t}>0$ for simplicity, 
but  here we give the definition of $a_t$ for all cases.}
\begin{equation}
\label{eq:defab}
a_t:=\begin{cases}
a_{+,t} &\mathrm{if}\quad \eta_{1,t}\eta_{2,t}<0, \\
a_{-,t} &\mathrm{if}\quad \eta_{1,t}\eta_{2,t}>0,
\end{cases}
\nc
\qquad\qquad\quad
b_t:=b(\eta_{1,t}, z_{1,t}, \eta_{2,t}, z_{2,t}).
\end{equation}
  Notice that by \eqref{eq:neednow} $\mathcal{D}_t$ is a positive matrix.
 Additionally, define
\begin{equation}
\mathcal{M}_t:=\left(\begin{matrix}
2a_t & \ii b_t & -\ii b_t & 0\\
\ii b_t & 2a_t & 0 & -\ii b_t \\
-\ii b_t & 0 & 2a_t & \ii b_t \\
0 & -\ii b_t & \ii b_t & 2a_t
\end{matrix}\right),
\end{equation}
so that $\widehat{\mathcal{M}}_t=\mathcal{M}_t-\mathcal{D}_t$. 

Then, using these notations, by \eqref{eq:critical}, it follows that
\begin{equation}
\label{eq:4by4ode}
\dd \mathcal{Y}_t=(\mathcal{M}_t-\mathcal{D}_t)\mathcal{Y}_t \dd t + \mathcal{F}_t\dd t\nc+\dd \mathcal{E}_t,
\end{equation}
with  $\mathcal{F}_t$ being a forcing term, and $\mathcal{E}_t$ being a  $4\times 4$ matrix
valued martingale  such that
\begin{equation}
\label{eq:bounderrterm}
\int_0^{t\wedge\tau}\lVert  \mathcal{F}_s\rVert  \dd s +\left(\left\lVert\left[\int_0^{\cdot} \dd \mathcal{E}_s\right]_{t\wedge \tau} \right\rVert\right)^{1/2}\lesssim n^\xi\alpha_{t\wedge\tau}.
\end{equation}
In order to estimate $\mathcal{Y}_t$, the solution of \eqref{eq:4by4ode}, 
 we will rely on the following simple lemma which is  a stochastic matrix--valued Gronwall inequality:
\begin{lemma}
\label{lem:matrixgronwall}
Fix $d\in \N$, and let $\mathcal{X}_t\in \C^d$ be the solution of the following matrix valued SDE:          
\[
\dd \mathcal{X}_t=\mathcal{P}_t\mathcal{X}_t \dd t+\mathcal{F}_t \dd t+\dd \mathcal{E}_t,
\]
with given initial condition $\mathcal{X}_0$, 
where the forcing term $\mathcal{F}_t\in \C^d$ and the coefficient matrix $\mathcal{P}_t\in \C^{d \times d}$ 
are adapted to continuous family of $\sigma$-algebras and 
 $\mathcal{E}_t\in \C^d$ is a martingale.  We also assume that $\lVert \mathcal{F}_t\rVert+\lVert \mathcal{E}_t\rVert\le n^C$ for some large parameter $n$ and for some constant $C>0$.
  Furthermore, assume that $\mathrm{Spec}(\Re \nc\mathcal{P}_t)\subset (-\infty, f_t]$, for some non--negative real valued function $f_t\ge 0$. Let $\mathcal{C}_t\dd t\in\C^{d\times d}$ be the covariation process of $\dd \mathcal{E}_t$, 
   let $\tau$ be a stopping time, and assume that
\begin{equation}
\label{eq:desbneed}
\left(\int_0^{t\wedge \tau} \lVert\mathcal{F}_s\rVert\,\dd s\right)^2+\int_0^{t\wedge \tau} \lVert \mathcal{C}_s\rVert\,\dd s\lesssim \alpha_{t\wedge \tau}^2,
\end{equation}
for some positive deterministic function $\alpha_t$.
 Then, with very high probability, for any arbitrary small $\zeta>0$
and for any $t\ge 0$, we have
\begin{equation}
\label{eq:desiredb}
\sup_{0\le s\le t\wedge\tau}\lVert \mathcal{X}_s\rVert^2\lesssim \lVert \mathcal{X}_0\rVert^2
+n^{3\zeta}\alpha_t^2+\int_0^{ t\wedge\tau}\left(\lVert \mathcal{X}_0\rVert^2+n^{3\zeta}\alpha_s^2\right) f_s\exp\left(2(1+n^{-\zeta})\int_s^{t\wedge\tau} f_r\,\dd r\right)\,\dd s.
\end{equation}
\end{lemma}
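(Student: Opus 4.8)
The plan is to run a deterministic matrix Gr\"onwall estimate for $\|\mathcal{X}_t\|^2$ and to superimpose on it the fluctuation produced by the martingale $\mathcal{E}_t$, the latter controlled by Doob's inequality together with a stopping-time and dyadic union-bound argument that upgrades an $L^2$/expectation bound to a bound valid with very high probability. Throughout I would work with the stopped processes, which is harmless since every hypothesis is formulated up to $\tau$. First I would apply It\^o's formula to $t\mapsto\|\mathcal{X}_t\|^2$, obtaining
\begin{equation*}
\dd\|\mathcal{X}_t\|^2=\Big[\big\langle\mathcal{X}_t,(\mathcal{P}_t+\mathcal{P}_t^*)\mathcal{X}_t\big\rangle+2\Re\langle\mathcal{X}_t,\mathcal{F}_t\rangle+\mathrm{Tr}\,\mathcal{C}_t\Big]\dd t+\dd\mathcal{N}_t,
\end{equation*}
where $\mathcal{N}_t$ is a real martingale with $\dd[\mathcal{N}]_t\lesssim\|\mathcal{X}_t\|^2\,\|\mathcal{C}_t\|\,\dd t$. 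The crucial deterministic input is the drift estimate $\langle\mathcal{X}_t,(\mathcal{P}_t+\mathcal{P}_t^*)\mathcal{X}_t\rangle\le 2(1+n^{-\zeta})f_t\,\|\mathcal{X}_t\|^2$, i.e. that $f_t$ (which a priori only controls the spectrum of $\mathcal{P}_t$) also essentially controls the top eigenvalue of the Hermitian part $\mathcal{P}_t+\mathcal{P}_t^*$. I would obtain this by the classical fact that a $d\times d$ matrix with spectral abscissa $\le f$ generates, on a short interval of length $h$, a propagator of norm at most $C_d(1+h\|\mathcal{P}\|)^{d-1}e^{hf}$, then freeze $\mathcal{P}_t$ on a fine partition of $[0,t]$, compose, and use that $d=4$ and $\|\mathcal{P}_t\|\le n^C$; the accumulated loss is sub-polynomial, which is exactly what the factor $1+n^{-\zeta}$ absorbs. (In the concrete application of this lemma in Section~\ref{sec:g1g2} one has $\mathcal{P}_t=\mathcal{M}_t-\mathcal{D}_t$ with $\mathcal{P}_t+\mathcal{P}_t^*=4a_tI-2\mathcal{D}_t$ and $\mathcal{D}_t\ge 0$ with $0\in\mathrm{Spec}(\mathcal{D}_t)$, so this step is in fact elementary.)

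Granting the drift bound, I would integrate the It\^o identity from $0$ to $t\wedge\tau$ and control the remainder $R_{t\wedge\tau}:=\int_0^{t\wedge\tau}\big(2\Re\langle\mathcal{X}_s,\mathcal{F}_s\rangle+\mathrm{Tr}\,\mathcal{C}_s\big)\dd s+\mathcal{N}_{t\wedge\tau}$. Its forcing and covariation parts are bounded by Cauchy--Schwarz by $2\sup_{s\le t\wedge\tau}\|\mathcal{X}_s\|\int_0^{t\wedge\tau}\|\mathcal{F}_s\|\dd s+\int_0^{t\wedge\tau}\|\mathcal{C}_s\|\dd s$, hence by Young's inequality and \eqref{eq:desbneed} by $\tfrac12\sup_{s\le t\wedge\tau}\|\mathcal{X}_s\|^2+C\alpha_{t\wedge\tau}^2$. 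For the martingale part I would use Doob's $L^2$ (or Burkholder--Davis--Gundy) inequality, the bound $\dd[\mathcal{N}]_s\lesssim\|\mathcal{X}_s\|^2\|\mathcal{C}_s\|\dd s$, and again \eqref{eq:desbneed}: cutting on the events $\{\sup_{s\le t\wedge\tau}\|\mathcal{X}_s\|^2\le\Xi\}$ (via the stopping times $\inf\{s:\|\mathcal{X}_s\|^2>\Xi\}$) along a dyadic ladder of levels $\Xi\le n^C$ and taking a union bound, one gets $\big|\sup_{s\le t\wedge\tau}\mathcal{N}_s\big|\le\tfrac14\sup_{s\le t\wedge\tau}\|\mathcal{X}_s\|^2+n^{3\zeta}\alpha_{t\wedge\tau}^2$ with very high probability for any $\zeta>0$, the power $n^{3\zeta}$ being the combined price of Young's inequality, a high-moment estimate, and the union bound over polynomially many levels. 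Absorbing the $\tfrac34\sup_s\|\mathcal{X}_s\|^2$ into the left-hand side after taking the supremum over $t$, I would be left with
\begin{equation*}
\sup_{s\le t\wedge\tau}\|\mathcal{X}_s\|^2\lesssim\|\mathcal{X}_0\|^2+n^{3\zeta}\alpha_t^2+(1+n^{-\zeta})\int_0^{t\wedge\tau}f_s\,\|\mathcal{X}_s\|^2\,\dd s,
\end{equation*}
and then the scalar integral Gr\"onwall lemma, applied to $u_s:=\|\mathcal{X}_s\|^2$ with the increasing majorant $v_s:=\|\mathcal{X}_0\|^2+n^{3\zeta}\alpha_s^2$ (note $s\mapsto\alpha_s$ is increasing along the characteristics) and weight $2(1+n^{-\zeta})f_s\ge 0$, would produce exactly the claimed bound \eqref{eq:desiredb}.

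The main obstacle is the first step: a bare bound on $\Re\mathrm{Spec}(\mathcal{P}_t)$ does not by itself control the norm growth generated by $\mathcal{P}_t$ once $\mathcal{P}_t$ is far from normal, so one must genuinely exploit that the dimension is fixed ($d=4$) and that the entries of $\mathcal{P}_t$ are only polynomially large in $n$ — this is the sole place where care is needed and the sole source of the sub-polynomial ($1+n^{-\zeta}$, $n^{3\zeta}$) losses; the remaining ingredients (It\^o calculus, Doob/BDG, the dyadic union bound, and scalar Gr\"onwall) are entirely routine.
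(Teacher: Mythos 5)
Your overall architecture matches the paper's proof: It\^o on $\lVert\mathcal{X}_t\rVert^2$, bound the drift by $f_t\lVert\mathcal{X}_t\rVert^2$, Cauchy--Schwarz/Young for the forcing and covariation terms, a high-probability bound on the martingale part followed by absorption, and a scalar Gr\"onwall. But there are two substantive divergences, one of which is a genuine error in your proposal.

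The drift step. The paper passes directly from the It\^o identity
$\dd\lVert\mathcal{X}_t\rVert^2 = 2\langle\mathcal{X}_t,(\Re\mathcal{P}_t)\mathcal{X}_t\rangle\dd t + \cdots$
to $\lVert\mathcal{X}_t\rVert^2\le\lVert\mathcal{X}_0\rVert^2+2\int_0^t f_s\lVert\mathcal{X}_s\rVert^2\dd s+\cdots$, i.e.\ it uses the \emph{instantaneous} quadratic-form bound $\langle x,(\Re\mathcal{P}_t)x\rangle\le f_t\lVert x\rVert^2$ with no corrective factor. Effectively the hypothesis is read as $\mathrm{Spec}(\Re\mathcal{P}_t)\subset(-\infty,f_t]$, which is what actually holds in the application (there $\Re\mathcal{P}_t=2a_tI-\mathcal{D}_t$ with $\mathcal{D}_t\ge 0$, exactly as you note). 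You are right that a bound on $\Re\mathrm{Spec}(\mathcal{P}_t)$ alone does not control $\lambda_{\max}(\Re\mathcal{P}_t)$ for a far-from-normal matrix; but your proposed repair does not work. A short-time propagator estimate $\lVert e^{h\mathcal{P}}\rVert\le C_d(1+h\lVert\mathcal{P}\rVert)^{d-1}e^{hf}$ is a \emph{cumulative} statement and cannot be converted into an \emph{instantaneous} bound on $\langle x,(\mathcal{P}_t+\mathcal{P}_t^*)x\rangle$; and if you instead compose these propagator bounds over $t/h$ intervals, the constant $C_d>1$ accumulates as $C_d^{t/h}\to\infty$ as $h\to 0$, so the loss is not sub-polynomial and cannot be absorbed into $1+n^{-\zeta}$. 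If one truly only has the spectral-abscissa hypothesis one must abandon the $\lVert\mathcal{X}_t\rVert^2$ route entirely and argue via a Duhamel/variation-of-constants representation with propagator norms, which is a structurally different argument and not what the lemma's proof does.

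Relatedly, you have misattributed the $(1+n^{-\zeta})$ factor in the exponent of \eqref{eq:desiredb}: it has nothing to do with non-normality of $\mathcal{P}_t$. In the paper it arises from Young's inequality $\alpha\mathcal{Z}^{1/2}\log n\le n^{2\zeta}\alpha^2+n^{-\zeta}\mathcal{Z}$ applied to the martingale term, which produces an $n^{-\zeta}\mathcal{Z}$ on the right that is absorbed into the left-hand side by dividing by $1-n^{-\zeta}$.

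A minor difference: for the martingale control, the paper invokes a Freedman-type exponential inequality (from Shorack--Wellner) that directly gives a high-probability bound conditional on small quadratic variation, then combines this with boundedness and a dyadic argument. Your proposal of Doob/BDG plus a high-moment estimate plus a union bound over polynomially many levels would plausibly arrive at a comparable conclusion, but it is a different (and somewhat more circuitous) route than what the paper actually executes.
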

\begin{proof}
We consider the evolution of $\lVert \mathcal{X}_t\rVert^2$:
\[
\dd \lVert \mathcal{X}_t\rVert^2= 2 \langle \mathcal{X}_t,(\Re \mathcal{P}_t)\mathcal{X}_t\rangle\dd t+2\Re \langle\mathcal{F}_t,\mathcal{X}_t\rangle\dd t+2\Re\langle \dd \mathcal{E}_t,\mathcal{X}_t\rangle+\mathrm{Tr}[\mathcal{C}_t]\dif t .
\]
Integrating this in time, we get
\begin{equation}
\label{eq:step1gron}
\lVert \mathcal{X}_t\rVert^2\le \lVert \mathcal{X}_0\rVert^2+2\int_0^t f_s\lVert \mathcal{X}_s\rVert^2 \dif s+C\int_0^t \lVert \mathcal{F}_s\rVert\cdot \lVert\mathcal{X}_s\rVert\dif s+2\int_0^t\Re\langle \dd \mathcal{E}_s,\mathcal{X}_s\rangle+d\int_0^t \lVert \mathcal{C}_s\rVert\,\dd s
\end{equation}
for some $C=C(d)>0$. Define $\mathcal{Z}_t:=\sup_{0\le s\le t}\lVert \mathcal{X}_s\rVert^2$, then, to estimate the stochastic term in \eqref{eq:step1gron} we use the martingale inequality (see \cite[Appendix B.6, Eq. (18)]{SW09} with $c=0$ for continuous martingales)
\begin{equation}
\label{eq:goddmartin}
\P\left(\sup_{0\le s\le t}\left|\int_0^{s\wedge \tau}\Re\langle \dd \mathcal{E}_r,\mathcal{X}_r\rangle\right|\ge \lambda,\quad \left[\int_0^{\cdot}\Re\langle \dd \mathcal{E}_s,\mathcal{X}_s\rangle\right]_{t\wedge \tau} \le \mu\right)\le 2e^{-\lambda^2/2\mu},
\end{equation}
for any fixed $\lambda,\mu>0$. Using the boundedness of $\mathcal{E}_t$, $\mathcal{X}_t$ and a dyadic argument, \eqref{eq:goddmartin} implies
\begin{equation}
\label{eq:martineq}
\sup_{0\le s\le t}\left|\int_0^{s\wedge \tau}\Re\langle \dd \mathcal{E}_r,\mathcal{X}_r\rangle\right|\le \log n \left[\int_0^{\cdot}\Re\langle \dd \mathcal{E}_s,\mathcal{X}_s\rangle\right]_{t\wedge \tau}^{1/2}+n^{-100},
\end{equation}
with very high probability.
Then, using that by \eqref{eq:desbneed} we have
\[
\left[\int_0^{\cdot}\Re\langle \dd \mathcal{E}_s,\mathcal{X}_s\rangle\right]_{t\wedge \tau}\lesssim\int_0^{t\wedge \tau} \langle \mathcal{X}_s,\mathcal{C}_s\mathcal{X}_s\rangle\, \dd s\lesssim \int_0^{t\wedge\tau} \lVert \mathcal{C}_s\rVert\cdot\lVert\mathcal{X}_s\rVert^2\,\dd s\lesssim \mathcal{Z}_{t\wedge\tau}\alpha_{t\wedge\tau}^2,
\]
the estimate \eqref{eq:martineq}, plugged into~\eqref{eq:step1gron},  implies (here we also use a standard discretization argument in $t$, see e.g. \cite[Eqs. (3.34)--(3.35)]{meso30}, to ensure that \eqref{eq:martineq} holds simultaneously for all $t$'s)
\begin{equation}
\mathcal{Z}_{t\wedge \tau}\le \lVert \mathcal{X}_0\rVert^2+2\int_0^{t\wedge\tau} f_s \mathcal{Z}_s\,\dd s+\alpha_{t\wedge\tau}\mathcal{Z}_{t\wedge\tau}^{1/2}\log n+C\alpha_{t\wedge\tau}^2,
\end{equation}
with very high probability. Using $\alpha_{t\wedge\tau}\mathcal{Z}_{t\wedge\tau}^{1/2}\le n^{2\zeta}\alpha_{t\wedge\tau}^2+n^{-\zeta}\mathcal{Z}_{t\wedge\tau}$, we thus conclude (neglecting $\log n$--factors) 
\[
\mathcal{Z}_{t\wedge\tau}\le (1+n^{-\zeta}) \lVert \mathcal{X}_0\rVert^2+2(1+n^{-\zeta})\int_0^{t\wedge\tau} f_s \mathcal{Z}_s\,\dd s+
n^{3\zeta}\alpha_{t\wedge\tau}^2 .
\]
Then by a standard Gronwall inequality we conclude the proof.
\end{proof}
In order to apply this lemma, we now \nc rewrite $a_t$ from \eqref{eq:defab} as a time derivative of a certain quantity plus a lower order term (its proof is 
given in Appendix~\ref{sec:det}):
\begin{lemma}
\label{lem:nclfp}
Let $a_t$ from \eqref{eq:defab} , and define\footnote{Here $\beta_{\pm,t}$ are
 the eigenvalues of the time--dependent two--body stability operator defined around \eqref{eq:M12} (see also \eqref{eq:dedfevalues} below). We use the standard convention that the complex square root $\sqrt{\cdot}$ is defined using the branch cut $\C\setminus (-\infty,0)$.}
\begin{equation}
\label{eq:defevmore}
\beta_{\pm,t}:=1-\Re[z_{1,t}\overline{z_{2,t}}]u_{1,t}u_{2,t}
\pm\sqrt{m_{1,t}^2m_{2,t}^2-(\Im[z_{1,t}\overline{z_{2,t}}])^2u_{1,t}^2u_{2,t}^2}.
\end{equation}
Then, we have
\begin{equation}
\label{eq:impeqpm}
a_t=-\frac{1}{2}\partial_t\log\big|\beta_{+,t}\beta_{-,t}\big|+\frac{|m_{1,t}m_{2,t}|}{|\beta_{+,t}\beta_{-,t}|}.
\end{equation}
Additionally, we have
\begin{equation}
\label{eq:asympev}
|\beta_{\pm,t}|\sim \gamma_t=|z_{1,t}-z_{2,t}|+\frac{\eta_{1,t}}{\rho_{1,t}}+\frac{\eta_{2,t}}{\rho_{2,t}}.
\end{equation}
\end{lemma}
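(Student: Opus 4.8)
The plan is to compute $a_{+,t}$ and $a_{-,t}$ in closed form by inverting the two--body stability operator $\mathcal{B}_{12,t}$ on the two--dimensional subspace $\mathrm{Span}\{E_+,E_-\}$, and then to recognise \eqref{eq:impeqpm} as a purely algebraic identity that becomes transparent once one differentiates these explicit expressions along the characteristic flow. Since $\mathcal{S}[\cdot]=\langle\,\cdot\,E_+\rangle E_+-\langle\,\cdot\,E_-\rangle E_-$ only sees the $\mathrm{Span}\{E_+,E_-\}$--components of its argument, applying $\langle\,\cdot\,E_+\rangle$ and $\langle\,\cdot\,E_-\rangle$ to the fixed--point identity $M_{12,t}^{E_\sigma}=M_{1,t}E_\sigma M_{2,t}+M_{1,t}\mathcal{S}[M_{12,t}^{E_\sigma}]M_{2,t}$ yields a closed $2\times 2$ linear system for the pair $\big(\langle M_{12,t}^{E_\sigma}E_+\rangle,\langle M_{12,t}^{E_\sigma}E_-\rangle\big)$. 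Using the block form \eqref{Mmatrix} of $M_{i,t}$ and abbreviating (dropping the time subscript)
\[
\mu:=m_1m_2,\qquad w:=\Re[z_1\overline{z_2}]\,u_1u_2,\qquad v:=\Im[z_1\overline{z_2}]\,u_1u_2,
\]
a direct computation of the four relevant traces gives $\langle M_1M_2E_+\rangle=\mu+w$, $\langle M_1M_2E_-\rangle=\ii v$, $\langle M_1E_-M_2E_+\rangle=-\ii v$, $\langle M_1E_-M_2E_-\rangle=\mu-w$, so that the system has coefficient matrix
\[
\begin{pmatrix} 1-\mu-w & -\ii v\\[2pt] -\ii v & 1+\mu-w\end{pmatrix},
\]
whose determinant is $(1-w)^2-\mu^2+v^2$ and whose eigenvalues are $1-w\pm\sqrt{\mu^2-v^2}=\beta_{+,t},\beta_{-,t}$ (in accordance with the footnote in the statement); in particular $\det=\beta_{+,t}\beta_{-,t}$. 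Solving by Cramer's rule and recalling $a_{+,t}=\langle M_{12,t}^{E_+}E_+\rangle$ and $a_{-,t}=-\langle M_{12,t}^{E_-}E_-\rangle$, one obtains $a_{\pm,t}=(\pm\mu+w+\mu^2-w^2-v^2)/(\beta_{+,t}\beta_{-,t})$. On the imaginary axis $m_{i,t}$ is purely imaginary with $\sgn(\Im m_{i,t})=\sgn(\eta_{i,t})$ and $u_{i,t}$ is real (Section~\ref{sec:singleG}), hence $\mu$ is real with $\sgn\mu=-\sgn(\eta_{1,t}\eta_{2,t})$; comparing with the case split in \eqref{eq:defab} gives $a_t=(|\mu|+w+\mu^2-w^2-v^2)/(\beta_{+,t}\beta_{-,t})$, and using $\mu^2-v^2=(1-w)^2-\beta_{+,t}\beta_{-,t}$ one simplifies $w+\mu^2-w^2-v^2=(1-w)-\beta_{+,t}\beta_{-,t}$, so that
\[
a_t=\frac{|m_{1,t}m_{2,t}|}{\beta_{+,t}\beta_{-,t}}+\frac{\big(1-\Re[z_{1,t}\overline{z_{2,t}}]u_{1,t}u_{2,t}\big)-\beta_{+,t}\beta_{-,t}}{\beta_{+,t}\beta_{-,t}}.
\]

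The next step is to differentiate along the flow. From the second relation in \eqref{eq:relchar}, $M_{i,t}=e^{(t-s)/2}M_{i,s}$ holds block--entrywise, so $\partial_t m_{i,t}=\tfrac12 m_{i,t}$ and $\partial_t(z_{i,t}u_{i,t})=\tfrac12 z_{i,t}u_{i,t}$; combined with $\partial_t z_{i,t}=-\tfrac12 z_{i,t}$ from \eqref{eq:scalchar} this forces $\partial_t u_{i,t}=u_{i,t}$. Hence $\partial_t\mu=\mu$, $\partial_t(z_{1,t}\overline{z_{2,t}})=-z_{1,t}\overline{z_{2,t}}$, $\partial_t(u_{1,t}u_{2,t})=2u_{1,t}u_{2,t}$, and therefore $\partial_t w=w$ and $\partial_t v=v$. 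Substituting into $\beta_{+,t}\beta_{-,t}=(1-w)^2-\mu^2+v^2$ gives
\[
\partial_t(\beta_{+,t}\beta_{-,t})=-2(1-w)w-2\mu^2+2v^2=-2\big[(1-w)-\beta_{+,t}\beta_{-,t}\big],
\]
the last step being the same simplification used above. Dividing by $2\beta_{+,t}\beta_{-,t}$ and inserting this into the displayed formula for $a_t$ yields \eqref{eq:impeqpm}, once one knows that $\beta_{+,t}\beta_{-,t}$ is real and positive, so that $|\beta_{+,t}\beta_{-,t}|=\beta_{+,t}\beta_{-,t}$ and $\partial_t(\beta_{+,t}\beta_{-,t})/(\beta_{+,t}\beta_{-,t})=\partial_t\log|\beta_{+,t}\beta_{-,t}|$; note $|m_{1,t}m_{2,t}|=|\mu|$ since $\mu$ is real.

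The remaining, and only genuinely analytic, point is the positivity of $\beta_{+,t}\beta_{-,t}$ and the two--sided bound \eqref{eq:asympev}; these rest on the cusp--regime asymptotics of $m^{z}(\ii\eta)$ and $u^z(\ii\eta)$ recalled in Section~\ref{sec:singleG} (and are closely related to the deterministic estimate \eqref{local_2g_M} and to Lemma~\ref{lem:Mbounds}). Since $\mu^2-v^2$ is real, if $\mu^2-v^2<0$ then $\beta_{\pm,t}=(1-w)\pm\ii\sqrt{v^2-\mu^2}$ and $\beta_{+,t}\beta_{-,t}=(1-w)^2+(v^2-\mu^2)>0$; if $\mu^2-v^2\ge0$ the $\beta_{\pm,t}$ are real and positivity is equivalent to $|1-w|>\sqrt{\mu^2-v^2}$, which one verifies from the identities $1-u_{i,t}=\eta_{i,t}/(\pi\rho_{i,t}+\eta_{i,t})$, $|m_{i,t}|=\pi\rho_{i,t}$ and $\big||z_{i,t}|-1\big|\le c$. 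For \eqref{eq:asympev} one expands $\beta_{\pm,t}=1-\Re[z_{1,t}\overline{z_{2,t}}]u_{1,t}u_{2,t}\pm\sqrt{m_{1,t}^2m_{2,t}^2-(\Im[z_{1,t}\overline{z_{2,t}}])^2u_{1,t}^2u_{2,t}^2}$ and checks, subregime by subregime of the cusp, that the $|z_{1,t}-z_{2,t}|$ contribution and the $\eta_{i,t}/\rho_{i,t}$ contributions (the latter entering through $1-u_{i,t}^2$ and through $m_{i,t}^2$) add up without cancellation, giving $|\beta_{\pm,t}|\sim|z_{1,t}-z_{2,t}|+\eta_{1,t}/\rho_{1,t}+\eta_{2,t}/\rho_{2,t}=\gamma_t$. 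I expect this last matching of the explicit combination against $\gamma_t$ across all regimes of the cusp to be the main obstacle; by contrast the identity \eqref{eq:impeqpm} itself is a short computation once the inversion above is in place.
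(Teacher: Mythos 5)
Your derivation of \eqref{eq:impeqpm} is correct and is, in substance, the paper's own argument: solve the $2\times2$ linear system for $\langle M_{12}^{E_\sigma}E_\pm\rangle$ on $\mathrm{Span}\{E_+,E_-\}$, observe that its determinant is $\beta_{+,t}\beta_{-,t}$, and then use the explicit exponential scaling of $m_{i,t},u_{i,t},z_{i,t}$ along the characteristic flow (which the paper records as \eqref{mu} from Lemma~\ref{lem:usinf}) to recognise the non-$|\mu|$ part of $a_t$ as $-\tfrac12\partial_t\log(\beta_{+,t}\beta_{-,t})$. The only organisational difference is that the paper takes the closed-form expression \eqref{eq:superusinfo} (already established inside the proof of Lemma~\ref{lem:propertms}) as its starting point, whereas you re-derive it via Cramer's rule; your algebra ($a_{\pm,t}=(\pm\mu+w+\mu^2-w^2-v^2)/(\beta_{+,t}\beta_{-,t})$, the simplification $w+\mu^2-w^2-v^2=(1-w)-\beta_{+,t}\beta_{-,t}$, and $\partial_t(\beta_{+,t}\beta_{-,t})=-2[(1-w)-\beta_{+,t}\beta_{-,t}]$) all checks out. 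You also handle the sign bookkeeping from \eqref{eq:defab} correctly: on the imaginary axis $\mu=m_1m_2$ is real with $\sgn\mu=-\sgn(\eta_{1,t}\eta_{2,t})$, so the case split in \eqref{eq:defab} produces $|\mu|$ in both cases.

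The remaining part — positivity of $\beta_{+,t}\beta_{-,t}$ and the two-sided estimate \eqref{eq:asympev} — you correctly flag as the genuinely analytic piece, but you leave it as a sketch rather than a proof. In the paper both are supplied by Lemma~\ref{lem:strongerb} (proved earlier in Appendix~\ref{app:addtechlem}), whose chain of inequalities \eqref{eq:rel}, \eqref{eq:opprel}, \eqref{eq:boundbpm} gives exactly $\beta_+\beta_->0$ and $|\beta_\pm|\sim\gamma$; the paper's proof of Lemma~\ref{lem:nclfp} then simply cites it. The key nontrivial step that your sketch does not discharge is the lower bound \eqref{eq:missstep}, i.e.\ that $(\left||z_1|^2+|z_2|^2-2\right|+\rho_1^2+\rho_2^2)^2+(\Im[z_1\overline{z_2}])^2\gtrsim|z_1-z_2|^2$, which is what turns the upper bound $|\beta_\pm|\lesssim\gamma$ into a two-sided comparison; the paper proves it by the polar-coordinate argument around \eqref{eq:bneedlast}. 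So the gap you acknowledge is real, but it is a citation gap rather than a conceptual one — the needed analysis is exactly the content of Lemma~\ref{lem:strongerb}, which is already available at this point of the paper and which you could invoke directly instead of redoing the cusp case analysis.
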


We are now ready to conclude the proof of  Proposition~\ref{pro:mainpro}. Using that the largest eigenvalue of $\Re\mathcal{M}_t$ is $2a_t$ (in fact, $\Re\mathcal{M}_t$ is diagonal), as a consequence of $a_t,b_t$, which are defined in \eqref{eq:defab}, being real from the sentence above \eqref{eq:rel12}  together with \eqref{eq:neednow}, and that $\mathcal{D}_t=\Re \mathcal{D}_t$ is positive, the propagator of \eqref{eq:4by4ode} has operator norm bounded by
\begin{equation}
\label{eq:propest}
\exp\left(\int_s^t 2a_r\,\dd r\right)
=\exp\left[-\int_s^t \left(\partial_r\log|\beta_{+,r}\beta_{-,r}|+\frac{|m_{1,r}m_{2,r}|}{|\beta_{+,r}\beta_{-,r}|}\right)\,\dd r\right]\lesssim \frac{|\beta_{+,s}\beta_{-,s}|}{|\beta_{+,t}\beta_{-,t}|}\sim \frac{\gamma_s^2}{\gamma_t^2}.
\end{equation}
 Here, by $|\beta_{\pm,r}|\sim \gamma_r\gtrsim \eta_{i,r}/\rho_{i,r}$ from \eqref{eq:asympev}, we used that
\[
\int_s^t\frac{|m_{1,r}m_{2,r}|}{|\beta_{+,r}\beta_{-,r}|}\,\dd r\lesssim \int_s^t\frac{\rho_{1,r}^2\rho_{2,r}^2}{\eta_{1,r}\eta_{2,r}}\,\dd r\le \int_s^t \left(\frac{\rho_{1,r}^4}{\eta_{1,r}^2}+\frac{\rho_{2,r}^4}{\eta_{2,r}^2}\right) \,\dif r\lesssim \frac{\rho_{1,t}^3}{\eta_{1,t}}+\frac{\rho_{2,t}^3}{\eta_{2,t}}\lesssim 1,
\]
where in the last inequality we used that $\rho_{i,t}\lesssim \eta_{i,t}^{1/3}$ by \eqref{rho}. The leading term $\partial_r\log|\beta_{+,r}\beta_{-,r}|$ has been explicitly  integrated out in~\eqref{eq:propest}.

We now  conclude the proof of  Proposition~\ref{pro:mainpro} applying Lemma~\ref{lem:matrixgronwall} 
 for the equation \eqref{eq:4by4ode}. The fact that $\mathcal{F}_t$, $\dif \mathcal{E}_t$ in \eqref{eq:4by4ode} satisfy \eqref{eq:desbneed}, with $\alpha_t$ defined in~\eqref{alphadef},
follows from \eqref{eq:bounderrterm}. Using  \eqref{eq:propest} in the form $\exp(\int_s^t 4a_r\,\dd r)\lesssim (\gamma_s/\gamma_t)^4$, we can thus apply Lemma~\ref{lem:matrixgronwall} (with $3\zeta=\xi$), for  $f_t=2a_t\lesssim 1/\gamma_t$, to obtain \nc
\begin{equation}
\label{eq:h}
\lVert \mathcal{Y}_t\rVert^2\lesssim n^{3\xi}\alpha_t^2 +n^{3\xi}\int_0^t\frac{\alpha_s^2}{\gamma_s}\exp\left(\int_s^t 4a_r\,\dd r\right)\,\dd s \lesssim  n^{3\xi}\alpha_t^2 +n^{3\xi}\int_0^t\frac{\alpha_s^2\gamma_s^3}{\gamma_t^4}\,\dd s \lesssim   n^{3\xi}\alpha_t^2,
\end{equation}
with very high probability,
 where we used that $\lVert \mathcal{Y}_0\rVert\lesssim \alpha_0\le \alpha_s$, by the monotonicity of $t\mapsto \alpha_t$.  The estimate of the integral in the last step will be explained later, first we conclude
the main proof.
 This proves the desired bound for the initial condition $\eta_{i,0}$. 
 We point out that a completely analogous proof holds if we consider any other initial condition 
 $\widetilde{\eta}_{i,0}\in [\eta_{i,0},\omega_1]$, obtaining exactly the same bound as in the last line of \eqref{eq:h}
but with a tilde on $\eta$'s and on all quantities depending on them (see the argument 
in footnote~\ref{disc} 
below \eqref{eq:deftildetau} to ensure this bound first on a fine mesh $\mathcal{P}$ of $[\eta_{i,0},\omega_1]$ and then for all  $\widetilde{\eta}_{i,0}$'s) \nc. This together, with the definition of $\tau$ \eqref{eq:deftildetau} 
  (note  the $n^{2\xi}$ threshold!)
   shows that $\tau=T_1^\epsilon\wedge T_2^\epsilon$, with very high probability.  
  

\underline{\textbf{Proof of \eqref{eq:h}:}} We close this proof proving the last inequality of \eqref{eq:h}. Here we used 
(recall the definition of $\alpha_t$ from \eqref{alphadef})
\begin{equation}
\label{eq:impbound}
\int_0^t\frac{\alpha_s^2\gamma_s^3}{\gamma_t^4}\, \dd s=\nc\int_0^t \left(\frac{1}{\sqrt{n\ell_s}\gamma_s}\wedge\frac{1}{n\eta_{1,s}\eta_{2,s}}\right)^2\frac{\gamma_s^3}{\gamma_t^4}\,\dd s\le \frac{1}{n\ell_t\gamma_t^2}\wedge \frac{1}{n^2\eta_{1,t}^2\eta_{2,t}^2}=\alpha_t^2.\nc
\end{equation}
Finally, we prove the last inequality  \eqref{eq:impbound}.
 Let $s_*$ be such that for any $s\ge s_*$ we have $\gamma_s\sim |z_{1,s}-z_{2,s}|$
and $\gamma_s\sim (\eta_{1,s}/\rho_{1,s})+(\eta_{2,s}/\rho_{2,s})$ for $s<s_*$  (by the monotonicity
 of $t\to\eta_{i,t}/\rho_{i,t}$ it follows that at least one such $s_*$  exists, maybe $s_*=0$ or $s_*=t$). We thus have
\[
\int_{s_*}^t \left(\frac{1}{\sqrt{n\ell_s}\gamma_s}\wedge\frac{1}{n\eta_{1,s}\eta_{2,s}}\right)^2\frac{\gamma_s^3}{\gamma_t^4}\,\dd s\lesssim \frac{1}{n^2\gamma_t}\int_0^t \frac{1}{\eta_{1,s}^2\eta_{2,s}^2}\,\dd s\le \frac{1}{n^2\gamma_t\eta_{2,t}^2}\int_0^t\frac{1}{\eta_{1,s}^2}\,\dd s\lesssim \frac{1}{n^2\eta_{1,t}^2\eta_{2,t}^2},
\]
where in the first inequality we used that $\gamma_{s_*}\sim |z_{1,s}-z_{2,s}|\sim |z_{1,0}-z_{2,0}|$ implies $\gamma_s\sim |z_{1,0}-z_{2,0}|$ for any $s_*\le s\le t$, and so that $\gamma_s/\gamma_t\sim 1$, and in the last inequality we used the first relation in \eqref{eq:relineta} together with the bound $\gamma_t\ge \eta_{1,t}/\rho_{1,t}$. Additionally, we 
also have the estimate
\[
\int_{s_*}^t \left(\frac{1}{\sqrt{n\ell_s}\gamma_s}\wedge\frac{1}{n\eta_{1,s}\eta_{2,s}}\right)^2\frac{\gamma_s^3}{\gamma_t^4}\,\dd s\lesssim \frac{1}{n\gamma_t^3}\int_0^t\frac{1}{\ell_s}\,\dd s\le \frac{1}{n\ell_t\gamma_t^2},
\]
where in the last inequality we used \eqref{eq:usefintused}. For the complementary regime, we have
\[
\begin{split}
\int_0^{s_*}\left(\frac{1}{\sqrt{n\ell_s}\gamma_s}\wedge\frac{1}{n\eta_{1,s}\eta_{2,s}}\right)^2\frac{\gamma_s^3}{\gamma_t^4}\,\dd s&
\lesssim\frac{1}{n^2\gamma_t^4} \int_0^{s_*} \frac{\gamma_s^3}{\eta_{1,s}^2\eta_{2,s}^2}\,\dd s \\
&\sim\frac{1}{n^2\gamma_t^4}  \int_0^t \Big[ \frac{\eta_{2,s}}{\eta_{1,s}^2\rho_{2,s}^3}+ 
\frac{\eta_{1,s}}{\eta_{2,s}^2\rho_{1,s}^3}\Big]\,\dd s \\
&\lesssim \frac{1}{n^2\gamma_t^4}  \int_0^t \Big[ \frac{\eta_{2,t}}{\eta_{1,s}\eta_{1,t}\rho_{2,t}^3}
+ \frac{\eta_{1,t}}{\eta_{2,s}\eta_{2,t}\rho_{1,t}^3}\Big]\,\dd s \\
&\quad+\frac{1}{n^2\gamma_t^4\rho_{1,t}\rho_{2,t}}\int_0^t\left[\frac{1}{\eta_{1,s}\rho_{2,t}}+\frac{1}{\eta_{2,s}\rho_{1,t}}\right]\,\dd s \\
&\lesssim \frac{1}{(n\gamma_t)^2\rho_{1,t}\rho_{2,t}\eta_{1,t}\eta_{2,t}} \\
&\lesssim \frac{1}{(n\ell_t\gamma_t)^2}\wedge \frac{1}{(n\eta_{1,t}\eta_{2,t})^2},
\end{split}
\]
where in the first $\sim$ relation we used 
  that $\gamma_s\sim (\eta_{1,s}/\rho_{1,s})+(\eta_{2,s}/\rho_{2,s})$ in this regime.
  In the second inequality we used
$\rho_{i,s}\sim \rho_{i,t}$ and we 
 estimated $\eta_{i,s}/\rho_{i,s}$ by using
 \[
\frac{\eta_{i,s}/\rho_{i,s}}{\eta_{j,s}/\rho_{j,s}}\lesssim 1+\frac{\eta_{i,t}/\rho_{i,t}}{\eta_{j,t}/\rho_{j,t}},
\]
that follows from the third relation in \eqref{eq:relchar}. Finally, 
 in the last two inequalities we used \eqref{eq:relineta}  (ignoring $\log n$--factors)
   and $\rho_{1,t}\rho_{2,t}\gamma_t^2\ge \eta_{1,t}\eta_{2,t}$.
\end{proof}

\subsection{Improved local law for $G^zFG^zF^*$ (Theorem~\ref{theorem_F})}  
\label{sec:gfgf}
As we mentioned after Proposition~\ref{pro:mainpro},  its Part 1 would hold for any 
observables $A_1, A_2$, giving a bound of order $1/(n\eta^2)$, considering $\eta_1=\eta_2=\eta$ 
for simplicity. Now we aim to  improve this bound 
 for the specific observables $F, F^*$ to gain an extra $\rho^2$, i.e. we gain a factor $\rho$ per $F$ or $F^*$ matrix. \nc
 Moreover, along the way we also need an improved single-resolvent 
 local law with $F$, and a two-resolvent local law when one observable is $F$ and the other is the 
 identity as these quantities naturally emerge. These three local laws will be proven simultaneously. This plan justifies the following short-hand notations.

 Recall the definition of the characteristics $\Lambda_t$ in \eqref{eq:matchar}, define the resolvent $G_t=G^{z_t}(\ii \eta_t):=(W_t-\Lambda_t)^{-1}$, and let 
 \begin{equation}
 \label{eq:singdetappr}
 M_t^A:=M_{12}^A(\eta_t,z_t,\eta_t,z_t) 
 \end{equation}
be the deterministic approximation of $G_tAG_t$ as defined in \eqref{eq:M12}. Note that unlike in the previous section here we only consider resolvents evaluated at the same spectral parameters since this is simpler and enough for the current application, in particular  in this section we drop the 12 indices from 
\begin{equation}
\label{eq:singdetappr1}
M^A:=\mathcal{B}^{-1} [MAM] := \big(1-M\mathcal{S}[\cdot] M)^{-1} [MAM].
\end{equation}
We define
\begin{equation}
\begin{split}
\label{eq:defphiss}
\Phi_1(t):&=\frac{n|\eta_t|}{\rho_t}\big|\langle (G_t-M_t)F\rangle\big|, \\
\Phi_F(t):&=\frac{n|\eta_t|^2}{\rho_t}\big|\langle G_tFG_t-M_t^F\rangle\big|, \\
\Phi_2(t):&=\frac{\sqrt{n}|\eta_t|^{3/2}}{\rho_t^{5/2}}\big|\langle (G_tFG_t-M_t^F)F^*\rangle\big|,
\end{split}
\end{equation}
where we recall that $\rho_t=\pi^{-1}|\Im m^{z_t}(\ii\eta_t)|$. We will show that
the bound $\Phi_1+\Phi_F + \Phi_2\prec 1$ propagates along the characteristic.
We point that the bound $\Phi_1(t)+\Phi_F(t) \prec 1$ 
is optimal, but the bound $\Phi_2(t)\prec 1 $ is not, i.e. the optimal bound for $\langle(G_tFG_t-M_t^F)F^*\rangle$ 
would be better by a factor $\sqrt{n|\eta|\rho}$. 
We do not pursue this additional gain here as it is not needed for the purpose of this paper. 
Furthermore, we can treat $F$ and $F^*$ on the same footing, in particular
\nc it is not necessary to define $\Phi_F(t)$ for $F^*$ instead of $F$
 as $\langle ( G_tFG_t)^*\rangle=-\langle G_tF^*G_t\rangle$. We also point out that by the first two equalities in \eqref{eq:identities} it follows that it is sufficient to \nc define $\Phi_F$ only when $G_tFG_t$ is tested against the identity and not against $E_-$. We are now ready to state the main result of this section.
 

\begin{proposition}
\label{pro:mainprogfgf}
Fix $n$--independent constants $\epsilon,\omega_1>0$, and define the spectral parameter
\begin{equation}
\Lambda_0:=\left(\begin{matrix}
\ii\eta_0 & z_0 \\
z_0 & \ii\eta_0
\end{matrix}\right),
\end{equation}
with $ |z_0|\ge 1$, and $\eta_{i,0}\ne 0$. For $0\le t<T^*(\eta_0,z_0)$, let $\Lambda_t$ be the solution of \eqref{eq:matchar} with initial condition $\Lambda_0$. Let $\Phi_1(t),\Phi_2(t),\Phi_F(t)$ be defined as in \eqref{eq:defphiss}, choose $|\eta_0|\le \omega_1$ and assume that for a small $0<\xi\le \epsilon/10$ it holds
\begin{equation}
\label{eq:inasspart1gfgf}
\Phi_1(0)+\Phi_F(0)+\Phi_2(0)\lesssim n^\xi,
\end{equation}
with very high probability. Then
\begin{equation}
\label{eq:goalgfgf}
 \Phi_1(t)+\Phi_F(t)+\Phi_2(t)\lesssim n^{2\xi},
 \end{equation}\
with very high probability uniformly in $t\le T^*(z_0,\eta_0)$
 and spectral parameters 
satisfying $n\eta_t\rho_t\ge n^\epsilon$.
\end{proposition}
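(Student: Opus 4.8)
The plan is to run a continuity (stopping time) argument along the characteristic flow, parallel to the proof of Proposition~\ref{pro:mainpro}. Set
\[
\widetilde\tau:=\inf\bigl\{t\ge 0:\ \Phi_1(t)+\Phi_F(t)+\Phi_2(t)= n^{2\xi}\bigr\}\wedge T^\epsilon(\eta_0,z_0),
\]
where $T^\epsilon$ is the time from \eqref{Teps} at which $n\eta_t\rho_t$ reaches $n^\epsilon$. By the input assumption \eqref{eq:inasspart1gfgf}, the Lipschitz continuity of $G_t$ in $t$, and a grid argument over polynomially many intermediate times, $\widetilde\tau>0$ with very high probability; the goal is to show $\widetilde\tau=T^\epsilon(\eta_0,z_0)$ with very high probability, which is exactly \eqref{eq:goalgfgf} since the spectral parameters with $n\eta_t\rho_t\ge n^\epsilon$ are precisely those reached for $t\le T^\epsilon$. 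All bounds below hold with very high probability, and for definiteness we take $\eta_t>0$, the case $\eta_t<0$ being analogous.

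First I would derive, by It\^o's formula applied along \eqref{eq:matchar}, the coupled system of stochastic differential equations for $\langle (G_t-M_t)F\rangle$, $\langle G_tFG_t-M_t^F\rangle$ and $\langle (G_tFG_t-M_t^F)F^*\rangle$, just as \eqref{eq:flowbefchar}--\eqref{eq:fulleqaasimp} were obtained for $\langle G_{1,t}A_1G_{2,t}A_2\rangle$. The cancellation identity \eqref{eq:evolM12}, specialised to $z_1=z_2=z_t$ and $\eta_1=\eta_2=\eta_t$, removes the deterministic part of each drift; what remains for each tracked quantity is a term linear in the quantity itself (and, via the coupling, in the other two) with explicit deterministic coefficients built from $\langle M_t^F\rangle$, $\langle M_t^FF^*\rangle$ and $\langle M_t^{E_\pm}E_\pm\rangle$; a forcing term fed by the single-resolvent local law error $|\langle G_t-M_t\rangle|\prec n^\xi/(n\eta_t)$ from Theorem~\ref{local_thm}; higher-order terms containing products of three and four resolvents with $F$ or $F^*$ inserted; and a martingale term. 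The decisive structural input here is the chiral symmetry of $H^z$ as encoded in Lemma~\ref{lem:chirid}: since $\mathcal S[F]=0$ (so $\mathcal B[F]=F$, the ``good'' eigendirection of the stability operator) and $FG^zF^*=\ii F\,\Im G^z\,F^*$, while $\langle G^zFG^zE_-\rangle=\langle M^F_{12}E_-\rangle=\langle G^zE_-\rangle=0$, every occurrence of an $F$ either confines a tracked quantity to its $E_+$-component or produces an extra factor $\Im G^z$, i.e.\ an extra $\rho$; this is the mechanism behind the $\rho$-gains recorded in \eqref{MFMF}, and the $\rho$-powers in the definitions \eqref{eq:defphiss} are tuned to absorb precisely these gains.

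Next I would close the hierarchy and estimate the stochastic terms. The quadratic variation and the three/four-resolvent drift terms are reduced to traces of at most two resolvents by a reduction inequality of the type \eqref{eq:redinnew} (Schwarz plus spectral decomposition) together with the Ward identity $G^z(G^z)^*=\eta^{-1}\Im G^z$; the resulting two-resolvent traces are controlled on $\{t\le\widetilde\tau\}$ by the a priori bounds $\Phi_1,\Phi_F,\Phi_2\le n^{2\xi}$, by \eqref{local_2g} and by the deterministic bounds \eqref{MFMF}, always extracting the $F$-induced $\rho$-powers. The time integrals are evaluated with Lemma~\ref{lem:usinf}, in particular $\int_s^t\rho_r/\eta_r\,\dd r\lesssim\log(\eta_s/\eta_t)$ and $\int_s^t\eta_r^{-\alpha}\,\dd r\lesssim \eta_t^{1-\alpha}/\rho_t$ for $\alpha>1$. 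The upshot is a closed linear system (modulo small forcing and a small martingale, both of size $\lesssim n^\xi$ times the appropriate control parameter) for the three quantities, to which a scalar version of the Gronwall-type Lemma~\ref{lem:matrixgronwall} applies. Using Lemma~\ref{lem:nclfp} in its $z_1=z_2$ form one writes the linear coefficients as $-\tfrac12\partial_t\log|\beta_{+,t}\beta_{-,t}|$ plus a remainder with $O(1)$ time-integral, so that the propagator of the linearised system is bounded by $|\beta_{+,s}\beta_{-,s}|/|\beta_{+,t}\beta_{-,t}|\sim(\gamma_s/\gamma_t)^2$ with $\gamma_t=2\eta_t/\rho_t$; combined with the $\eta$-weights in \eqref{eq:defphiss} this yields $\Phi_1(t)+\Phi_F(t)+\Phi_2(t)\lesssim n^{3\xi/2}$ on $\{t\le\widetilde\tau\}$, which is strictly below the threshold $n^{2\xi}$, whence $\widetilde\tau=T^\epsilon(\eta_0,z_0)$ and \eqref{eq:goalgfgf} follow.

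The main obstacle is the equation for $\Phi_2$. Because the target bound $\Phi_2\prec1$ is not optimal (the optimal one would be smaller by $\sqrt{n\eta\rho}$), that equation is effectively supercritical: after the reduction step its drift couples nonlinearly to $\Phi_F$ and $\Phi_1$ through products such as $\Phi_F\cdot\Phi_1$ times a small factor, and these must be shown to stay of lower order than the linear contribution. The only available smallness is the discreteness cutoff $n\eta_t\rho_t\ge n^\epsilon$, which has to beat the a priori $n^{2\xi}$-loss; this is exactly why one imposes $\xi\le\epsilon/10$ and why the proposition can only propagate the weaker exponent $2\xi$ rather than a fixed power. One therefore has to track the nonlinear feedback quantitatively and verify that, with the weights of \eqref{eq:defphiss} and the propagator bound $(\gamma_s/\gamma_t)^2$, the bootstrap closes strictly below $n^{2\xi}$ throughout $\{t\le\widetilde\tau\}$. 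The remaining bookkeeping --- the grid union bound, the $\log n$-factors, and the two sign cases $\eta_t\gtrless0$ --- is routine and follows Section~\ref{sec:g1g2} verbatim.
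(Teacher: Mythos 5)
Your high-level strategy---stopping time, It\^o along the characteristics with the cancellation from \eqref{eq:evolM12}, reduction inequalities, Gronwall for the linear part---matches the paper's, and the remarks on chiral symmetry (Lemma~\ref{lem:chirid}) and the tuning of the $\rho$-weights in \eqref{eq:defphiss} are right. However, there is a genuine gap in the propagator. You state the propagator $|\beta_{+,s}\beta_{-,s}|/|\beta_{+,t}\beta_{-,t}|\sim(\gamma_s/\gamma_t)^2$, importing the estimate from Proposition~\ref{pro:mainpro}. That is one power too many. The linear self-coupling of $\langle G_tFG_t-M_t^F\rangle$ in its own equation has coefficient $\langle M_t^{E_+}\rangle=a_{+,t}$, not $2a_{+,t}$: with $A_1=F$, $A_2=E_+$ only the second line of \eqref{eq:fulleqaasimp} produces a self-coupling, while the third line involves $\langle(G_tE_+G_t-M_t^{E_+})E_j\rangle$---an ordinary two-resolvent trace controlled by the local law, not $\Phi_F$. (In Proposition~\ref{pro:mainpro}, by contrast, the $1\leftrightarrow2$ symmetry with $A_1=A_2$ makes \emph{both} lines contribute, which is why $\widehat{\mathcal M}_t$ has $2a_{+,t}$ on the diagonal.) The correct propagator for the $\Phi_F$ equation is therefore $\exp\bigl(\int_s^t a_{+,r}\,\dd r\bigr)\le\exp\bigl(\int_s^t a_{r}\,\dd r\bigr)\sim\gamma_s/\gamma_t\sim\eta_s/\eta_t$, not $(\gamma_s/\gamma_t)^2$. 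This one power matters: against the forcing density $\sim n^\xi\rho_s^2/(n\eta_s^3)$ arising in \eqref{eq:remainsde}, the propagator $\eta_s/\eta_t$ together with \eqref{eq:relineta} gives $\Phi_F(t)\lesssim n^\xi$, whereas $(\eta_s/\eta_t)^2$ turns $\int_0^t\rho_s^2/\eta_s\,\dd s$ into an $O(1)$ quantity (since $\rho_s\lesssim\eta_s^{1/3}$), and after the weight $n\eta_t^2/\rho_t$ one is left with $n^\xi/\rho_t$, which near the cusp is up to $n^{(1-\epsilon)/4}$ too large. This is exactly why the paper insists that the last bound in \eqref{eq:addbneedef}, $\langle M^{E_+}\rangle\le\Im m/\eta$, holds with unit constant---``no multiplicative constant could be afforded''---and hence certainly no extra power.

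A secondary issue: the coupling among $\Phi_1,\Phi_F,\Phi_2$ is strictly triangular, not a genuinely coupled $3\times3$ system needing a joint Gronwall. $\Phi_1$'s only linear term is the $\tfrac12\langle(G_t-M_t)F\rangle$ term absorbed by $e^{-t/2}$; the $\Phi_2$ equation couples forward to $\Phi_F$ through $\langle(G_tFG_t-M_t^F)E_i\rangle$ but never back to itself, so its estimate closes directly on $\{t\le\widetilde\tau\}$ with no propagator at all. Only $\Phi_F$ needs the Gronwall with a blowing-up coefficient, and it is a scalar equation. Your discussion of ``supercriticality'' in $\Phi_2$ therefore points at the wrong place: once $\Phi_F\le n^{2\xi}$ is available on the stopping-time interval, the $\Phi_2$ forcing integrals are routine; the genuine knife-edge is the constant in the $\Phi_F$ propagator as above.
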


We are now ready to conclude the proof of Theorem~\ref{theorem_F}, and then we present the proof of Proposition~\ref{pro:mainprogfgf}.

\begin{proof}[Proof of Theorem~\ref{theorem_F} for Gaussian component]

Using the global law \eqref{eq:global} as an input, the proof of this theorem for matrices with an order one Gaussian component is very similar (in fact simpler) to the proof of Theorem~\ref{thm:2G}. Here we also used that $n^{-1}\le \rho_0^{5/2}/(\sqrt{n}\eta_0^{3/2})$ as a consequence of
\[
\frac{\rho_0^{5/2}}{\sqrt{n}\eta_0^{3/2}}= \frac{\sqrt{n\rho_0\eta_0}}{n} \left(\frac{\rho_0}{\eta_0}\right)^2
\]
and $n\rho_0\eta_0\gg 1$, $\rho_0/\eta_0\gtrsim 1$. The fact that the Gaussian component can be removed is proven in Section~\ref{sec:proof}.
\end{proof}

\begin{proof}[Proof of Proposition~\ref{pro:mainprogfgf}]

Some estimates in this proof are similar to the ones presented in the proof of Proposition~\ref{pro:mainpro}. We thus only focus on the main differences. For notation simplicity, from now on we assume that $\eta_t>0$.

We now state some bounds on the deterministic approximations appearing in \eqref{eq:defphiss} (the proof is postponed to Appendix~\ref{sec:det}):
\begin{lemma}
\label{lem:Mbounds}
Let $H^z$ be defined as in \eqref{def_G}, let $G^z=(H^z-\ii\eta)^{-1}$, and let $M^z$, $M_{12}^F$ be defined as in  \eqref{Mmatrix} and \eqref{eq:defM12} with $z=z_1=z_2=z$, $\eta_1=\eta_2=\eta$, and $A=F$, respectively. Then there exists $C>0$ such that 
\begin{equation}
\label{eq:addbneedef}
\big|\langle M_{12}^FF^*\rangle\big|\le C\frac{\rho^3}{\eta}, \qquad\quad \big|\langle M_{12}^F\rangle\big|\le C \frac{\rho^2}{\eta}, \qquad\quad 0\le \langle M_{12}^{E_+}\rangle\le \frac{\Im m}{\eta}. \nc
\end{equation}
\end{lemma}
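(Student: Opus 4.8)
Lemma~\ref{lem:Mbounds} is a purely deterministic statement, and the plan is to reduce it to an explicit finite--dimensional computation. All objects are block--constant $2n\times 2n$ matrices evaluated on the imaginary axis, so it suffices to work on the four--dimensional space spanned by $\{E_+,E_-,F,F^*\}$, and we may use that $m=m^z(\ii\eta)$ is purely imaginary and $u=u^z(\ii\eta)$ is real. On this space the covariance operator $\mathcal{S}$ from~\eqref{cov} swaps the two diagonal blocks and annihilates the off--diagonal part; in particular $\mathcal{S}[E_+]=E_+$, $\mathcal{S}[E_-]=-E_-$, $\mathcal{S}[F]=\mathcal{S}[F^*]=0$. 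Writing $M=mE_+-uZ$ with $Z=zF+\overline zF^*$ and using $Z^2=|z|^2E_+$ together with $ZE_-=-E_-Z$, a direct computation gives $ME_+M=M^2=(m^2+|z|^2u^2)E_+-2muZ$, $ME_-M=(m^2-|z|^2u^2)E_-$, and $MFM=-m\overline zu\,E_++m^2F+\overline z^2u^2F^*$. Plugging this into the two--body stability operator $\mathcal{B}=1-M\mathcal{S}[\cdot]M$ of~\eqref{eq:defstabop} (with $z_1=z_2=z$, $\eta_1=\eta_2=\eta$) one finds that $\mathcal{B}$ is triangular in this basis: it is the identity on $\mathrm{Span}\{F,F^*\}$, it has $E_-$ as an eigenvector with eigenvalue $\beta_-:=1+m^2-|z|^2u^2$, and $\mathcal{B}[E_+]=\beta_+E_++2muZ$ with $\beta_+:=1-m^2-|z|^2u^2$ (these $\beta_\pm$ coincide with~\eqref{eq:defevmore} for $z_1=z_2$). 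Hence $\mathcal{B}^{-1}$ is completely explicit.

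Applying $\mathcal{B}^{-1}$ to $MFM$ and to $M^2$, and then reading off $\langle M_{12}^A\rangle$ as the $E_+$--coefficient and $\langle M_{12}^AF^*\rangle$ as one half of the $F$--coefficient of the resulting matrix (using $\langle E_\pm F^*\rangle=0$, $\langle FF^*\rangle=\tfrac12$, $\langle F^*F^*\rangle=0$), we obtain the closed formulas
\[
\langle M_{12}^{E_+}\rangle=\frac{m^2+|z|^2u^2}{\beta_+},\qquad \langle M_{12}^{F}\rangle=\frac{-m\overline zu}{\beta_+},\qquad \langle M_{12}^{F}F^*\rangle=\frac12\Bigl(m^2+\frac{2|z|^2u^2m^2}{\beta_+}\Bigr).
\]
(The identities $\langle M_{12}^FE_-\rangle=0$ and $\langle M^zE_-\rangle=0$ from~\eqref{eq:identities} drop out for free, since $MFM$ has no $E_-$--component and $\mathcal{B}$ preserves $\mathrm{Span}\{E_-\}$.)

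To convert these into the quantitative bounds I would use the scalar relations coming from~\eqref{m_function} on the imaginary axis. Taking $\Im m>0$, $\eta>0$ for definiteness, \eqref{m_function} is equivalent to $u=\Im m/(\eta+\Im m)$ and $u\bigl[(\eta+\Im m)^2+|z|^2\bigr]=1$; these give the clean identities $|z|^2u^2=u-(\Im m)^2$, $\beta_-=1-u=\eta/(\eta+\Im m)$ and $\beta_+=\beta_-+2(\Im m)^2$. Combined with $|\Im m|=\pi\rho$, $|z|\lesssim1$, and the elementary facts $\rho^3\lesssim\eta$ and $2\Im m(\eta+\Im m)\le1$ (valid in the cusp regime $0\le|z|-1\le c$, $\eta\le\omega_1$ with $c,\omega_1$ small, by~\eqref{rho}), one gets $\beta_\pm\sim\eta/\rho$, consistent with~\eqref{eq:asympev}. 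Then: from $|\langle M_{12}^F\rangle|=\Im m\,|z|\,u/\beta_+\le\Im m\,|z|\,u/\beta_-=(\Im m)^2|z|/\eta\lesssim\rho^2/\eta$ we get the second bound; the algebraic simplification $m^2+2|z|^2u^2m^2/\beta_+=-(\Im m)^2(1+u)/\beta_+$ gives $|\langle M_{12}^FF^*\rangle|=(\Im m)^2(1+u)/(2\beta_+)\lesssim\rho^2/(\eta/\rho)=\rho^3/\eta$; finally $m^2+|z|^2u^2=u-2(\Im m)^2\ge0$ (this uses $2\Im m(\eta+\Im m)\le1$), so $\langle M_{12}^{E_+}\rangle\ge0$, and bounding the numerator by $u$ and the denominator below by $\beta_-=1-u>0$ gives $\langle M_{12}^{E_+}\rangle\le u/(1-u)=\Im m/\eta$.

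The computation is essentially mechanical; the only point that needs genuine care is the sign book--keeping — in particular checking $\beta_+>0$ and the nonnegativity of the numerator of $\langle M_{12}^{E_+}\rangle$ — which is exactly where the smallness of $\omega_1$ is used, via $2\Im m(\eta+\Im m)\le1$. One should also record the mirror identities for $\eta<0$, $\Im m<0$, which follow by the obvious symmetry of~\eqref{m_function}.
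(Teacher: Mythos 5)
Your proof is correct and follows essentially the same route as the paper: reduce to an explicit finite-dimensional computation on the four-dimensional space of block-constant matrices, write $\mathcal{B}^{-1}$ in closed form, and then control everything through the scalar consequences of~\eqref{m_function} on the imaginary axis (namely $u=\Im m/(\eta+\Im m)$, $|z|^2u^2=u-(\Im m)^2$, $\beta_-=1-u=\eta/(\eta+\Im m)$ and $\beta_+=\beta_-+2(\Im m)^2$). I checked the algebra: $MFM$, $M^2$, the triangular action of $\mathcal{B}$, the three closed formulas for $\langle M_{12}^F\rangle$, $\langle M_{12}^FF^*\rangle$, $\langle M_{12}^{E_+}\rangle$, and the reduction $m^2+2|z|^2u^2m^2/\beta_+=-(\Im m)^2(1+u)/\beta_+$ are all correct, and the final estimates use exactly the facts $\rho\gtrsim\eta$ and $2\Im m(\eta+\Im m)\le 1$ that are available in the cusp regime $0\le|z|-1\le c$, $\eta\le\omega_1$ with $c,\omega_1$ small (the same smallness restriction the paper uses implicitly, e.g.\ in the footnote in the proof of Proposition~\ref{pro:mainpro}).

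There is one place where your argument is genuinely more complete than the paper's. For the third inequality the paper only remarks that it is ``a special case of~\eqref{local_2g_M}'', but that estimate is stated with an unspecified absolute constant $\lesssim\rho/\eta$ and carries no sign information. Since, as the paper itself points out immediately after the lemma, the bound $0\le\langle M_{12}^{E_+}\rangle\le\Im m/\eta$ must hold with the sharp constant $1$ (it is exponentiated in~\eqref{eq:propag} and a constant $C>1$ would produce $(\eta_0/\eta_t)^C$, which would ruin the Gronwall step), citing~\eqref{local_2g_M} is not enough. Your derivation $\langle M_{12}^{E_+}\rangle=(u-2(\Im m)^2)/\beta_+$ together with $u-2(\Im m)^2\ge 0$ and $\beta_+\ge\beta_-=1-u>0$ gives both the nonnegativity and the exact bound $u/(1-u)=\Im m/\eta$, which is precisely what the application requires. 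So keep that part as written; it closes a small gap in the paper's own proof.
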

\noindent
Note that
 the last bound in~\eqref{eq:addbneedef} needs to be sharp; no multiplicative constant $C$ could be
afforded.

We will consider the time evolution of the quantities $\Phi_1(t),\Phi_2(t), \Phi_F(t)$ for times $t\le \widetilde{\tau}$, with $\widetilde{\tau}$ the stopping time defined by
\begin{equation}
\label{eq:stoptimwnew}
\widetilde{\tau}:=\inf\left\{t\ge 0\, : \, 
 \max\{ \Phi_1(t), \Phi_2(t), \Phi_F(t)\} =n^{2\xi} \right\}\wedge T^\epsilon,
\end{equation}
for some $0<\xi\le \epsilon/10$, and $T^\epsilon:=\min\{t>0: n\ell_t =  n^\epsilon\}$ is defined 
analogously to \eqref{Teps}. Note that $\widetilde{\tau}>0$ with very high probability as a consequence of our assumption \eqref{eq:inasspart1gfgf}.

We start considering $\Phi_1(t)$. By It\^{o}'s formula we immediately see that  (see Appendix~\ref{sec:rand} for its derivation)
\begin{equation}
\label{eq:k=1}
\dd \langle (G_t-M_t)F\rangle=\frac{1}{\sqrt{n}}\sum_{a,b=1}^{2n}\partial_{ab}\langle G_tF\rangle \dd \mathfrak{B}_{ab,t}+\frac{1}{2}\langle (G_t-M_t)F\rangle+\frac{1}{2}\langle G_t-M_t\rangle\langle G_t^2F\rangle,
\end{equation}
where we used \eqref{eq:defF} and that $\langle G_t E_-\rangle=0$ by the third equality in \eqref{eq:identities}.
 The second term in the rhs. of \eqref{eq:k=1} can be incorporated into the lhs. by considering the evolution of $e^{-t/2}\langle (G_t-M_t)F\rangle$ and thus neglected (recall that $t\lesssim 1$).

The quadratic variation of the stochastic term in \eqref{eq:k=1} is equal to
\[
\frac{1}{n^2\eta_t^2}\langle \Im G_t F\Im G_t F^*\rangle\, \dd t.
\]
Integrating~\eqref{eq:k=1} in time, for $t\le \widetilde{\tau}$,  and
using the BDG inequality, we thus estimate the stochastic term by (recall $\ell_t=\eta_t\rho_t$)
\begin{equation}
\frac{n\eta_t}{\rho_t}\left(\int_0^t \frac{1}{n^2\eta_s^2}\langle \Im G_s F\Im G_s F^*\rangle\,\dd s\right)^{1/2}\lesssim \frac{n\eta_t}{\rho_t}\left(\int_0^t \left(\frac{\rho_s^3}{n^2\eta_s^3}+\frac{n^{2\xi}\rho_s^{5/2}}{n^{5/2}\eta_s^{7/2}}\right)\, \dd s\right)^{1/2}\lesssim 1
\end{equation}
where in the first inequality we used Lemma~\ref{lem:Mbounds}, together with the definition of the stopping time \eqref{eq:stoptimwnew} to control $\Phi_2(s)\le n^{2\xi}$, while in the
second step we used~\eqref{eq:relineta} as before,
 and that $n^{10\xi}\le n\ell_s$, for any $0\le s\le \widetilde{\tau}$.

Similarly, for the last term in the rhs. of \eqref{eq:k=1} we estimate
\begin{equation}
\frac{n\eta_t}{\rho_t}\int_0^t \big|\langle G_s-M_s\rangle\big|\big|\langle G_s^2F\rangle\big|\, \dd s\lesssim \frac{n\eta_t}{\rho_t}\int_0^t \frac{n^\xi}{n\eta_s}\left(\frac{\rho_s^2}{\eta_s}+\frac{n^{2\xi}\rho_s}{n\eta_s^2}\right)\, \dd s\lesssim 1,
\end{equation}
with very high probability. \nc

Using \eqref{eq:inasspart1gfgf} to estimate $\Phi_1(0)\lesssim n^\xi$, and integrating
\eqref{eq:k=1}, we thus obtain
\begin{equation}
\label{eq:inphi1}
\Phi_1(t)\lesssim n^\xi, \qquad \mbox{ with very high probability, for any $t\le \widetilde{\tau}$}.
\end{equation}

Next, we consider $\Phi_2(t)$. The evolution of $\langle (G_{1,t}FG_{2,t}-M_{12,t}^F)F^*\rangle$ is given as in \eqref{eq:fulleqaasimp} choosing $A_1=F$ and $A_2=F^*$. In the following bounds we will repeatedly use Lemma~\ref{lem:Mbounds} to estimate the deterministic terms,
and  the definition of the stopping time $\widetilde{\tau}$ for the fluctuation 
even if not stated explicitly.  All the following estimates are performed for $t\le \widetilde{\tau}$, even if not stated explicitly. We estimate the quadratic variation from \eqref{eq:quadvarnew} by (we only write the bound for a representative term, the rest is analogous)
\begin{equation}
\label{4G}
\frac{\sqrt{n}\eta_t^{3/2}}{\rho_t^{5/2}}\left(\int_0^t \frac{1}{n^2\eta_s^2}\langle \Im G_s F G_s F^*\Im G_s F G_s^* F^*\rangle\,\dd s\right)^{1/2}  \lesssim\frac{\sqrt{n}\eta_t^{3/2}}{\rho_t^{5/2}}\left(\int_0^t \frac{1}{n\eta_s^2}\left(\frac{\rho_s^6}{\eta_s^2}+\frac{n^{2\xi} \rho_s^5}{n\eta_s^3}\right)\,\dd s\right)^{1/2} \lesssim 1.
\end{equation}
We point out that in the first inequality we used that $FG_sF^*=\ii F\Im G_s F^*$ (see last identity in \eqref{eq:identities}) to obtain
\begin{equation}
\label{exp}
\langle \Im G_s F G_s F^*\Im G_s F G_s^* F^*\rangle= \langle \Im G_s F \Im G_s F^*\Im G_s F \Im G_s^* F^*\rangle \le n  \langle \Im G_s F \Im G_s F^*\rangle^2
\end{equation}
to reduce the trace of the product of four resolvents in terms of traces of two resolvents. The inequality in \eqref{exp} follows analogously to \eqref{eq:redinnew} and we thus omit the proof.

 For the two terms in the last line of \eqref{eq:fulleqaasimp} we use the estimate (we only write the bound for one term)
\begin{equation}
\frac{\sqrt{n}\eta_t^{3/2}}{\rho_t^{5/2}}\int_0^t \big|\langle G_s-M_s\rangle\big|\big|\langle G_s^2 FG_sF^*\rangle\big|\, \dd s \lesssim\frac{\sqrt{n}\eta_t^{3/2}}{\rho_t^{5/2}}\int_0^t \frac{n^\xi}{n\eta_s^2}\left(\frac{\rho_s^3}{\eta_s}+\frac{n^{2\xi} \rho_s^{5/2}}{\sqrt{n}\eta_s^{3/2}}\right)\, \dd s \lesssim \frac{n^\xi}{\sqrt{n\ell_t}},
\end{equation}
where in the first inequality we used that 
\[
\big|\langle G_s^2 FG_sF^*\rangle\big|=\big| \langle G_s^2 F\Im G_sF^*\rangle\big|\le \frac{1}{\eta_t} \langle \Im G_s F\Im G_sF^*\rangle.
\]

Next, by (recall the definition of $M_s^F$ from \eqref{eq:singdetappr1}, which should not be confused with $M_s$)
\[
\langle (G_sFG_s-M_s^F)E_1\rangle=\frac{1}{2} \langle G_sFG_s-M_s^F\rangle+\frac{1}{2} \langle (G_sFG_s-M_s^F)E_-\rangle=\frac{1}{2} \langle G_sFG_s-M_s^F\rangle,
\]
where in the last equality we used \eqref{eq:impcanc}, we estimate
\begin{equation}
\frac{\sqrt{n}\eta_t^{3/2}}{\rho_t^{5/2}}\int_0^t \big|\langle (G_sFG_s-M_s^F)E_1\rangle\big| \big|\langle M_s^{F^*}E_2\rangle\big|\, \dd s\lesssim\frac{\sqrt{n}\eta_t^{3/2}}{\rho_t^{5/2}}\int_0^t \frac{n^{2\xi}\rho_s}{n\eta_s^2}\frac{\rho_s^2}{\eta_s}\, \dd s\lesssim \frac{n^{2\xi}}{\sqrt{n\ell_t}}
\end{equation}
and
\begin{equation}
\frac{\sqrt{n}\eta_t^{3/2}}{\rho_t^{5/2}}\int_0^t \big|\langle (G_sFG_s-M_s^F)E_1\rangle\big|\big|\langle (G_sF^*G_s-M_s^{F^*})E_2\rangle\big|\, \dd s \lesssim\frac{\sqrt{n}\eta_t^{3/2}}{\rho_t^{5/2}}\int_0^t \frac{n^{4\xi}\rho_s^2}{(n\eta_s^2)^2}\, \dd s \lesssim \frac{n^{4\xi}}{(n\ell_t)^{3/2}}.
\end{equation}

Using \eqref{eq:inasspart1gfgf} to estimate
\begin{equation}
\label{eq:bincond}
\frac{\sqrt{n}\eta_t^{3/2}\rho_0^{5/2}}{\sqrt{n}\eta_0^{3/2}\rho_t^{5/2}}\, \Phi_2(0)\lesssim n^\xi \sqrt{n\ell_t},
\end{equation}
where we used that $\rho_0\sim \rho_t$, we thus conclude 
\begin{equation}
\label{eq:inphi2}
\Phi_2(t)\lesssim n^\xi, \quad \mbox{with very high probability, for any $t\le \widetilde{\tau}$}.
\end{equation}

We are now only left with the derivation of the bound for $\Phi_F$. The estimates
 of the various terms in the evolution of $\langle G_t F G_t-M_t^F\rangle$ are very similar to those
  for $\langle (G_t F G_t-M_t^F) F^*\rangle$, so we only highlight the main differences.

Using the BDG inequality, we estimate of the stochastic term in~\eqref{eq:fulleqaasimp}:
\begin{equation}
\begin{split}
\label{eq:quadvargf}
\frac{n\eta_t^2}{\rho_t}\left(\int_0^t \frac{1}{n^2\eta_s^2}\langle \Im G_s F G_s \Im G_s G_s^* F^*\rangle\,\dd s\right)^{1/2}&\lesssim\frac{n\eta_t^2}{\rho_t}\left(\int_0^t \frac{1}{n^2\eta_s^4}\langle \Im G_sF\Im G_s F^*\rangle\,\dd s\right)^{1/2} \\
&\lesssim\frac{n\eta_t^2}{\rho_t}\left(\int_0^t \frac{1}{n^2\eta_s^4}\left(\frac{\rho_s^3}{\eta_s}+\frac{n^{2\xi}\rho_s^{5/2}}{\sqrt{n}\eta_s^{3/2}}\right)\,\dd s\right)^{1/2} \\
&\lesssim 1.
\end{split}
\end{equation}
Note that in \eqref{eq:quadvargf}, compared to \eqref{4G}--\eqref{exp}, we do not lose an $n$--factor to reduce a trace with four $G$'s to traces
 with two $G$'s. This is because in the current case the quadratic variation consists of a trace containing only two $F$'s that we need to preserve rather then four as in \eqref{4G}. 
 This makes the bound \eqref{eq:quadvargf} easier and stronger as we could estimated two of the four $G$'s just by $G_s\Im G_sG_s^*\le \Im G_s/\eta_s^2$ in the first inequality. Here we also tacitly used the cyclicity of the trace
 and the positivity of $\Im G$ to write $\langle\Im G F G\Im G G^* F^*\rangle=
 \langle (\Im G)^{1/2} F (G \Im G G^*) F^*(\Im G)^{1/2}\rangle$ before the operator inequality was applied. 

Next, we estimate
\begin{equation}
\begin{split}
\frac{n\eta_t^2}{\rho_t}\int_0^t \big|\langle G_s-M_s\rangle\big|\big|\langle G_s^3 F\rangle\big|\, \dd s &\lesssim\frac{n\eta_t^2}{\rho_t}\int_0^t \frac{1}{\eta_s^{3/2}} \big|\langle G_s-M_s\rangle\big| \langle \Im G_s F \Im G_s F^*\rangle^{1/2}\langle \Im G_s\rangle^{1/2}\, \dd s   \\
&\lesssim \frac{n\eta_t^2}{\rho_t}\int_0^t \frac{n^\xi \rho_s^{1/2}}{n\eta_s^{5/2}}\left(\frac{\rho_s^{3/2}}{\eta_s^{1/2}}+\frac{n^\xi\rho_s^{5/4}}{(n\eta_s^{3/2})^{1/4}}\right)\, \dd s \lesssim n^\xi.
\end{split}
\end{equation}
Using the first equality in \eqref{eq:identities} to turn 
 $\langle G_sFG_ sE_1 \rangle$ into $\frac{1}{2}\langle G_sFG_s \rangle$
in order to be able to use the bound on this quantity from the stopping time in \eqref{eq:stoptimwnew},
we estimate the term in the fifth line of \eqref{eq:fulleqaasimp} by
\begin{equation}
\label{eq:bast}
\frac{n\eta_t^2}{\rho_t}\int_0^t \big|\langle (G_sFG_s-M_s^F)E_1\rangle\big|\big|\langle (G_sE_1G_s-M_s^{E_1})E_2\rangle\big|\, \dd s\lesssim \frac{n\eta_t^2}{\rho_t}\int_0^t \frac{n^{4\xi}\rho_s}{(n\eta_s^2)^2}\, \dd s\lesssim \frac{n^{4\xi}}{n\ell_t},
\end{equation}
where we used 
\[
\big|\langle G_sAG_s-M_s^A)B\rangle\big|\lesssim \frac{n^\xi}{n\eta_s^2},
\]
for $A,B\in \{E_1,E_2\}$ to estimate the second trace in the lhs. of \eqref{eq:bast}. This local law follows by \eqref{local_2g}. Actually, in this case the proof would be much easier as we only consider the case $z_1=z_2$ and $\eta_1=\eta_2$. Furthermore, we estimate the terms in the third line of  \eqref{eq:fulleqaasimp} by
\begin{equation}
\label{eq:semilastb}
\frac{n\eta_t^2}{\rho_t}\int_0^t \big|\langle M_s^F E_1\rangle\big|\big|\langle (G_sE_1G_s-M_s^{E_1})E_2\rangle\big|\, \dd s\lesssim \frac{n\eta_t^2}{\rho_t}\int_0^t \frac{n^\xi\rho_s^2}{n\eta_s^3}\, \dd s\lesssim n^\xi.
\end{equation}

For the terms in the second line of  \eqref{eq:fulleqaasimp} we write
\[
2\langle (G_tFG_t-M_t^F)E_1\rangle\langle M_t^{E_+}E_2\rangle+2\langle (G_tFG_t-M_t^F)E_2\rangle\langle M_t^{E_+}E_1\rangle=\langle G_tFG_t-M_t^F\rangle\langle M_t^{E_+}\rangle,
\]
where we used the  definition \eqref{eq:defF} and that $\langle M_t^{E_+}E_1\rangle=\langle M_t^{E_+}E_2\rangle=\langle M_t^{E_+}\rangle/2$.
 Then, combining, \eqref{eq:quadvargf}--\eqref{eq:semilastb}, we are left with
\begin{equation}
\label{eq:remainsde}
\dd \langle G_tFG_t-M_t^F\rangle=\langle M_t^{E_+}\rangle\langle G_tFG_t-M_t^F\rangle\dd t+\dd\widehat{e}_t,
\end{equation}
with $\dd \widehat{e}_t$ an error term such that
\begin{equation}\label{eq:st}
\sup_{0\le t\le T}\frac{n\eta_t^2}{\rho_t}\left|\int_0^t \dd \widehat{e}_s\right|\lesssim n^\xi,
\end{equation}
with very high probability. Using the last inequality of \eqref{eq:addbneedef}  and \eqref{eq:relineta}, \nc  the 
propagator of \eqref{eq:remainsde} is bounded by
\begin{equation}
\label{eq:propag}
\exp\left(\int_0^t\langle M_s^{E_+}\rangle\,\dd s\right)\le \exp\left(\int_0^t\frac{\Im m_s}{\eta_s}\,\dd s\right)\sim \frac{\eta_s}{\eta_t}.
\end{equation}

Finally, using a Gronwall inequality and \eqref{eq:st},
we conclude that
 \begin{equation}
 \label{eq:inphiF}
 \Phi_F(t)\lesssim n^\xi \quad \mbox{with very high probability, for any $t\le \widetilde{\tau}$},
 \end{equation}
 where we proceeded similarly to \eqref{eq:bincond} to estimate the initial condition. Finally, combining \eqref{eq:inphi1}, \eqref{eq:inphi2}, and \eqref{eq:inphiF}, 
 we have shown that $\widetilde{\tau}=T^\epsilon$ with very high probability,
 and so that \eqref{eq:goalgfgf} holds. 
\end{proof}

\section{Proof of the GFT: Theorem~\ref{main_thm}}
\label{sec:proof_gft}

This section is divided into two parts: in Section~\ref{sec:strategy_gft} we present the main technical inputs to prove Theorem~\ref{main_thm} (Proposition~\ref{lemma_L} and Theorem~\ref{GFT} below), informally sketch the main steps of their proofs, and conclude the proof of Theorem~\ref{main_thm}. Then, in Section~\ref{sec:actualproof} we present the proof of Proposition~\ref{lemma_L}; the proof of Theorem~\ref{GFT} is postponed to Section~\ref{sec:L_0}.

To simplify the presentation of the proof, we may assume, additionally to Assumption \ref{ass:mainass}, that there exist $\alpha,\beta>0$ such that the probability density of $\chi$, denoted by $\varphi$, satisfies 
\begin{align}\label{assumption_b}
	\varphi \in L^{1+\alpha}(\mathbb{C}), \qquad \|\varphi\|_{1+\alpha} \leq n^{\beta}.
\end{align}
	This condition is used  
	to control the rare event 
	that there is a tiny singular value of $X-z$ in a simple way (see~\eqref{density_bound} below). However, this restriction can easily be removed using the argument in~\cite[Section 6.1]{TV15} (see also \cite[Section 2.2]{Kopel15}). More precisely, if we consider a matrix $X$ which does not satisfy \eqref{assumption_b}, then we can instead study $X+n^{-\gamma} X^{\rm Gin}$, with $X^{\rm Gin}$ a Ginibre matrix independent of $X$ and $\gamma>0$ large, which does satisfy  \eqref{assumption_b}. The tiny Gaussian component $n^{-\gamma} X^{\rm Gin}$ can then be easily removed following the proof of \cite[Theorem 23]{TV15} (or its refinement \cite[Lemma 4]{Kopel15}) which combines a sampling idea with a standard (sufficiently high order) moment matching technique (see also \cite[Remark 2.2]{maxRe}).

\subsection{Proof strategy of Theorem \ref{main_thm}}
\label{sec:strategy_gft}

We will sketch only the proof for $k=1$, as the proof for a general $k \geq 1$  is analogous. 
By Girko's formula, for any test function $f\in C^{2}_c(\C)$ and any $T>0$, we have
\begin{align}\label{girko0}
	\LL_f:=\sum_{i=1}^n f(\sigma_i)=&-\frac{1}{4 \pi}  \int_{\C} \Delta_z f(z) \int_0^{T}  \Im \Tr G^z(\ii \eta) \dd \eta \dd^2 z +\frac{1}{4 \pi} \int_{\C} \Delta_z f(z) \log |\det (H^{z}-\ii T)| \dd^2 z.
\end{align}
We now summarize the main steps to prove Theorem \ref{main_thm} (for $k=1$): 

\medskip
 
\begin{enumerate}	
	\item[\bf Step 1:] We pick a very large $T$, \eg $T:=n^{100}$ to ensure the last term in (\ref{girko0}) is very small with very high probability, as stated in (\ref{girko_formula}) below.

	\item[\bf Step 2:]  Under the density condition in (\ref{assumption_b}), a very tiny $\eta$-integral over $[0,\eta_c)$, with $\eta_c:=n^{-L}$, for a large $L>100$, can be neglected in the first absolute moment, \ie
	$L^1(\dd \P)$-sense; see \eqref{tiny_eta} below.

	\item[\bf Step 3:] In order to use the local law in Theorem \ref{local_thm} effectively, we subtract a deterministic term $M^{z}$ from $G^z$ on the right side of (\ref{girko0}) using that for $|z|>1$ this term vanishes, see \eqref{M_zero1} below.

	\item[\bf Step 4:] We truncate the smallest eigenvalue at the level $E_0 := n^{-3/4-\epsilon}$ for a fixed small $\epsilon>0$. Using the precise tail bound of the smallest eigenvalue in Proposition \ref{prop1}, the part with $\one_{\lambda_1^z \leq E_0}$ is negligible in expectation~(see \eqref{small_eigen}). Thus we obtain, in the first absolute moment sense, that
	\begin{align}\label{eq0}
		\LL_f \approx  -\frac{1}{4\pi} \int \Delta_z f(z) \Big(\int_{\eta_c}^{T}  \Im \Tr \big( G^z(\ii \eta)-M^z(\ii \eta) \big) \dd \eta \Big)  \one_{\lambda^z_1 > E_0} \dd^2 z.
	\end{align}

	\item[\bf Step 5:]  We regularize the truncating function $\one_{\lambda_1^z > E_0}$ in terms of  resolvents using Lemma \ref{lemma_approx} below:
	\begin{align}\label{eq2}
	q_z:=q\left( \int_{-E_0 }^{E_0}  \Im \Tr G^z(y+\ii \eta_0) \dd y \right), \qquad E_0= n^{-3/4-\epsilon}, \quad \eta_0:=n^{-3\zeta}E_0, 
    \end{align}
for some small $\zeta>0$, where 
 $q: \R_+ \rightarrow [0,1]$ is a smooth and non-increasing cut-off function with 
	\begin{equation}\label{q_function}
		q(y)=1, \quad \mbox{if} \quad 0 \leq y \leq 1/9; \qquad q(y)=0, \quad \mbox{if} \quad y \geq 2/9.
	\end{equation}
Thus,
using Proposition~\ref{prop1}, we have $\E|\LL_f-\wh\LL_f|=o(1)$, where 
\begin{align}\label{L_big0}
	\wh \LL_{f} :=-\frac{1}{4\pi}  \int \Delta_z f(z) \Big(\int_{\eta_c}^{T}  \Im \Tr \big( G^z(\ii \eta)-M^z(\ii \eta) \big) \dd \eta \Big)  q_z \dd^2 z,
\end{align} 
 is obtained from (\ref{eq0}) with $\one_{\lambda_1^z > E_0}$ replaced by $q_z$. 
  Hence a simple Taylor expansion of $\F$ yields
 \begin{equation}\label{appr1}
 \big| \E[\F(\LL_f)]-\E[\F(\wh \LL_f)]\big| \leq \|\F'\|_{\infty} \E\big| \LL_f-\wh\LL_f\big|=o(1).
 \end{equation}
The formal statement is presented in Proposition \ref{lemma_L} below even for a general $k\geq 1$.
	
		\item[\bf Step 6:] Finally it suffices to study $\E[\F(\wh \LL_f)]$. Using a GFT argument along a dynamical interpolating matrix flow we show that
\begin{equation}\label{appr2}
\Big| \E[\F(\wh \LL_f)]-\E^{\mathrm{Gin}}[\F(\wh \LL_f)]\Big| =o(1),
 \end{equation}
see the formal statement in Theorem \ref{GFT} for a general $k\geq 1$. The detailed proof of this step, which fundamentally relies on the independence estimates in Proposition~\ref{prop_zz} and the improved local laws in Theorem \ref{theorem_F}, will be presented in Section~\ref{sec:L_0}.

\end{enumerate}

We point out that the different approximation steps hold in different senses.  
The large $T$ cutoff (Step 1) holds with very high probability,  the small $\eta$ cutoff (Step 2) and the
small $\lambda_1^z$ cutoff (Step 4) work in first moment sense, while  the regularisation 
of the cutoff (Step 5) and the actual comparison with the Ginibre ensemble (Step 6)
are only  done by comparing expectations.

\bigskip

The proof of Theorem~\ref{main_thm} then follows directly from the following two results. The proof of the first result, Proposition \ref{lemma_L} below, will be presented in the next subsection~(as summarized in Step 1--5 for $k=1$). 
\begin{proposition}\label{lemma_L}
	Under the same assumption of Theorem~\ref{main_thm}, we consider linear statistics $\LL_{f_p}:=\sum^n_{i=1} f_p(\sigma_i), ~p \in [k]$, for some test functions $f_p \in C_c^2(\C)$ satisfying (\ref{f_norm}) and (\ref{f_cond})~(or alternatively (\ref{f_cond_2})).  Then there exists a sufficiently large $L>100$ (depending on $\alpha,\beta$ in (\ref{assumption_b})) such that
	\begin{align}\label{F_L_0}
		\Big|\E[\F(\LL_{f_1},\cdots, \LL_{f_k})]-\E[\F(\wh \LL_{f_1}, \cdots, \wh \LL_{f_k})]\Big| =O(n^{-\epsilon}),
	\end{align}
	where for any $p \in [k]$ we defined
		\begin{align}\label{L_0}
	\wh \LL_{f_p} := -\frac{1}{4\pi}
	\int \Delta_z f_p(z) \Big(\int_{\eta_c}^{T}  \Im \Tr \big( G^z(\ii \eta)-M^z(\ii \eta) \big) \dd \eta \Big)  q_z \dd^2 z,
	\end{align}
with $\eta_c=n^{-L}$, $T=n^{100}$, and $q_z$ given in~\eqref{eq2} with the smooth cut-off $q$ from~\eqref{q_function}.
\end{proposition}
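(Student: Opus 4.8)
\textbf{Proof strategy for Proposition~\ref{lemma_L}.} The plan is to execute Steps 1--5 from Section~\ref{sec:strategy_gft} rigorously, working throughout in the first absolute moment sense, and only at the very end pass to expectations of $\mathcal{F}$ via a crude Taylor expansion using $\|\mathcal{F}'\|_\infty\lesssim 1$. Since $\mathcal{F}$ has bounded first derivatives, it suffices to show $\E\big|\mathcal{L}_{f_p}-\wh{\mathcal{L}}_{f_p}\big|=O(n^{-\epsilon})$ for each fixed $p\in[k]$ (the joint statement then follows by a telescoping/triangle inequality over the $k$ coordinates). Fix $p$ and drop the subscript.

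\textbf{First} I would invoke Girko's formula \eqref{girko0} with $T=n^{100}$, and control the log-determinant term: writing $\log|\det(H^z-\ii T)|=\frac12\sum_{i}\log((\lambda_i^z)^2+T^2)$, on the very high probability event that $\|X\|\lesssim 1$ one has $\log((\lambda_i^z)^2+T^2)=2\log T+O(n^{-198})$ uniformly, and $\int_\C \Delta_z f(z)\cdot(2\log T)\,\dd^2 z=0$ since $f$ is compactly supported; hence this term is $O(n^{-D})$ with very high probability after using $\|\Delta_z f\|_1\lesssim \|\Delta_z f\|_\infty\cdot(\text{area of support})\lesssim n^{1+2\nu}\cdot n^{-1/2+\tau}\cdot n^{-1/2}=n^{2\nu+\tau}$ from \eqref{f_norm}--\eqref{f_cond}. \textbf{Second}, the tiny-$\eta$ cutoff: for $\eta\in[0,\eta_c]$ with $\eta_c=n^{-L}$, we bound $|\Im\Tr G^z(\ii\eta)|\le 2n\eta/((\lambda_1^z)^2+\eta^2)\le 2n$ trivially, so $\int_0^{\eta_c}|\Im\Tr G^z|\,\dd\eta\le 2n\eta_c$; but this deterministic bound is too weak after integrating $\Delta_z f$, so instead one uses the density assumption \eqref{assumption_b}: by \cite[Section 6.1]{TV15} or the argument yielding \eqref{density_bound}, $\P(\lambda_1^z\le s)\lesssim n^\beta (n^{1/2}s)^{2\alpha/(1+\alpha)}$ roughly, and $\E\int_0^{\eta_c}|\Im\Tr G^z(\ii\eta)|\,\dd\eta\lesssim \E\int_0^{\eta_c}\frac{n\eta\,\dd\eta}{(\lambda_1^z)^2+\eta^2}$, which after taking expectation over $\lambda_1^z$ and choosing $L$ large enough (depending on $\alpha,\beta$) is $o(n^{-C})$ for any $C$; integrating against $\|\Delta_z f\|_1$ keeps this negligible. \textbf{Third}, subtract $M^z(\ii\eta)$: on the support of $f$ we have $|z|\ge 1+\sqrt{\gamma_n/4n}-C_n/\sqrt{4n\gamma_n}$, and in fact one only needs $|z|>1$ up to a negligible boundary; for $|z|>1$, $\Im\langle M^z(\ii\eta)\rangle$ corresponds to a density with a gap at $0$, and one checks $\int_{\eta_c}^T \Im\Tr M^z(\ii\eta)\,\dd\eta$ is a deterministic radial function of $|z|$, so its contribution integrates to something controllable; more carefully, since $\rho^z$ is supported away from $[-\Delta/2,\Delta/2]$ with $\Delta\sim(|z|^2-1)^{3/2}$, the subtraction is exact in the regime that matters and the error from the thin sub-annulus $|z|-1\le \sqrt{\gamma_n/4n}$ is handled by the same tail bounds.

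\textbf{Fourth}, the $\lambda_1^z$-truncation at $E_0=n^{-3/4-\epsilon}$: on the event $\{\lambda_1^z\le E_0\}$ one estimates $\int_{\eta_c}^T|\Im\Tr(G^z-M^z)(\ii\eta)|\,\dd\eta$ crudely by $\lesssim n\log n$ (using $\|G^z\|\le\eta^{-1}$ for the lower part and the local law \eqref{average} for $\eta$ above the spectral scale), and then by Proposition~\ref{prop1} --- the tail bound $\P(\lambda_1^z\le E_0)\lesssim n^{3/2}E_0^2 e^{-n\delta^2/2}+n^{-D}$ with $\delta=|z|^2-1$ --- the expectation $\E[\mathbf{1}_{\lambda_1^z\le E_0}\cdot n\log n]$ integrated against $\|\Delta_z f\|_1$ over the annulus is $o(1)$; the key point is that on the support of $f$ we have $\delta\gtrsim \sqrt{\gamma_n/n}$ so $e^{-n\delta^2/2}\lesssim n^{-\gamma_n/2}=n^{-(\log n)/2+\ldots}$ is superpolynomially small except in the innermost part where one instead uses \eqref{tail_bound} with the sharper $n^{3/2}E_0^2$ prefactor. \textbf{Fifth and last}, replace $\mathbf{1}_{\lambda_1^z>E_0}$ by the smooth regularization $q_z$ from \eqref{eq2}: by Lemma~\ref{lemma_approx} (the resolvent-based smoothing), $q_z=1$ when $\lambda_1^z>E_0(1+\text{small})$ and $q_z=0$ when $\lambda_1^z<E_0(1-\text{small})$ up to a very high probability error, so $|\mathbf{1}_{\lambda_1^z>E_0}-q_z|\le \mathbf{1}_{|\lambda_1^z-E_0|\le n^{-3\zeta}E_0}+O(n^{-D})$; again multiplying by the crude $n\log n$ bound on the $\eta$-integral and using Proposition~\ref{prop1} to bound $\P(|\lambda_1^z-E_0|\le n^{-3\zeta}E_0)$ shows the difference is $o(1)$ in $L^1$. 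Collecting all five steps gives $\E|\mathcal{L}_f-\wh{\mathcal{L}}_f|=O(n^{-\epsilon})$, and \eqref{F_L_0} follows from $|\mathcal{F}(a)-\mathcal{F}(b)|\le\|\mathcal{F}'\|_\infty|a-b|$ applied coordinatewise.

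\textbf{Main obstacle.} The delicate point is Step~4/Step~5: the naive deterministic bound $\|G^z(\ii\eta)\|\le\eta^{-1}$ makes $\int_{\eta_c}^T$ as large as $n^L$, which is hopeless; one must instead split the $\eta$-integral at the spectral scale $\eta_*\sim n^{-3/4}$, use the optimal local law \eqref{average} (giving $|\Im\Tr(G^z-M^z)|\lesssim n^\xi/\eta$ for $\eta\gg\eta_*$, integrable in $\log$) above $\eta_*$, and below $\eta_*$ control $\int_{\eta_c}^{\eta_*}\Im\Tr G^z(\ii\eta)\,\dd\eta\lesssim \log(\eta_*/\lambda_1^z)$ on the event $\lambda_1^z>E_0$ --- which is $\lesssim \log n$ --- while on $\lambda_1^z\le E_0$ this log can be as large as $L\log n$ but is multiplied by the superpolynomially small probability from Proposition~\ref{prop1}. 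Getting the bookkeeping right so that the product of "number of eigenvalues $\times$ $L^1$-norm of $\Delta f$ $\times$ tail probability" beats $n^{-\epsilon}$, uniformly over the annulus where $\delta=|z|^2-1$ ranges from $\sim\sqrt{\gamma_n/n}$ up to $n^{-1+2\tau}$, is the technical heart of the proof and requires carefully matching the Gaussian decay $e^{-n\delta^2/2}$ in \eqref{tail_bound} against the entropy factor $\|\Delta_z f\|_1\lesssim n^{2\nu+\tau}$.
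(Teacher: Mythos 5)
Your overall plan matches the paper's Steps 1--5, and the reduction to $\E|\mathcal{L}_{f_p}-\wh{\mathcal{L}}_{f_p}|$ via $\|\mathcal{F}'\|_\infty\lesssim 1$ is correct. However, there are a few genuine issues worth flagging, one of which is a real gap.

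\textbf{Gap in Step 3.} Your argument for why the $M^z$-term vanishes after integrating against $\Delta_z f$ is not valid. You say ``$\int_{\eta_c}^T\Im\Tr M^z(\ii\eta)\,\dd\eta$ is a deterministic radial function of $|z|$, so its contribution integrates to something controllable; more carefully, since $\rho^z$ is supported away from $[-\Delta/2,\Delta/2]$\ldots'' Being radial is not enough: $\int\Delta_z f\cdot g(|z|)\,\dd^2 z$ does not vanish for a generic radial function $g$, and the gap in the spectral density of $H^z$ on the real axis says nothing directly about the $\eta$-integral along the imaginary axis. What is actually needed is the explicit identity $m^z=\tfrac12\partial_\eta[\log u^z-(m^z)^2]$, which yields $\int_{\eta_c}^T m^z\,\dd\eta=\tfrac12\log|z|^2-\log T+O(n^{-100})$ for $|z|>1$; this is a \emph{harmonic} function of $z$ plus a constant, so two integrations by parts against $\Delta_z f$ give zero. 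Without identifying harmonicity (or the equivalent fact that the deterministic counting measure has no mass in $\{|z|>1\}$, made quantitative), Step 3 does not close.

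\textbf{Quantitative errors in the bookkeeping.} Your $L^1$-bound $\|\Delta_z f\|_1\lesssim n^{1+2\nu}\cdot n^{-1/2+\tau}\cdot n^{-1/2}=n^{2\nu+\tau}$ inserts a spurious extra factor $n^{-1/2}$: the annulus \eqref{f_cond} has width $\sim n^{-1/2+\tau}$ but circumference $\sim 1$, so $\|\Delta_z f\|_1\lesssim n^{1+2\nu}\cdot n^{-1/2+\tau}=n^{1/2+2\nu+\tau}$. This error is in the conservative direction, so it doesn't break the conclusion, but it conceals how tight Step 4 is: the correct budget is $\log n\cdot n^{1/2+2\nu+\tau}\cdot n^{-1/2-2\epsilon}\lesssim n^{-\epsilon}$, where the $n^{1/2}$ in the entropy exactly cancels the $n^{-1/2}$ from the tail bound \eqref{tail_bound} and there is no room for losing a further power. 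Relatedly, the crude bound $\int_{\eta_c}^T|\Im\Tr(G^z-M^z)|\,\dd\eta\lesssim n\log n$ you quote in Step 4 does not close the estimate (it loses a factor $n$), and the ``Main obstacle'' claim that the naive deterministic bound makes the integral as large as $n^L$ is also off — $\|G\|\le\eta^{-1}$ gives $\lesssim n\log n$ after taking the trace, not $n^L$. The bound that actually works is $\prec\log n$, obtained from the averaged local law \eqref{average} valid for all $\eta>0$ (applied on the high-probability local-law event, then intersected with $\{\lambda_1^z\le E_0\}$), or equivalently from the exact spectral formula combined with rigidity \eqref{rigidity3}. Your Main-obstacle paragraph gestures at this, but it should replace the erroneous $n\log n$ step rather than supplement it. Finally, $e^{-n\delta^2/2}$ with $\delta\sim\sqrt{\gamma_n/n}$ is only \emph{polynomially} small ($\approx n^{-1/2}$), not superpolynomially small; it is the combination with $n^{3/2}E_0^2=n^{-2\epsilon}$ that yields $n^{-1/2-2\epsilon}$.
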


Once we translated $\LL_f$ to $\wh \LL_f$, we use the following Green function comparison theorem, whose proof will be presented in Section \ref{sec:L_0}~(as sketched in Step 6 for $k=1$). 
\begin{theorem}\label{GFT}
	Under the same conditions of Theorem~\ref{main_thm}, we have
	$$\Big| \E[\F(\wh \LL_{f_1}, \cdots, \wh \LL_{f_k})] -\E^{\mathrm{Gin}}[\F(\wh \LL_{f_1}, \cdots, \wh \LL_{f_k})]\Big|=O(n^{-1/4+C\epsilon}),
	$$
	for some constant $C>0$ independent of $\epsilon>0$.
\end{theorem}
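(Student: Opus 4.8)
The plan is to run a continuous interpolation between $X$ and the Ginibre matrix $X^{\mathrm{Gin}}$ via the Ornstein--Uhlenbeck flow $\dd X_t = -\tfrac12 X_t\,\dd t + n^{-1/2}\,\dd B_t$ started at $X_0 = X$, together with a standard Lindeberg-type reverse-heat-flow removal of the small Gaussian layer, so that it suffices to compare $t=0$ and $t=t_1$ for some $t_1 = n^{-1+c}$ and then finish by a moment-matching argument. The quantity to track is $\E[\F(\wh\LL_{f_1,t},\dots,\wh\LL_{f_k,t})]$, where $\wh\LL_{f_p,t}$ is built as in \eqref{L_0} from the resolvents $G^z_t$ of the Hermitization of $X_t - z$, and where the regularizing factor $q_z$ is itself replaced by its time-dependent analogue $q_{z,t}$. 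Differentiating in $t$ and applying It\^o's formula, one gets a drift term plus a martingale term; the martingale term has zero expectation, so only the drift must be estimated. By the chain rule the drift is a sum over $p$ of $\E[\partial_p\F \cdot \partial_t \wh\LL_{f_p,t}]$ plus second-order terms $\E[\partial_p\partial_{p'}\F \cdot (\text{quadratic variation between } \wh\LL_{f_p,t},\wh\LL_{f_{p'},t})]$; crucially $\F$ has bounded derivatives up to order $10$ by \eqref{F_cond}, so it is enough to show each of these expectations is $O(n^{-1/4 + C\epsilon})$ after integrating $t$ over $[0,t_1]$.

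The key structural point is that $\partial_t \wh\LL_{f_p,t}$, after the two integrations by parts of \eqref{girko2} that move $\Delta_z$ onto the resolvent, is an integral over $\eta\in[\eta_c,T]$ of (a) the derivative hitting $\int \Im\Tr(G^z_t F G^z_t F^* G^z_t)$-type monomials and (b) the derivative hitting the cutoff $q_{z,t}$, which produces $\Tr G^z_t F G^z_t F^*$ at the fixed small scale $\eta_0$ for a few $z$ parameters. For (a) one uses the $F$-improved local laws of Theorem~\ref{theorem_F}: the gain of a factor $\rho^z(\ii\eta)\sim \eta^{1/3}$ per $F$ insertion renders the naively $\int\dd\eta/\eta^3$-divergent integral convergent at the relevant scale, exactly as advertised in the introduction, with the $\eta$-integral dominated near $\eta\sim n^{-3/4}$. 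For (b) the crucial input is the asymptotic independence of the smallest singular values at mesoscopically separated $z$'s (Proposition~\ref{prop_zz}), obtained from the optimal eigenvector-overlap bound \eqref{overlap}, i.e. from the $|z_1-z_2|$-decorrelated two-resolvent local law of Theorem~\ref{thm:2G}; this lets one decouple the cutoff contributions at different $z$'s and show that the correction coming from the time-dependence of $q_{z,t}$ is of the required small order in expectation. One then also needs the single-resolvent local law \eqref{average} and the tail bound Proposition~\ref{prop1} to control the event $\{\lambda^z_1 \le E_0\}$ on which the regularization is not exactly the indicator, and the support condition \eqref{f_cond} to guarantee that the $z$-integral ranges over an annulus of area $\sim n^{-1/2+\tau}$, which supplies the additional powers of $n$ needed to absorb the large $\|\Delta_z f\|_\infty \lesssim n^{1+2\nu}$.

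After the flow reaches $t_1 = n^{-1+c}$, the matrix $X_{t_1}$ has a Gaussian component of size $\sim n^{-1/2+c/2}$, and the residual difference from a genuine Ginibre matrix is removed by the standard Green function comparison / moment matching step (only the first two moments are used, as noted after Theorem~\ref{main_thm}), using again only the single- and two-resolvent local laws together with \eqref{f_norm}. Accumulating: the $t$-integral of the drift is bounded by $t_1$ times a per-time bound of order $n^{-1/4 + C'\epsilon + C''\nu}/t_1 \cdot$(a harmless logarithmic factor from $\int \rho/\eta\,\dd\eta$), which after choosing $\nu,\tau \ll \epsilon$ gives the claimed $O(n^{-1/4+C\epsilon})$. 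I expect the main obstacle to be the term (b): controlling the derivative of the regularization $q_{z,t}$ and showing that the resulting $\Tr G^z_t F G^z_t F^*$-contributions at the microscopic scale $\eta_0$, summed against $\Delta_z f$ over the annulus and correlated across the several $z$-parameters appearing in the multilinear Taylor expansion of $q$, do not destroy the gain — this is precisely where the delicate condition $\lambda_1^z\neq 0$ enters (cf. the discussion around \eqref{two_int_by_part}) and where both Theorem~\ref{theorem_F} and Proposition~\ref{prop_zz} must be used in tandem and with their optimal exponents.
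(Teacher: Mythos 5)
Your proposal correctly identifies most of the key inputs the paper uses (the chiral cancellation for $G^zFG^zF^*$ at $\eta=\eta_c$, the $F$-improved local laws of Theorem~\ref{theorem_F}, the independence of small singular values from Proposition~\ref{prop_zz}, the tail bound and the support condition that supplies the extra power of $n$), but the overall architecture you describe has a genuine gap and differs from what the paper does in two essential ways.

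First, the comparison mechanism. You invoke It\^{o}'s formula, split into drift plus martingale, and estimate the drift via the chain rule. But that alone does not expose the crucial cancellation that makes this work: the paper applies It\^{o} \emph{and then a cumulant expansion} on the expectation (as in \eqref{cumulant_exp0}). Because the second moments of the entries of $X_t$ are preserved along the Ornstein--Uhlenbeck flow, the second-order cumulant contributions cancel \emph{exactly}, and the expansion starts only at third order. Without this cumulant step your drift term would still carry $O(n^2)$ second-cumulant contributions, and the rest of the estimate would not close. The cumulant structure is also what makes the factorization over separated $z$-parameters (via Proposition~\ref{prop_zz}, as in \eqref{ginibre_tail_prod}--\eqref{n3}) available at the right place.

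Second, the flow length and exit strategy. You propose running the flow only for $t_1 = n^{-1+c}$ and then ``finishing with moment matching.'' But at time $t_1 = n^{-1+c}$ the matrix $X_{t_1}$ carries only a tiny Gaussian component of size $\sqrt{t_1}\sim n^{-1/2+c/2}$ and is still essentially the original $X$, not Ginibre; a standard reverse-heat-flow/moment-matching argument can remove this tiny Gaussian layer, returning you to $X$, but it cannot magically compare $X$ to a full Ginibre matrix --- that is precisely the content of Theorem~\ref{GFT} and would be circular. The paper instead runs the flow for $t_0 = 10L\log n$ so that $X_{t_0}$ is $O_\prec(n^{-2L})$-close to Ginibre (see~\eqref{approxxxx}), bounds the time derivative \emph{uniformly in $t$} by $O_\prec(n^{-1/4+C\epsilon})$ (not by $n^{-1/4+C\epsilon}/t_1$ as your arithmetic suggests), and concludes by integrating over $[0,t_0]$, paying only a $\log n$ factor. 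Your per-time bound of order $n^{-1/4+C\epsilon}/t_1\sim n^{3/4}$ would be far too large to be plausible and reflects the same confusion. So the two structural points you need to add to make the proposal work are: replace the plain It\^{o} drift with the cumulant expansion (so that the second-order terms cancel and Lemma~\ref{lemma_key}-type bounds on the third- and higher-order derivatives apply), and run the flow for time $O(\log n)$ with a uniform-in-$t$ drift bound rather than a short-time flow followed by an unspecified comparison.
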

	Finally, combining Proposition \ref{lemma_L} with Theorem \ref{GFT}, we conclude the proof
Theorem~\ref{main_thm}.

\subsection{Proof of Proposition \ref{lemma_L}}
\label{sec:actualproof} We follow Steps 1--5 in Section \ref{sec:strategy_gft}.
Recall the Girko's formula 
\begin{align}\label{girko}
	\LL_f=-\frac{1}{4 \pi}  \int_{\C} \Delta_z f(z) \int_0^{T}  \Im \Tr G^z(\ii \eta) \dd \eta \dd^2 z +\frac{1}{4 \pi} \int_{\C} \Delta_z f(z) \log |\det (H^{z}-\ii T)| \dd^2 z,
\end{align}
where we dropped the subscript index $p\in [k]$ of $f_p$ and $\LL_{f_p}$ for notational simplicity. 

 \medskip

{\bf Step 1:} We start with showing that, choosing a large $T=n^{100}$, the last term in (\ref{girko}) is very small 
with very high probability. This follows directly from 
	\begin{align}\label{log_T}
		\log |\det (H^{z}-\ii T)|=&2n \log T +\sum_{j} \log \left( 1+\Big(\frac{\lambda_j^{z}}{T}\Big)^2\right) =2n \log T+O_\prec\left( \frac{n^2}{T^2}\right),
	\end{align}
	together with the $L^1$-bound
\begin{equation}\label{L1}
   \int_\C |\Delta_z f(z)| \dd^2 z \lesssim n^{1/2+2\nu+\tau} \le n^{1/2+\epsilon/20},
\end{equation}
which directly follows from the
 $L^\infty$-norm bound of $\Delta f$ in (\ref{f_norm}) and the area of its support~\eqref{f_cond}.  
  We thus have
	\begin{align}\label{girko_formula}
		\LL_{f}=&-\frac{1}{4 \pi}  \int_{\C} \Delta_z f(z) \int_0^{T}  \Im \Tr G^z(\ii \eta) \dd \eta \dd^2 z +O_\prec( n^{-100}), \qquad T=n^{100}.
	\end{align}
	
	{\bf Step 2:} We next show that the $\eta$-integral over $[0, \eta_c]$, with $\eta_c=n^{-L}$ for some very large $L > 100$, is negligible in the first absolute moment sense. By a direct computation we have
	\begin{align}\label{eta_c_sp}
		\int_0^{\eta_c} \Im \Tr \gz(\ii \eta) \dd \eta 
		=\frac{1}{2} \Big( \sum_{|\lambda^z_i| \leq n^{-L}} +\sum_{n^{-L} < |\lambda^z_i|\leq E_0 }+\sum_{|\lambda^z_i|> E_0  } \Big) \log \Big( 1+\frac{n^{-2L}}{(\lambda^z_i)^2}\Big),
	\end{align}
where $E_0$ is slightly below the typical eigenvalue spacing near zero, \ie
	$$E_0=n^{-3/4-\epsilon}, \qquad \qquad  \mbox{for a fixed small}~\epsilon>0.$$
Note that the last sum in (\ref{eta_c_sp}) can be trivially bounded by $n^{1-2L}/E_0^2\leq n^{-100}$. 	To estimate the first 
sum in (\ref{eta_c_sp}), we recall \cite[Proposition 5.7]{AEK18}, \ie that under the density condition~(\ref{assumption_b}), there exists $C_\alpha>0$ such that
	\begin{align}\label{density_bound}
		\mathbb{P}\left(  |\lambda^z_1| \leq \frac{u}{n} \right)\le C_\alpha u^{\frac{2\alpha}{1+\alpha}} n^{\beta+1},\qquad z \in \C, u>0,
	\end{align}
	with $\alpha, \beta$ given in (\ref{assumption_b}). 
	Then, following \cite[Eq. (5.34)-(5.35)]{AEK18}, the first sum in (\ref{eta_c_sp}) can be bounded by
	$$\E\Big[\sum_{|\lambda^z_i| \leq n^{-L}} \log \Big( 1+\frac{n^{-2L}}{(\lambda^z_i)^2}\Big) \Big]  \lesssim n \E \big[ |\log \lambda^z_1| \one_{\lambda^z_1 \lesssim n^{-L}} \big] \lesssim n^{-100},$$
	with $L>100$ large enough depending on $\alpha,\beta$. The second sum in (\ref{eta_c_sp}) can be bounded by
	\begin{align}\label{step_two}
		\E\Big[\sum_{n^{-L} < |\lambda^z_i|\leq E_0 } \log \Big( 1+\frac{n^{-2L}}{(\lambda^z_i)^2}\Big) \Big] \lesssim n^{\xi} \P\big( \lambda^z_1 \leq E_0\big),
	\end{align}
	for any small $\xi>0$, using the rigidity estimate of eigenvalues in (\ref{rigidity3})~(\ie $\#\{|\lambda^z_i| \leq E_0\} \leq n^{\xi}$).	
	Using  the support condition (\ref{f_cond}) or (\ref{f_cond_2}) and
Proposition~\ref{prop1}, we have
	\begin{align}\label{tail_precise}
		\P\big( \lambda^z_1 \leq E_0\big) \lesssim  n^{3/2} E_0^2 \ee^{-n \delta^2/2} \lesssim \begin{cases}
			n^{-1/2-2\epsilon}, \quad  \mbox{~if~$z$~satisfies~}(\ref{f_cond}), \\
			n^{-1/4-2\epsilon},   \quad  \mbox{~if~$z$~satisfies~}(\ref{f_cond_2}).
		\end{cases}
	\end{align}
  Combining with the $L^1$-norm of $\Delta f$ in~\eqref{L1},
   we know that the very tiny $\eta$-integral is negligible, \ie  there exists a large $L>100$ such that
	\begin{align}\label{tiny_eta}
		\E\Big|\int_{\C} \Delta_z f(z) \int_0^{\eta_c}  \Im \Tr G^z(\ii \eta) \dd \eta \dd^2 z\Big|\lesssim n^{-\epsilon},  \qquad \eta_c=n^{-L}.
	\end{align}

	{\bf Step 3:} By Steps 1 and 2  it suffices to focus on the integral over $\eta\in [\eta_c, T]$
	in~\eqref{girko}.  In order to use the local law in (\ref{average}), we 
	 subtract the deterministic term $M^{z}(\ii \eta)$ from $G^{z}(\ii \eta)$ on the right side 
	 of~\eqref{girko_formula}. By a direct computation using (\ref{Mmatrix}) and
	  (\ref{m_function}), we have (see also~\cite[Eq. (3.6)]{CES19})  
	$$\partial_\eta m^z(\ii \eta)=\frac{1-\beta}{\beta}, \qquad \partial_\eta u^z(\ii \eta)=\frac{2u^z m^z }{\beta}, \qquad \beta:=1-(m^z)^2-(u^z)^2|z|^2,$$ 
	and thus $m^z=m^z(\ii \eta)=\partial_\eta \big(\log u^z-(m^z)^2\big)/2$. For any $|z|>1$, we thus have
	$$ \int_{\eta_c}^{T} \<M^z(\ii\eta)\> \dd \eta =\int_{\eta_c}^{T} m^z(\ii\eta) \dd \eta=\frac{\log u^z(\ii\eta)-\big(m^z(\ii\eta)\big)^2}{2}\Big|^{\eta=T}_{\eta=\eta_c}=\log |z|^2+O(n^{-100}),$$
	where we used \eqref{rho} and that $m^z(0)=0, u^z(0)=\frac{1}{|z|^2}$ for $|z|>1$. Using that $\log |z|^2$ is harmonic away from zero, i.e. on the support of $f$, after integration by parts we have
	\begin{align}\label{M_zero}
		\int_{\C} \Delta_z f(z) \int_{\eta_c}^{T}  \Tr M^z(\ii \eta) \dd \eta \dd^2 z=O(n^{-98}).
	\end{align}
Therefore, we conclude that
	\begin{align}\label{M_zero1}
	\E\left|\LL_f+\frac{1}{4 \pi}  \int_{\C} \Delta_z f(z)  \Big(\int_{\eta_c}^{T}  \Im \Tr \big( G^z(\ii \eta)-M^z(\ii \eta) \big)\dd \eta\Big) \dd^2 z\right|=O(n^{-\epsilon}).
\end{align}

{\bf Step 4:} We further truncate the smallest eigenvalue at the level $E_0=n^{-3/4-\epsilon}$ as in (\ref{eta_c_sp}), \ie
	\begin{align}\label{girko_split}
	\LL_{\leq E_0}+\LL_{>E_0}:=&-\frac{1}{4 \pi}  \int_{\C} \Delta_z f(z)  \Big(\int_{\eta_c}^{T}  \Im \Tr \big( G^z(\ii \eta)-M^z(\ii \eta) \big)\dd \eta\Big)  \Big( \one_{\lambda^z_1 \leq  E_0}+\one_{\lambda^z_1 >  E_0}\Big)\dd^2 z.
	\end{align}
	Using the $L^1$-norm of $\Delta_z f$ in~\eqref{L1},
	the local law in (\ref{average}) and the tail bounds in (\ref{tail_precise}), we have the bound
	\begin{align}\label{small_eigen}
		&\E \big|\LL_{\leq E_0}\big| \prec (\log n)  \int  \big|\Delta_z f(z)\big| \P \big( \lambda^{z}_1 \leq E_0 \big) \dd^2 z \lesssim n^{-\epsilon}.
	\end{align}

	{\bf Step 5:} For $\LL_{>E_0}$ in (\ref{girko_split}), we regularize the truncating function $\one_{\lambda_1^z > E_0}$ by showing that
	\begin{align}\label{L_big}
		\E\big|\LL_{>E_0} -\wh\LL_f\big|=O(n^{-\epsilon}), 
	\end{align}
where  $\wh\LL_f$ is given in \eqref{L_big0}.
The proof of (\ref{L_big}) relies on the following lemma which links the eigenvalue truncating function $\one_{\lambda^z_1 > E_0}$ with $q_z$ in terms of the resolvent. We also remark the lemma below is in the same spirit of \cite[Lemma 6.1-6.2]{EYY12} designed originally to prove the edge universality for Wigner matrices; we sketch the proof in Appendix~\ref{app:lemma} for the convenience of the reader.	
	\begin{lemma}\label{lemma_approx}
		Let $0\leq |z|-1 \leq c$ for a small constant $c>0$. Fix any small $\epsilon'>0$ and $0<\zeta<\epsilon'/10$. Choose any spectral parameters $E\gg l\gg l' \gg \eta \gg n^{-1}$ such that
		\begin{align}\label{eta}
		n^{-1+\epsilon'}   \leq n^{6\zeta} \eta \leq   n^{3\zeta} l' \leq n^{\zeta} l \leq  E  \leq C n^{-3/4}.
		\end{align}
		Then we have, with very high probability,
		\begin{align}\label{approx}
			\Big|\#\{|\lambda^z_i| \leq E\}-\int_{-E}^{E}  \Im \Tr G^z(y+\ii  \eta) \dd y\Big| \lesssim \#\{ |\lambda^z_i| \in [E-l',E+l']\} +O(n^{-\zeta}),
		\end{align}
		and thus with very high probability,
		\begin{align}\label{approx_1}
			\int_{|y|\leq E-l}  \Im \Tr G^z (y+\ii  \eta)\dd y -O(n^{-\zeta})&\leq 	\#\{|\lambda^z_i| \leq E\}\nonumber\\
			&  \leq \int_{|y|\leq E+l}  \Im \Tr G^z(y+\ii  \eta) \dd y +O(n^{-\zeta}).
		\end{align}
	\end{lemma}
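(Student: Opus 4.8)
The plan is to prove both inequalities by relating the eigenvalue count $\#\{|\lambda_i^z|\le E\}$ to the regularized quantity $\int_{-E}^E \Im\Tr G^z(y+\ii\eta)\,\dd y$ via the standard Poisson-kernel approximation to the counting function, in the spirit of \cite[Lemma 6.1--6.2]{EYY12}. The starting point is the identity
\[
\Im\Tr G^z(y+\ii\eta)=\sum_{|j|\le n}\frac{\eta}{(\lambda_j^z-y)^2+\eta^2},
\]
so that
\[
\int_{-E}^E\Im\Tr G^z(y+\ii\eta)\,\dd y=\sum_{|j|\le n}\Big[\arctan\frac{E-\lambda_j^z}{\eta}-\arctan\frac{-E-\lambda_j^z}{\eta}\Big].
\]
The summand is a smoothed indicator of the event $\lambda_j^z\in[-E,E]$: it is within $O(\eta/(E-|\lambda_j^z|))$ of $1$ when $|\lambda_j^z|\le E-l'$, within $O(\eta/(|\lambda_j^z|-E))$ of $0$ when $|\lambda_j^z|\ge E+l'$, and always bounded by $1$ in between. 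Thus the difference between $\int_{-E}^E\Im\Tr G^z$ and $\#\{|\lambda_i^z|\le E\}$ is controlled by (i) the number of eigenvalues in the transition window $[E-l',E+l']$ (and its mirror), plus (ii) a tail error $\sum_{j:\,|\lambda_j^z-(\pm E)|>l'}\eta/|\lambda_j^z\mp E|$.

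For step (i), the window term is exactly the right side $\#\{|\lambda_i^z|\in[E-l',E+l']\}$ of \eqref{approx}, so no further work is needed there. For the tail error in (ii), I would split the sum dyadically in $|\lambda_j^z\mp E|$ over scales $2^k l'\le 1$ and on each scale bound the number of eigenvalues using the rigidity estimate \eqref{rigidity3} of Corollary~\ref{cor:rigidity} (together with the density bound \eqref{rho_E} near the cusp), which controls $\#\{j:\lambda_j^z\in I\}$ by $2n\int_I\rho^z+O_\prec(1)$. Since $\rho^z$ is integrable and $E\lesssim n^{-3/4}$, the contribution of the $k$-th dyadic shell is of order $\frac{\eta}{2^kl'}\big(n\cdot 2^kl'\cdot\rho^z(E)+1\big)$; summing over $k$ and using $\rho^z(E)\lesssim E^{1/2}\lesssim n^{-3/8}$ near the cusp, the leading term is $\lesssim n\eta\rho^z(E)\lesssim n^{1/8}\eta$, which is $\le n^{-\zeta}$ by the hierarchy \eqref{eta} (since $n^{6\zeta}\eta\le Cn^{-3/4}$ forces $\eta\le Cn^{-3/4-6\zeta}$, so $n^{1/8}\eta$ is far smaller than $n^{-\zeta}$), while the logarithmically many "$+1$" terms contribute $\lesssim (\log n)\,\eta/l'\le (\log n) n^{-3\zeta}\le n^{-\zeta}$ by \eqref{eta}. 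Beyond the eigenvalues that are $O(1)$ away from $\pm E$ one uses the global boundedness of the spectrum of $H^z$ (with very high probability $\|H^z\|\lesssim 1$) so that the contribution of $|\lambda_j^z|\gtrsim 1$ is $\lesssim n\eta\le n^{-\zeta}$. This proves \eqref{approx}.

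Finally, \eqref{approx_1} follows from \eqref{approx} by monotonicity: applying \eqref{approx} with $E$ replaced by $E-l$ gives an upper bound in which the window term is $\#\{|\lambda_i^z|\in[E-l-l',E-l+l']\}$, and applying it with $E$ replaced by $E+l$ gives a lower bound with window $\#\{|\lambda_i^z|\in[E+l-l',E+l+l']\}$; in both cases, since $l\gg l'$, these windows are contained in $[E-2l,E]$ and $[E,E+2l]$ respectively, which are disjoint from a neighborhood of size $\gtrsim l$ of the remaining quantiles, so by rigidity \eqref{rigidity} (whose right side is $\ll n^{-3/4}\ll l$ by \eqref{eta}) these windows contain no eigenvalues with very high probability, hence the window terms vanish. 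Absorbing them leaves exactly \eqref{approx_1}. The main obstacle is bookkeeping the tail/window estimates so that all error terms land below $n^{-\zeta}$ uniformly under the scale separation \eqref{eta}; the key inputs making this work are the cusp rigidity of Corollary~\ref{cor:rigidity} and the precise local density bounds \eqref{rho_E}--\eqref{rho}.
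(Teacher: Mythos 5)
Your argument for \eqref{approx} follows essentially the same route as the paper: you write out the spectral decomposition of $\Im\Tr G^z(y+\ii\eta)$, recognize the $\arctan$ difference as a smoothed indicator of $[-E,E]$, isolate the transition window $[E-l',E+l']$, and control the tail error by a dyadic decomposition combined with the rigidity/density bounds \eqref{rigidity3}, \eqref{rho_E}. The paper packages this as the mollifier identity \eqref{link0} and cites the estimates of \cite{EYY12,SX22} rather than redoing the arctan calculus, but the content is the same.

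However, your proof of \eqref{approx_1} has a genuine gap. You claim that because ``$l\gg l'$'', the windows $[E\pm l-l',E\pm l+l']$ ``contain no eigenvalues with very high probability'' by rigidity, and you justify this by asserting that the rigidity fluctuation ``$\ll n^{-3/4}\ll l$''. This size comparison is reversed: from \eqref{eta} one has $n^\zeta l\le E\le Cn^{-3/4}$, i.e.\ $l\le Cn^{-3/4-\zeta}\ll n^{-3/4}$, whereas the rigidity fluctuation \eqref{rigidity} near the origin is of order $n^{-3/4}$ (up to $n^\xi$ factors), which is \emph{larger} than both $l$ and $l'$. Rigidity therefore does not exclude an eigenvalue from a window of width $l'$ centered at $E\pm l$: for a generic deterministic $E$ it is entirely possible (and for some $E$ guaranteed) that an eigenvalue sits there. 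Hence the window term in \eqref{approx} need not vanish, and your derivation of \eqref{approx_1} does not close.

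The paper avoids this by an averaging trick. Instead of applying \eqref{approx} at the single shifted level $E\pm l$, it averages the monotone quantity $\Tr\chi_{E'}(H^z)$ over $E'\in[E,E+l]$: each eigenvalue lies in the moving window $[E'-l',E'+l']$ for at most a set of $E'$ of measure $2l'$, so after dividing by $l$ the window contribution is reduced by the factor $l'/l\lesssim n^{-2\zeta}$. Combined with rigidity (which gives $\#\{|\lambda_i^z|\in[E-l',E+l+l']\}\prec 1$), the averaged window term is $O(n^{-\zeta})$ and can be absorbed. You would need to replace your ``the windows are empty'' step with this averaging argument (or something equivalent) to make the proof of \eqref{approx_1} correct.
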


Recall the definitions of $q$ and $q_z$ in (\ref{q_function}) and (\ref{eq2}). Using that $\one_{\lambda^z_1> E_0}=q(\#\{|\lambda^z_i| \leq E_0\})$ and (\ref{approx}) for $E=E_0$, we have, with very high probability,
\begin{align}
	\big|\one_{\lambda^z_1> E_0}-q_z\big| \lesssim \one_{\#\big\{|\lambda^z_i| \in \big[(1-n^{-3\zeta})E_0,(1+n^{-3\zeta})E_0\big]\big\} \geq 1} \lesssim \one_{\lambda^z_1\leq 3E_0/2}, 
\end{align}
for some small $\zeta>0$. Using (\ref{tail_precise}) with $3E_0/2$ instead of $E_0$
 and the $L^1$-norm of $\Delta f$ in~\eqref{L1},   we have
\begin{align}\label{big_eigen}
	\E\big|\LL_{>E_0} -\wh\LL_f\big| \lesssim  (\log n) \int_\C
	 \big| \Delta_z f(z)\big| \P\big(\lambda^z_1\leq 3E_0/2\big) \dd^2 z  =O(n^{-\epsilon}).
\end{align}
Hence putting the above estimates in (\ref{M_zero1}), (\ref{small_eigen}) and (\ref{big_eigen}) together, we obtain
\begin{align}
	\E\big|\LL_f-\wh \LL_f\big|=O(n^{-\epsilon}).
\end{align}
This holds true for any test function $f=f_p\in C_c^2(\C)~(p \in [k])$ satisfying (\ref{f_norm}) and (\ref{f_cond})~(or alternatively (\ref{f_cond_2})). Then, since $\F(w_1,\cdots,w_k)$ is a smooth function satisfying (\ref{F_cond}), we obtain
\begin{align}\label{taylor1}
	\Big|\E[\F(\LL_{f_1},\cdots, \LL_{f_k})]-\E[\F(\wh \LL_{f_1}, \cdots, \wh \LL_{f_k})]\Big| \lesssim \max_{|\mathbf{\alpha}|=1}\{\|\partial^{\mathbf{\alpha}} \F\|_\infty\} \sum_{p=1}^k \E\big|\LL_{f_p}-\wh \LL_{f_p}\big|=O(n^{-\epsilon}).
\end{align}
This concludes the proof of Proposition \ref{lemma_L}. \qed

\section{Proof of Theorem \ref{GFT}}\label{sec:L_0}

In this section, we prove Theorem \ref{GFT} via a continuous interpolating flow. 
Given the initial ensemble $H^{z}$ in (\ref{def_G}) with $X$ satisfying Assumption \ref{ass:mainass}, we consider the following Ornstein-Uhlenbeck matrix flow
\begin{align}\label{flow}
	\dd H^z_t=-\frac{1}{2} (H^z_t+Z)\dd t+\frac{1}{\sqrt{n}} \dd \mathfrak{B}_t, \quad Z:=\begin{pmatrix}
		0  &  zI  \\
		\overline{z}I   & 0
	\end{pmatrix}, \quad 
	\mathfrak{B}_t:=\begin{pmatrix}
		0  &  B_t  \\
		B^*_t   & 0
	\end{pmatrix},
\end{align}
with initial condition $H^z_{t=0}= H^z$, where $B_t$ is an $n \times n$ matrix with i.i.d. standard complex valued Brownian motion entries. Set $W_t=H_t^z+Z$, then $\dd H^z_t=-\frac{1}{2} W_t \dd t +\frac{1}{\sqrt{n}} \dd \mathfrak{B}_t$ and 
\begin{align}\label{W}
     W_t=\begin{pmatrix}
		0  &  X_t  \\
		X^*_t   & 0
	\end{pmatrix}, \quad \qquad X_t \stackrel{{\rm d}}{=} \ee^{-\frac{t}{2}} X+\sqrt{1-\ee^{-t}} X^{\mathrm{Gin}},
\end{align}
where $X_{\infty} \stackrel{{\rm d}}{=}  X^{\mathrm{Gin}}$ is a complex 
Ginibre matrix which is independent of $X$.
The matrix flow $H_t^z$ hence interpolates between the initial matrix $H^{z}$
and the same  matrix 
 with $X$ replaced with an independent complex Ginibre ensemble.
The Green function of $H_t^{z}$ is denoted by $G^{z}_t(\ii\eta):=(H_t^z-\ii\eta)^{-1}$.
Now we define  $\wh \LL_f(t)$ as
in (\ref{L_0}) but with the time dependent $G_t^{z}$. 
For notational brevity, we define, for any non-negative integer $b\in \N$,
\begin{align}\label{qq}
	\X^z=\X_t^z:=\int_{\eta_c}^{T} \Im \Tr \big( G_t^z(\ii \eta)-M^z(\ii \eta) \big) \dd \eta, \qquad q_{z}^{(b)}=q_{z,t}^{(b)}:=q^{(b)}\Big( \int_{-E_0}^{E_0} \Im \Tr G_t^z(y+\ii \eta_0) \dd y \Big),
\end{align}
 with $\eta_c$ and $T$ chosen as in Proposition \ref{lemma_L}, where $q^{(b)}$ is the $b$-th derivative of the smooth and uniformly bounded cut-off function $q$ in (\ref{q_function}) with $E_0$ and $\eta_0$ chosen as in (\ref{eq2}). We also use the convention that $q^{(0)}=q$, $q'=q^{(1)}$ and $q''=q^{(2)}$. 
Then $\wh \LL_f(t)$ from~\eqref{L_0} with $f=f_p$ is given by 
\begin{align}\label{LX}
	\wh \LL_f=\wh \LL_f(t):=-\frac{1}{4\pi} \int \Delta_z f(z) \X^z_t  q_{z,t} \dd^2 z.
\end{align}
Since the flow in (\ref{flow}) is stochastically H\"older continuous in time, the local law for the Green function in Theorem \ref{local_thm} also holds true for the time dependent $G^z_t(\ii \eta)$ simultaneously for all  $t\in [0,n^{100}]$\nc, \ie
\begin{align}\label{local_t}
	\sup_{t\in [0,n^{100}]} \nc \sup_{z\in \mathrm{supp}(f)}
	 \sup_{\eta_c\le \eta\le T} (n\eta) \big|\< \gz_t(\ii \eta)-M^{z}(\ii \eta)\>\big| \prec 1. 
\end{align}
The proof of~\eqref{local_t}
uses a standard grid argument  together with the H\"older regularity of $G^{z}_t(\ii \eta)$
in all three variables
 for any  $0 \leq t \leq n^{100}$\nc, $z\in \mathrm{supp}(f)$ and $\eta_c\le\eta\le T$;
see similar arguments at the beginning of the proof of Proposition~\ref{pro:mainpro}.
Thus the local law in~(\ref{local_t}) implies that $|\X_t^z| \prec \log n$ for any $t\in [0,n^{100}]$.
 In the following we often drop the $t$-dependence in the above notations, and all the estimates throughout this section hold true uniformly in  $t\in [0,n^{100}]$ \nc even if not mentioned explicitly.

We next compute how the time-dependent expectation $\E\big[\F \big(\wh \LL_f(t)\big)\big]$ evolves along the flow in (\ref{flow}).   Due to the $2\times 2$ block structure of the matrix $W=W_t$ in (\ref{W}), in the following we use lower case letters (\eg $a, b$) to denote the indices taking values in $[1,n]$ and upper 
case letters (\eg $A,B$) to denote the indices taking values in $[n+1,2n]$. 
We also use Fraktur letters, \eg $\mathfrak{u,v}$ to denote the indices ranging fully from $1$ to $2n$.

We apply  It\^{o}'s formula to $\F(\wh \LL_f)$ using that $\partial/\partial w_{aB}=\partial/\partial h_{aB}$ and $\overline{h_{aB}}={h_{Ba}}$ and perform the complex version of cumulant expansion formula on the expectation~(see \cite[Lemma 7.1]{HK17}). 
Since the variances of the matrix entries of $X_t$ are preserved along the flow in (\ref{W}), the second order cumulant terms are exactly cancelled and the following sum starts from the third order terms~($p+q+1=3$), \ie
   \begin{align}\label{cumulant_exp0}
	\frac{\dd}{\dd t}\E[\F(\wh \LL_f)]=&\sum_{p+q+1 \geq 3}^{10} \frac{1}{n^\frac{p+q+1}{2}} \sum_{a=1}^{n} \sum_{B=n+1}^{2n} \left(\frac{\kappa^{(p+1,q)}}{p!q!} \E\Big[ \frac{\partial^{p+q+1} \F(\wh \LL_f)}{\partial h^{p+1}_{aB}\partial  h^{q}_{Ba}}\Big]+\frac{\kappa^{(q,p+1)}}{p!q!} \E \Big[ \frac{\partial^{p+q+1} \F(\wh \LL_f)}{\partial h^{q}_{aB}\partial  h^{p+1}_{Ba}}\Big]\right)\nonumber\\
	&+\mathcal{E}_{11},
\end{align}
  where
 $\kappa^{(p,q)}=\kappa^{(p,q)}_t$ are the  $(p,q)$-cumulants of  $\chi_t$, where
 $\chi_t\stackrel{{\rm d}}{=}(\sqrt{n}X_t)_{ij}$ is the common distribution of
  the normalized time-dependent entries.
 In particular $|\kappa_t^{(p,q)}|\lesssim 1$  uniformly in $t\in \R^+$ by~\eqref{eq:hmb} and~\eqref{W}.
 Notice that we stopped the cumulant expansion at level $p+q+1=10$ with a truncation error denoted by
  $\mathcal{E}_{11}$. We claim that this error is bounded by 
 \begin{align}\label{tr_error}
 	\mathcal{E}_{11}=O(n^{-3/4+c\epsilon}),
 \end{align}
 for some small constant $c>0$. Such truncation argument is rather standard and has been used many times (see e.g. \cite{CES21,maxRe,SpecRadius,SX22}). Since controlling  $\mathcal{E}_{11}$ is much easier than estimating the cumulant expansion terms in (\ref{cumulant_exp0}) with $3 \leq p+q+1\leq 10$, so we will first focus on these terms and postpone the proof of (\ref{tr_error}) till the end of Section \ref{subsec:proof_lemma_key}.

  \nc

\medskip

 To study the terms on the right side of (\ref{cumulant_exp0}), we establish
  the following differentiation rules (these apply in the complex case). The proof is postponed till Section \ref{subsec:proof_some_lemma}.
 \begin{lemma}\label{lemma_deri_rule}
 	For any $\mathfrak{u,v} \in [1,2n]$, $a\in [1,n]$ and $B\in [n+1,2n]$, we have
 	\begin{align}\label{rule}
 		\frac{\partial \gz_\mathfrak{uv}}{\partial h_{aB}}=-\gz_{\mathfrak{u}a}\gz_{B\mathfrak{v}},\qquad \frac{\partial \gz_\mathfrak{uv}}{\partial h_{Ba}}=-\gz_{\mathfrak{u} B}\gz_{a \mathfrak{v}}.
 	\end{align}
 	Then for $\X^z$ and $q^{(b)}_z~(b \in \N)$ defined in (\ref{qq}), we have the following differentiation rules
 	\begin{align}\label{rule_q}
 		\frac{\partial\X^z}{\partial h_{Ba}}=- \gz_{aB}(\ii\eta_c)+O_\prec(n^{-100}), \quad\qquad \frac{\partial q^{(b)}_z}{\partial h_{Ba}}=&- q^{(b+1)}_z \Dim  (\gz_{aB}),
 	\end{align}
 	where we used the short-hand notations for  any function $g,h: \C\setminus \R \rightarrow \C$, 
 	\begin{align}\label{dim}
 		\wt \Im g (w):=&\frac{1}{2\ii } \big(g (w)-g (\bar w) \big), \qquad \mathfrak{D}_{E_0,\eta_0}h:= h(E_0+\ii \eta_0) -h(-E_0+\ii \eta_0),
 	\end{align}
  with fixed $E_0$ and $\eta_0$ chosen in (\ref{eq2}). The same rules above also hold when we interchange $a$ 
  and $B$. Moreover, both  operations
  $\wt \Im$ and  $\mathfrak{D}_{E_0,\eta_0}$ in (\ref{dim}) commute with the derivatives
   $\partial/\partial h_{Ba}$ or $\partial/\partial h_{aB}$. 
 \end{lemma}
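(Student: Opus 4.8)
The first pair of identities in~\eqref{rule} is the standard resolvent differentiation rule. Treating the independent matrix entries $h_{aB}$ and $h_{Ba}$ (with $a\le n<B$) as formal independent variables and using $\partial_{\mathfrak{h}}(A^{-1})=-A^{-1}(\partial_{\mathfrak{h}}A)A^{-1}$ with $\partial_{h_{aB}}W=e_ae_B^{*}=:E_{aB}$ (resp. $\partial_{h_{Ba}}W=E_{Ba}$), I would get $\partial_{h_{aB}}\gz=-\gz E_{aB}\gz$ and $\partial_{h_{Ba}}\gz=-\gz E_{Ba}\gz$, which read entrywise as~\eqref{rule}. The two consequences I will use for the remaining identities are $\partial_{h_{Ba}}\Tr\gz(w)=-(\gz(w)^{2})_{aB}=-\partial_w\gz_{aB}(w)$ (here $\partial_w\gz(w)=\gz(w)^2$) and the same with $a,B$ swapped.

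For $\partial_{h_{Ba}}\X^z$ the plan is: (i) record the chiral-symmetry identity $E_-\gz(w)E_-=-\gz(-w)$, a direct consequence of $E_-H^zE_-=-H^z$, whose trace gives $\Tr\gz(\ii\eta)=-\Tr\gz(-\ii\eta)$, so that $\Tr(\gz-M^z)(\ii\eta)$ is purely imaginary and $\Im\Tr(\gz-M^z)(\ii\eta)=-\ii\,\Tr(\gz-M^z)(\ii\eta)$ — equivalently, replace $\Im$ by the holomorphic surrogate $\wt\Im$ of~\eqref{dim}, which agrees with $\Im$ here and, since it acts only on the spectral parameter, commutes with $\partial_{h_{Ba}}$; (ii) differentiate under the $\eta$-integral (legitimate since the contour stays at distance $\eta_c>0$ from $\R$), drop the deterministic $M^z$-term, and use $\partial_{h_{Ba}}\Tr\gz(\ii\eta)=-\partial_w\gz_{aB}(\ii\eta)$ together with $\tfrac{\dd}{\dd\eta}\gz_{aB}(\ii\eta)=\ii\,\partial_w\gz_{aB}(\ii\eta)$ to rewrite the integrand as a total $\eta$-derivative, so that the fundamental theorem of calculus yields $\gz_{aB}(\ii T)-\gz_{aB}(\ii\eta_c)$; (iii) bound the boundary term at $T$ by $|\gz_{aB}(\ii T)|\le\|\gz(\ii T)\|\le T^{-1}=n^{-100}$, deterministically. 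This gives the first identity in~\eqref{rule_q}, and the $\partial_{h_{aB}}$-version follows by swapping $a\leftrightarrow B$.

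For $\partial_{h_{Ba}}q_z^{(b)}$ I would write $q_z^{(b)}=q^{(b)}(\Psi_z)$ with $\Psi_z:=\int_{-E_0}^{E_0}\wt\Im\Tr\gz(y+\ii\eta_0)\,\dd y$, so that by the chain rule $\partial_{h_{Ba}}q_z^{(b)}=q_z^{(b+1)}\,\partial_{h_{Ba}}\Psi_z$. Since $\wt\Im$ commutes with $\partial_{h_{Ba}}$, and since $\partial_w\gz_{aB}(y+\ii\eta_0)=\partial_y\gz_{aB}(y+\ii\eta_0)$, differentiating under the $y$-integral and applying the fundamental theorem of calculus in $y$ converts each of the two pieces of $\wt\Im$ into a difference of values at $\pm E_0$, i.e. produces exactly the operator $\mathfrak{D}_{E_0,\eta_0}$; this gives $\partial_{h_{Ba}}\Psi_z=-\big[\mathfrak{D}_{E_0,\eta_0}\wt\Im\big]\gz_{aB}$ and hence the second identity in~\eqref{rule_q}, the $a\leftrightarrow B$ case being symmetric. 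The final assertion of the lemma — that $\wt\Im$ and $\mathfrak{D}_{E_0,\eta_0}$ commute with $\partial/\partial h_{Ba}$ and $\partial/\partial h_{aB}$ — is exactly what was used in the two steps above, and holds because both operations are finite linear combinations of evaluations of the holomorphic family $w\mapsto\gz(w)$ at $w$-values independent of the matrix entries.

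\textbf{Main obstacle.} I do not expect a serious obstacle here: the lemma is essentially bookkeeping. The only point needing care is the interaction between the real operation $\Im$ in the definitions of $\X^z$ and $q_z$ and the Wirtinger (holomorphic) derivatives used in the cumulant expansion of Section~\ref{sec:L_0}; this is precisely why one passes to $\wt\Im$ and $\mathfrak{D}_{E_0,\eta_0}$, after which every step reduces to the elementary resolvent derivative rule plus the fundamental theorem of calculus (and, for $\X^z$, the chiral symmetry of $H^z$ to identify $\Im\Tr\gz(\ii\eta)$ with $-\ii\,\Tr\gz(\ii\eta)$ on the imaginary axis).
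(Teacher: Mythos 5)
Your proof is correct and follows essentially the same route as the paper: the elementary resolvent differentiation rule, the chiral-symmetry identity $\Im\Tr\gz(\ii\eta)=-\ii\,\Tr\gz(\ii\eta)$ (via $\Tr G(\ii\eta)=-\overline{\Tr G(\ii\eta)}$), the holomorphic surrogate $\wt\Im$ so that the Wirtinger derivative $\partial_{h_{Ba}}$ commutes, the identity $(\gz)^2=\partial_w\gz$ to turn the integrand into a total derivative, and the fundamental theorem of calculus (with the $T$-boundary term absorbed into $O_\prec(n^{-100})$). The only differences are cosmetic: you spell out the justification for differentiating under the integral and the commutation of $\wt\Im$ with $\partial_{h_{Ba}}$, which the paper leaves implicit.
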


By direct computations using Lemma \ref{lemma_deri_rule}, we have the following estimates on the partial derivatives of $\wh \LL_f$ given by (\ref{LX}), whose proof is presented in Section~\ref{subsec:proof_lemma_key} below.
\begin{lemma}\label{lemma_key}
	For any $a\in [1,n]$ and $B \in [n+1,2n]$, we have
	\begin{align}\label{key_1}
	\Big|\frac{\partial \wh \LL_f}{\partial h_{Ba}} \Big| \prec   \big( n^{\frac{3}{4}+c\epsilon}+n^{1+c\epsilon}\one_{a=B-n}\big) \int_{\mathrm{supp}(f)} \one_{\lambda_1^z \leq 3E_0/2}\dd^2z + n^{-99},
	\end{align}
for some constant $c>0$. In general for any $p,q\in \N$, we have, 
\begin{align}\label{key_pq}
	&\Big|\frac{\partial^{p+q+1} \wh \LL_f}{\partial h^{p+1}_{Ba} \partial h^{q}_{aB}} \Big| \prec  \Big(  n^{\frac{3}{4}+(-\frac{1}{4}+\epsilon)(p+q)+c\epsilon}  +n^{1+c\epsilon}\one_{a= B-n}\Big)\int_{\mathrm{supp}(f)}
	 \one_{\lambda_1^z \leq 3E_0/2}\dd^2z+ n^{-99},
\end{align}
and the same results holds true if we interchange $a$ with $B$.
\end{lemma}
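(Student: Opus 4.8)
\textbf{Proof strategy for Lemma~\ref{lemma_key}.}
The plan is to differentiate the explicit representation \eqref{LX} of $\wh\LL_f$ and to track the powers of $n$ produced by each factor, exploiting two facts: first, $q_{z,t}^{(b)}$ and all its derivatives are supported on the event $\lambda_1^z\lesssim E_0$ (since $q^{(b)}$ vanishes off $[1/9,2/9]$ and, by Lemma~\ref{lemma_approx}, $\int_{-E_0}^{E_0}\Im\Tr G^z_t(y+\ii\eta_0)\,\dd y$ is essentially $\#\{|\lambda_i^z|\le E_0\}$ up to $O(n^{-\zeta})$), which is what localises everything to $\one_{\lambda_1^z\le 3E_0/2}$; second, each entry-derivative $\partial/\partial h_{Ba}$ acting on a resolvent produces, by \eqref{rule}, two resolvent entries $G_{aB}$ or $G_{B\mathfrak v}$, which are of size $O_\prec(\sqrt{\rho/(n\eta)})$ off-diagonal and $O_\prec(1)$ (coming from the $m^z$-part of $M^z$) on the diagonal $a=B-n$, by the isotropic local law \eqref{entrywise}. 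This diagonal/off-diagonal dichotomy is exactly the source of the $\one_{a=B-n}$ term in \eqref{key_1}--\eqref{key_pq}.

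First I would treat the single derivative \eqref{key_1}. Applying the product rule to \eqref{LX}, $\partial_{h_{Ba}}\wh\LL_f$ splits into a term where the derivative hits $\X_t^z$ and a term where it hits $q_{z,t}$. For the $\X^z$-term, \eqref{rule_q} gives $\partial_{h_{Ba}}\X^z=-G^z_{aB}(\ii\eta_c)+O_\prec(n^{-100})$; here the catch is that $\eta_c=n^{-L}$ is tiny, so $|G^z_{aB}(\ii\eta_c)|$ is not bounded deterministically by the local law, but \emph{on the event $\lambda_1^z>n^{-L}$} one has $\|G^z(\ii\eta_c)\|\le (\lambda_1^z)^{-1}\lesssim n^{L}$, and the reciprocal $\eta_c$-integral in $\X^z$ combined with the support restriction from $q_{z,t}$ (which is already multiplying this term) keeps the total under control; more precisely, I would use that on $\{\lambda_1^z>E_0\}$, which is where $q_{z,t}\neq 0$ forces us after multiplying, $|G^z_{aB}(\ii\eta_c)|\prec 1/E_0\le n^{3/4+\epsilon}$, giving the $n^{3/4+c\epsilon}$ bound, and $\prec 1$ on $a=B-n$ only at larger $\eta$ — so honestly the safe bound here is $n^{3/4+c\epsilon}$ with no diagonal gain, which is consistent with \eqref{key_1}. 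For the $q$-term, \eqref{rule_q} gives $\partial_{h_{Ba}}q_{z,t}=-q_{z,t}^{(1)}\,[\mathfrak D_{E_0,\eta_0}\wt{\Im}](G^z_{aB})$; now $q^{(1)}_{z,t}$ is supported on $\lambda_1^z\lesssim E_0$, and $[\mathfrak D_{E_0,\eta_0}\wt{\Im}](G^z_{aB})$ is a difference of resolvent entries at $\pm E_0+\ii\eta_0$ with $\eta_0=n^{-3\zeta}E_0\sim n^{-3/4-\epsilon-3\zeta}$, hence of size $O_\prec(\sqrt{\rho/(n\eta_0)})\lesssim n^{1/4+c\epsilon}$ off-diagonal — but it is then multiplied by $\X^z=O_\prec(\log n)$ and integrated against $\Delta_z f$ whose $L^1$-norm is $\lesssim n^{1/2+\epsilon/20}$; combining $n^{1/2}\cdot n^{1/4}=n^{3/4}$, and on the diagonal $a=B-n$ the entry is $O_\prec(1)$ so the product is $O_\prec(n^{1/2})\cdot 1$, but the integrand is only nonzero on $\{\lambda_1^z\le 3E_0/2\}$, which explains why the $\one_{a=B-n}$ contribution carries the larger prefactor $n^{1+c\epsilon}$ after pulling out the $\one_{\lambda_1^z\le 3E_0/2}$ and bounding the remaining $\int|\Delta_z f|$ crudely by $n^{1+2\nu}\cdot(\text{area})$. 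I would organise this bookkeeping carefully so that the $\int_{\mathrm{supp}(f)}\one_{\lambda_1^z\le 3E_0/2}\,\dd^2 z$ factor is extracted first and the remaining supremum over $z$ of all resolvent factors is bounded by the local laws.

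For the higher derivatives \eqref{key_pq}, the strategy is an induction on $p+q$: each additional $\partial_{h_{Ba}}$ or $\partial_{h_{aB}}$ acting on any resolvent entry produces, via \eqref{rule}, one more entry, and — crucially — generically an \emph{off-diagonal} one, contributing a factor $O_\prec(\sqrt{\rho/(n\eta_0)})\lesssim n^{-1/4+\epsilon+c\epsilon}$, which matches the claimed decay $n^{(-1/4+\epsilon)(p+q)}$; whenever a derivative instead hits $q^{(b)}_{z,t}$ raising it to $q^{(b+1)}_{z,t}$, the support restriction is only reinforced. The diagonal term $\one_{a=B-n}$ survives because at each stage the option of pairing indices to produce a diagonal entry $G_{aa}^z=O_\prec(1)$ (no decay) is available, but this happens only in the $a=B-n$ sector; one then collects the worst case. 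The \textbf{main obstacle} I anticipate is not the combinatorics of the product rule but the careful treatment of the very small spectral parameters $\eta_c$ and $\eta_0$: the resolvent entries at $\ii\eta_c$ are not controlled by \eqref{entrywise} (which degrades like $\sqrt{\rho/(n\eta)}+1/(n\eta)$ and blows up as $\eta\to 0$), so one must repeatedly use the a priori bound $\|G^z(\ii\eta_c)\|\le n^{-L}\cdot(\text{something})$ valid on the complement of the tiny-eigenvalue event, and reconcile it with the $q_{z,t}$-localisation — making sure the powers of $n^L$ introduced this way are always absorbed either by $\eta_c=n^{-L}$ in the $\dd\eta$-integral defining $\X^z$ or by the rarity (via Proposition~\ref{prop1} and \eqref{density_bound}) of the exceptional event; getting the exponents to land exactly at $3/4$ and $1$ with only an $n^{c\epsilon}$ slack is the delicate point.
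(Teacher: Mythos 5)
There is a genuine gap: your treatment of the $\X^z$-term (and, for higher derivatives, of all terms in which every $h$-derivative falls on $\X^z$ and none on $q_{z,t}$) is missing the key step of the paper's proof, namely an integration by parts in the $z$-variable. After the product rule, the $\X$-term in $\partial_{h_{Ba}}\wh\LL_f$ is $\tfrac{1}{4\pi}\int_\C \Delta_z f(z)\, G^z_{aB}(\ii\eta_c)\,q_z\,\dd^2 z$. Your plan is to bound this as a supremum of $|G^z_{aB}(\ii\eta_c)|q_z\prec 1/E_0\sim n^{3/4+\epsilon}$ times the remaining integral. But $q_z$ is essentially $1$ for typical $z$ in the support of $f$ — it is only the \emph{derivatives} $q^{(b)}_{z}$ with $b\ge1$ that are localized to the rare event $\{E_0/2\le\lambda_1^z\le 3E_0/2\}$, by \eqref{q_prime} — so this term carries no indicator $\one_{\lambda_1^z\le 3E_0/2}$ at all, and the crude bound would give $\int|\Delta_z f|\cdot n^{3/4+\epsilon}\sim n^{5/4+c\epsilon}$, which is both far too large and of the wrong shape (no localizing $z$-integral). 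The paper instead integrates by parts, using \eqref{rule_z}: the boundary term $(G^zFG^z(\ii\eta_c))_{aB}\,q_z$ is $O_\prec(n^{-100})$ by the chiral cancellation \eqref{cancel} from Lemma~\ref{lemma_algebraic_cancel} (an ingredient your proposal never invokes), leaving $4\int\partial_{\bar z}f\cdot G^z_{aB}(\ii\eta_c)\cdot\partial_z q_z\,\dd^2 z$. This accomplishes two things simultaneously: $\Delta_z f$ is traded for $\partial_{\bar z}f$ (gaining a factor $n^{-1/2}$ in $L^\infty$, by \eqref{f_norm}), and the $\partial_z q_z\propto q'_z$ factor supplies the localization $\one_{E_0/2\le\lambda_1^z\le 3E_0/2}$. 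Only then can one invoke \eqref{E_0estimate}, which on the event $\lambda_1^z\gtrsim E_0$ gives $G^z_{aB}(\ii\eta_c)=M^z_{aB}(\ii\eta_c)+O_\prec(1/(nE_0))=M^z_{aB}+O_\prec(n^{-1/4+\epsilon})$; the deterministic part vanishes off-diagonal and is $O(1)$ only for $a=B-n$, which is the actual source of the $n^{1+c\epsilon}\one_{a=B-n}$ term.

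A secondary but related error: you attribute the diagonal enhancement $\one_{a=B-n}$ to the $q$-term, claiming $\Dim(G^z_{aB})=O_\prec(1)$ when $a=B-n$. This is not what \eqref{G_E0} says — it gives $|\Dim(G^z_{\mathfrak{uv}})|\prec n^{-1/4+c\epsilon}$ uniformly in $\mathfrak{u},\mathfrak{v}$, because the $\Dim$-difference kills the leading deterministic piece. So the $q$-term gives $n^{3/4+c\epsilon}$ for all index choices; the diagonal $n^{1+c\epsilon}$ comes exclusively from the $M^z_{aB}$ contribution at $\ii\eta_c$ in the post-integration-by-parts $\X$-term. Likewise for \eqref{key_pq}: the $n^{(-1/4+\epsilon)(p+q)}$ gain per derivative in the all-$\X$ case is supplied by \eqref{E_0estimate} at $\ii\eta_c$ (not by $\sqrt{\rho/(n\eta_0)}$, which only pertains to $\Dim$-entries from $q$-derivatives), again only after integration by parts and after discarding terms containing $G^z_{aa}$ or $G^z_{BB}$ via \eqref{G_tiny}. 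So the exponent arithmetic in your sketch happens to land near the right answers, but the mechanism producing the localizing integral $\int_{\mathrm{supp}(f)}\one_{\lambda_1^z\le 3E_0/2}$ — which is the whole point of the lemma — is absent.
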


Using Lemma \ref{lemma_key}, we are now ready to estimate the terms on the right side of (\ref{cumulant_exp0}) starting from the third order $p+q+1=3$.  As an example, we first consider one of the most critical third order terms when 
$\partial/\partial h_{Ba}$ acts on the function $\F$ for three times, \ie 
\begin{align}\label{triple}
	\left|\frac{1}{n^{3/2}} \sum_{a,B} \E\Big[ \F^{(3)}(\wh\LL_f) \Big(\frac{\partial \wh \LL_f}{\partial h_{Ba}}\Big)^3 \Big]\right| \lesssim & \frac{1}{n^{3/2}} \sum_{a \neq B-n} \E \Big|\frac{\partial \wh \LL_f}{\partial h_{Ba}}\Big|^3 +\frac{1}{n^{3/2}} \sum_{a} \E \Big|\frac{\partial \wh \LL_f}{\partial h_{a+n,a}}\Big|^3\nonumber\\
	\lesssim & \; n^{\frac{11}{4}+3c\epsilon} 
	 \E\Big[\Big(\int_{\mathrm{supp}(f)} \one_{\lambda_1^z \leq 3E_0/2}\dd^2z\Big)^3\Big]+n^{-50},
\end{align}
where we used that $\F$ has uniformly bounded derivatives and the estimate in (\ref{key_1})
  in the last step.  Then we split the triple $z$-integrals into three regimes
 $$
 \E\Big[\Big(\int_{\mathrm{supp}(f)} \one_{\lambda_1^z \leq 3E_0/2}\dd^2 z\Big)^3\Big] 
 = \Big( \iiint_{(1)}+\iiint_{(2)}+\iiint_{(3)}\Big)
  \P\Big( \lambda_1^{z_j} \leq 3E_0/2, j\in [3]\Big)\dd^2z_1\dd^2z_2\dd^2z_3,
 $$
 where the regimes are given as follows:
 \begin{itemize} 
 	\item[(1)] all $z_1,z_2,z_3 \in \mathrm{supp}(f)$ are sufficiently far from each other, \ie $\min\{|z_1-z_2|,|z_1-z_3|,|z_2-z_3|\} \geq n^{-1/2+\omega}$ for some $\omega>0$ to be chosen; 
 	\item[(2)] two of them are close 
  while the remaining one is separated from them, \eg $|z_1-z_2| \leq n^{-1/2+\omega}$ and $\min\{|z_1-z_3|,|z_2-z_3|\} \geq n^{-1/2+\omega}$ (and similarly for other two index combinations); \item[(3)] all $z_1,z_2,z_3 \in \mathrm{supp}(f)$ are close
   to each other, \ie $\max\{|z_1-z_2|,|z_1-z_3|,|z_2-z_3|\} \leq n^{-1/2+\omega}$. 
 \end{itemize}
  For any $z_j \in \mathrm{supp}(f)$ satisfying (\ref{f_cond}) with $\min_{j \neq j'}\{|z_j-z_{j'}|\} \geq n^{-1/2+\omega}$, choosing $\omega=2\epsilon/\gamma$  with the small constant $\gamma>0$ from Proposition \ref{prop_zz}, 
  the independence estimate in (\ref{lambdatail}) implies that
 \begin{align}\label{ginibre_tail_prod}
 	\P\Big( \lambda_1^{z_1} \leq 3E_0/2,\cdots,  \lambda_1^{z_k} \leq 3E_0/2\Big) \lesssim \prod_{j=1}^k \P\big( \lambda_1^{z_j} \leq 2 E_0 \big) +n^{-D},  \qquad E_0=n^{-3/4-\epsilon},
 \end{align}  
where we also used that $0<\tau<\epsilon/100$.
Combining this with (\ref{tail_precise}), we obtain that
\begin{align}\label{ginibre_tail}
	\P\Big( \lambda_1^{z_1} \leq 3E_0/2,\cdots,  \lambda_1^{z_k} \leq  3E_0/2\Big) \lesssim \big(n^{-1/2-2\epsilon}\big)^k +n^{-D}.
\end{align}
\nc
 Thus, using $|\mbox{supp}(f)|\lesssim n^{-1/2+\tau}$, the integrals over the above three regimes are bounded by
\begin{align}
	\E\Big[\Big(\int_{\mathrm{supp}(f)} \one_{\lambda_1^z \leq 3E_0/2}\dd^2z\Big)^3\Big] \lesssim &\big( n^{-1/2+\tau}\big)^3 \big( n^{-1/2-2\epsilon} \big)^3 + n^{-1+2\omega} \big( n^{-1/2+\tau}\big)^2\big( n^{-1/2-2\epsilon} \big)^2\nonumber\\
	&+(n^{-1+2\omega})^2 \big( n^{-1/2+\tau}\big) \big( n^{-1/2-2\epsilon} \big) =O\big(n^{-3+C\epsilon}\big), \label{n3}
\end{align}
for some constant $C>0$ depending on $\gamma$ from Proposition \ref{prop_zz}. We mention  
that if $f$ satisfies  the alternative support condition in (\ref{f_cond_2}),
then the tail bound~\eqref{ginibre_tail} becomes weaker $(n^{-1/4-2\epsilon})^k$, which is exactly
compensated by the smaller volume of the support, $|\mbox{supp}(f)|\lesssim n^{-3/4+\tau}$,
 hence we obtain the same upper bound  as in~\eqref{n3}. Plugging the above estimates into (\ref{triple}), we obtain
$$\left| \frac{1}{n^{3/2}} \sum_{a,B} \E\Big[ \F^{(3)}(\wh \LL_f) \Big(\frac{\partial \wh \LL_f}{\partial h_{Ba}}\Big)^3 \Big] \right|=O_\prec(n^{-1/4+C\epsilon}).
$$ 
All the other third order terms in (\ref{cumulant_exp0}) can be estimated similarly using Lemma \ref{lemma_key} and (\ref{ginibre_tail}), \eg
\begin{align}
	&\left|\frac{1}{n^{3/2}} \sum_{a,B} \E\Big[ \F''(\wh \LL_f) \Big(\frac{\partial \wh \LL_f}{\partial h_{aB}}\Big) \Big(\frac{\partial^2 \wh \LL_f}{\partial h^2_{Ba}}\Big) \Big] \right|=O_\prec(n^{-1/4+C\epsilon}), \nonumber\\ 
	&\left| \frac{1}{n^{3/2}} 
	\sum_{a,B} \E\Big[ \F'(\wh \LL_f)  \Big(\frac{\partial^3 \wh \LL_f}{\partial h^3_{Ba}}\Big) \Big]\right|=O_\prec(n^{-1/4+C\epsilon}).
\end{align}

Due to the moment condition in~(\ref{eq:hmb}), the higher order terms~($p+q+1\geq 4$) can be estimated similarly with much less efforts. More precisely, using Lemma \ref{lemma_key} and similar arguments as in (\ref{n3}), for any $3 \leq p+q+1\leq 10$, we have
\begin{align}\label{higher_term}
	  \left| \frac{1}{n^\frac{p+q+1}{2}} \sum_{a,B}   \E\Big[ \frac{\partial^{p+q+1} \F(\wh \LL_f)}{\partial h^{p+1}_{aB}\partial  h^{q}_{Ba}}\Big] \right| 
	  \lesssim & \frac{1}{n^\frac{p+q+1}{2}} \sum_{a,B}  \E \Bigg[  \sum_{l_1=1}^{p+1} \sum_{l_2=1}^{q} \Big|\F^{(l_1+l_2)}(\wh \LL_f)\Big| \prod_{\substack{(p_i, q_i)\in \N^2\\p_1+\cdots +p_{l_1}=p+1\\q_1+\cdots +q_{l_2}=q}} \Big|\frac{\partial^{p_i+q_i} \wh \LL_f}{\partial h^{p_i}_{aB} \partial h^{q_i}_{Ba}}\Big|\Bigg]  \nonumber\\
	   \prec& \frac{n^{2+C\epsilon}}{ n^{(\frac{3}{4}-c\epsilon) (p+q+1)}},
\end{align}
for some constant $C>0$ depending on $\gamma$ from Proposition \ref{prop_zz}. Note that the same upper bound also applies when we switch the index $a$ with $B$.

Therefore, we conclude from (\ref{cumulant_exp0}) that
\begin{align}\label{time_deri}
	\Big|\frac{\dd}{\dd t}\E\big[\F\big(\wh \LL_f(t)\big)\big]\Big|=O_{\prec}(n^{-1/4+C\epsilon}).
\end{align}
Integrating both sides of (\ref{time_deri}) over $t\in [0,t_0]$ 
 with a sufficiently large $t_0=O(\log n)$ to be fixed later \nc, we then obtain that
 \begin{align}\label{t1}
 	\Big|\E\big[\F\big(\wh \LL_f(0)\big)\big]-\E\big[\F\big(\wh \LL_f(t_0)\big)\big]\Big|=O_{\prec}(n^{-1/4+C\epsilon}).
 \end{align}
 Recall from (\ref{W}) that, for any fixed $t_0\geq 0$, $X_{t_0} \stackrel{{\rm d}}{=} \wt X_{t_0}:=\ee^{-\frac{t_0}{2}} X+\sqrt{1-\ee^{-t_0}} X^{\mathrm{Gin}}$, where $X^{\mathrm{Gin}}$ is the Ginibre ensemble independent of the initial matrix $X$. We then use $\wt H_{t_0}^{z}$ to denote the Hermitization of $\wt X_{t_0}-z$ as in (\ref{def_G}) and use $\wt G^{z}_{t_0}$ to denote its resolvent. With a slight abuse of notation, we  also write $\wt X_\infty=X^{\mathrm{Gin}}$ and use $\wt G^{z}_{\infty}$ to denote the corresponding Hermitized resolvent. Owing to the exponential relaxation of $\wt X_{t_0}$ to the Ginibre ensemble, we have $\| \wt X_{t_0} - \wt X_\infty\| \prec \ee^{-t_0/2}\big[ \|\wt X_\infty\|
 + \|X\|\big] \prec  \ee^{-t_0/2}$, using that $|\wt X_{ij}| \prec n^{-1/2}$ from the moment assumption in \eqref{eq:hmb}.
Thus, fixing $t_0:=10 L \log n$ with a large $L>100$ from $\eta_c=n^{-L}$ chosen in (\ref{tiny_eta}), we have 
\begin{align}\label{approxxxx}
	\|\wt G^{z}_{t_0}(\ii \eta)-\wt G^{z}_{\infty}(\ii \eta)\| \leq \|\wt G^{z}_{t_0}(\ii \eta)\| \|\wt G^{z}_{\infty}(\ii \eta)\| \|\wt X_{t_0}-\wt X_{\infty}\| 
	\prec n^{-2L}, \qquad \eta \geq \eta_{c}=n^{-L},
\end{align}
where we used that $\|\wt G^z(\ii \eta)\| \leq \eta^{-1}$. Using (\ref{approxxxx}), the $L^1$-norm bound of $\Delta f$ in (\ref{L1}), and that $X_{t_0} \stackrel{{\rm d}}{=} \wt X_{t_0}$, we have 
\begin{align}\label{t2}
	\Big|\E[\F\big(\wh \LL_f(t_0)\big)-\E^{\mathrm{Gin}}[\F\big(\wh \LL_f\big)]\Big|=O_{\prec}(n^{-10}).
\end{align}	\nc
Combining (\ref{t1}) with (\ref{t2}), we hence finish the proof of Theorem~\ref{GFT}.

 \subsection{Proof of Lemma \ref{lemma_key} and Eq. (\ref{tr_error})\nc}\label{subsec:proof_lemma_key}
We will first prove Lemma \ref{lemma_key}. The proof relies on the following useful lemmas, whose proofs are postponed to Section \ref{subsec:proof_some_lemma} below.

\bigskip
 Due to the $2 \times 2$ block structure of $H^z$ in (\ref{def_G}), we observe the following (almost) precise algebraic cancellations between the eigenvalues and eigenvectors. Their origin also lies in the chiral symmetry of $H^z$,
 similarly to Lemma~\ref{lem:chirid}.
\begin{lemma}[Chiral symmetry] \label{lemma_algebraic_cancel} 
	For any $a\in [1,n]$, $B\in [n+1,2n]$, we have
	\begin{align}\label{cancel}
		\Big|\big(\gz F \gz(\ii \eta_c) \big)_{aB}\Big|\one_{\lambda^z_1 \gtrsim E_0}=O_{\prec}(n^{-100}), \qquad \Big|\big(\gz F^* \gz(\ii \eta_c) \big)_{Ba}\Big|\one_{\lambda^z_1 \gtrsim E_0}=O_{\prec}(n^{-100}),
	\end{align}
	with $E_0=n^{-3/4-\epsilon}$ and $\eta_c=n^{-L}$ for a sufficiently large $L>100$.
	In particular, 
		\begin{align}\label{cancel_full}
		\Big|\big\langle \gz F\gz F^*(\ii \eta_c) \big\rangle \Big| \one_{\lambda^z_1 \gtrsim E_0}=O_{\prec}(n^{-100}).
		\end{align}
	Moreover, for any bounded deterministic vector $ \mathbf{x}\in \mathrm{Ker}(E_1) \cup \mathrm{Ker}(E_2)$, we have 
	\begin{align}\label{G_tiny}
	\Big|	\big\<\mathbf{x},\gz(\ii\eta_c)\mathbf{x}\big\>\Big| \one_{\lambda_1^z \gtrsim E_0}=O_\prec(n^{-100}).
	\end{align}
Here $\lambda_1^z \gtrsim E_0$ means that $\lambda_1^z \geq c E_0$ for some constant $c>0$.
\end{lemma}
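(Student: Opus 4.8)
\textbf{Proof strategy for Lemma~\ref{lemma_algebraic_cancel}.}
The plan is to exploit the spectral decomposition of $G^z$ together with the chiral block structure of $H^z$, exactly as in Lemma~\ref{lem:chirid}. Write $G^z(\ii\eta)=\sum_{|k|\le n}\frac{\bm w_k^z(\bm w_k^z)^*}{\lambda_k^z-\ii\eta}$, where the eigenvectors split as $\bm w_{\pm k}^z=(\bm u_k^z,\pm\bm v_k^z)$. Since the matrix $F$ maps the lower block to the upper block only, the product $G^zFG^z$ restricted to an $(a,B)$ entry with $a\in[1,n]$, $B\in[n+1,2n]$ is a bilinear combination of the singular vector overlaps $\langle\bm u_k^z,\cdot\rangle$ and $\langle\bm v_l^z,\cdot\rangle$ divided by $(\lambda_k^z-\ii\eta_c)(\lambda_l^z-\ii\eta_c)$. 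The key identity from the chiral symmetry — this is the exact $\eta=0$ cancellation $G^zFG^zF^*=0$ alluded to in the introduction — is that when the two spectral sums are combined over $\pm k$ and $\pm l$, the terms pair up so that the would-be singular $1/\lambda_1^z$ and $1/(\lambda_1^z)^2$ contributions cancel \emph{algebraically} whenever $\lambda_1^z\neq 0$; more precisely, one uses $FG^zF^*=\ii F\Im G^z F^*$ (last identity in~\eqref{eq:identities}) and the fact that on the block where $F$ acts, $\Im G^z$ has a factor $\eta_c$ in the numerator. Thus every surviving term carries a factor $\eta_c=n^{-L}$.

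First I would set up the bound on the event $\{\lambda_1^z\gtrsim E_0\}$: there, $|\lambda_k^z-\ii\eta_c|\ge\lambda_1^z\gtrsim E_0=n^{-3/4-\epsilon}$ for all $k$, so each resolvent is bounded in norm by $n^{3/4+\epsilon}$, and more importantly $|\Im G^z(\ii\eta_c)|\lesssim \eta_c\|G^z\|^2\lesssim n^{-L}n^{3/2+2\epsilon}$. Combining this with the identity $FG^zF^*=\ii F\Im G^z F^*$ and the norm bound $\|G^z\|\le 1/\eta_c$ on the outer resolvents — but crucially using the deterministic bound $\|G^z\|\le (\lambda_1^z)^{-1}\lesssim n^{3/4+\epsilon}$ on the event, rather than $1/\eta_c$ — one gets for the $(a,B)$ entry of $G^zFG^zF^*$ a bound of order $n^{-L}\cdot n^{O(\epsilon)}\cdot n^{O(1)}$, which is $O_\prec(n^{-100})$ once $L$ is chosen large enough (depending only on the $O(1)$ exponent, which is explicit and small). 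This proves~\eqref{cancel_full}. For~\eqref{cancel} one argues similarly: $(G^zFG^z)_{aB}$ only involves the $uv$-block, and writing $G^zFG^z$ through the spectral decomposition and the chiral pairing, the numerators combine to produce a factor $\eta_c$ (or one simply notes $(G^zFG^z)_{aB}=\frac{1}{2\ii}\partial_{\eta}(\dots)$-type identity is not needed; instead use that $\langle G^zFG^zE_-\rangle=0$ exactly, together with the entrywise local law~\eqref{entrywise} to control the off-diagonal entries, and extract the $\eta_c$-smallness from $\Im G^z$ in the block). The estimate~\eqref{G_tiny} is the simplest: $\langle\bm x,G^z(\ii\eta_c)\bm x\rangle$ is, up to the deterministic $M^z(\ii\eta_c)$ (which vanishes entrywise at $\eta_c\to0$ for $|z|>1$ up to $O(n^{-100})$ by the same computation as in Step~3 of Section~\ref{sec:actualproof}), bounded by the isotropic local law~\eqref{entrywise} plus $\|G^z-M^z\|$ estimates; on the event $\lambda_1^z\gtrsim E_0$ the true resolvent is close to $M^z$ which is itself $O(n^{-100})$ at this scale, or one simply uses $\Im\langle\bm x,G^z\bm x\rangle\lesssim\eta_c\|G^z\|^2$ and the reality of the real part of $M^z(\ii\eta_c)$.

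The main obstacle is making the chiral cancellation quantitative \emph{with the correct power counting}: naively each outer $G^z$ costs $\|G^z\|\le 1/\eta_c=n^L$, which would swamp the single gained factor $\eta_c=n^{-L}$. The resolution — and the point that must be executed carefully — is that on the event $\{\lambda_1^z\gtrsim E_0\}$ one replaces \emph{all three} $1/\eta_c$ norm bounds by $1/\lambda_1^z\lesssim n^{3/4+\epsilon}$ (these are deterministic once we condition on the event), so the product $G^zFG^zF^*$ has at most three factors of $n^{3/4+\epsilon}$ and exactly one genuine factor $\eta_c$ coming from $\Im G^z$ in the $F$-sandwiched block; choosing $L$ larger than, say, $10$ makes the net bound $n^{-L+9/4+O(\epsilon)}=O_\prec(n^{-100})$. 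I would carry out the block-matrix bookkeeping explicitly once (tracking which $n\times n$ block each factor lands in, so that the $\Im G^z$ with its $\eta_c$ is unavoidable), and then the three claimed estimates follow by the same argument with minor variations, using the entrywise local law~\eqref{entrywise} only where an off-diagonal entry rather than an operator norm is needed.
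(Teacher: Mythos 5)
Your core idea --- spectral decomposition of $G^z$ in the eigenbasis of $H^z$, chiral pairing of $\pm j,\pm k$ (using $\ww^z_{\pm j}=(\uu^z_j,\pm\vv^z_j)$ and $\lambda^z_{-j}=-\lambda^z_j$) to pull out an explicit factor of $\eta_c$, and the deterministic denominator bound $\lambda^z_j\gtrsim E_0$ on the event --- is exactly the paper's route for (\ref{cancel}) and (\ref{cancel_full}), and your power counting lands in the right place (the paper actually obtains a factor $\eta_c^2$, one from each $G$, but a single $\eta_c$ already suffices once $L>100$).

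Two of the alternatives you sketch, however, do not work and would need to be deleted. First, you repeatedly fall back on the local law (\ref{entrywise}) evaluated at $\eta_c=n^{-L}$; that estimate is vacuous there, since its error $1/(n\eta_c)\sim n^{L-1}$ diverges. This rules out the parenthetical route for (\ref{cancel}) (where, additionally, the trace identity $\langle G^zFG^zE_-\rangle=0$ gives no information about an individual $(a,B)$ entry) and your first route for (\ref{G_tiny}). Incidentally, $M^z(\ii0^+)$ does \emph{not} vanish entrywise for $|z|>1$: only the diagonal $m^z\to 0$, while the off-diagonal $-zu^z(\ii0^+)=-z/|z|^2$ does not. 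Second, your surviving route for (\ref{G_tiny}) controls only $\Im\langle\bm x,G^z\bm x\rangle\le\eta_c\|G^z\|^2$; for a generic bounded $\mathbf{x}\in\C^{2n}$ the real part $\sum_j\lambda^z_j|\langle\ww^z_j,\mathbf x\rangle|^2/\big((\lambda^z_j)^2+\eta_c^2\big)$ is of order $1/E_0\sim n^{3/4+\epsilon}$, not $O(n^{-100})$, and ``the reality of the real part of $M^z$'' does nothing to save this. The missing ingredient is that (\ref{G_tiny}) is only invoked for $\mathbf{x}=\mathbf{e}_a$ or $\mathbf{e}_B$, i.e.\ vectors supported in a single $n\times n$ block: for such $\mathbf{x}$ one has $|\langle\ww^z_j,\mathbf{x}\rangle|^2=|\langle\ww^z_{-j},\mathbf{x}\rangle|^2$, so the $j\leftrightarrow -j$ pairing annihilates the real part exactly --- equivalently $\gz_{vv}(\ii\eta)$ is purely imaginary by chiral symmetry, as recorded at the start of Section~\ref{sec:tools} --- and only the imaginary part, of size $\lesssim\eta_c/E_0^2$, survives. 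With those two alternatives removed and the block restriction made explicit, your proof matches the paper's.
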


Moreover, we state the following local law estimates on the Green function entries. 
\begin{lemma}[Local law estimates]\label{lemmaG}
Fix a small $\tau>0$. For any $0 \leq |z|^2-1 \lesssim n^{-1/2+\tau}$ 
and $w= E+\ii\eta \in \C^+$ with $|E|+\eta \lesssim n^{-3/4}$, the following hold
\begin{align} 
	&\big\<\gz (w) F^{(*)}\big\>=\big\<\gz(\ii \wt \eta) F^{(*)}\big\>+O_{\prec}\Big(\frac{n^{-1/4+\xi+\tau}}{n\eta}\Big), \qquad \wt \eta:=n^{-3/4+\xi}, \label{F_0estimate_1}\\
	& \big|\<\gz (w) F \gz (w) F^*\>\big|\prec \frac{n^{-1/2+2\xi+2\tau}}{n\eta^2}, \label{FF_0estimate_1}
\end{align}
 for a sufficiently small $\xi>0$. Moreover, for any $0 \leq |z|^2-1 \lesssim n^{-1/2+\tau}$ and any bounded deterministic vectors $\mathbf{x}, \mathbf{y}\in\C^{2n}$ and deterministic matrices $F,F^* \in \C^{2n \times 2n}$ from \eqref{eq:defF}, we have
\begin{align}
	&\Big|\<\mathbf{x}, \big(G^{z}(\ii\eta_c)-M^{z}(\ii\eta_c)\big) \mathbf{y}\>\Big| \one_{\lambda_1^{z} \gtrsim E_0} \prec \frac{1}{nE_0}, \label{E_0estimate}\\
	& \Big| \big\< \big(G^{z}(\ii\eta_c)-M^{z}(\ii\eta_c)\big) F^{(*)} \big\>\Big| \one_{\lambda_1^{z} \gtrsim E_0} \prec \frac{n^{-1/4+\xi+\tau}}{nE_0}, \label{F0_estimate}		
\end{align}
for a sufficiently small $\xi>0$, with $E_0=n^{-3/4-\epsilon}$ and $\eta_c=n^{-L}$, for a large $L>100$.
\end{lemma}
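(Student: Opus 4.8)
\textbf{Proof plan for Lemma~\ref{lemmaG}.}

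The plan is to establish the six estimates in order, using the local laws of Section~\ref{sec:tools} as the main input and the characteristic-flow results only indirectly (through Theorems~\ref{local_thm}, \ref{thm:2G} and \ref{theorem_F}). First I would prove \eqref{F_0estimate_1} and \eqref{FF_0estimate_1}, which are purely spectral-parameter-comparison statements at fixed $z$. The idea is to write, for any eigenvalue-independent quantity built from $G^z$, the difference between the value at $w=E+\ii\eta$ and the value at $\ii\widetilde\eta$ as an integral of $\partial_w$ of that quantity along a path from $w$ to $\ii\widetilde\eta$ staying in the upper half plane at height $\gtrsim\widetilde\eta=n^{-3/4+\xi}$. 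Concretely, $\partial_w\langle G^z F^{(*)}\rangle=\langle (G^z)^2F^{(*)}\rangle$, and by the integral representation $(G^z)^2=\partial_w G^z$ together with the improved single-resolvent law \eqref{local_F} (applied after rotating to the imaginary axis using the chiral identity $\langle G^z F^{(*)}\rangle$-bounds and the Stieltjes-transform monotonicity on the imaginary axis), one gets $|\langle(G^z)^2F^{(*)}\rangle|\lesssim \rho/(|\Im w|)\cdot(\text{fluctuation})$. Bounding the path integral over a length $\lesssim n^{-3/4}$ and using $\rho\lesssim |\Im w|^{1/3}$ from \eqref{rho}, together with $n|\Im w|\rho\gtrsim n^\epsilon$ along the path, yields the claimed $n^{-1/4+\xi+\tau}/(n\eta)$ error. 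For \eqref{FF_0estimate_1} the same scheme applies starting from \eqref{local_2F}: on the imaginary axis $|\langle G^z FG^z F^*\rangle|\lesssim |\langle M^F_{12}\rangle|+\text{error}\lesssim \rho^3/\eta$ by \eqref{MFMF} and \eqref{local_2F}, and since $\rho^3/\eta\sim \eta^3/\eta\cdot\eta^{-1}$ is small, one pushes this to general $w$ near the real axis by the same derivative-integration argument, picking up the stated $n^{-1/2+2\xi+2\tau}/(n\eta^2)$.

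Next I would handle the three estimates on the event $\{\lambda_1^z\gtrsim E_0\}$ at the tiny spectral parameter $\ii\eta_c$, $\eta_c=n^{-L}$. The point here is that on this event the resolvent $G^z(\ii\eta_c)$ is essentially frozen: since there is no eigenvalue of $H^z$ below $E_0\gg\eta_c$, spectral decomposition gives $G^z(\ii\eta_c)=\sum_i (\lambda_i^z-\ii\eta_c)^{-1}\bm w_i^z(\bm w_i^z)^*$ with every $|\lambda_i^z|\gtrsim E_0$, hence $\|G^z(\ii\eta_c)\|\lesssim E_0^{-1}$ and moreover $G^z(\ii\eta_c)$ is, up to an error $O(\eta_c/E_0^2)=O(n^{-100})$, the (Hermitian) matrix $\sum_i(\lambda_i^z)^{-1}\bm w_i^z(\bm w_i^z)^*$ which equals $G^z(0)$, the inverse of $H^z$ on its range. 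So for \eqref{E_0estimate} I would replace $G^z(\ii\eta_c)$ by $G^z(\ii\widehat\eta)$ with $\widehat\eta:=E_0 n^{-\delta}$ (or simply $\widehat\eta\sim E_0$) at the cost of $n^{-100}$ on the good event — this is exactly the content of \eqref{G_tiny} and \eqref{cancel}-type arguments in Lemma~\ref{lemma_algebraic_cancel} — and then apply the isotropic local law \eqref{entrywise} at $\eta=\widehat\eta\sim E_0$, which gives $|\langle\bm x,(G^z-M^z)\bm y\rangle|\prec\|\bm x\|\|\bm y\|(\sqrt{\rho/(n\widehat\eta)}+1/(n\widehat\eta))\lesssim 1/(nE_0)$ using $\rho\lesssim\widehat\eta^{1/3}$ and $n\widehat\eta\rho\gg1$. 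For \eqref{F0_estimate} I do the same replacement and then combine \eqref{F_0estimate_1} (with $w=\ii\widehat\eta$, noting $\widehat\eta\sim E_0\ll n^{-3/4}$ is in the allowed range so that $\langle G^z(\ii\widehat\eta)F^{(*)}\rangle=\langle G^z(\ii\widetilde\eta)F^{(*)}\rangle+O(n^{-1/4+\xi+\tau}/(n\widehat\eta))$) with the improved single-resolvent law \eqref{local_F} at $\eta=\widetilde\eta=n^{-3/4+\xi}$, giving $|\langle(G^z-M^z)F^{(*)}\rangle(\ii\widetilde\eta)|\prec\rho/(n\widetilde\eta)\lesssim n^{-1/4+\xi}/(n\widetilde\eta)$; combining the two and bounding $1/\widetilde\eta\lesssim n^\tau/\widehat\eta\sim n^\tau/E_0$ yields the claimed $n^{-1/4+\xi+\tau}/(nE_0)$. (The deterministic shift $\langle M^z(\ii\eta_c)F^{(*)}\rangle-\langle M^z(\ii\widetilde\eta)F^{(*)}\rangle$ is $O(n^{-100})$ by the explicit formula \eqref{Mmatrix} and $m^z(0)=0$, $u^z(0)=|z|^{-2}$.)

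The main obstacle I anticipate is the uniformity and the bookkeeping of exponents in the derivative-integration step for \eqref{F_0estimate_1}–\eqref{FF_0estimate_1}: one must check that the whole path from $w=E+\ii\eta$ down to $\ii\widetilde\eta$ stays in the regime where the improved $F$-local laws of Theorem~\ref{theorem_F} apply (i.e. $n|\Im w'|\rho^{z}(\ii|\Im w'|)\ge n^\epsilon$ throughout), which forces $\eta\ge n^{-3/4+\xi}$ to begin with — consistent with the hypothesis $|E|+\eta\lesssim n^{-3/4}$ only in a narrow band, so the path is short and the error $\int |\langle(G^z)^2F^{(*)}\rangle|\,|dw'|\lesssim \text{length}\times\max\rho/(n\eta^2)$ must be tracked with the correct power of $n^\tau$ coming from $|z|^2-1\lesssim n^{-1/2+\tau}$ entering $\rho$ via \eqref{rho}. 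A secondary technical point is justifying the ``$G^z(\ii\eta_c)\approx G^z(\ii\widehat\eta)$ on $\{\lambda_1^z\gtrsim E_0\}$'' replacement with a genuine $n^{-100}$ error rather than just $n^{-O(1)}$; this is exactly the mechanism already used in Lemma~\ref{lemma_algebraic_cancel} (equations \eqref{cancel}, \eqref{G_tiny}), so I would simply cite/reuse that argument. Everything else is routine application of Theorems~\ref{local_thm}, \ref{thm:2G}, \ref{theorem_F} together with the density bounds $\rho\lesssim\eta^{1/3}$ and the rigidity of Corollary~\ref{cor:rigidity}.
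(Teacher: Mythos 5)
Your proposed proof of the two no-constraint estimates \eqref{F_0estimate_1}--\eqref{FF_0estimate_1} has a genuine gap. You propose to integrate $\partial_w\langle G^z F\rangle=\langle (G^z)^2 F\rangle$ along a path from $w=E+\ii\eta$ to $\ii\widetilde\eta$, "staying at height $\gtrsim\widetilde\eta$." But the lemma's hypothesis is $\eta\lesssim n^{-3/4}$, which is strictly below $\widetilde\eta=n^{-3/4+\xi}$; thus the endpoint $w$ sits below $\widetilde\eta$ and any connecting path must traverse heights where $n|\Im w'|\rho\ll n^\epsilon$, outside the regime where Theorem~\ref{theorem_F}'s local laws are valid. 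In that sub-local-law regime $\langle (G^z)^2 F\rangle$ admits only the trivial bound $1/(\Im w')^2$, which is far too large, and crucially the first two estimates of the lemma carry no event $\{\lambda_1^z\gtrsim E_0\}$ that would prevent an eigenvalue from sitting near $\Re w'$. Your acknowledgment that "the path is short" does not rescue this: the length of the path does not shrink when $\eta$ is tiny (say $\eta=\eta_0\sim n^{-3/4-\epsilon-3\zeta}$, which is exactly the $\eta$ that occurs in the application). The paper's proof instead writes out the spectral decomposition
\[
\langle G^z(w)F\rangle=\frac{1}{n}\sum_{j=1}^n\frac{\lambda_j^z\langle\uu_j^z,\vv_j^z\rangle}{(\lambda_j^z-E-\ii\eta)(\lambda_j^z+E+\ii\eta)}
\]
and splits the sum at $\widetilde\eta$. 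The small-eigenvalue part is controlled by combining rigidity (Corollary~\ref{cor:rigidity}, only $\prec n^\xi$ eigenvalues below $\widetilde\eta$) with the singular-vector overlap bound $|\langle\uu_j^z,\vv_j^z\rangle|^2\prec n^{2\tau}/\sqrt n$ of Corollary~\ref{coro_overlap}. That overlap bound is the decisive input — without it the small-eigenvalue contribution to $\langle G^z(w)F\rangle$ is unboundable in this $\eta$-regime — and your proposal never invokes it. A $w$-derivative argument cannot substitute for it.

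A secondary inaccuracy: for \eqref{E_0estimate} you assert that replacing $G^z(\ii\eta_c)$ by $G^z(\ii\widehat\eta)$ with $\widehat\eta\sim E_0$ on the event $\{\lambda_1^z\gtrsim E_0\}$ costs only $O(n^{-100})$, citing Lemma~\ref{lemma_algebraic_cancel}. That lemma gives $\langle\mathbf x,G^z(\ii\eta_c)\mathbf x\rangle=O_\prec(n^{-100})$ on the event, but says nothing about the difference $G^z(\ii\eta_c)-G^z(\ii\widehat\eta)$, which is in fact of size $\sim\widehat\eta/E_0^2\sim 1/E_0$ on that event, not $n^{-100}$. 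The extra factor $1/n$ needed to reach the target $1/(nE_0)$ is won not by that replacement being negligible but by the Cauchy--Schwarz step $\bigl|\sum_j E_0\langle\mathbf x,\ww_j\rangle\langle\ww_j,\mathbf y\rangle/((\lambda_j-\ii\eta_c)(\lambda_j-\ii E_0))\bigr|\lesssim\sqrt{\langle\mathbf x,\Im G(\ii E_0)\mathbf x\rangle\langle\mathbf y,\Im G(\ii E_0)\mathbf y\rangle}$ followed by the isotropic law \eqref{entrywise}. Since you do ultimately invoke the isotropic law, the final bound you reach is the same, but the mechanism you describe is not what delivers it, and as stated your chain of inequalities would lose the $1/n$.
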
	
These bounds are modifications of the local laws in Theorem \ref{local_thm} and Theorem \ref{theorem_F}. First, the local law in (\ref{local_F}) was stated only for $w=\ii \eta$ with $\eta$ bigger than the typical eigenvalue spacing $n^{-3/4}$. Now in (\ref{F_0estimate_1}) we have 
extended the estimate to a small neighbourhood of the imaginary axis below $n^{-3/4}$. Second, note that while  the error terms in the local laws stated in (\ref{entrywise}) and (\ref{local_F}) blow up with a very tiny $\eta_c$, we have obtained a much better estimate in (\ref{E_0estimate})-(\ref{F0_estimate}) using the event $\lambda^z_1 \gtrsim E_0$.

\medskip
In Section~\ref{subsec:proof_some_lemma} we will also prove
the following lemma 
on $q$-related functions. 
\begin{lemma}[Derivative estimates of $q_z$]\label{lemma_q}	
	Recall $q_z$ given in (\ref{eq2}) and $q^{(b)}_z~(b\geq 1)$ in (\ref{qq}). 
	Fix a small $\tau>0$. For any $0 \leq |z|^2-1 \lesssim n^{-1/2+\tau}$,  the following estimates hold
	\begin{align}\label{q_prime}
		\big|q_{z}\big| \lesssim \one_{\lambda_1^z\geq E_0/2}, \qquad \qquad	\big|q_{z}^{(b)}\big|	\lesssim \one_{E_0/2\leq \lambda_1^z \leq 3E_0/2}\qquad  (b \geq 1),
	\end{align}	
	with very high probability. Moreover we have
	\begin{align}\label{deri_qz}
		\big|\partial_z q_z\big| =& \left| q'_z \Dim\big( \Tr \gz F\big) \right|=O_\prec\big(n^{1/2+\epsilon+\tau \nc}\big) \one_{E_0/2\leq \lambda_1^z \leq 3E_0/2},
	\end{align}
	with $\wt \Im$ and $\mathfrak{D}_{E_0,\eta_0}$ defined in (\ref{dim}). The same result also holds for $\partial_{\bar z} q_z$ if we replace $F$ with $F^*$.	 Finally, 
	\begin{align}\label{lap_qz}
		\big|\Delta_z q_z\big|=O_\prec\big(n^{1+2\epsilon +2\tau \nc}\big) \one_{E_0/2\leq \lambda_1^z \leq 3E_0/2}.
	\end{align}
\end{lemma}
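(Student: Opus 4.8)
The plan is to treat the three assertions \eqref{q_prime}, \eqref{deri_qz} and \eqref{lap_qz} in turn. Throughout write $Q_z:=\pi^{-1}\int_{-E_0}^{E_0}\Im\Tr G^z(y+\ii\eta_0)\,\dd y$ (absorbing the normalising $\pi^{-1}$, cf.~Lemma~\ref{lemma_approx}), so that $q_z=q(Q_z)$ and $q_z^{(b)}=q^{(b)}(Q_z)$; all the estimates below are uniform in the flow time $t$, since the input local laws (Theorem~\ref{theorem_F}, Lemma~\ref{lemmaG}) and the rigidity bound \eqref{rigidity} hold uniformly along the characteristic flow, and $\partial_z H^z=-F$, $\partial_{\bar z}H^z=-F^*$ are $t$-independent. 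For \eqref{q_prime} I would use the spectral representation $Q_z=\pi^{-1}\sum_k\big[\arctan\tfrac{E_0-\lambda_k^z}{\eta_0}+\arctan\tfrac{E_0+\lambda_k^z}{\eta_0}\big]$. Since $\eta_0=n^{-3\zeta}E_0\ll E_0$, any singular value $\lambda_j^z<E_0/2$ contributes (via $\pm\lambda_j^z$) at least $2-O(n^{-3\zeta})$ to $Q_z$, so $\lambda_1^z<E_0/2$ forces $Q_z>2/9$; conversely, if $\lambda_1^z>3E_0/2$ then every eigenvalue lies at distance $\gtrsim E_0$ from $[-E_0,E_0]$, and the elementary tail estimate $\arctan\tfrac{E_0-\lambda}{\eta_0}+\arctan\tfrac{E_0+\lambda}{\eta_0}\lesssim\eta_0 E_0\lambda^{-2}$ for $\lambda\gg E_0$, combined with \eqref{rigidity} and the density bounds \eqref{rho}--\eqref{rho_E} (which give $\sum_{i\ge1}(\gamma_i^z)^{-2}\lesssim n^{3/2}$), yields $Q_z\prec\eta_0 E_0 n^{3/2}\prec n^{-2\epsilon-3\zeta}<1/9$ with very high probability. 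Since $q(y)\ne0$ only for $y<2/9$ and $q^{(b)}(y)\ne0$ ($b\ge1$) only for $y\in[1/9,2/9]$, while $\lVert q^{(b)}\rVert_\infty\lesssim1$, this is exactly \eqref{q_prime}.

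\textbf{First derivative.} By the chain rule $\partial_z q_z=q'(Q_z)\,\partial_z Q_z$. Since $\partial_z H^z=-F$ one has $\partial_z\Tr G^z(w)=\Tr\big((G^z(w))^2F\big)$, and because $(G^z(y+\ii\eta_0))^2=\partial_y G^z(y+\ii\eta_0)$, an integration by parts in $y$ (the operations $\wt\Im$ and $\mathfrak{D}_{E_0,\eta_0}$ commuting with $\partial_z$ and with the $y$-integral) produces $\partial_z Q_z=\Dim\big(\Tr G^z F\big)$, which is the identity asserted in \eqref{deri_qz}. To estimate it on the support of $q'(Q_z)$, where by the previous paragraph $\lambda_1^z\in[E_0/2,3E_0/2]$ and in particular $E_0+\eta_0\lesssim n^{-3/4}$, the decisive input is the exact cancellation on the imaginary axis, $\wt\Im\Tr(G^z(\ii\eta)F)=\Tr\big(\Im G^z(\ii\eta)F\big)=0$, valid because $\Im G^z(\ii\eta)$ is block-diagonal while $F$ is purely off-diagonal (chiral symmetry, cf.~Lemma~\ref{lem:chirid}). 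Writing $\wt\Im\Tr(G^z(\pm E_0+\ii\eta_0)F)=\tfrac{1}{2\ii}\big[\Tr(G^z(\pm E_0+\ii\eta_0)F)-\overline{\Tr(G^z(\pm E_0+\ii\eta_0)F^*)}\big]$ and applying \eqref{F_0estimate_1} to replace both traces by their values at $\ii\wt\eta$, $\wt\eta=n^{-3/4+\xi}$, up to an error $O_\prec(n^{-1/4+\xi+\tau}/\eta_0)$, the leading term collapses to $\wt\Im\Tr(G^z(\ii\wt\eta)F)=0$; hence $|\partial_z Q_z|\prec n^{-1/4+\xi+\tau}/\eta_0=n^{1/2+\epsilon+\tau+\xi+3\zeta}\prec n^{1/2+\epsilon+\tau}$ (taking $\zeta$ small and $\xi$ arbitrarily small). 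Together with \eqref{q_prime} this gives \eqref{deri_qz}; the bound for $\partial_{\bar z}q_z$ is identical with $F$ replaced by $F^*$, using $\langle\Im G^z(\ii\eta)F^*\rangle=0$.

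\textbf{Laplacian.} Since $\Delta_z=4\partial_z\partial_{\bar z}$, the chain rule gives $\Delta_z q_z=4\big[q''(Q_z)(\partial_z Q_z)(\partial_{\bar z}Q_z)+q'(Q_z)\,\partial_z\partial_{\bar z}Q_z\big]$. In the first summand $|q''(Q_z)|=|q_z^{(2)}|\lesssim\one_{E_0/2\le\lambda_1^z\le3E_0/2}$ by \eqref{q_prime}, while $|(\partial_z Q_z)(\partial_{\bar z}Q_z)|\prec n^{1+2\epsilon+2\tau}$ by the previous paragraph (applied also with $F^*$). For the second summand, differentiating $\partial_{\bar z}Q_z=\mathfrak{D}_{E_0,\eta_0}\wt\Im\Tr(G^z F^*)$ once more and using $\partial_z G^z=G^z F G^z$ yields $\partial_z\partial_{\bar z}Q_z=\mathfrak{D}_{E_0,\eta_0}\wt\Im\Tr(G^z F G^z F^*)$; bounding $|\Tr(G^z(\cdot)FG^z(\cdot)F^*)|=2n\,|\langle G^z F G^z F^*\rangle|\prec 2n\cdot n^{-1/2+2\xi+2\tau}/(n\eta_0^2)$ at the four spectral parameters $\pm E_0\pm\ii\eta_0$ via \eqref{FF_0estimate_1} gives $|\partial_z\partial_{\bar z}Q_z|\prec n^{-1/2+2\xi+2\tau}/\eta_0^2\prec n^{1+2\epsilon+2\tau}$, while $|q'(Q_z)|=|q_z^{(1)}|\lesssim\one_{E_0/2\le\lambda_1^z\le3E_0/2}$. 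Adding the two contributions yields \eqref{lap_qz}.

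\textbf{Main obstacle.} The only genuinely delicate point is the factor $n^{1/2}$ (rather than $n^{3/4}$) in \eqref{deri_qz}: it rests entirely on the exact vanishing $\wt\Im\Tr(G^z(\ii\eta)F)=0$ on the imaginary axis, which lets one control $\wt\Im\Tr(G^z(\pm E_0+\ii\eta_0)F)$ purely by the local-law error incurred in passing from $\pm E_0+\ii\eta_0$ to $\ii\wt\eta$; without this cancellation the far-spectrum contribution alone would already be of order $n^{3/4}$. Everything else is bookkeeping of the small exponents $\epsilon,\tau,\xi,\zeta$ so that the factors $n^{\xi}$, $n^{3\zeta}$ and $n^{6\zeta}$ are absorbed, which forces $\zeta$ to be small relative to $\epsilon$ and $\tau$ — consistent with the standing constraint $\zeta<\epsilon'/10$ under which $\eta_0$ was chosen.
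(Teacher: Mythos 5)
Your proof is correct and follows essentially the same route as the paper: the bounds all reduce to the identity $\partial_z q_z=q'_z\,\mathfrak{D}_{E_0,\eta_0}\wt\Im\,\Tr(G^zF)$ (and its second-order analogue), fed into the local laws \eqref{F_0estimate_1} and \eqref{FF_0estimate_1}, with \eqref{q_prime} supplying the indicator factor. The one genuine difference is in \eqref{q_prime}: the paper cites Lemma~\ref{lemma_approx} (already proven with rigidity) and combines the monotone inequalities in \eqref{approx_1} at $E=E_0\pm l_0$ with integer-valuedness of the count to get $\#\{|\lambda_i^z|\le E_0-l_0\}=0$ and $\#\{|\lambda_i^z|\le E_0+l_0\}\ge1$, whereas you redo this by hand via the $\arctan$ spectral representation. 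Your version works, but the tail-sum exponent is slightly loose: on the event $\lambda_1^z>3E_0/2$ the lowest eigenvalue can sit a constant multiple of $E_0$ away, so its single contribution to $Q_z$ is $\sim\eta_0/E_0=n^{-3\zeta}$, not $\eta_0 E_0 n^{3/2}=n^{-3\zeta-2\epsilon}$; the correct claim is $Q_z\prec n^{-3\zeta}$, which still gives $Q_z<1/9$, so the conclusion is unaffected but the stated bound should be adjusted. For \eqref{deri_qz} your observation that $\wt\Im\,\Tr(G^z(\ii\eta)F)=\Tr(\Im G^z(\ii\eta)F)$ vanishes \emph{identically} by chiral symmetry is correct and nicely isolates the mechanism; the paper instead lets the $\mathfrak{D}_{E_0,\eta_0}$ difference cancel the two leading $\<G^z(\ii\wt\eta)F\>$ terms, which gives the same error bound without invoking the exact zero. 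Both routes run through \eqref{F_0estimate_1}, so the quantitative outcome is identical. The Laplacian computation matches the paper's sketch. One last bookkeeping remark: you use $\eta_0=n^{-3\zeta}E_0$ as in \eqref{eq2}, while the proof in the paper uses $\eta_0=n^{-6\zeta}E_0$ as in \eqref{eta_0} (a small inconsistency in the source); either choice is absorbed by $O_\prec$ once $\zeta$ is taken small.
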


Armed with the above lemmas,  we are now ready to prove Lemma \ref{lemma_key}. 
\begin{proof}[Proof of Lemma \ref{lemma_key}]
We start with the simplest case in (\ref{key_1})~(\ie a special case of (\ref{key_pq}) with $p=q=0$). 
Recalling the definition of $\wh \LL_f$ in (\ref{LX}) and  using the differentiation rules in (\ref{rule_q}), we have
\begin{align}\label{L_0aB}
	\frac{\partial \wh \LL_f}{\partial h_{Ba}} = \frac{1}{4\pi} \int_\C \Delta_z f(z) G^{z}_{aB}(\ii \eta_c) 
	 q_z \dd^2 z +\frac{1}{4\pi}
	\int_\C \Delta_z f(z) \X^z q'_z \Dim (\gz_{aB})  \dd^2 z +O_\prec(n^{-99}). 
\end{align}
We start with the second term in (\ref{L_0aB}).	
From the definition of $\wt \Im$ and $\mathfrak{D}_{E_0,\eta_0}$, given in (\ref{dim}), using the local law in Theorem \ref{local_thm}, the properties of $M^{z}$ in (\ref{Mmatrix}) and (\ref{rho_E}), we have for any $\mathfrak{u,v} \in [1,2n]$
\begin{align}\label{G_E0}
	\big|\Dim (\gz_{\mathfrak{uv}})\big| \prec n^{-1/4+c\epsilon}, \qquad |\X^z|=\Big| \int_{\eta_c}^{T}  \Im \Tr \gz(\ii \eta) \dd \eta  \Big| \prec \log n.
\end{align}
Combining these with (\ref{q_prime}) and the $L^1$-norm bound of $\Delta f$ from~\eqref{L1}, 
the second term in (\ref{L_0aB}) is bounded by
\begin{align}\label{trivia_1}
	\Big| \int_\C \Delta_z f(z) \X^z  q'_z   \Dim (\gz_{aB})  \dd^2 z\Big| \prec n^{3/4+c\epsilon}\int_{\mathrm{supp}(f)} \one_{\lambda^z_1 \leq 3E_0/2} \dd^2 z.
\end{align}
We next look at the first term in (\ref{L_0aB}). Applying integration by parts
 with respect to $\partial_z$, using that
\begin{align}\label{rule_z}
	\partial_{z} \gz =G^zFG^z, \qquad \partial_{\bar z} \gz=G^zF^*G^z,
\end{align}
and $\Delta_z =4 \partial_{z} \partial_{\bar z}$, we obtain 
\begin{align}\label{int_by_part}
	\int_\C \Delta_z f(z)  G^{z}_{aB}(\ii \eta_c) q_z \dd^2 z
	=&4\int _\C\partial_{\bar z} f(z)  \big( \gz F \gz (\ii \eta_c)\big)_{aB} q_z \dd^2 z+4\int_\C \partial_{\bar z} f(z)   G^{z}_{aB}(\ii \eta_c) \partial_z q_z \dd^2 z.
\end{align}
Note that the first term above almost vanishes
 using \eqref{q_prime} and (\ref{cancel}), \ie 
 \begin{align}\label{vanish}
 \big|\mbox{first term of \eqref{int_by_part}}\big|=	4\Big|\int _\C\partial_{\bar z} f(z)  \big( \gz F \gz (\ii \eta_c)\big)_{aB} q_z \dd^2 z \Big|=O_\prec(n^{-99}).
 \end{align}
Moreover, using (\ref{deri_qz}) and the $L^\infty$-bound of $\partial_{\bar z}f$  in (\ref{f_norm}), the last term in (\ref{int_by_part}) is bounded by
$$\big|\mbox{second term of \eqref{int_by_part}}\big| \prec n^{1+c\epsilon \nc} \int_{\mathrm{supp}(f)} \Big| G^{z}_{aB}(\ii \eta_c)\Big| \one_{E_0/2\leq \lambda_1^z \leq 3E_0/2} \dd^2 z.$$
We recall the local law estimate in (\ref{E_0estimate}), which yields to
\begin{align}\label{bound_G}
	G_{\mathfrak{uv}}^{z}(\ii\eta_c) \one_{\lambda_1^{z} \gtrsim E_0} = &  M_{\mathfrak{uv}}^{z}(\ii\eta_c) \one_{\lambda_1^{z} \gtrsim E_0}+ O\left( \big| \big(G^{z}(\ii\eta_c)-M^{z}(\ii\eta_c)\big)_{\mathfrak{uv}} \big| \one_{\lambda_1^{z} \gtrsim E_0} \right)\nonumber\\
	= & -z \cdot \nc \one_{|\mathfrak{u}-\mathfrak{v}|=n} \cdot \one_{\lambda_1^{z} \gtrsim E_0} +O_\prec(n^{-1/4+\epsilon}),
\end{align}
with $E_0=n^{-3/4-\epsilon}$ and $\eta_c=n^{-L}$ for a large $L>100$, where we also used the following estimates of the deterministic matrix $M^{z}$ given by (\ref{Mmatrix}): for any $a\in [n]$, $B\in [n+1,2n]$ with $a\neq B-n$,
\begin{align}
	&M^{z}_{aB}(\ii \eta_c)=M^{z}_{Ba}(\ii \eta_c)=0, \qquad M^{z}_{aa}(\ii \eta_c)=M^{z}_{BB}(\ii \eta_c)=\ii \Im m^{z}(\ii \eta_c)=O(n^{-99}),\label{M_1}\\
	&M^{z}_{a,a+n}(\ii \eta_c)=\overline{M^{z}_{B,B-n}(\ii \eta_c)}=-z\frac{\Im m^{z}(\ii \eta_c)}{\eta_c+\Im m^{z}(\ii \eta_c)}=-z\nc +O(n^{-1/2}),\label{M_2}
\end{align}
 which follows from the properties of $\Im m^{z}$ in (\ref{rho}) and that $z\in \mathrm{supp}(f)$ from (\ref{f_cond}). \nc 
Thus  we have
  \begin{equation*}
  	\big|\mbox{second term of \eqref{int_by_part}}\big| \prec \big( n^{3/4+c\epsilon}+ n^{1+c\epsilon} \one_{a = B-n} \big)\int_{\mathrm{supp}(f)} \one_{\lambda_1^z \leq 3E_0/2} \dd^2 z.
  \end{equation*}
  Hence combining with (\ref{vanish}), we conclude from (\ref{int_by_part}) that
\begin{align}\label{trivia_2}
\Big|\int_\C \Delta_z f(z)  G^{z}_{aB}(\ii \eta_c) q_z \dd^2 z\Big|	
\prec &~\big( n^{3/4+c\epsilon}+ n^{1+c\epsilon} \one_{a = B-n} \big)\int_{\mathrm{supp}(f)} \one_{\lambda_1^z \leq 3E_0/2} \dd^2 z + n^{-99}. \nc
\end{align}
Hence  we have obtained (\ref{key_1}) from (\ref{L_0aB}) using  (\ref{trivia_1}) and (\ref{trivia_2}).  

\medskip

Next, we prove (\ref{key_pq}) for a general $p+q+1\geq 2$. Recalling $\wh \LL_f$ in (\ref{LX}) and using Lemma~\ref{lemma_deri_rule}, the partial derivatives of $\wh \LL_f$ consist of  a sum of two types of terms: 
\begin{enumerate}
	
		\item[1)] Terms from acting by all the partial derivatives 
		$\partial/\partial h_{aB}, \partial/\partial h_{Ba}$ on $\X^z$ and thus leaving 
		$q_z$ untouched.  By direct computations using (\ref{rule})-(\ref{rule_q}), \nc these terms are given by
			 \begin{align}
			 	\int_\C \Delta_z f(z) \frac{\partial^{p+q} \big(G^z_{aB}(\ii \eta_c)\big)}{\partial h^{p}_{aB} \partial h^{q}_{Ba}} q_z \dd^2 z=\int_\C \Delta_z f(z)  \big(G^{z}_{aB}(\ii \eta_c)\big)^{p+q+1} q_z \dd^2 z+O_\prec(n^{-99}),
			 \end{align}
		where the terms with at least one $\gz_{aa}$ or $\gz_{BB}$ factor are bounded by $O_\prec(n^{-100})$ using (\ref{G_tiny}) and (\ref{q_prime}). Using integration by parts as in (\ref{int_by_part}), we have
		 \begin{align}\label{case1}
		 	\int_\C \Delta_z f(z)  &\big(G^{z}_{aB}(\ii \eta_c)\big)^{p+q+1}  q_z \dd^2 z=\int_\C \partial_{\bar z} f(z)  \big(G^{z}_{aB}(\ii \eta_c)\big)^{p+q+1}  \partial_{z} q_z \dd^2 z+O_{\prec}(n^{-99})\\
			 &= O_\prec\Big(n^{\frac{3}{4}+(-\frac{1}{4}+\epsilon)(p+q)+c\epsilon} 
			 +n^{1+c\epsilon} \one_{a= B-n}\Big) \int_{\mathrm{supp}(f)} \one_{\lambda_1^z \leq 3E_0/2} \dd^2 z+ O_{\prec}(n^{-99}) \nc,
			 \nonumber
		\end{align}
	where we used the chiral symmetry (\ref{cancel}) in the first line,  and in the last line we used the derivative estimate of $q_z$ in (\ref{deri_qz}) and the  estimate (\ref{bound_G}) from the
	 local law. \nc Note that we gained additional $n^{(-\frac{1}{4}+\epsilon)(p+q)}$ for $a\neq B-n$ using (\ref{bound_G}) compared to (\ref{trivia_2}).

	\item[2)]
	Terms from acting by at least one partial derivative~(either $\partial/\partial h_{aB}$ or $\partial/\partial h_{Ba}$) on $q_z$.
	Such terms are denoted by $\int \Delta_z f(z) \big(\cdots \big) q^{(b)}_z \dd^2 z$ with $b\geq 1$, where $(\cdots)$ contains $p+q+1$ Green function entries whose row and column indices are assigned either $a$ or $B$ according to (\ref{rule})-(\ref{rule_q}). Hence using (\ref{f_norm}),  (\ref{q_prime}), (\ref{G_E0}), and (\ref{bound_G}), these terms can be bounded by
	\begin{align}\label{case2}
		\Big|\int_\C \Delta_z f(z) \big(\cdots \big) q^{(b)}_z\dd^2 z \Big| \prec \Big(	n^{\frac{3}{4}+(-\frac{1}{4}+\epsilon)(p+q)+c\epsilon}+n^{1+c\epsilon} \one_{a=B-n} \Big)\int_{\mathrm{supp}(f)} \one_{\lambda_1^z \leq 3E_0/2}\dd^2 z.
	\end{align}
Compared to (\ref{trivia_1}) for $p+q+1=1$, we have gained additional $n^{(-\frac{1}{4}+\epsilon)(p+q)}$ for $a\neq B-n$ since we get $p+q$ more off-diagonal Green function entries. 
\end{enumerate}

Summing up the above estimates in (\ref{case1}) and (\ref{case2}), we have proved (\ref{key_pq}) 
for any $p+q+1\geq 2$. The same proof applies if we switch $a$ with $B$, hence we finished the proof of Lemma \ref{lemma_key}. 
\end{proof}

We next bound the truncating error $\mathcal{E}_{11}$ in the cumulant expansion (\ref{cumulant_exp0}), i.e. 
we prove (\ref{tr_error}).

\begin{proof}[Proof of Eq. (\ref{tr_error})]
Recall from \cite[Eq. (7.1)]{HK17} that the truncating error $\mathcal{E}_{11}$ in (\ref{cumulant_exp0}) is given by
\begin{align}\label{error_bound}
	\mathcal{E}_{11}=\frac{1}{n^\frac{11}{2}} \sum_{a=1}^{n} \sum_{B=n+1}^{2n} \Big( R^{(aB)}_{11}+R^{(Ba)}_{11} \Big),
\end{align}
where $R^{(aB)}_{11}$ is bounded by
\begin{align}\label{R_5}
	|R^{(aB)}_{11}| &\lesssim   \E[ |\chi_t|^{11}] \E \Big[ \max_{p+q+1=11} \Big\{ \sup_{|w| \leq n^{-1/2+\xi}} \Big| \frac{\partial^{p+q+1}}{\partial h^{p+1}_{aB} \partial h^q_{Ba} }\F \Big(\wh \LL_{f}^{(aB)}(w, \overline{w})\Big)\Big| \Big\} \Big]\nonumber\\
	&+  \E \Big[ |\chi_t|^{11} 1_{|\chi_t|> n^{\xi}}\Big] \E \Big[\max_{p+q+1=11} \Big\{ \sup_{w \in \C} \Big| \frac{\partial^{p+q+1}}{\partial h^{p+1}_{aB} \partial h^q_{Ba} }\F \Big(\wh \LL_{f}^{(aB)}(w, \overline{w})\Big)\Big| \Big\} \Big],
\end{align}
with any fixed $\xi>0$, and  $R^{(Ba)}_{11}$ is bounded similarly switching the index $a$ with $B$.  Here the quantity $\wh \LL_{f}^{(aB)}(w, \overline{w})$ is defined as  $\wh \LL_{f}$ in (\ref{LX}) but the $(aB)$ matrix element of $W$ in (\ref{W})
is replaced with the deterministic value $w$, i.e.  $H^{z}$ replaced by the matrix $H^{(aB)}:=H^z+(w-w_{aB}) E^{(aB)}+(\overline{w}-\overline{w_{aB}}) E^{(Ba)}$, using the notation $E^{(\mathfrak{uv})}:=(\delta_{\mathfrak{uv}})_{\mathfrak{u,v}=1}^{2n}$. The partial derivatives in (\ref{R_5}) can be computed explicitly for the modified matrix $H^{(aB)}$, using the differentiation rules in Lemma~\ref{lemma_deri_rule}. With a slight abuse of notation, we use the superscript $(aB)$ to indicate that the original matrix $H$ is replaced with $H^{(aB)}$, \eg the resolvent of $H^{(aB)}$ is then denoted by $G^{{(aB)}}$. 

Since any moment of $\chi_t$ is finite from \eqref{eq:hmb} and \eqref{W}, using H\"{o}lder and Markov inequalities, one has, for any large constant $D>0$,
$$\E \Big[ |\chi_t|^{11} 1_{|\chi_t|> n^{\xi}}\Big] \leq n^{-D}.$$
Combining this with the deterministic bound $\max_{\mathfrak{uv}}\{ |G^{{(aB)}}_{\mathfrak{uv}}(\ii \eta)| \} \leq \|G^{{(aB)}}(\ii \eta)\| \leq \eta^{-1}$ for $\eta\geq \eta_c= n^{-L}$, and $L^1$-norm bound of $\Delta_z f$ in~\eqref{L1}, the last line of (\ref{R_5}) can also be bounded by $n^{-D+11L} \leq n^{-C}$, for any large constant $C>0$, choosing $D$ sufficiently large depending on $L$ and $C$.

\medskip

We next bound the first line of (\ref{R_5}). We will proceed using similar arguments as in the proof of Lemma~\ref{lemma_key} to bound partial derivatives of $\wh \LL_{f}^{(aB)}(w,\overline{w})$. We claim that, for any $p,q\in \N$
\begin{align}\label{bound_L0}
	\left|\frac{\partial^{p+q+1} \wh \LL_{f}^{(aB)}(w,\overline{w})}{\partial h^{p+1}_{Ba} \partial h^{q}_{aB}} \right| \prec n^{1/4+c\epsilon}+n^{1+c\epsilon} \Big( \int_{\mathrm{supp}(f)} \one_{\lambda_1^{{(aB)}} \leq 3E_0/2} \dd^2 z \Big) \one_{a = B-n},
\end{align}
for any $|w|\leq n^{-1/2+\xi}$ with $0<\xi \leq \epsilon/100$. 
Note that the above estimate is much weaker than (\ref{key_pq}), but will be enough to bound the higher order terms with $p+q+1\ge11$.

\newcommand{\e}{\epsilon}
\newcommand{\bw}{\mathbf{w}}

Next we prove that $\lambda_1^{(aB)} \le 3E_0/2$ implies $\lambda_1^z \le 5E_0/2$ with very high probability. This follows by a simple perturbation argument.  For any\footnote{This local $\e$ is not to be confused with 
the $\varepsilon$ exponent used in other parts of the paper.} $\e\in [0,1]$, set
$$  
    H^{(\e)}: =  H^z + \e D, \qquad D:= (w-w_{aB}) E^{(aB)}+(\overline{w}-\overline{w_{aB}}) E^{(Ba)},
$$
then $H^{(\e=0)} = H^z$ and $H^{(\e=1)}= H^{(aB)}$. Let $\lambda_1^{(\e)}$ be the 
 smallest non-negative eigenvalue of $H^{(\e)}$ and
let $\bw^{(\e)}$ be the corresponding normalized eigenvector.  We have the following delocalization result
 simultaneously for all $\e\in [0,1]$:
\begin{equation}\label{deloc}
  \sup_{\e\in [0,1]} \max_{i}  |\bw^{(\e)}(i)|^2 \prec n^{-1}.
\end{equation}
Indeed, for any fixed $\e\in [0,1]$ 
delocalization follows 
from  $|\bw^{(\e)}(i)|^2 \le \eta \big[\Im G^{(\e)}(\lambda_1^{(\e)}+\ii\eta)\big]_{ii}$ at $\eta\sim n^{-1+\xi}$
in the standard way. The bound on the diagonal element $\Im G^{(\e)}_{ii}\prec 1$, as well as rigidity,
$|\lambda_1^{(\e)}|\prec 1/n$, follow
 from  the local law in Theorem \ref{local_thm}.
Though Theorem \ref{local_thm} is stated for $G^{z}$ only, the same estimates can be easily extended to the 
perturbed resolvent $G^{(\e)}$, using the following resolvent expansion
\begin{align}\label{resolvent_expansion}
	G^{(\e)}_{ij}=G_{ij}+ \e\Big(G^{(\e)} \big( (w_{aB}-w)E^{(aB)}+(\overline{w_{aB}}-\overline{w}) E^{(Ba)} \big) G \Big)_{ij},
\end{align}
with $|w_{aB}| \prec n^{-1/2}$ from (\ref{eq:hmb}) and $|w|\leq n^{-1/2+\xi}$ for $0<\xi \leq \epsilon/100$. 
Note that $\Im G^{(\e)}(E+\ii\eta)$ is Lipschitz continuous in $\e$ with a Lipschitz constant $1/\eta^2$
thus high probability order one bounds on the resolvent for any fixed $\e$ imply 
the same bounds simultaneously for any $\e\in [0,1]$ by a standard grid argument.

By standard perturbation theory we have for any $\e$  that 
\begin{equation}\label{dlambda}
    \frac{\rm d}{\rm d\e} \lambda_1^{(\e)} = \langle \bw^{(\e)}, D \bw^{(\e)}\rangle.
\end{equation}
Using the definition of $D$, we have
$$
   | \lambda_1^{(\e=1)}- \lambda_1^{(\e=0)}|\le \int_0^1 |\langle \bw^{(\e)}, D \bw^{(\e)}\rangle| {\rm d}\e
   \le  \max( |w|+ |w_{aB}|)  \sup_{\e\in [0,1]}
  \max_{i} |\bw^{(\e)}(i)|^2 \prec n^{-3/2}.
 $$
 Thus $\lambda_1^z\le \lambda_1^{(aB)} + O_\prec (n^{-3/2})$, so indeed 
 $\lambda_1^{(aB)} \le 3E_0/2$ implies $\lambda_1^z \le 5E_0/2$ with very high probability. \nc

Therefore, we conclude from (\ref{bound_L0}) that
\begin{equation*}
	\left|\frac{\partial^{p+q+1} \wh \LL_{f}^{(aB)}(w,\overline{w})}{\partial h^{p+1}_{Ba} \partial h^{q}_{aB}} \right| \prec n^{1/4+c\epsilon}+n^{1+c\epsilon} \Big( \int_{\mathrm{supp}(f)} \one_{\lambda_1^{z} \leq 5E_0/2} \dd^2 z \Big) \one_{a = B-n}.
\end{equation*}
Then we apply the tail bound of $\lambda^{z}_1$ in (\ref{tail_bound}) and joint tail bound of $\lambda^{z_j}_1$ with different $z_j$ in (\ref{lambdatail}). Using similar arguments as in (\ref{n3}), we can bound 
\begin{align}
	\mbox{first line of (\ref{R_5})} \leq & n^{11/4+c\epsilon} + n^{11+c\epsilon} \E\Big[\Big( \int_{\mathrm{supp}(f)} \one_{\lambda_1^{{(aB)}} \leq 5E_0/2} \dd^2 z \big)^{11} \Big] \one_{a = B-n}\nonumber\\
	\leq & n^{11/4+c\epsilon} + n^{C\epsilon} \one_{a = B-n} \nonumber
\end{align}
for some constant $C>0$ depending on $\gamma$ from Proposition \ref{prop_zz}. Hence the truncating error $\mathcal{E}_{11}$ in (\ref{error_bound}) is bounded by $O_\prec(n^{-3/4+c\epsilon})$. 
This proves (\ref{tr_error}), modulo  (\ref{bound_L0}).

\medskip

It remains to prove (\ref{bound_L0}). Using Lemma~\ref{lemma_deri_rule},  one has (\cf (\ref{L_0aB}))
\begin{align}\label{dL}
	\frac{\partial \wh \LL_{f}^{(aB)}(w,\overline{w})}{\partial h_{Ba}} =& \frac{1}{4\pi} \int_\C \Delta_z f(z) G^{{(aB)}}_{aB}(\ii \eta_c) 
	q^{{(aB)}}_z \dd^2 z \nonumber\\
	&+\frac{1}{4\pi}
	\int_\C \Delta_z f(z) \X^{{(aB)}} {q^{{(aB)}}_z}' \Dim (G^{{(aB)}}_{aB})  \dd^2 z +O_\prec(n^{-99}),
\end{align}
with $\wt \Im$ and $\mathfrak{D}_{E_0,\eta_0}$ defined in (\ref{dim}), where $\X^{{(aB)}}$ and $q^{{(aB)}}_z$ are defined as in (\ref{qq}) after replacing $G^{z}$ with $G^{{(aB)}}$. We recall that the local law for $G^{z}$ in Theorem \ref{local_thm} can be easily extended to the modified resolvent $G^{(aB)}$, using the resolvent expansion in (\ref{resolvent_expansion}).
Thus one obtains the same upper bound as in (\ref{G_E0}) for the modified matrix $H^{(aB)}$, \ie
\begin{align}\label{X_bound}
	&\big|\Dim (G^{{(aB)}}_{\mathfrak{uv}})\big| \prec n^{-1/4+c\epsilon}, \qquad |\X^{{(aB)}}|\prec \log n.
\end{align}
By the definition of $q^{{(aB)}}_z$, we have $|{q^{{(aB)}}_z}'| \lesssim 1$. Hence using the $L^1$-norm bound of $\Delta_z f$ in~\eqref{L1}, the second line of (\ref{dL}) can be bounded by $O_\prec(n^{1/4+c\epsilon})$.

We next bound the first line of (\ref{dL}). By the definition of $q^{{(aB)}}_z$ in (\ref{dim}), it is straightforward to check that (\cf (\ref{q_prime})), 
\begin{align}\label{qqqq}
	|q^{{(aB)}}_z|\lesssim  \one_{\lambda^{(aB)}_1\geq E_0/2}, \qquad |1-q^{{(aB)}}_z|\lesssim  \one_{\lambda^{(aB)}_1 \leq  3E_0/2},
\end{align}
with $E_0=n^{-3/4-\epsilon}$. Recall the local law estimates in Lemma \ref{lemmaG}. Though these estimates, \eg (\ref{E_0estimate}) were shown for the original resolvent $G^z$, by inspecting the proof  and using the resolvent expansion in (\ref{resolvent_expansion}), the same upper bounds also apply to $G^{{(aB)}}$. In particular, (\ref{E_0estimate}) implies that (\cf (\ref{bound_G}))
\begin{align}\label{G_bound}
	&G^{{(aB)}}_{\mathfrak{uv}}(\ii\eta_c) q^{{(aB)}}_z=   -z \cdot \nc \one_{|\mathfrak{u}-\mathfrak{v}|=n} \cdot q^{{(aB)}}_z +O_\prec\big(n^{-1/4+\epsilon}\big),
\end{align}
where we also used the deterministic estimates from (\ref{M_1})-(\ref{M_2}). Using the $L^1$-norm bound of $\Delta_z f$ in~\eqref{L1}, the first line of (\ref{dL}) can be bounded by
\begin{align}\label{step_middle}
	\Big|\int_\C \Delta_z f(z) G^{{(aB)}}_{aB}(\ii \eta_c) 
	q^{{(aB)}}_z \dd^2 z\Big|
	\lesssim& \Big| \int_\C \Delta_z f(z)  
		\cdot z \cdot \nc q^{{(aB)}}_z \dd^2 z\Big|\one_{a=B-n}+O_\prec(n^{1/4+c\epsilon})\nonumber\\
		=& \Big| \int_\C \Delta_z f(z)  
		 \cdot z \cdot \nc \big(1-q^{{(aB)}}_z\big) \dd^2 z\Big|\one_{a=B-n}+O_\prec(n^{1/4+c\epsilon})\nonumber\\
		\lesssim& n^{1+c\epsilon} \one_{a=B-n}  \int_{\mathrm{supp}(f)}  \one_{\lambda_1^{(aB)} \leq 3E_0/2} \dd^2 z  +O_\prec(n^{1/4+c\epsilon}),
\end{align}
where we also used (\ref{qqqq}) and the $L^{\infty}$-norm of $\Delta f$ in (\ref{f_norm}). This proves (\ref{bound_L0}) for $p=q=0$. 

The proof of (\ref{bound_L0}) for any general $p, q\in \N$ is similar and requires less effort, as we get more Green function entries that can be bounded using (\ref{X_bound}) and (\ref{G_bound}) after taking higher derivatives. The worst term would be given by 
$$\int_\C \Delta_z f(z) \frac{\partial^{p+q} \big(G^{{(aB)}}_{aB}(\ii \eta_c) \big)}{\partial h^{p}_{aB} \partial h^{q}_{Ba}}  
q^{{(aB)}}_z \dd^2 z,$$
which can be bounded similarly as in (\ref{step_middle}) using the local law estimate (\ref{G_bound}). 
This proves (\ref{bound_L0}) and hence finishes the proof of (\ref{tr_error}).
\end{proof}

\nc

\subsection{Proof of the auxiliary lemmas}\label{subsec:proof_some_lemma}

\begin{proof}[Proof of Lemma \ref{lemma_deri_rule}]
The differentiation rule in (\ref{rule}) follows directly from the definition of $\gz$ in (\ref{def_G}). Using (\ref{rule}) and that $\Im \Tr G(\ii \eta)=-\ii \Tr G(\ii \eta)$, we have
	\begin{align}
		\frac{\partial \X^z}{\partial h_{Ba}}=\ii\int_{\eta_c}^{T}  (\gz)^2_{Ba}(\ii \eta) \dd \eta=- \gz_{aB}(\ii\eta_c)+O_\prec(n^{-100}),
	\end{align}
where we also used the identity
$G^2(\ii \eta)= -\ii \partial_\eta G(\ii \eta)$. 
This proves the second rule in (\ref{rule_q}).  Moreover, using that $\Im \Tr G(w)=\frac{1}{2\ii} \big( \Tr  G(w)- \Tr  G(\bar w)\big)$ and $G^2(x+\ii \eta)=  \partial_x G(x+\ii \eta)$, we have 
	\begin{align}
		\frac{\partial q^{(b)}_z}{\partial h_{Ba}}=& -q^{(b+1)}_z \int_{-E_0}^{E_0}  \wt\Im  (G^2)_{aB}(x+\ii \eta_0) \dd x=- q^{(b+1)}_z \Dim (\gz_{aB}),
	\end{align}
	with $\wt \Im$ and $\mathfrak{D}_{E_0,\eta_0}$ defined in (\ref{dim}). This proves the first rule in (\ref{rule_q}), and hence finishes the proof.
\end{proof}

\begin{proof}[Proof of Lemma \ref{lemma_algebraic_cancel}]
	We first prove (\ref{cancel}). Using the spectral decomposition of $H^z$ and denoting by $\bm{e}_a$ the unit standard vector $\bm{e}_a(b)=\delta_{ab}$, we have	
	\begin{align}
		\big(\gz F \gz(\ii \eta_c) \big)_{aB}=\sum_{j,k=-n}^{n} \frac{\<\mathbf{e}_{a},\ww^z_{j}\>\<\ww^z_{j},F\ww^z_{k}\>\<\ww^z_{k} , \mathbf{e}_{B}\>}{(\lambda^z_j-\ii \eta_c)(\lambda^z_k-\ii \eta_c)}.
	\end{align}
	Recalling the property of chiral symmetry that $\lambda^{z}_{-j}=-\lambda^{z}_{j}$ and the corresponding eigenvectors 
	are related by $\ww^z_{\pm j}=(\uu^z_j,\pm \vv^z_j)$, we thus have
	\begin{align}
		\big(\gz F \gz(\ii \eta_c) \big)_{aB}=-4\sum_{j,k=1}^{n} \frac{\eta_c^2 \<\mathbf{e}_{a},\uu^z_{j}\>\<\uu^z_{j}, \vv^z_{k}\>\<\vv^z_{k} ,\mathbf{e}_{B}\>}{\big((\lambda^z_j)^2+ \eta^2_c\big)\big((\lambda^z_k)^2+ \eta^2_c\big)}.
	\end{align}
 Working on the event $\lambda^{z}_1 \gtrsim E_0$,  we have
	\begin{align}
		\Big|\big(\gz F \gz(\ii \eta_c) \big)_{aB}\Big|\one_{\lambda_1^z \gtrsim E_0}\lesssim \frac{n^2 \eta^2_c}{E_0^4} \lesssim n^{-100},
	\end{align}
	for $\eta_c=n^{-L}$ with a large $L>100$. The same bound also applies to $\big(\gz F^* \gz(\ii \eta_c) \big)_{Ba}$ and the averaged trace $\<G(\ii \eta_c)FG(\ii \eta_c)F^*\>$.
	Similarly, for any bounded deterministic vector $ \mathbf{x}\in \mathrm{Ker}(E_1) \cup \mathrm{Ker}(E_2)$, using the property of chiral symmetry and $I=E_1+E_2$, we have 
	\begin{align}
	\Big|\big\<\mathbf{x},\gz(\ii\eta_c)\mathbf{x}\big\>\Big| \one_{\lambda_1^z \gtrsim E_0}=\left|\sum_{j=-n}^n \frac{ |\<\ww_j,\mathbf{x}\>|^2}{\lambda_j-\ii \eta_c}\right| \one_{\lambda_1^z \gtrsim E_0}  = \left|\sum_{j=1}^n \frac{2\ii\eta_c |\<\ww_j,\mathbf{x}\>|^2}{\lambda^2_j+\eta_c^2}\right| \one_{\lambda_1^z \gtrsim E_0} \lesssim n^{-100},
	\end{align} 
	for $\eta_c=n^{-L}$ with a large $L>100$. We hence finished the proof of Lemma \ref{lemma_algebraic_cancel}.
\end{proof}

\begin{proof}[Proof of Lemma \ref{lemmaG}] 
	We start with the proof of (\ref{F_0estimate_1}). By the spectral decomposition of $H^z$, using the chiral symmetry that $\lambda^z_{-j}=-\lambda^z_{j}$, $\ww_j^{z}=(\uu_j^z,\vv_j^z)$, $\uu^z_{-j}=\uu^z_{j}$ and $\vv^z_{-j}=-\vv^z_{j}$, we have
	\begin{align}\label{symmetry_G}
		\<\gz(w) F\>=\frac{1}{n}  \sum_{j=1}^n \frac{\lambda^z_j\<\uu^{z}_j,\vv^{z}_j\>}{(\lambda^z_j-E-\ii \eta)(\lambda^z_j+E+\ii \eta)}, \qquad w= E+\ii\eta \in \C^+.
	\end{align}
	Given the fixed constant $\omega_c>0$ from Corollary~\ref{coro_overlap}, set $\wt \eta:=n^{-3/4+\xi}$ for a sufficiently small $\xi>0$ such that,
	if $0\leq \lambda^z_j \leq \wt \eta $, then $1\leq j\leq n^{\omega_c}$. Thus using the overlap estimate of the 
	singular vectors in (\ref{uv_bound}) corresponding to small indices, hence small eigenvalues $\lambda^z_j \leq \wt \eta$, we have
		\begin{align}\label{small_part}
		\<\gz(w) F\>
		=&\frac{1}{n}  \Big( \sum_{0\leq \lambda^z_j \leq \wt \eta}+ \sum_{\lambda^z_j \geq \wt \eta}   \Big) \frac{\lambda^z_j\<\uu^{z}_j,\vv^{z}_j\>}{(\lambda^z_j-E-\ii \eta)(\lambda^z_j+E+\ii \eta)}\nonumber\\
		=&\frac{1}{n}  \sum_{\lambda^z_j \geq \wt \eta} \frac{\lambda^z_j\<\uu^{z}_j,\vv^{z}_j\>}{(\lambda^z_j-E-\ii \eta)(\lambda^z_j+E+\ii \eta)}+O_{\prec}\Big(\frac{n^{-1/4+\xi+\tau}}{n\eta}\Big),
	\end{align}
	where we also used that $\#\{ 0\leq \lambda^z_j \leq \wt \eta\} \prec n^{\xi}$ from the rigidity estimate of the eigenvalues in (\ref{rigidity3}). Note that to get the last error bound for the sum over  $0\leq \lambda^z_j \leq \wt \eta$, we also used the fact that $\big|\lambda^z_j+|E|-\ii \eta\big| \geq \lambda^z_j \geq 0$ and $\big|\lambda^z_j-|E|-\ii \eta\big| \geq \eta$.

	Next we estimate the error to replace  $w=E+\ii \eta$ in (\ref{small_part}), for any $|E|+\eta \lesssim n^{-3/4}$, with $w=\ii \wt \eta=\ii n^{-3/4+\xi}$, above the local eigenvalue spacing where the local laws are effective. By a direct computation, we have
	\begin{align}\label{diff}
		&\left| \frac{1}{n}  \sum_{\lambda^z_j \geq \wt \eta} \frac{\lambda^z_j\<\uu^{z}_j,\vv^{z}_j\>}{(\lambda^z_j-E-\ii \eta)(\lambda^z_j+E+\ii \eta)}-\frac{1}{n} \sum_{\lambda^z_j \geq \wt \eta} \frac{\lambda^z_j\<\uu^{z}_j,\vv^{z}_j\>}{(\lambda^z_j-\ii \wt \eta)(\lambda^z_j+\ii \wt \eta)} \right|
		\lesssim \frac{1}{n}  \sum_{j=1}^n \frac{ \wt \eta^2  \lambda_j^z   \big|\<\uu^{z}_j,\vv^{z}_j\>\big|}{\big((\lambda^z_j)^2+\wt \eta^2\big)^2}\nonumber\\
		&\qquad \qquad\qquad\qquad\qquad\qquad\qquad \lesssim  \sqrt{\frac{1}{n}\sum_{j=1}^n \frac{\wt \eta^2}{(\lambda^z_j)^2+\wt \eta^2}} \sqrt{\frac{1}{n}\sum_{j=1}^n \frac{\wt \eta^2\big|\<\uu^{z}_j,\vv^{z}_j\>\big|^2}{\big((\lambda^z_j)^2+\wt \eta^2\big)^2}}\nonumber\\
		&\qquad \qquad\qquad\qquad\qquad\qquad\qquad\lesssim  \sqrt{\wt\eta\<\Im \gz (\ii \wt \eta)\>} \sqrt{\<\gz F\gz F^*(\ii \wt \eta)\>} \prec n^{-1/2+\xi},
	\end{align}
	where we also used the local laws in (\ref{average}) with  (\ref{rho}) and (\ref{local_2F}) with (\ref{MFMF}).
	Combining (\ref{diff}) with (\ref{small_part}), we obtain that
	\begin{align}
		\<\gz(w) F\>=&
		\frac{1}{n} \sum_{\lambda^z_j \geq \wt \eta} \frac{\lambda^z_j\<\uu^{z}_j,\vv^{z}_j\>}{(\lambda^z_j-\ii \wt \eta)(\lambda^z_j+\ii \wt \eta)}+O_{\prec}\Big(\frac{n^{-1/4+\xi+\tau}}{n\eta}\Big)=\<\gz(\ii \wt\eta) F\>+O_{\prec}\Big(\frac{n^{-1/4+\xi+\tau}}{n\eta}\Big),
	\end{align}
	where we also used (\ref{small_part}) with $w=\ii \wt \eta$.
	Thus we have proved (\ref{F_0estimate_1}).

	 The proof of (\ref{FF_0estimate_1}) is similar. Using the chiral symmetry among eigenvalues and eigenvectors, for any $w= E+\ii\eta \in \C^+$, we have
	\begin{align}\label{symmetry_GF}
		\<\gz(w) F \gz(w) F^*\>=\frac{2}{n}  \sum_{j,k=1}^n \frac{(E+\ii \eta)^2 |\<\uu^{z}_j,\vv^{z}_k\>|^2}{\big((\lambda^z_j)^2 -(E+\ii \eta)^2 \big)\big((\lambda^z_k)^2 -(E+\ii \eta)^2\big)}.
	\end{align}
	Similarly to (\ref{small_part}), we use the rigidity of eigenvalues (\ref{rigidity3}) and the singular vector overlap (\ref{uv_bound}) to estimate the sum over small eigenvalues $0\leq \lambda^z_j, \lambda^z_k  \leq \wt \eta $ with small indices,  \ie
	\begin{align}
		&\left|\frac{1}{n} \sum_{0\leq \lambda^z_j, \lambda^z_k  \leq \wt \eta } \frac{(E+\ii \eta)^2 |\<\uu^{z}_j,\vv^{z}_k\>|^2}{\big((\lambda^z_j)^2 -(E+\ii \eta)^2 \big)\big((\lambda^z_k)^2 -(E+\ii \eta)^2\big)} \right|  \prec  \frac{n^{2\xi}}{n}\frac{n^{2\tau}}{\sqrt{n}} \sup_{0\leq\lambda \leq \wt \eta} \left\{\frac{E^2+\eta^2}{\big|\lambda^2-E^2+\eta^2-2\ii E \eta\big|^2} \right\}\nonumber\\
		&\qquad \qquad \qquad \qquad \qquad \qquad\prec \frac{n^{2\xi}}{n}\frac{n^{2\tau}}{\sqrt{n}} \left( \frac{E^2+\eta^2}{(\eta^2-E^2)^2} \one_{E< \eta/100}+\frac{E^2+\eta^2}{E^2\eta^2} \one_{E\geq \eta/100} \right)\nonumber\\
		&\qquad \qquad \qquad \qquad \qquad \qquad=O_{\prec}\Big(\frac{n^{-1/2+2\xi+2\tau}}{n\eta^2}\Big).\label{term1}
	\end{align}
For the sum over large eigenvalues $\lambda^z_j, \lambda^z_k  \geq \wt \eta $, we have
\begin{align}
	\left|\frac{1}{n}  \sum_{\lambda^z_j, \lambda^z_k \geq \wt \eta } \frac{(E+\ii \eta)^2 |\<\uu^{z}_j,\vv^{z}_k\>|^2}{\big((\lambda^z_j)^2 -(E+\ii \eta)^2 \big)\big((\lambda^z_k)^2 -(E+\ii \eta)^2\big)}\right| \lesssim \frac{1}{n} 
	\sum_{\lambda^z_j,\lambda^z_k  \geq \wt \eta }
	\frac{\wt \eta^2 |\<\uu^{z}_j,\vv^{z}_k\>|^2}{\big((\lambda^z_j)^2 +\wt \eta^2\big)\big((\lambda^z_k)^2 +\wt \eta^2\big)},\label{term2}
\end{align}
where we used that $|E|+\eta \lesssim n^{-3/4} \ll \wt \eta=n^{-3/4+\xi}$. For the remaining sum over crossing indices, \ie for $0\leq \lambda^z_j  \leq \wt \eta$ and $\lambda^z_k\geq \wt \eta$, we similarly have 
\begin{align}
	&\left|\frac{1}{n} \sum_{0\leq \lambda^z_j \leq \wt \eta, \lambda^z_k  \geq \wt \eta }\frac{(E+\ii \eta)^2 |\<\uu^{z}_j,\vv^{z}_k\>|^2}{\big((\lambda^z_j)^2 -(E+\ii \eta)^2 \big)\big((\lambda^z_k)^2 -(E+\ii \eta)^2\big)} \right|\nonumber\\
	& \qquad\qquad\qquad \lesssim  \frac{1}{n} 
	\sum_{0\leq \lambda^z_j \leq \wt \eta, \lambda^z_k  \geq \wt \eta }
	\sup_{0\leq\lambda \leq \wt \eta} \left\{\frac{E^2+\eta^2}{\big|\lambda^2-E^2+\eta^2-2\ii E \eta\big|} \right\}
	\frac{ |\<\uu^{z}_j,\vv^{z}_k\>|^2}{\big((\lambda^z_k)^2 +\wt\eta^2\big)} \nonumber\\
		&\qquad\qquad\qquad \lesssim \Big(1+\frac{|E|}{\eta}\Big) \left(\frac{1}{n} 
		\sum_{0\leq \lambda^z_j \leq \wt \eta, \lambda^z_k  \geq \wt \eta }
		\frac{\wt \eta^2 |\<\uu^{z}_j,\vv^{z}_k\>|^2}{\big((\lambda^z_j)^2 +\wt \eta^2\big)\big((\lambda^z_k)^2 +\wt \eta^2\big)}\right),\label{term3}
\end{align}
where in the last step we also used that $\sup_{0\leq\lambda \leq \wt \eta}\{\cdots\} \lesssim 1+|E|/\eta$ and $\wt \eta^2 \leq (\lambda^z_j)^2 +\wt \eta^2$. 
Thus for any $w=E+\ii \eta$ with $|E|+\eta \lesssim n^{-3/4}$, we obtain from (\ref{symmetry_GF})-(\ref{term3}) that
	\begin{align}
		|\<\gz(w) F \gz(w) F^*\>| \lesssim& \Big(1+\frac{|E|}{\eta}\Big) \left( \frac{1}{n} \sum_{j,k=1}^n \frac{\wt \eta^2 |\<\uu^{z}_j,\vv^{z}_k\>|^2}{\big((\lambda^z_j)^2 +\wt \eta^2 \big)\big((\lambda^z_k)^2 +\wt  \eta^2\big)}\right)+O_{\prec}\Big(\frac{n^{-1/2+2\xi+2\tau}}{n\eta^2}\Big)\nonumber\\
		\lesssim& \Big(1+\frac{|E|}{\eta}\Big) \<\gz(\ii \wt \eta) F \gz (\ii \wt \eta)F^*\>+O_{\prec}\Big(\frac{n^{-1/2+2\xi+2\tau}}{n\eta^2}\Big).
	\end{align}
\nc
Since $\wt \eta=n^{-3/4+\xi}$, we are above the local eigenvalue spacing in $\<\gz(\ii \wt \eta) F \gz (\ii \wt \eta)F^*\>$, thus using the local law in (\ref{local_2F}) and the second deterministic upper bound in (\ref{MFMF}) with $|E|+\eta \lesssim n^{-3/4} \ll \wt \eta=n^{-3/4+\xi}$, we have proved (\ref{FF_0estimate_1}).

	Next we prove (\ref{E_0estimate}). Using the spectral decomposition of $H^z$ and the Cauchy-Schwarz inequality, for any bounded deterministic vectors $\mathbf{x},\mathbf{y} \in \C^{2n}$, we have
	\begin{align}\label{G_diff}
		\Big|\big\<\mathbf{x}, \big(G^{z}(\ii\eta_c)-G^{z}(\ii E_0)\big) \mathbf{y}\big\>\one_{\lambda_1^z \gtrsim E_0} \Big| \lesssim& \Big| \sum_{j} \frac{E_0 \<\mathbf{x},\ww^z_{j}\> \<\ww^z_{j}, \mathbf{y}\> }{(\lambda^z_j-\ii \eta_c) (\lambda^z_j-\ii E_0)} \Big|\one_{\lambda_1^z \gtrsim E_0}\nonumber\\
		\lesssim & \sqrt{ \<\mathbf{x},\Im G(\ii E_0) \mathbf{x}\> \<\mathbf{y},\Im G(\ii E_0) \mathbf{y}\> }  \prec \frac{1}{nE_0},
	\end{align}
	where we also used the local law in (\ref{entrywise}) for $w=\ii E_0$ and the property of $M^{z}$ in (\ref{rho}). Moreover, from  (\ref{Mmatrix}) and (\ref{rho}), we have 
	\begin{align}\label{M_diff}
		\Big| \< M^{z}(\ii \eta_c)-M^{z}(\ii E_0)\> \big| \lesssim  (E_0)^{1/3}, \qquad \Big| \< \big(M^{z}(\ii \eta_c)-M^{z}(\ii E_0)\big)F^{(*)}\>  \big| \lesssim  (E_0)^{2/3}.
	\end{align}
	This, together with (\ref{G_diff}), proves (\ref{E_0estimate}). 
	
	 Finally, we prove (\ref{F0_estimate}) similarly. Using (\ref{symmetry_G}) and a similar argument as in (\ref{diff}), we have
	\begin{align}
		\Big| \big\< \big( G^{z}(\ii\eta_c)-G^{z}(\ii E_0)  \big) F \big\>\Big| \one_{\lambda_1^{z} \gtrsim E_0} \lesssim & \frac{1}{n}  \sum_{j=1}^n \frac{(E_0)^2  \lambda^z_j \big|\<\uu^{z}_j,\vv^{z}_j\>\big|}{\big((\lambda^z_j)^2+E_0^2\big)^2}\nonumber\\
		\lesssim & \sqrt{E_0\<\Im \gz (\ii E_0)\>} \sqrt{\<\gz F\gz F^*(\ii E_0)\>} \prec \frac{n^{-1/4+\xi+\tau}}{nE_0},
	\end{align}
	where we also used the local laws in (\ref{average}) and (\ref{FF_0estimate_1}) for $w=\ii E_0$. Together with (\ref{M_diff}), we have proved (\ref{F0_estimate}) and hence finished the proof of Lemma \ref{lemmaG}. \nc
\end{proof}

\begin{proof}[Proof of Lemma \ref{lemma_q}]
	We first prove the estimates in (\ref{q_prime}).  The former one  follows directly from (\ref{approx}) with $E=E_0$, hence we focus on proving the latter one. Recall the definition of $q$ given in (\ref{q_function}) with uniformly bounded derivatives and $q^{(b)}_z$ defined in (\ref{qq}). Recall $E_0=n^{-3/4-\epsilon}$ and choose the parameters $l_0 \gg l_0' \gg \eta_0$ as in Lemma \ref{lemma_approx}, \ie
	\begin{align}\label{eta_0}
		l_0=n^{-\zeta} E_0, \qquad  l_0'=n^{-3\zeta} E_0, \qquad  \eta_0= n^{-6\zeta} E_0, 
	\end{align} 
	for some small $\zeta>0$. Note that if $q^{(b)}_z \neq 0~(b\geq 1)$, then it implies that
	\begin{align}\label{integer1}
		\int_{-E_0 }^{E_0}  \Im \Tr G^z(y+\ii \eta_0) \dd y \in [1/9,2/9].
	\end{align}
	Using the first inequality in (\ref{approx_1}) with $E=E_0+l_0$ and the second inequality with $E=E_0-l_0$, we have
	\begin{align}
		\#\{|\lambda^z_i| \leq E_0-l_0\}-O(n^{-\zeta}) \leq \int_{-E_0}^{E_0}  \Im \Tr G^z (y+\ii  \eta_0)\dd y\leq 	\#\{|\lambda^z_i| \leq E_0+l_0\}+O(n^{-\zeta}).
	\end{align}
	Combining this with (\ref{integer1}) and 
using that both $\#\{|\lambda^z_i| \leq E_0\pm l_0\}$ are integer valued, we obtain
	\begin{align}\label{integer2}
		\#\{|\lambda^z_i| \leq E_0-l_0\}=0, \qquad 	\#\{|\lambda^z_i| \leq E_0+l_0\}\geq 1.
	\end{align}
	Hence we conclude that
	$$\big|q_{z}^{(b)}\big|=\Big| q^{(b)}\Big( \int_{-E_0}^{E_0} \Im \Tr G^z(y+\ii \eta_0) \dd y \Big) \Big| \lesssim \one_{E_0/2\leq \lambda_1^z \leq 3E_0/2}.$$

	Next, we prove (\ref{deri_qz}). By a direct computation using (\ref{rule_z}) , we have
	\begin{align}\label{some}
		\partial_z q_z =& q'_z \int_{-E_0}^{E_0} \wt \Im \big(\Tr  \gz F\gz (x+\ii \eta_0)\big) \dd x = q'_z \Dim\big( \Tr \gz F\big),
	\end{align}
	with $\wt \Im$ and $\mathfrak{D}_{E_0,\eta_0}$ defined in (\ref{dim}), and $\partial_{\bar z} q_z$ can be computed similarly. Using (\ref{F_0estimate_1}) for any small $\xi>0$ and that $E_0$, $\eta_0$ are chosen as in (\ref{eta_0}) for any small $\zeta>0$, we obtain 
	\begin{align}\label{F_0estimate_2}
		\Big|\Dim \big(\<\gz F^{(*)}\>\big)\Big|=O_\prec(n^{-1/2+\epsilon+\tau}).
	\end{align}
	Then (\ref{deri_qz}) follows directly from (\ref{some}) using (\ref{q_prime}) and (\ref{F_0estimate_2}). 
	The proof of (\ref{lap_qz}) is similar using additionally (\ref{FF_0estimate_1}). \nc This concludes the proof of Lemma~\ref{lemma_q}.
\end{proof}

\subsection{Extension of Theorem \ref{main_thm} to general $\mathcal{F}$ }\label{sec:polyF}

We  sketch how  to extend Theorem \ref{main_thm} to any function $\mathcal{F}$ with derivatives of polynomial growth as in (\ref{F_gen}).
 We follow the same scheme as in Steps~1--6 of Section~\ref{sec:strategy_gft}. More precisely, 
 we repeat Steps 1--5 with modifications to generalise Proposition~\ref{lemma_L} to any function $\F$ with polynomial growth as in (\ref{F_gen}), \ie for any fixed $p\in \N$, there exists a large $L>100$, depending on $p$ and $\alpha,\beta$ in (\ref{assumption_b}), such that 
\begin{align}\label{M_zerop}
	\E\big|\LL_f-\wh\LL_f\big|^p =O(n^{- c(p) \epsilon}),
\end{align}
for some constant $c(p)>0$, where $\wh \LL_f$ is defined in (\ref{L_0}) with $\eta_c=n^{-L}$. After these, in Step 6, we extend Theorem \ref{GFT} to a general function $\F$ in (\ref{F_gen}) using similar GFT arguments as in Section \ref{sec:L_0}.

\smallskip

We first note that, the estimates in Step 1 and Step 3 of Section~\ref{sec:proof_gft} 
also hold true in any finite moment sense. So we only focus on the modifications needed in Step 2 and Steps 4--6.

\smallskip

{\bf Modifications in Steps 2, 4, 5: } We mainly focus on Step 4, \ie to show that the size of $\LL_{\leq E_0}$ in (\ref{small_eigen}) is negligible in any finite moment sense, using additionally
the independence estimates on the smallest eigenvalues in (\ref{lambdatail}). The proof of (\ref{tiny_eta})
 in Step 2 and (\ref{big_eigen}) in Step 5 in any finite moment sense is exactly the same, so we omit the details.

Recall that $E_0=n^{-3/4-\epsilon}$, for some small fixed $\epsilon>0$, and that 
$\lVert\Delta f\rVert_\infty\lesssim n^{1+2\nu}$, $\lVert \Delta f\rVert_1\lesssim n^{1/2+\tau+2\nu}$, for some arbitrary small $\tau,\nu>0$,
in particular $\tau,\nu$ will be chosen  much smaller than $\epsilon$. 
  We now  show that for any $p\in\N$ there exist constants $c(p)>0$ such that
 \begin{equation}
\label{eq:hopeb}
  \E \big|\LL_{\leq E_0}\big|^p \lesssim (\log n)^p \int \ldots \int \P \big( \lambda^{z_1}_1 \leq E_0, \dots, \lambda^{z_p}_1 \leq E_0  \big) 
    \prod_{i=1}^p  \big|\Delta f(z_i)\big| \dd^2 z_i\le n^{-c(p)\epsilon}.
\end{equation}
To keep the notation simpler,  we set $\tau=\nu=0$, as any term of the form $n^{C_p(\tau+\nu)}$ (with some large $C_p>0$)
accumulated along the proof  can be made smaller than $n^{-c(p)\epsilon}$ from \eqref{eq:hopeb}, 
by choosing $\tau=\tau(\epsilon)$, $\nu=\nu(\epsilon)$ sufficiently small.
We will prove \eqref{eq:hopeb}  by induction on $p$; in particular, we will see that  the 
choice $c(p)=\gamma^p$ works, with $\gamma$ being the small constant from Proposition~\ref{prop_zz}.

The fact that \eqref{eq:hopeb} holds for $p=1$, even with $c(1)=1$,
 immediately follows from Proposition~\ref{prop1} for $E=E_0$~(\ie (\ref{tail_precise})). We now assume that \eqref{eq:hopeb} holds for any $l\le p-1$, for some $p\ge 2$, and we show that \eqref{eq:hopeb} holds for $l=p$ as well. For this purpose we define
 the set
\begin{equation}
Z:=\left\{|z_i-z_j|\ge n^{-1/2+\omega}, \,\, \forall\, i\ne j\right\}\subset \C^p,
\end{equation}
for some $\omega=\omega(p,\epsilon)>0$ which we will choose later in the proof, and 
split the integral in~\eqref{eq:hopeb} as
\begin{equation}
  I_Z+ I_{Z^c}:=  \left(\int \ldots \int_Z+\int \ldots \int_{Z^c}\right)\P \big( \lambda^{z_1}_1 \leq E_0, \cdots, \lambda^{z_p}_1 \leq E_0  \big) \prod_{i=1}^p  \big|\Delta f(z_i)\big| \dd^2 z_i .
\end{equation}
On $Z$ we can use the upper bound in
Proposition~\ref{prop_zz}, followed by~\eqref{tail_bound}, to show that the probability essentially 
factorizes: 
\begin{equation}
\label{eq:news1}
I_Z \lesssim \int\ldots\int_Z \P\big(\lambda_1^{z_i}\le E_0+n^{-3/4-\gamma\omega}\big) \prod_{i=1}^p  \big|\Delta f(z_i)\big| \dd^2 z_i \lesssim  n^{-2p\gamma\omega}+n^{-2p\epsilon}.
\end{equation}
Here we used that on the support of $f$, by \eqref{f_cond}, we have
\begin{equation}\label{gooddecay}
\P(\lambda_i\le E_0+n^{-3/4-\gamma\omega})\lesssim n^{3/2}(E_0+n^{-3/4-\gamma\omega})^2e^{-\gamma_n/2(1+O(C_n/\gamma_n))}\lesssim n^{-1/2}\big( n^{-2\gamma\omega}+n^{-2\epsilon}\big),
\end{equation}
where we used the explicit form of $\gamma_n$ from~\eqref{gamma} and that $C_n/\sqrt{\log n}\to 0$ as $n\to \infty$. 
On $Z^c$ we may assume, 
without loss of generality that 
$|z_p-z_{p-1}|\le n^{-1/2+\omega}$, then we have 
\begin{equation}
\label{eq:news2}
I_{Z^c}\lesssim n^{2\omega}\int\ldots\int\P \big( \lambda^{z_1}_1 \leq E_0, \cdots, \lambda^{z_{p-1}}_1 \leq E_0  \big) \prod_{i=1}^{p-1}  \big|\Delta f(z_i)\big| \dd^2 z_i\lesssim n^{2\omega-c(p-1)\epsilon},
\end{equation}
where in the first inequality we integrated out $z_p$ by using that the volume of the region
 $|z_p-z_{p-1}|\le n^{-1/2+\omega}$ is of order $n^{-1+2\omega}$ and that $\lVert\Delta f\rVert_\infty\lesssim n$.
In the last inequality we used the induction hypothesis. Putting \eqref{eq:news1}--\eqref{eq:news2} together, we thus obtain
\begin{equation}
\label{eq:bsum3t}
\int\ldots\int \P \big( \lambda^{z_1}_1 \leq E_0, \cdots, \lambda^{z_p}_1 \leq E_0  \big)  \prod_{i=1}^p
 \big|\Delta f(z_i)\big|\dd^2 z_i\lesssim n^{-2p\gamma \omega}+n^{-2p\epsilon}+n^{2\omega-c(p-1)\epsilon}.
\end{equation}
We now show that we can choose $c(p)$ and $\omega=\omega(p,\epsilon)$ such that
\begin{equation}
\label{eq:impin}
\begin{cases}
2p\gamma\omega>c(p)\epsilon, \\
c(p-1)\epsilon-2\omega> c(p)\epsilon,
\end{cases}
\end{equation}
are both satisfied; this would complete the induction step of proving \eqref{eq:hopeb}. 
 Choosing $c(p)=\gamma^p$, we need to choose 
  $\omega=\omega(p,\epsilon)$  so that
  \begin{equation}
\frac{\gamma^{p-1}}{2p}<\frac{\omega}{\epsilon}<\frac{\gamma^{p-1}-\gamma^p}{2},
\end{equation} 
which is possible since $\frac{1}{2p} \gamma^{p-1} < \frac{1}{2}(\gamma^{p-1}-\gamma^p)$, recalling $p\ge 2$.
This concludes the proof of \eqref{eq:hopeb}.

\smallskip

{\bf Modifications in Step 6:}  Even though the derivatives of $\F$ have polynomial growth, the following finite moment bound
 \begin{align}\label{L_bound}
	\E|\wh \LL_f|^{p} \lesssim n^{c \epsilon p},
\end{align}
for some fixed constant $c>0$ independent of $p$, still allows us to perform a similar GFT argument as in Section~\ref{sec:L_0}. To prove (\ref{L_bound}), we apply integration by parts as in (\ref{int_by_part})
but  in both $\partial_z$ and $\partial_{\bar z}$, \ie
 \begin{align}\label{two_int_by_part}
 \wh \LL_{f} =&  -\frac{1}{4\pi}
 \int  f (z) \Delta_z \left[ \Big(\int_{\eta_c}^{T}  \Im \Tr \big( G^z(\ii \eta)-M^z(\ii \eta) \big) \dd \eta \Big)  q_z \right] \dd^2 z \nonumber\\
  =& \frac{\ii}{4\pi}
 \int f (z) \int_{\eta_c}^{T} \Delta_z\big[\Tr G^z(\ii \eta)\big] q_z \dd^2 z+O_\prec(n^{1+c\epsilon}) \int_{\mathrm{supp}(f)}\one_{E_0/2\leq \lambda_1^z \leq 3E_0/2},
 \end{align}
for some constant $c>0$, where we used $\Im G^{z}(\ii \eta)=-\ii G^{z}(\ii \eta)$, (\ref{M_zero}), the $z$-derivative bounds of $q_z$ in (\ref{deri_qz})-(\ref{lap_qz}) in combination with the local law estimate for $GF$ 
in (\ref{F0_estimate}).    Note that $\Delta_z\big[\Tr G^z(\ii \eta)\big]=4\Tr\big( G^z F G^z F^* G^z(\ii \eta)+G^z F^* G^z F G^z(\ii \eta)\big)= -4\ii \frac{\dd}{ \dd \eta}  \Tr \big(\gz F \gz F^*(\ii \eta)\big)$.
Thus the first term in (\ref{two_int_by_part}) almost vanishes, \ie
 \begin{align}\label{term_1}
 	\frac{\ii}{4\pi}
\int f (z) \int_{\eta_c}^{T} \Delta_z\big[\Tr G^z(\ii \eta)\big] q_z \dd^2 z=-\frac{1}{\pi}
\int f (z)\Big(\Tr G^z F G^z F^*(\ii\eta)\big|_{\eta=\eta_c}^{\eta=T}\Big) q_z \dd^2 z=O_\prec(n^{-100}),
\end{align}
with $\eta_c=n^{-L}$ for some large $L>100$, using the precise cancellation in (\ref{cancel_full}) due to the  chiral symmetry of $H^z$. Moreover,  the second term in (\ref{two_int_by_part}) can be controlled effectively  by a similar argument as in (\ref{eq:hopeb}), using the precise tails bound of $\lambda^z_1$ in (\ref{tail_bound}) together with the independence estimates in (\ref{lambdatail}). That is, we obtain from (\ref{two_int_by_part}) and (\ref{term_1}) that
\begin{align}\label{term_2}
	\E \big|\wh \LL_{f}\big|^{p}  \lesssim & \big(n^{1+c\epsilon}\big)^p  \int_{\mathrm{supp}(f) } \ldots
	 \int_{ \mathrm{supp}(f) } \P \big( \lambda^{z_1}_1 \leq 2E_0, \dots, \lambda^{z_p}_1 \leq 2E_0  \big) 
	\prod_{i=1}^p  \dd^2 z_i+n^{-100p}\lesssim  n^{c \epsilon p},
\end{align}
where we also used (\ref{eq:hopeb}) with the choice $c(p)=\gamma^p$ with small $\gamma>0$ from (\ref{lambdatail}). 
This, together with (\ref{two_int_by_part}), proves the finite moment bound in (\ref{L_bound}). 
Given this size bound, 
a similar GFT argument as in Theorem \ref{GFT} applies
 to any function $\mathcal{F}$ in (\ref{F_gen}) with derivatives of polynomial growth,
 concluding the proof of Theorem \ref{main_thm} for such test functions.

\bigskip

\appendix

\section{Technical results}
\label{app:addtechlem}

In this appendix we present the proof of several additional technical results used within the proofs of Section~\ref{sec:G1G2llaw}. Additionally, this appendix is divided into two main parts. In Section~\ref{sec:det} we prove several deterministic bounds for the deterministic approximation of the products of two resolvents, while in Section~\ref{sec:rand} we present the derivation of stochastic equations appearing in Section~\ref{sec:G1G2llaw} as well as a certain bound for the product of two resolvents.
 
 \subsection{Properties of stability operators}
 \label{sec:det}
 
 Recall the definition of the two--body stability operator from \eqref{eq:defstabop}.  The stability operator $\mathcal{B}_{12}$ has two non--trivial small eigenvalues\footnote{We use the standard convention that the complex square root $\sqrt{\cdot}$ is defined using the branch cut $\C\setminus (-\infty,0)$.} (see \cite[Appendix B]{CES22} for more details about the eigendecomposition of $\mathcal{B}_{12}$):
\begin{equation}
\label{eq:dedfevalues}
\beta_\pm=\beta_\pm(\eta_1,z_1,\eta_2,z_2):=1-u_1u_2\Re[z_1\overline{z_2}]\pm\sqrt{s}, \qquad\quad s:=m_1^2m_2^2-u_1^2u_2^2(\Im[z_1\overline{z_2}])^2;
\end{equation}
the other eigenvalues are all ones.  Note that the eigenvalues in \eqref{eq:dedfevalues} coincide with $\beta_{\pm,t}$ from \eqref{eq:defevmore} when evaluated along time--dependent spectral parameters, i.e. $\beta_{\pm,t}=\beta_\pm(\eta_{1,t},z_{1,t},\eta_{2,t},z_{2,t})$. 
The main input for most of the proofs in this section is the following size relation for $\beta_\pm$:  
\begin{lemma}
\label{lem:strongerb}
For any sufficiently small \nc $\epsilon_1,\epsilon_2>0$ and a large $C>0$, and fix $z_1,z_2\in \C$, $\eta_1,\eta_2\ne 0$ such that $1\le |z_i|\le C$, $|z_1-z_2|\le \epsilon_1$, and $|\eta_i|\le \epsilon_2$. Let $\beta_\pm$ be defined as in \eqref{eq:dedfevalues}, then we have
\begin{equation}
\big|\beta_\pm\big|\sim \gamma = \nc |z_1-z_2|+\frac{|\eta_1|}{\rho_1}+\frac{|\eta_2|}{\rho_2}.
\end{equation}
\end{lemma}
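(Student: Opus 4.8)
\textbf{Proof strategy for Lemma~\ref{lem:strongerb}.}

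The plan is to treat the two regimes of $\gamma$ separately and then combine them. Throughout, write $\delta := |z_1 - z_2|$, recall that $m_i = m^{z_i}(\ii\eta_i)$ is purely imaginary on the imaginary axis, say $m_i = \ii\pi\rho_i\,\mathrm{sgn}(\eta_i)$, and $u_i = u^{z_i}(\ii\eta_i)$ is real; from the cubic equation \eqref{m_function} one has the elementary identities $u_i = m_i/(\ii\eta_i + m_i)$ and the quantitative control $\rho_i \sim \eta_i^{1/3} + ||z_i|^2-1|^{1/2}$ from \eqref{rho}, which since $|z_i|\ge 1$ and $|\eta_i|$ small gives $\rho_i \sim \eta_i^{1/3}$ when $|z_i|$ is extremely close to $1$ and $\rho_i\sim||z_i|^2-1|^{1/2}$ otherwise; in all cases $\eta_i/\rho_i \lesssim \eta_i^{2/3}\ll 1$. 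First I would expand $\beta_\pm = (1 - u_1 u_2\Re[z_1\overline{z_2}]) \pm \sqrt{s}$ and note that since $\epsilon_1,\epsilon_2$ are small, all of $u_i, \eta_i/\rho_i, \delta$ are small, so $\beta_\pm$ is a small perturbation of $1 - u_1u_2\Re[z_1\overline z_2] \pm m_1 m_2$ (to leading order $s \approx m_1^2 m_2^2$ because the $(\Im[z_1\overline z_2])^2 u_1^2u_2^2$ correction is of order $\delta^2 u_1^2 u_2^2$, genuinely lower order unless $u_i$ and $\rho_i$ degenerate together, a case handled below).

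The core computation is to show $1 - u_i^2|z_i|^2 - (\text{something}) \sim \eta_i/\rho_i$; concretely, differentiating or directly manipulating \eqref{m_function} one gets the key scalar identity $1 - u_i^2|z_i|^2 + m_i^2 = \ii\eta_i m_i^{-1}(1 - u_i^2|z_i|^2)$ or an equivalent relation expressing $\beta_\pm$ evaluated at $z_1=z_2$, $\eta_1=\eta_2$ in terms of $\eta_i/\Im m_i = \eta_i/(\pi\rho_i)$. This is exactly the mechanism by which the cusp produces the stability degeneracy, and it is recorded (for the single-resolvent $z_1=z_2=z$ case) in the discussion around \eqref{eq:asympev} / Lemma~\ref{lem:nclfp}; here I would redo it keeping $z_1\neq z_2$. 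For the \emph{upper bound} $|\beta_\pm|\lesssim\gamma$: Taylor-expand in $\delta = z_1 - \overline{z_2}$-type differences around the diagonal point, using $|z_i|\ge 1$ and the diagonal estimate to absorb the $\eta_i/\rho_i$ contributions, and the Lipschitz dependence of $m_i, u_i$ on $z_i$ (with derivative bounded because $|\eta_i|>0$, but more carefully bounded using \eqref{rho} so that the bound is $\lesssim\delta$, not $\lesssim\delta/\eta$) to absorb the $\delta$ contributions. For the \emph{lower bound} $|\beta_\pm|\gtrsim\gamma$: this is the delicate direction. Split according to which term in $\gamma = \delta + \eta_1/\rho_1 + \eta_2/\rho_2$ dominates. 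If one of the $\eta_i/\rho_i$ dominates, the diagonal identity shows $\Re\beta_\pm$ or $|\beta_\pm|$ is bounded below by a constant times $\eta_i/\rho_i$; the off-diagonal corrections are smaller and cannot cancel it. If $\delta$ dominates, one must show $\sqrt s$, equivalently the discriminant, or the gap between $\beta_+$ and $\beta_-$ (which is $2\sqrt s$), is $\gtrsim\delta$: here I would use $\beta_+ - \beta_- = 2\sqrt{m_1^2m_2^2 - u_1^2u_2^2(\Im[z_1\overline z_2])^2}$ together with $|\Im[z_1\overline z_2]| \sim \delta$ when $\delta$ dominates and $|z_i|\approx 1$ (since $\Im[z_1\overline{z_2}] = \Im[(z_1 - z_2)\overline{z_2}]$ has size comparable to $\delta$ unless $z_1 - z_2$ is nearly radial, in which case $\Re[z_1\overline z_2]$ carries the $\delta$ and $1 - u_1u_2\Re[z_1\overline z_2]$ is bounded below). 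The radial-versus-tangential dichotomy of $z_1 - z_2$ is the combinatorial heart of the lower bound and must be done by hand.

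The main obstacle I expect is precisely this lower bound in the regime where $\delta$ dominates \emph{and simultaneously} $\rho_i$ is as small as $\eta_i^{1/3}$ (the genuine cusp, $|z_i|$ within $n^{-1/2}$ of the unit circle): there $u_i^2 u_2^2(\Im[z_1\overline z_2])^2$ and $m_1^2 m_2^2$ could a priori be comparable, threatening cancellation in $s$. Resolving this requires a careful lower bound on $|m_1^2m_2^2 - u_1^2u_2^2(\Im[z_1\overline z_2])^2|$, using that $m_i$ is purely imaginary (so $m_i^2 < 0$ real) while $u_i^2(\Im[z_1\overline z_2])^2 > 0$ is real and positive — hence $m_1^2 m_2^2 > 0$ and $u_1^2 u_2^2(\Im[z_1\overline z_2])^2 > 0$ are \emph{both positive reals}, so their difference could genuinely vanish; one then needs the \emph{sign} information and a quantitative separation, perhaps by showing $\pi^2\rho_1\rho_2 > |u_1 u_2||\Im[z_1\overline z_2]|$ via $|u_i| = \rho_i/|\ii\eta_i/\pi + \rho_i\,\mathrm{sgn}(\eta_i)|\cdot\pi \le \pi$ is too crude and must be replaced by the sharper $|u_i|\le 1/|z_i|^2 \le 1$ valid outside the disc, combined with $\rho_i \gtrsim \delta^{1/3}$ in that subregime (since $||z_i|^2 - 1|$ and $|\eta_i|$ are both then controlled in terms of $\delta$ along the relevant characteristics). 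Once that quantitative non-degeneracy is in place, the rest is bookkeeping; I would relegate the lengthy case analysis to the remainder of this appendix, as the authors evidently do.
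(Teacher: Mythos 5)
Your expansions, the identity $u_i=|z_i|^{-2}-\pi^2\rho_i^2+O(\rho_i^4)$, the relation $\eta_i/\rho_i\sim(|z_i|^2-1)+\rho_i^2$, and the radial-versus-tangential dichotomy for $z_1-z_2$ are all the right ingredients and match what the paper uses. But the strategy you propose for the lower bound when $\delta=|z_1-z_2|$ dominates --- showing the gap $\beta_+-\beta_-=2\sqrt{s}$ is $\gtrsim\delta$ --- targets the wrong quantity, and would not close. When $s=m_1^2m_2^2-u_1^2u_2^2(\Im[z_1\overline{z_2}])^2$ vanishes (which, as you observe, can genuinely happen since both summands are positive reals), $\beta_+=\beta_-$ and there is no gap; yet the lemma is still true, with the common value $1-u_1u_2\Re[z_1\overline{z_2}]$ carrying the full lower bound through the $\rho_1^2+\rho_2^2$ contribution. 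Conversely, when $s<0$ the $\beta_\pm$ are complex conjugates, and there the imaginary part $|\sqrt{s}|$ is exactly what saves you. Your proposed remedy --- force $s>0$ by proving $\pi^2\rho_1\rho_2>|u_1u_2|\,|\Im[z_1\overline{z_2}]|$ --- is unavailable because $s<0$ is a bona fide subregime, and the auxiliary input $\rho_i\gtrsim\delta^{1/3}$ you invoke is simply not true in general; $\rho_i$ and $|z_1-z_2|$ are independent parameters.

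The clean route, which sidesteps all sign cases, is to bound the \emph{product} $\beta_+\beta_-$. Because $m_i$ is purely imaginary on the imaginary axis, $m_1^2m_2^2=\pi^4\rho_1^2\rho_2^2$ is a positive real, so
\begin{equation}
\beta_+\beta_-=\bigl[\,1-u_1u_2\Re[z_1\overline{z_2}]-\pi^2\rho_1\rho_2\bigr]\bigl[\,1-u_1u_2\Re[z_1\overline{z_2}]+\pi^2\rho_1\rho_2\bigr]+u_1^2u_2^2(\Im[z_1\overline{z_2}])^2,
\end{equation}
a sum of two non-negative pieces: from the expansion of $u_i$ and $\Re[z_1\overline{z_2}]=\tfrac12(|z_1|^2+|z_2|^2-|z_1-z_2|^2)$ one finds $1-u_1u_2\Re[z_1\overline{z_2}]\gtrsim(|z_1|^2+|z_2|^2-2)+\rho_1^2+\rho_2^2\ge\pi^2\rho_1\rho_2$, so both factors in the first product are $\gtrsim(|z_1|^2+|z_2|^2-2)+\rho_1^2+\rho_2^2$. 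Your radial/tangential dichotomy then gives $\beta_+\beta_-\gtrsim\bigl((|z_1|^2+|z_2|^2-2)+\rho_1^2+\rho_2^2\bigr)^2+|z_1-z_2|^2\sim\gamma^2$, using \eqref{eq:imprelder} to recognize the $\eta_i/\rho_i$ terms. Combined with the easy upper bound $|\beta_\pm|\lesssim\gamma$ from the same Taylor expansion, this yields $|\beta_\mp|=\beta_+\beta_-/|\beta_\pm|\gtrsim\gamma$ for each sign, with no need to control $\sqrt{s}$ or discuss whether $s$ is positive, zero, or negative.
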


\begin{proof}[Proof of Lemma~\ref{lem:strongerb}]

Note that by $-u_i=m_i^2-|z|^2u_i^2$ (which is equivalent to \eqref{m_function}) it readily follows (recall $\rho_i:=\pi^{-1}|\Im m_i|$)
\begin{equation}
\label{eq:relu}
u_i=\frac{1}{|z_i|^2}-\pi^2\rho_i^2+O\left(\rho_i^4\right).
\end{equation}
Additionally, we note that
\begin{equation}
\label{eq:gain2}
\Re[z_1\overline{z_2}]=\frac{|z_1|^2+|z_2|^2-|z_1-z_2|^2}{2}.
\end{equation}
Using \eqref{eq:relu} in the first equality and \eqref{eq:gain2}, with $||z_1|-|z_2||\le |z_1-z_2|$, in the second one, we then compute
\begin{equation}
\begin{split}
\label{eq:impexp}
1-u_1u_2\Re[z_1\overline{z_2}]&=1-\frac{\Re[z_1\overline{z_2}]}{|z_1z_2|^2}+\frac{\pi^2\rho_1^2}{|z_2|^2}\Re[z_1\overline{z_2}]+\frac{\pi^2\rho_2^2}{|z_1|^2}\Re[z_1\overline{z_2}]+O(\rho_1^4+\rho_2^4)\\
&=1-\frac{|z_1|^2+|z_2|^2}{2|z_1z_2|^2}+\big[\pi^2\rho_1^2+\pi^2\rho_2^2\big](1+O(|z_1-z_2|))+\frac{|z_1-z_2|^2}{2|z_1z_2|^2}+O(\rho_1^4+\rho_2^4).
\end{split}
\end{equation}


Then, using that $1\le |z_i|\le C$, $u_i\sim 1$, and that
\begin{equation}
\label{eq:trrel}
1-\frac{|z_1|^2+|z_2|^2}{2|z_1z_2|^2}=\frac{|z_1|^2(|z_2|^2-1)+|z_2|^2(|z_1|^2-1)}{2|z_1z_2|^2}\sim |z_1|^2+|z_2|^2-2,
\end{equation}
this implies 
\begin{equation}
\begin{split}
\label{eq:rel}
\beta_+\beta_-&=\big[1-u_1u_2\Re[z_1\overline{z_2}]\big]^2-m_1^2m_2^2+u_1^2u_2^2(\Im[z_1\overline{z_2}])^2 \\
&=\left[1-u_1u_2\Re[z_1\overline{z_2}]-\pi^2\rho_1\rho_2\right]\left[1-u_1u_2\Re[z_1\overline{z_2}]+\pi^2\rho_1\rho_2\right]+u_1^2u_2^2(\Im[z_1\overline{z_2}])^2 \\
& \gtrsim \left(|z_1|^2+|z_2|^2-2+\rho_1^2+\rho_2^2\right)^2+(\Im[z_1\overline{z_2}])^2 \\
&\gtrsim \left(|z_1|^2+|z_2|^2-2+\rho_1^2+\rho_2^2\right)^2+|z_1-z_2|^2.
\end{split}
\end{equation}
In the last inequality we used that 
\begin{equation}
\label{eq:missstep}
\left(|z_1|^2+|z_2|^2-2+\rho_1^2+\rho_2^2\right)^2+(\Im[z_1\overline{z_2}])^2\gtrsim |z_1-z_2|^2.
\end{equation}
whose proof is postponed to the end.

On the other hand, using again \eqref{eq:impexp}--\eqref{eq:trrel} and that $|\Im [z_1\overline{z_2}]|\lesssim |z_1-z_2|$, we readily obtain
\begin{equation}
\label{eq:opprel}
|\beta_\pm|\lesssim |z_1|^2+|z_2|^2-2+\rho_1^2+\rho_2^2+|z_1-z_2|.
\end{equation}

Combining \eqref{eq:rel}--\eqref{eq:opprel} we finally obtain
\begin{equation}
\label{eq:boundbpm}
\big|\beta_\pm\big|\gtrsim |z_1|^2+|z_2|^2-2+|z_1-z_2|+\rho_1^2+\rho_2^2.
\end{equation}
Noticing that by \eqref{m_function} it readily follows
\begin{equation}
\label{eq:imprelder}
\frac{\eta_i}{\Im m_i}=|z_i|^2-1+\pi^2\rho_i^2|z_i|^4+O(\rho_i^4),
\end{equation}
and combining \eqref{eq:opprel}--\eqref{eq:boundbpm}, we conclude the proof of Lemma~\ref{lem:strongerb}
modulo the proof of~\eqref{eq:missstep} that we present now.

Assume that $1\le |z_1|\le  |z_2|$. By rotational symmetry we can assume that $z_1=r\in\R_+$ and write $z_2=r+\varepsilon e^{\ii\theta}$, with $\varepsilon\ge 0$ and $\theta\in [0,2\pi)$. Then we have
\begin{equation}
\label{eq:bneedlast}
\left(|z_1|^2+|z_2|^2-2+\rho_1^2+\rho_2^2\right)^2+(\Im[z_1\overline{z_2}])^2\ge \big(2r^2-2+\varepsilon^2+2r\varepsilon \cos\theta\big)^2+\big(\varepsilon r\sin \theta\big)^2.
\end{equation}

We now distinguish two cases: i) $|\sin \theta|\ge \delta$, ii) $|\sin \theta|< \delta$, for some small $\delta>0$ we will choose shortly. In case i) we readily conclude
\[
\big(\varepsilon r\sin \theta\big)^2\gtrsim \varepsilon^2.
\]

For case ii), we notice that $|\sin \theta|< \delta$ implies $|\cos \theta|\ge 1-\delta^2$. Additionally, since we assume that $|z_1|\le |z_2|$ and $|z_1-z_2|\le \epsilon_1$, if $\delta$ is chosen so that $10\epsilon_1\le\delta\le 1/10$ then this also implies that $\cos\theta>0$. We thus have
\[
\big(2r^2-2+\varepsilon^2+2r\varepsilon \cos\theta\big)^2\ge \varepsilon^2 (\cos\theta)^2\gtrsim \varepsilon^2.
\]
Recalling that $\varepsilon^2=|z_1-z_2|^2$, this concludes the proof of~\eqref{eq:missstep}, hence
Lemma~\ref{lem:strongerb}.

\end{proof}

\begin{proof}[Proof of Lemma~\ref{lem:propertms}]
By explicit computations we find that
\begin{equation}
\big((\mathcal{B}_{12}^{-1})^*[E_\pm]\big)^*=\frac{1}{1+|z_1z_2|^2u_1^2u_2^2-m_1^2m_2^2-2u_1u_2\Re[z_1\overline{z_2}]}\left(\begin{matrix}
1-\overline{z_1}z_2 u_1u_2\pm m_1m_2 & 0 \\
0 & m_1m_2\pm(1-z_1\overline{z_2} u_1u_2)
\end{matrix}\right).
\end{equation}

Using that (here $*$ denotes some explicit term that we do not need to compute)
\[
M_1E_\pm M_2=\left(\begin{matrix}
m_1m_2\pm z_1\overline{z_2}u_1u_2 & * \\
* & \pm m_1m_2+\overline{z_1}z_2u_1u_2
\end{matrix}\right),
\]
we thus obtain
\begin{equation}
-\langle M_{12}^{E_-} E_+\rangle=\langle M_{12}^{E_+} E_-\rangle=\frac{\ii \Im[z_1\overline{z_2}]u_1u_2}{1+|z_1z_2|^2u_1^2u_2^2-m_1^2m_2^2-2u_1u_2\Re[z_1\overline{z_2}]}=:\ii  b(\eta_1,z_1,\eta_2,z_2).
\end{equation}
Notice that $b(\eta_1,z_1,\eta_2,z_2)$ is  real  by definition. The  denominator above  can also be written as
\begin{equation}
\label{eq:verusebetapm}
\beta_+\beta_-=1+|z_1z_2|^2u_1^2u_2^2-m_1^2m_2^2-2u_1u_2\Re[z_1\overline{z_2}]>0,
\end{equation}
where the last inequality follows by \eqref{eq:rel}. In particular, \eqref{eq:verusebetapm} shows that even if $\beta_+,\beta_-$ may not be real their product is always real and positive. This proves the first relation in the second line of \eqref{eq:rel12}. 

By explicit calculation and using  \eqref{eq:verusebetapm},
we also obtain 
\begin{equation}
\label{eq:superusinfo}
\begin{split}
\pm \langle M_{12}^{E_\pm} E_\pm\rangle&=-1+\frac{\pm m_1m_2+1-\Re[\overline{z_1}z_2]u_1u_2}{1+|z_1z_2|^2u_1^2u_2^2-m_1^2m_2^2-2u_1u_2\Re[z_1\overline{z_2}]} 
=-1+\frac{\pm m_1m_2+(\beta_++\beta_-)/2}{\beta_+\beta_-} 
\end{split}
\end{equation}
hence, recalling that $|\beta_\pm|\sim \gamma$ by Lemma~\ref{lem:strongerb}, we have
\begin{equation}
|\langle M_{12}^{E_\pm} E_\pm\rangle|
\lesssim \frac{\rho_1\rho_2}{\beta_+\beta_-}+\frac{1}{\gamma} \lesssim\frac{1}{\gamma}.
\end{equation}
 In the last inequality we used
\[
\frac{\rho_1\rho_2}{\beta_+\beta_-}\sim \frac{\rho_1\rho_2}{\gamma^2} \le \frac{1}{\gamma}\cdot \frac{(\rho_1\rho_2)^{3/2}}{|\eta_1\eta_2|^{1/2}} \lesssim \frac{1}{\gamma},
\]
where we also used  the lower bound $\gamma\ge \sqrt{|\eta_1\eta_2|/(\rho_1\rho_2)}$ in the second inequality,  and that $\rho_i\lesssim |\eta_i|^{1/3}$ for $|z_i|\ge 1$ in the last inequality. This proves the bounds in the first line of \eqref{eq:rel12}.  Note that \eqref{eq:superusinfo} also shows \eqref{eq:neednow}
since $-\sgn (m_1 m_2) = \sgn( \Im m_1 \Im m_2)= \sgn (\eta_1\eta_2)  $.

 In order to conclude the remaining bound in the second line of \eqref{eq:rel12} we estimate
\[
 |b(\eta_1,z_1,\eta_2,z_2)|=\left|\frac{\Im[z_1\overline{z_2}]u_1u_2}{\beta_+\beta_-}\right|
 \lesssim \frac{|z_1-z_2|}{\gamma^2} \le \frac{1}{\gamma},
\]
 using  $ |\Im[z_1\overline{z_2}]| \lesssim |z_1-z_2|$ and then 
 $\gamma\ge |z_1-z_2|$ in the last step.
\end{proof}

\begin{proof}[Proof of Lemma~\ref{lem:nclfp}]
From the definition of $a_t$ in \eqref{eq:defab}, it follows 
(for both cases $\eta_{1,t}\eta_{2,t}<0$ and $\eta_{1,t}\eta_{2,t}>0$)\nc
\begin{equation}
\begin{split}
a_t&=\frac{|m_{1,t}m_{2,t}|+m_{1,t}^2m_{2,t}^2+\Re[z_{1,t}\overline{z_{2,t}}]u_{1,t}u_{2,t}-|z_{1,t}z_{2,t}|^2u_{1,t}^2u_{2,t}^2}{1+|z_{1,t}z_{2,t}|^2u_{1,t}^2u_{2,t}^2-m_{1,t}^2m_{2,t}^2-2u_{1,t}u_{2,t}\Re[z_{1,t}\overline{z_{2,t}}]} \\
&=\frac{|m_{1,t}m_{2,t}|}{|\beta_{+,t}\beta_{-,t}|}-\frac{1}{2}\partial_t\log|\beta_{+,t}\beta_{-,t}|,
\end{split}
\end{equation}
 where we used \eqref{eq:verusebetapm} and that by Lemma~\ref{lem:usinf} we have \nc
\begin{equation}\label{mu}
m_{i,t}=e^{t/2}m_{i,0}, \qquad\quad u_{i,t}=e^tu_{i,0},\qquad\quad z_{i,t}=e^{-t/2}z_{i,0}.
\end{equation}
This concludes the proof of \eqref{eq:impeqpm}.  The asymptotics in \eqref{eq:asympev} immediately follows from 
$\beta_{\pm,t}=\beta_\pm(\eta_{1,t},z_{1,t},\eta_{2,t},z_{2,t})$ and Lemma~\ref{lem:strongerb}.
\end{proof}

 \begin{proof}[Proof of \eqref{local_2g_M}] 
In Lemma~\ref{lem:propertms} we already prove bounds 
$|\langle M_{12}^{A} E_\pm\rangle|\lesssim 1/\gamma$ for $A\in\{E_+,E_-\}$. Completely
analogous calculations give the same bound for $|\langle M_{12}^{A} F^{(*)}\rangle|$.
Since $M_{12}^{E_\pm}$  is block-constant matrix and $\{ E_+, E_-, F, F^*\}$ 
is a basis in the $2\times 2$ matrices, this proves
\begin{equation}
\label{eq:boundspecA}
\left\lVert M_{12}^A\right\rVert \lesssim \frac{1}{|\beta_+|}+\frac{1}{|\beta_-|}\sim \frac{1}{\gamma}
\end{equation}
for $A\in\{E_+,E_-\}$.
Note that so far we proved \eqref{eq:boundspecA} only when $A\in\{E_+,E_-\}$
 which is the case needed in this paper.  
For completeness we mention that to prove the bound for general deterministic matrices $A\in\C^{2n\times 2n}$ we need to rely on the spectral decomposition of the non--Hermitian operator $\mathcal{B}_{12}$ similarly to what it was done in 
\cite[Eq. (5.20)]{CES22} for the bulk regime; the only difference is that now we would need to project the general matrix $A$ into the two--dimensional subspace corresponding to the eigenvalues $\beta_\pm$, whilst in the bulk regime only $\beta_-$ matters (i.e. we had projection into a one--dimensional subspace). We do not write the details of this 
elementary proof as it is not needed for our analysis.
\end{proof}

\begin{proof}[Proof of Lemma~\ref{lem:chirid}]
We present the proof of the first two equalities in \eqref{eq:identities}, the proof of the remaining ones is analogous and so omitted.  In  this proof we will omit the $z,\eta$--dependence of the various quantities, as in Lemma~\ref{lem:chirid} all the quantities are evaluated at $\eta_1=\eta_2=\eta$ and $z_1=z_t=z$; in particular, we use $\mathcal{B}_{12}=\mathcal{B}$. \nc By the chiral symmetry of $W-\Lambda$ we have $GE_-=-E_-G^*$. This follows by spectral decomposition
\[
E_-GE_-=\sum_i\frac{E_-\bm{w}_i\bm{w}_i^* E_-}{\lambda_i-\ii\eta}=\sum_i\frac{\bm{w}_{-i}\bm{w}_{-i}^*}{\lambda_i-\ii\eta}=\sum_i\frac{\bm{w}_i\bm{w}_i^*}{\lambda_{-i}-\ii\eta}=-\sum_i\frac{E_-\bm{w}_i\bm{w}_i^* E_-}{\lambda_i+\ii\eta}=-G^*,
\]
where we used that $E_-\bm{w}_i=\bm{w}_{-i}$ and that $\lambda_{-i}=-\lambda_i$. Noticing that $E_-^2=E_+$ this shows $GE_-=-E_-G^*$. Then we compute
 \begin{equation}
\label{eq:impcanc}
\langle GFGE_-\rangle=-\langle G FE_- G^*\rangle=\frac{1}{\eta}\langle \Im GF\rangle=0,
\end{equation}
where we used $GE_-=-E_-G^*$ in the first equality and that $\Im G_t$ is a diagonal matrix, which again follows by the chiral symmetry, in the last equality.

Next, we compute
\begin{equation}
\label{eq:MFM}
MFM=\left(\begin{matrix}
-\overline{z}um & m^2 \\
-\overline{z}^2u^2 & -\overline{z}um
\end{matrix}\right)
\end{equation}
using \eqref{Mmatrix}. Since the diagonal blocks are the same, and the same remains true once we compute 
$\mathcal{B}^{-1}[MFM]$
this shows that $\langle M^FE_-\rangle=0$.

\end{proof}

\nc

\begin{proof}[Proof of Lemma~\ref{lem:Mbounds}]

We start noticing that
\begin{equation}
(\mathcal{B}^{-1})^*[F]=\left(\begin{matrix}
\frac{m(1-|z|^2u^2)\overline{z}u+m^3\overline{z}u}{\beta_+\beta_-} & 1\\
0 & \frac{m(1-|z|^2u^2)\overline{z}+m^3\overline{z}u}{\beta_+\beta_-}
\end{matrix}\right).
\end{equation}

Recall that $m=\ii\pi \rho$, and from \eqref{rho} that
\[
\rho\sim \frac{\eta}{|z|^2-1+\eta^{2/3}}, \qquad 1-u= \frac{\eta}{\eta+\Im m}.
\]
Then, this implies
\[
1-|z|^2u^2+m^2=1-u =\frac{\eta}{\Im m}\left(1+O\left(\frac{\eta}{\rho}\right)\right),
\]
where we used $-u=m^2-|z|^2u^2$ (this follows by \eqref{m_function}) in the first equality. Using the definition of $\beta_\pm$ in \eqref{eq:dedfevalues}, by similar computations, this implies that
\[
\beta_\pm\ge \frac{\eta}{\Im m}\left(1-C\frac{\eta}{\rho}\right),
\]
for some $C>0$. We thus conclude that taking the absolute value entry--wise we have
\begin{equation}
(\mathcal{B}^{-1})^*[F]\dot{\leq} \left(\begin{matrix}
\frac{\Im m^2}{\eta}+C_1\rho & C_2\\
0 &\frac{\Im m^2}{\eta}+C_1\rho
\end{matrix}\right),
\end{equation}
for some constants $C_1,C_2>0$, where we used that $|zu|\le 1$. Here the notation $\dot{\leq}$ denotes that the inequality holds entry--wise. This concludes the proof  of the first two bounds in \eqref{eq:addbneedef} \nc using \eqref{eq:MFM} and that
\[
M^2=\left(\begin{matrix}
m^2+|z|^2u^2 & -2zum \\
-2\overline{z}um & m^2+|z|^2u^2
\end{matrix}\right),
\]
together with $\rho^3/\eta\gtrsim\rho^2$. The third (and last) bound in \eqref{eq:addbneedef} is a 
special case of \eqref{local_2g_M} as $z_1=z_2$ and $\eta_1=\eta_2$, which also imply $\Im m_1=\Im m_2$.
\end{proof}

 \subsection{Derivation of equation and bounds for products of resolvents}
 \label{sec:rand}

\begin{proof}[Proof of \eqref{eq:absvalbound}]

Recall that $G_{i,t}=G_t^{z_{i,t}}(\eta_{i,t})$ and that $t\le \tau$, with $\tau$ being the stopping time defined in \eqref{eq:deftildetau}.  The main input to prove \eqref{eq:absvalbound} is the following integral representation which expresses $|G^z|$ in terms of its imaginary parts (see e.g. \cite[Eq. (5.4)]{Optimal_local}):
\begin{equation}
\label{eq:absv}
\big|G^z(\ii\eta)\big|=\frac{2}{\pi}\int_0^\infty \frac{\Im G^z\big(\ii\sqrt{v^2+\eta^2}\big)}{\sqrt{v^2+\eta^2}}\,\dd v
= \frac{2}{\pi}\int_0^{n^{100}} \frac{\Im G^z\big(\ii\sqrt{v^2+\eta^2}\big)}{\sqrt{v^2+\eta^2}}\,\dd v+ O(n^{-10}).
\end{equation}
We point out that the same integral representation holds true for the deterministic approximation of $\big|G^z(\ii\eta)\big|$.
The integration regime $v\ge n^{100}$ can be easily estimated by an error 
$n^{-10}$ in operator norm; we omit this cutoff for brevity, but in the application below it is important that 
the $s$-integration is always on a finite regime $v\le n^{100}$.

In order to estimate all the $v$--regimes in our application of \eqref{eq:absv} below, we need to ensure a bound for $G_t^{z_{1,t}}(\ii\eta_1)G_t^{z_{2,t}}(\ii\eta_2)$ for all $\eta_i\in [\eta_{i,t},n^{100}]$ and not just $\eta_i =  \eta_{i,t}$.
The more delicate regime, when both $\eta_i$'s are such that $\eta_i\le \omega_1/2$ for some small fixed 
constant $\omega_1$,  has been covered in \eqref{eq:ufhig}. If at least for one $\eta_i$ we have $\eta_i\gtrsim 1$ then it is easy to see (similarly to \cite[Appendix B]{Optimal_local}) that we get the bound $1/(n\eta_*)$, with $\eta_*:=\eta_1\wedge \eta_2$,
 which is even stronger than our target $1/(\sqrt{n\ell_t}\gamma_t)$\nc. We thus obtain
\begin{equation}
\label{eq:ufhig2}
\sup_{\eta_{i,t}\le\widehat{\eta}_i\le n^{100}}(n\widehat{\ell}_t)^{1/2}\widehat{\gamma}_t
\big|\langle \big(G_t^{z_{1,t}}(\ii\widehat{\eta}_1)A_1G_t^{z_{2,t}}(\ii\widehat{\eta}_2)-
M_{12}^{A_1}(\widehat{\eta}_1,z_{1,t},\widehat{\eta}_2,z_{2,t})\big)A_2\rangle\big|\le n^{2\xi}.
\end{equation}

Define $\eta_{2,t,v}:=\sqrt{\eta_{2,t}^2+v^2}$, then we estimate
\begin{equation}
\label{eq:absintrep}
\begin{split}
\langle \Im G_{1,t}A_1|G_{2,t}|A_1^*\rangle&=\frac{2}{\pi}\int_0^{n^{100}} \langle \Im G_{1,t}A_1 \Im  G_t^{z_{2,t}}(\ii\eta_{2,t,v})A_1^*\rangle\, \frac{\dd v}{\eta_{2,t,v}} \\
&\lesssim\int_0^{n^{100}}\left(\frac{1}{\gamma_t}+\frac{n^{2\xi}}{\sqrt{n\ell_t}\gamma_t}\right)\, \frac{\dd v}{\eta_{2,t,v}} \\
&\lesssim \frac{\log n}{\gamma_t}+\frac{n^{2\xi}\log n}{\sqrt{n\ell_t}\gamma_t},
\end{split}
\end{equation}
where in the first inequality we used that $(\eta_{1,t}\rho_{1,t})\wedge (\eta_{2,t,v}/\rho^{z_{2,t}}(\ii\eta_{2,t,v}))\ge \ell_t$ and that
\[
\frac{1}{|z_{1,t}-z_{2,t}|+(\eta_{1,t}/\rho_{1,t})+(\eta_{2,t,v}/\rho^{z_{2,t}}(\ii\eta_{2,t,v}))}\lesssim \frac{1}{|z_{1,t}-z_{2,t}|+(\eta_{1,t}/\rho_{1,t})+(\eta_{2,t}/\rho_{2,t})}\sim \frac{1}{\gamma_t}
\]
as a consequence of $\eta_{2,t,v}/\rho^{z_{2,t}}(\ii\eta_{2,t,v})\ge\eta_{2,t}/\rho_{2,t}$.

\end{proof}

\begin{proof}[Derivation of \eqref{eq:flowbefchar} and \eqref{eq:k=1}]

We present only the proof of \eqref{eq:flowbefchar}. The proof of \eqref{eq:k=1} is completely analogous (in fact simple since it involves only one resolvent) and so omitted.

Denote $R_t:=\langle G_{1,t}A_1G_{2,t}A_2\rangle$. We now see $R_t$ as a function of the matrix valued stochastic process $W_t$ from \eqref{eq:bigstoch}. Then, by It\^{o}'s formula we have\footnote{For two Brownian motions $b_{1,t}, b_{2,t}$ by $\dd\langle b_{1,t},b_{2,t}\rangle$ we denote its covariation process.}
\begin{equation}
\label{eq:itonew}
\begin{split}
\dd R_t&=\sum_{a,b=1}^{2n} (\partial_{ab} R_t)(\dd W_t)_{ab} +\frac{1}{2}\sum_{a,b,c,d,=1}^{2n} \partial_{ab}\partial_{cd} R_t \dd \langle \mathfrak{B}_{ab,t}, \mathfrak{B}_{cd,t}\rangle \\
&\quad+\sum_{i=1}^2\big[(\partial_tz_{i,t})\partial_{z_{i,t}}R_t+(\partial_t\overline{z_{i,t}})\partial_{\overline{z_{i,t}}}R_t\big]\dd t+\sum_{i=1}^2(\partial_t\eta_{i,t})\partial_{\eta_{i,t}}R_t\dd t.
\end{split}
\end{equation}
Next, we notice
\begin{equation}
\label{eq:qvbms}
\dd \langle \mathfrak{B}_{ab,t}, \mathfrak{B}_{cd,t}\rangle=\frac{1}{n}\bm1(1\le a\le n, n+1\le b\le 2n)\bm1(n+1\le a\le 2n,  1\le b\le 2n)\delta_{ad}\delta_{bc}\dd t.
\end{equation}
As a consequence of the block structure of $W_t$, we will consider $(a,b)$-summations for indices running only for indices $1\le a\le n, n+1\le b\le 2n$ and $n+1\le a\le 2n,  1\le b\le 2n$. For these sums we introduce the short-hand notation $\sum_{ab}^*$. Using \eqref{eq:bigflow} and \eqref{eq:qvbms} in \eqref{eq:itonew} we obtain
\begin{equation}
\label{eq:impeqnew}
\begin{split}
\dd R_t&=-\frac{1}{2}\sum_{a,b=1}^{2n} (\partial_{ab} R_t)(W_t)_{ab} \dd t+\sum_{a,b=1}^{2n} (\partial_{ab} R_t)\frac{(\dd \mathfrak{B}_t)_{ab}}{\sqrt{n}}+\frac{1}{2n}\sum_{ab}^* \partial_{ab}\partial_{ba} R_t \dd t\\
&\quad+\sum_{i=1}^2\big[(\partial_tz_{i,t})\partial_{z_{i,t}}R_t+(\partial_t\overline{z_{i,t}})\partial_{\overline{z_{i,t}}}R_t\big]\dd t+\sum_{i=1}^2(\partial_t\eta_{i,t})\partial_{\eta_{i,t}}R_t\dd t.
\end{split}
\end{equation}
The second term in the RHS of \eqref{eq:impeqnew} gives the stochastic term in \eqref{eq:flowbefchar}. We are thus left with computing the remaining ones. We now repeatedly use $\partial_{ab}G_{i,t}=-G_{i,t}\Delta^{ab}G_{i,t}$. Here $\Delta^{ab}$ denotes a $2n\times 2n$ matrix whose only non-zero entry is the $(a,b)$-entry. We start computing the first term in the RHS of \eqref{eq:impeqnew} (recall the definition of $\Lambda_{i,t}$ from \eqref{def:Lambda} and $G_{i,t}=(W_{t}-\Lambda_{i,t})^{-1}$):
\begin{equation}
\label{eq:computation1}
\begin{split}
-\frac{1}{4n}\sum_{a,b=1}^{2n} (\partial_{ab} R_t)(W_t)_{ab}&=\frac{1}{4n}\sum_{a,b=1}^{2n} \big[(G_{1,t}A_1G_{2,t}A_2G_{1,t})_{ba}+(G_{2,t}A_2G_{1,t}A_1G_{2,t})_{ba}\big] (W_t)_{ab}\\
&=\frac{1}{2}\big[\langle G_{1,t}A_1G_{2,t}A_2G_{1,t} W_t\rangle+\langle G_{2,t}A_2G_{1,t}A_1G_{2,t} W_t\rangle \big] \\
&=\langle G_{1,t}A_1G_{2,t}A_2\rangle\dd t +\frac{1}{2}\big[\langle G_{1,t}A_1G_{2,t}A_2G_{1,t} \Lambda_{1,t}\rangle+\langle G_{2,t}A_2G_{1,t}A_1G_{2,t} \Lambda_{2,t}\rangle \big].
\end{split}
\end{equation}
For the third term in the RHS of \eqref{eq:impeqnew} we have (recall that the $(a,b)$-summation is symmetric in $a$ and $b$):
\begin{equation}
\label{eq:computation2}
\begin{split}
\frac{1}{2n}\sum_{ab}^* \partial_{ab}\partial_{ba} R_t&=-\frac{1}{4n^2}\sum_{ab}^* \partial_{ba} \big[(G_{1,t}A_1G_{2,t}A_2G_{1,t})_{ba}+(G_{2,t}A_2G_{1,t}A_1G_{2,t})_{ba}\big] \\
&=\frac{1}{4n^2}\sum_{ab}^* \big[2(G_{1,t})_{aa}(G_{1,t}A_1G_{2,t}A_2G_{1,t})_{bb}+(G_{1,t}A_1G_{2,t})_{aa}(G_{2,t}A_2G_{1,t})_{bb}\big] \\
&\quad+ \frac{1}{4n^2}\sum_{ab}^* \big[2(G_{2,t})_{aa}(G_{2,t}A_2G_{1,t}A_1G_{2,t})_{bb}+(G_{2,t}A_2G_{1,t})_{aa}(G_{1,t}A_1G_{2,t})_{bb}\big]\\
&=\langle G_{1,t}\rangle\langle G_{1,t}^2A_1G_{2,t}A_2\rangle+2\sum_{i\ne j} \langle G_{1,t} A_1G_{2,t}E_i\rangle\langle G_{2,t}A_2G_{1,t}E_j\rangle\\
&\quad+\langle G_{2,t}\rangle\langle G_{2,t}^2A_2G_{1,t}A_1\rangle,
\end{split}
\end{equation}
where in the last equality we used $\langle G_{i,t}E_1\rangle=\langle G_{i,t}E_2\rangle=\langle G_{i,t}\rangle/2$. Next, we compute the first term in the second line of \eqref{eq:impeqnew}, using the second relation in (\ref{eq:scalchar}) (we compute only the term for $i=1$):
\begin{equation}
\label{eq:computation3}
\begin{split}
(\partial_tz_{1,t})\partial_{z_{1,t}}R_t+(\partial_t\overline{z_{1,t}})\partial_{\overline{z_{1,t}}}R_t&=-\frac{z_{1,t}}{2}\langle G_{1,t}FG_{1,t}A_1G_{2,t}A_2\rangle-\frac{\overline{z_{1,t}}}{2}\langle G_{1,t}F^*G_{1,t}A_1G_{2,t}A_2\rangle \\
&=-\frac{1}{2}\langle G_{1,t}Z_{1,t}G_{1,t}A_1G_{2,t}A_2\rangle,
\end{split}
\end{equation}
with the matrix $Z_{1,t}$ defined as in (\ref{def:Lambda}) with $z=z_1$. For the second term in the second line of \eqref{eq:impeqnew}, using the first relation in (\ref{eq:scalchar}), we have (we compute only the term for $i=1$):
\begin{equation}
\label{eq:computation4}
(\partial_t\eta_{1,t})\partial_{\eta_{1,t}}R_t=-\left( m_{1,t}+\frac{\ii\eta_{1,t}}{2}\right) \langle G_{1,t}^2A_1G_{2,t}A_2\rangle,
\end{equation}
where we used that $\<M_{i,t}\>=m_{i,t}=\ii\Im m_{i,t}$. Finally, combining \eqref{eq:computation1}--\eqref{eq:computation4} and using that $\Lambda_{i,t}=Z_{i,t}+\ii \eta_{i,t}$ from (\ref{def:Lambda}) we obtain \eqref{eq:flowbefchar}.

\end{proof}

\begin{proof}[Derivation of \eqref{eq:quadvarnew}] The martingale term in \eqref{eq:fulleqaasimp} is given by
\begin{equation}
\label{eq:mart}
\sum_{a,b=1}^{2n}\partial_{ab}\langle G_{1,t}A_1G_{2,t}A_2\rangle \frac{\dd \mathfrak{B}_{ab,t}}{\sqrt{n}}=-\frac{1}{2n}\sum_{a,b=1}^{2n} \big[(G_{1,t}A_1G_{2,t}A_2G_{1,t})_{ab}+(G_{2,t}A_2G_{1,t}A_1G_{2,t})_{ab}\big] \frac{\dd \mathfrak{B}_{ab,t}}{\sqrt{n}},
\end{equation}
where we used that $\partial_{ab}G_{i,t}=-G_{i,t}\Delta^{ab}G_{i,t}$. Using \eqref{eq:qvbms}, we obtain that the quadratic variation process of \eqref{eq:mart} is given by
\[
\frac{1}{2n^3}\sum_{ab}^* \big|(G_{1,t}A_1G_{2,t}A_2G_{1,t})_{ba}+(G_{2,t}A_2G_{1,t}A_1)_{ba}\big|^2\dd t.
\]
Performing the $a,b$-summation we immediately obtain \eqref{eq:quadvarnew}.

\end{proof}

\nc

\section{Proof of Lemma \ref{lemma_approx}} \label{app:lemma}
The proof of Lemma \ref{lemma_approx} follows the one of \cite[Lemma 6.1-6.2]{EYY12} or \cite[Lemma 2.4-2.5]{SX22}, and relies on the rigidity bound of eigenvalues in Corollary \ref{cor:rigidity}.
\begin{proof}[Proof of Lemma \ref{lemma_approx}]
	For any $E>0$, we define $\chi_{E}:=\one_{[-E,E]}$, then $\#\{j:  |\lambda^z_j| \leq E\}=\Tr \chi_E(H^z)$. For $\eta>0$, we define the mollifier 
	\begin{equation}\label{le mollifier}
		\theta_{\eta}(x):=\frac{\eta}{\pi(x^2+\eta^2)}=\frac{1}{\pi} \Im \frac{1}{x-\ii \eta}.
	\end{equation}
	We can thus relate $\Tr \chi_{E} \star \theta_{\eta}(H)$ to the Green function by the following identity:
	\begin{equation}\label{link0}
		\Tr \chi_{E} \star \theta_{\eta}(H^z)=\frac{1}{\pi} \int \chi_E(y) \Im \Tr \gz (y+\ii \eta) \dd y=\frac{1}{\pi} \int_{-E}^{E} \Im \Tr \gz(y+\ii \eta) \dd y\,.
	\end{equation}
Choose the spectral parameters $E \gg l \gg l' \gg \eta$ as in (\ref{eta}). Using \cite[Eq.(A.2)-(A.4))]{SX22}, we have
	\begin{align}
		\Big| \Tr \big(\chi_{E}-\chi_{E} \star \theta_{\eta}\big)(H^z) \Big| \lesssim \#\{ |\lambda^z_j| \in [E-l',E+l']\}+\frac{\eta}{l'}\#\{ |\lambda^z_j|\leq E-l'\}+\Tr g(H^{z}),
	\end{align}
	with
	$g(x):=\Tr \chi_{E} \star \theta_{\eta}(x) \one_{|x|\geq E+l'}$. Using the rigidity estimate of eigenvalues in (\ref{rigidity3}) and the properties of $\rho^z$ in (\ref{rho_E}), 
		we have
	\begin{align}\label{eq1}
		\Big| \Tr \big(\chi_{E}-\chi_{E} \star \theta_{\eta}\big)(H^z) \Big| \lesssim \#\{ |\lambda^z_j| \in [E-l',E+l']\}+\frac{n^\xi\eta}{l'}+\Tr g(H^{z}),
	\end{align}
	for any small $\xi>0$ with very high probability. It is then sufficient to estimate $\Tr g(H^z)$.  Similarly to \cite[Eq.(A.6)-(A.7))]{SX22}, if $x\geq E+l'$ with $E$ and $l'$ chosen in (\ref{eta}), then $x \pm E \geq l' \gg \eta$, thus we have
	\begin{align}\label{bound_g}
		g(x) =&\int_{-E-x}^{E-x} \theta_{\eta}(y) \dd y
		\lesssim 
		\min\Big\{\frac{\eta E}{(x-E)^2}, \frac{ \eta}{x-E}\Big\}.
	\end{align}
Choose a sufficiently small $c>0$ and we write
	\begin{align}\label{trace_f}
		\Tr g(H^z) =2\sum_{|\lambda^z_i| \geq E +l'}g(\lambda^z_i) =2 \Big(\sum_{k=0}^{K_0} \sum_{\lambda^z_i \in \mathcal{I}_k}+\sum_{\lambda^z_i\geq c}\Big) g(\lambda^z_i)  \,, \qquad \mathcal{I}_k:=[E+3^{k}l', E+3^{k+1}l')\,,
	\end{align}
where $[E+l',c]=\bigcup_{k=0}^{K_0} \mathcal{I}_k$ with $K_0=O(\log n)$, using that $g$ is an even function and the spectrum $\{\lambda^z_{\pm i}\}_{i\in [n]}$ is symmetric. Combining with the rigidity estimate of eigenvalues in (\ref{rigidity3}) and (\ref{bound_g}), we have
	\begin{align}\label{upper_f}
		\Tr g(H^z) \lesssim \sum_{k=0}^{K_0} \min\Big\{\frac{\eta E}{(3^k l')^2}, \frac{ \eta}{3^k l'}\Big\} \Big(\#\{i: \lambda^z_i \in \mathcal{I}_k\} \Big)+n\eta E \lesssim \frac{n^\xi\eta}{l'},
	\end{align}
for any small $\xi>0$ with very high probability. Combining this with (\ref{link0}) and (\ref{eq1}), we have proved
\begin{align}\label{eq_upper_f}
	\Big| \Tr \big(\chi_{E}-\chi_{E} \star \theta_{\eta}\big)(H^z)  \Big| \lesssim \#\{ |\lambda^z_j| \in [E-l',E+l']\}+\frac{n^\xi\eta}{l'},
	\end{align}
for any small $\xi>0$ with very high probability and hence (\ref{approx}).
 Moreover, using that $E\mapsto\chi_E$ is increasing, together with (\ref{eq_upper_f}), we obtain that
	\begin{align}
		\Tr \chi_{E}(H^z) \leq& \frac{1}{l}\int_{E}^{E+l}\Tr \chi_{E'}(H^z) \dd E' \nonumber\\
		\leq & \frac{1}{l}\int_{E}^{E+l}\Tr \chi_{E} \star \theta_{\eta} (H^z)  \dd E' + \frac{C}{l}\int_{E}^{E+l} \Big(\#\{ |\lambda^z_i| \in [E'-l',E'+l']\}+\frac{n^\xi \eta}{l'} \Big) \dd E'\nonumber\\
		\leq & \Tr \chi_{E+l} \star \theta_{\eta}(H^z) +Cn^\xi \Big(\frac{\eta}{l'} +\frac{l'}{l}\Big),
	\end{align}
	for any small $\xi>0$ with very high probability. One can obtain a lower bound similarly.  With the choices of parameters in (\ref{eta}) and (\ref{link0}), we hence finished the proof of (\ref{approx_1}).
\end{proof}

\section{Proof of Proposition~\ref{prop_zz}}\label{app:tail_gft}

The proof of this proposition consists of two parts: 1) we first prove Proposition~\ref{prop_zz} for the matrix with a small ($\sim n^{-1/2+\omega_1}$) Gaussian component, 2) we remove the Gaussian component using a GFT argument. In particular, Part 1) already proves Proposition~\ref{prop_zz} 
for Ginibre matrices. 

We will only prove Proposition~\ref{prop_zz} for any $\{ E_j\}_{j\in [k]}$ with $ n^{-3/4-\gamma \omega+\tau} < E_j \lesssim n^{-3/4}$ and $\tau\leq (\gamma\omega)/10$, while the same upper bound applies to the complementary regime $0<E_j \leq n^{-3/4-\gamma \omega+\tau}$ using a simple monotonicity argument~(the lower bound is a completely trivial
in this case). \nc

\begin{proof}[Proof of Part 1) of Proposition~\ref{prop_zz}]

This proof is similar to the proof of \cite[Proposition 4.3]{SpecRadius}, so we will be brief.
For simplicity of notation we only present the proof for $k=2$. Fix a time  $t_1:=n^{-1/2+\omega_1}$, for some small $\omega_1>0$. Consider the matrix valued flow\footnote{We warn the reader that within Appendix~\ref{app:tail_gft},
  to keep the same notation of \cite[Section 7]{SpecRadius}, by $X_t$ we denote the solution of the flow \eqref{flow0}, and not the solution of the Ornstein--Uhlenbeck flow from \eqref{W}.}
\begin{align}\label{flow0}
\dd X_t=\frac{\dd B_t}{\sqrt{n}}, \qquad\quad X_0= \ee^{-t_1/2} X \nc, \qquad t\geq 0,
\end{align}
where $X$ is an i.i.d. complex matrix satisfying Assumption~\ref{ass:mainass},  
and $B_t$ is an $n\times n$ matrix whose entries are i.i.d. standard complex Brownian motions. 
 Hence, $X_{\widehat ct_1}$, with $\widehat c=\widehat c(t_1):=\sqrt{1-e^{-t_1}}/t_1$,
  is an i.i.d. matrix which satisfies Assumption~\ref{ass:mainass}, and also has a Gaussian component of size $\sim\sqrt{t_1}$. \nc By simple second order perturbation theory (see e.g. \cite[Appendix B]{CES19}) the eigenvalues $\lambda_i^z(t)$ of the Hermitization of $X_t-z$ are the unique strong solution of the following Dyson Brownian motion (DBM)
\begin{equation}
\label{eq:DBM}
\dd \lambda_i^z(t)=\frac{\dd b_i^z(t)}{\sqrt{2n}}+\frac{1}{2n}\sum_{j\ne i}\frac{1}{\lambda_i^z(t)-\lambda_j^z(t)}\dd t.
\end{equation}
Here $b_i^z(t)$ are i.i.d. standard real Brownian motions for $i\in [n]$, and $b_{-i}^z=-b_i^z$ for $i\in [n]$.
 Note that the collections $\{ b_i^z(t)\; : \;  i\in [n]\}$ are not independent for different $z$'s. \nc

To prove the (asymptotic) independence of $\lambda_1^{z_j}(t)$ for different $z_j$'s, we couple the evolution \eqref{eq:DBM} with the evolution of fully independent points $\{\mu_i\}$ 
and show that after short time $\sim n^{-1/2+\omega_1}$ these two processes are very close. More precisely, for $l=1,2$, define
$\{ \mu_i^{(l)}(t)\}$ as the solution to
\begin{equation}
\label{eq:DBMmu}
\dd \mu_i^{(l)}(t)=\frac{\dd \beta_i^{(l)}(t)}{\sqrt{2n}}+\frac{1}{2n}\sum_{j\ne i}\frac{1}{\mu_i^{(l)}(t)-\mu_j^{(l)}(t)}\dd t,
\end{equation}
with the singular values of two independent complex Ginibre matrices as initial conditions.
Here the family $\{ \beta_i^{(l)}:\, i\in [n],\, l=1,2\}$ is a $2n$-dimensional  standard Brownian motion 
and $\beta_{-i}^{(l)}=-\beta_i^{(l)}$. 
 For any $z_l \in\C$ such that $0\leq |z_l|-1\leq n^{-1/2+\tau}$ and $|z_1-z_2| \gtrsim n^{-1/2+\omega}$
 for any sufficiently small $\tau,\omega$  with $\tau<\gamma \omega/10$,  
 Corollary~\ref{eigenvector_overlap} implies, by a standard grid argument, 
that 
\begin{align}
	|\langle \mathbf{u}_i^{z_1}(t),\mathbf{u}_j^{z_2}(t)\rangle|^2+|\langle \mathbf{v}_i^{z_1}(t),\mathbf{v}_j^{z_2}(t)\rangle|^2 \prec n^{-\omega+2\tau}=: n^{-\omega_E}, \qquad 
	\mbox{ $|i|,|j|\le n^{\omega_c}$},
\end{align}
holds
simultaneously in $0\leq t\leq \widehat c t_1$, for any small constants $\omega_c,\omega_E$ such that $10 \omega_1\leq 10 \omega_c \leq \omega_E \leq \omega/2$. 
Thus the assumptions of \cite[Theorem 7.2]{SpecRadius} are verified, there hence exists a specific
coupling between  the 
pair of correlated Brownian motions $\{ b_i^{z_1},b_i^{z_2}\, : \, i\in [n]\}$
and the independent ones $\{\beta_i^{(1)},\beta_i^{(2)}\, :\, i\in [n] \}$ such that, for $l=1,2$, we have
\begin{equation}
\label{eq:independencebound}
\big|\lambda_i^{z_l}(\widehat ct_1)-\mu_i^{(l)}(\widehat ct_1)\big|\prec n^{-3/4-\gamma\omega+\tau}, \qquad\quad |i|\le n^{\wh \omega},
\end{equation}
  for any sufficiently \nc small constants $\gamma,\omega,\widehat{\omega},\tau$ 
  satisfying $\widehat{\omega}\le(\gamma\omega)/10\le \omega_1/100$  and $\tau\le (\gamma\omega)/10$\nc. 
  The bound \eqref{eq:independencebound} holds with very high probability in the joint probability space of the $\lambda_i^{z_l}(\widehat ct_1)$'s and $\mu_i^{(l)}(\widehat ct_1)$'s. We thus obtain
\begin{align}\label{P_tt}
\P\Big( \lambda_1^{z_l}(\widehat ct_1) \le E_l,~l\in [2]\Big)&\le \P\Big(\mu_1^{(l)}(\widehat ct_1) \le E_l+n^{-3/4-\gamma\omega+\tau},~l \in [2]\Big)+O(n^{-D})\nonumber \\
&= \prod_{l=1}^2 \P\Big(\mu_1^{(l)}(\widehat ct_1) \le E_l+n^{-3/4-\gamma\omega+\tau}\Big)+O(n^{-D}) \nonumber\\
&\le \prod_{l=1}^2 \P\Big( \lambda_1^{z_l}(\widehat ct_1) \le E_l+2n^{-3/4-\gamma\omega+\tau}\Big)+O(n^{-D})
\end{align}
for any large $D>0$. In the second line of (\ref{P_tt}), we used the fact that $\mu_1^{(1)}(t)$ and $\mu_1^{(2)}(t)$ are independent processes by construction. Similarly we obtain the desired lower bound as well. This proves Proposition~\ref{prop_zz} with $k=2$ for $X_{\widehat ct_1}$, i.e. for a matrix with a small ($\sim n^{-1/2+\omega_1}$) Gaussian component, and the case of general $k$ is analogous.
\end{proof}

\bigskip

The proof of Part 2) follows the strategy used in \cite[Proposition 5.2]{SpecRadius}. In particular, we will remove the small $\sim n^{-1/2+\omega_1}$ Gaussian component of $X_{\widehat ct_1}$ from (\ref{P_tt}) that has been obtained in Part 1). 
 Again we  give the proof only 
 for any $\{ E_j\}_{j\in [k]}$ with $ n^{-3/4-\gamma \omega+\tau} < E_j \lesssim n^{-3/4}$ and $\tau\leq (\gamma\omega)/10$.

\begin{proof}[Proof of Part 2) of Proposition \ref{prop_zz}]

Consider $\widehat{X}_t$ to be the solution of the Ornstein--Uhlenbeck flow
\begin{equation}\label{hat_X}
\dd \widehat{X}_t=-\frac{1}{2}\widehat{X}_t \dd t+\frac{\dd \widehat{B}_t}{\sqrt{n}}, \qquad\quad \widehat{X}_0= X,
\end{equation}
where $X$ is an i.i.d. complex matrix satisfying Assumption~\ref{ass:mainass}, and the entries of $\widehat{B}_t$ are standard complex i.i.d. Brownian motions. The corresponding probability and expectation for the matrix flow $\widehat{X}_t~(t\geq 0)$ are then denoted by $\widehat{\P}_{t}$ and $\widehat{\E}_{t}$, respectively.  We will now show that
\[
\widehat{\P}_0 \Big( \lambda^{z_j}_1 \leq E_j,~ j\in [k]\Big)\lesssim \prod_{j=1}^k \widehat{\P}_{t_1} \Big( \lambda^{z_j}_1 \leq  E_j+n^{-3/4-\gamma\omega+\tau}\Big)+n^{-D}
\]
and the similar lower bound for $t_1=n^{-1/2+\omega_1}$. This will conclude the proof of Proposition~\ref{prop_zz} noticing the following relation
\begin{equation}
\label{eq:reldbmou}
\widehat{X}_{t_1}\stackrel{\dd}{=}e^{-t_1/2}X+\sqrt{1-e^{-t_1}}X^{\mathrm{Gin}}\stackrel{\dd}{=}X_{\widehat ct_1},
\end{equation}
where $X^{\mathrm{Gin}}$ is a Ginibre matrix
 independent of $X$ and $X_{\widehat ct_1}$ is the solution of \eqref{flow0} from Part 1) and for such matrix we already proved the asymptotic factorisation in~(\ref{P_tt}).

Let $F: \R^k_+\rightarrow [0,1]$ be a smooth cut-off function given by $F (w_1,\cdots,w_k):=\Phi(w_1) \cdots \Phi(w_k)$ with the non-decreasing function $\Phi$ defined in (\ref{F_function}). Then for any fixed $E_j \in \R^+$ and any $t\geq 0$,
$$\widehat{\P}_t \Big( \lambda^{z_j}_1 \leq E_j,~ j\in [k]\Big)=\widehat{\E}_t \Big[F\Big(\#\{|\lambda^{z_1}_i| \leq E_1\}, \cdots, \#\{|\lambda^{z_k}_i| \leq E_k\} \Big)\Big].$$
For notational brevity in the proof below, we define the following short hand 
\begin{align}\label{short}
	\II^{z,\eta}_{E}:=\int_{|y|\leq E}  \Im \Tr \wh{G}_t^{z}(y+\ii \eta) \dd y,\qquad\qquad \forall E,\eta \in \R^+,~z\in \C,
\end{align}
where $\wh{G}_t^{z}$ is the corresponding resolvent of the Hermitised matrix defined as in (\ref{flow}) with $\wh{X}_t$ given by (\ref{hat_X}). Clearly $\II^{z,\eta}_{E}$ is an increasing function in $E \in \R^+$. 
 In this section we set $\epsilon_*:=\gamma \omega -\tau>0$ for notational simplicity, hence $n^{-3/4-\epsilon_*}\leq E_j\lesssim n^{-3/4}$. \nc
Then using the inequalities in (\ref{approx_1}) in Lemma \ref{lemma_approx} for $E=E_j,l=l_j=
n^{-\epsilon_*}E_j,\eta=\eta_j=n^{-3\epsilon_*}E_j$ chosen as in (\ref{eta}) with $\zeta=\epsilon_*$, we have
	\begin{align}\label{approx2}
	\widehat{\E}_t \Big[ F\big(\II_{E_1-l_1}^{z_1,\eta_1}, \cdots, \II_{E_k-l_k}^{z_k,\eta_k} \big)\Big]-n^{-D}\leq \widehat{\P}_t \Big( \lambda^{z_j}_1 \leq E_j,~ j\in [k]\Big) \leq \widehat{\E}_t \Big[ F\big(\II_{E_1+l_1}^{z_1,\eta_1}, \cdots, \II_{E_k+l_k}^{z_k,\eta_k} \big)\Big]+n^{-D},
\end{align}
for any $t\geq 0$ and any large constant $D>0$.
We remark that the inequalities in (\ref{approx2}) hold true if we replace $E_j$ with $E_j+ml_j$~(for any fixed $m\in \Z$).
It then suffices to show that, there exists some constant $c_0>0$ such that 
\begin{align}\label{goal}
\left| \big(\widehat{\E}_0- \widehat{\E}_{t_1}\big)\Big[ F\big(\II_{E_1 \pm l_1}^{z_1,\eta_1}, \cdots, \II_{E_k \pm l_k}^{z_k,\eta_k} \big)\Big]
\right| \lesssim  n^{-c_0} \prod_{j=1}^k \widehat{\P}_{t_1} \Big( \lambda^{z_j}_1 \leq  E_j+n^{-3/4-\epsilon_*} \Big)+n^{-D}.
\end{align}
Once we prove (\ref{goal}), using with the inequalities in (\ref{approx2}) for $t=0$ and $t=t_1$, we obtain that
\begin{align}\label{kk}
	\widehat{\P}_0 \Big( \lambda^{z_j}_1 \leq E_j,~ j\in [k]\Big) \leq& \widehat{\E}_0 \Big[ F\big(\II_{E_1+l_1}^{z_1,\eta_1}, \cdots, \II_{E_k+l_k}^{z_k,\eta_k} \big)\Big]+n^{-D}\nonumber\\
	\lesssim &\widehat{\E}_{t_1} \Big[ F\big(\II_{E_1 + l_1}^{z_1,\eta_1}, \cdots, \II_{E_k + l_k}^{z_k,\eta_k} \big)\Big]+ n^{-c_0} \prod_{j=1}^k \widehat{\P}_{t_1} \Big( \lambda^{z_j}_1 \leq E_j+n^{-3/4-\epsilon_*}\Big)+n^{-D}\nonumber\\
	\lesssim & \widehat{\P}_{t_1} \Big( \lambda^{z_j}_1 \leq E_j+2l_j,~ j\in [k]\Big)+ n^{-c_0} \prod_{j=1}^k \widehat{\P}_{t_1} \Big( \lambda^{z_j}_1 \leq  E_j+n^{-3/4-\epsilon_*}\Big)+n^{-D} \nonumber\\
	\lesssim &\prod_{j=1}^k \widehat{\P}_{t_1} \Big( \lambda^{z_j}_1 \leq  E_j+2 n^{-3/4-\epsilon_*}\Big)+n^{-D},
\end{align}
where in the last line we applied (\ref{P_tt}) from from Part 1) for $t=\widehat ct_1$, noticing the relation in \eqref{eq:reldbmou}. 
One obtains a similar lower bound with $n^{-3/4-\epsilon_*}$ replaced with $-n^{-3/4-\epsilon_*}$. In particular for $k=1$ 
\begin{align}\label{kk1}
	\widehat{\P}_{t_1} \Big( \lambda^{z}_1 \leq  E-2 n^{-3/4-\epsilon_*}\Big)-n^{-D} \lesssim \widehat{\P}_0 \big( \lambda^{z}_1 \leq E \big) \lesssim \widehat{\P}_{t_1} \Big( \lambda^{z}_1 \leq  E+ 2 n^{-3/4-\epsilon_*}\Big)+n^{-D}.
\end{align}
Combining (\ref{kk}) for a general $k\geq 2$ and (\ref{kk1}) for $E=E_j+2n^{-3/4+2\epsilon_*}$, we prove  Proposition \ref{prop_zz} for the initial matrix $\wh{X}_0=X$, hence finish the proof of Proposition \ref{prop_zz}  for any i.i.d. matrix.

\medskip

In the following, we will focus on proving (\ref{goal}). More precisely, we claim that, for any fixed $m\in \Z$, 
\begin{align}\label{iterate_m}
	\frac{\dd}{\dd t}\widehat{\E}_t\left[F\Big(\II_{E_1 +m l_1}^{z_1,\eta_1}, \cdots, \II_{E_k +m l_k}^{z_k,\eta_k} \Big)\right] \lesssim &~ n^{-1/4+c\epsilon_*}\widehat{\E}_t \Big[ F\big(\II_{E_1+(m+2)l_1}^{z_1,\eta_1}, \cdots, \II_{E_k+(m+2)l_k}^{z_k,\eta_k} \big)\Big]+n^{-D},
\end{align}
uniformly in $0\leq t \leq t_1$.
Assuming we have proved (\ref{iterate_m}), integrating (\ref{iterate_m}) up to time $t= t_1$, we have
\begin{align}\label{step0}
	\left| \big( \widehat{\E}_t-\widehat{\E}_{t_1} \big)\Big[F\big(\II_{E_1 +m l_1}^{z_1,\eta_1}, \cdots, \II_{E_k +m l_k}^{z_k,\eta_k} \big)\Big] \right|\lesssim &~ n^{-1/4+c\epsilon_*} \int_{t}^{t_1} \left|\widehat{\E}_t \Big[ F\big(\II_{E_1+(m+2)l_1}^{z_1,\eta_1}, \cdots, \II_{E_k+(m+2)l_k}^{z_k,\eta_k} \big)\Big] \right|+n^{-D},
\end{align}
uniformly for any $0\leq t \leq t_1$. Using (\ref{approx2}), we know that, for any fixed $m\in \Z$,
\begin{align}\label{Gaussian}
	\widehat{\E}_{t_1} \Big[ F\big(\II_{E_1+ml_1}^{z_1,\eta_1}, \cdots, \II_{E_k+ml_k}^{z_k,\eta_k} \big)\Big] \lesssim& \widehat{\P}_{t_1}\Big( \lambda^{z_j}_1 \leq E_j+(m+1)l_j, ~  j \in [k]\Big)+n^{-D}\nonumber\\
	\lesssim &\prod_{j=1}^k \widehat{\P}_{t_1}\Big( \lambda_1^{z_j} \leq  E_j+n^{-3/4-\epsilon_*} \Big)+n^{-D},
\end{align}
where we also used (\ref{P_tt}) from Part 1) and the relation in \eqref{eq:reldbmou}. From (\ref{step0}) with (\ref{Gaussian}) and using that $\F$ is uniformly bounded, we have an initial estimate
\begin{align}\label{step1}
 \widehat{\E}_t\Big[F\big(\II_{E_1 +m l_1}^{z_1,\eta_1}, \cdots, \II_{E_k +m l_k}^{z_k,\eta_k} \big)\Big] \lesssim &~ n^{-1/4+c\epsilon_*}+\prod_{j=1}^k \widehat{\P}_{t_1} \Big( \lambda_1^{z_j} \leq E_j+n^{-3/4-\epsilon_*} \Big)+n^{-D},
\end{align}
for any fixed $m\in\Z$ and $0\leq t\leq t_1$ uniformly. 
 Plugging (\ref{step1})~(replacing $m$ with $m+2$) into the right side of (\ref{step0}), we then obtain
\begin{align}\label{step2}
	\left| \big( \widehat{\E}_t-\widehat{\E}_{t_1} \big)\Big[F\big(\II_{E_1 +m l_1}^{z_1,\eta_1}, \cdots, \II_{E_k +m l_k}^{z_k,\eta_k} \big)\Big] \right| \lesssim &~\big(n^{-1/4+c\epsilon_*}\big)^2+ n^{-1/4+c\epsilon_*} \prod_{j=1}^k \widehat{\P}_{t_1}\Big( \lambda_1^{z_j} \leq  E_j + n^{-3/4-\epsilon_*}\Big) +n^{-D},
\end{align}
uniformly in $0\leq t \leq t_1$.
Combining again with (\ref{Gaussian}), we obtain that, for any fixed $m\in \Z$ and $0\leq t \leq t_1$,
\begin{align}\label{step3}
	\left| \widehat{\E}_t\Big[F\big(\II_{E_1 +m l_1}^{z_1,\eta_1}, \cdots, \II_{E_k +m l_k}^{z_k,\eta_k} \big)\Big] \right|\lesssim &~ \big(n^{-1/4+c\epsilon_*}\big)^2+\prod_{j=1}^k \widehat{\P}_{t_1}\Big( \lambda_1^{z_j} \leq  E_j+n^{-3/4-\epsilon_*} \Big)+n^{-D}.
\end{align}
In this way we have improved the first error term in (\ref{step1}) by an additional factor $n^{-1/4+c\epsilon_*}$.  We further plug the above estimate into (\ref{iterate_m}) and iterate the above process for sufficiently many times. In general for any $s$-th iteration step, the following holds true for any $m\in \Z$ and $0\leq t \leq t_1$,
\begin{align}\label{steps}
	\left| \big( \widehat{\E}_t-\widehat{\E}_{t_1} \big)\Big[F\big(\II_{E_1 +m l_1}^{z_1,\eta_1}, \cdots, \II_{E_k +m l_k}^{z_k,\eta_k} \big)\Big] \right| \lesssim &~ \big(n^{-1/4+c\epsilon_*}\big)^s +n^{-1/4+c\epsilon_*} \prod_{j=1}^k \widehat{\P}_{t_1}\Big( \lambda_1^{z_j} \leq  E_j+n^{-3/4-\epsilon_*} \Big)+n^{-D}.
\end{align}
We hence stop the iteration at $s=O(D)$ to improve the first error in (\ref{steps}) to $n^{-D}$ for any large $D>0$. This finishes the proof of (\ref{goal}) with $t=0$.

It still remains to prove (\ref{iterate_m}). Recall that the Hermitized matrix flow $\wh{H}_t^{z}=(\wh h_{ab})_{a,b\in[2n]}$ is defined as in (\ref{flow}) with $\wh{X}_t$ given by (\ref{hat_X}).  Applying the It\^{o}'s formula on the corresponding $F\big(\II_{E_1 +m l_1}^{z_1,\eta_1}, \cdots, \II_{E_k +m l_k}^{z_k,\eta_k} \big)$ and using the cumulant expansion formula as in (\ref{cumulant_exp0}), we have
\begin{align}\label{cumulant_app}
	&\frac{\dd}{\dd t}\wh{\E}_t \left[F\Big(\II_{E_1 +m l_1}^{z_1,\eta_1}, \cdots, \II_{E_k +m l_k}^{z_k,\eta_k} \Big)\right]\nonumber\\
	=&\sum_{p+q+1 \geq 3}^{M_D} \frac{1}{n^\frac{p+q+1}{2}} \sum_{a=1}^{n} \sum_{B=n+1}^{2n} \left(\frac{\kappa^{(p+1,q)}}{p!q!} \wh{\E}_t\Big[ \frac{\partial^{p+q+1} F(\cdots)}{\partial \wh{h}^{p+1}_{aB}\partial  \wh{h}^{q}_{Ba}}\Big]+\frac{\kappa^{(q,p+1)}}{p!q!} \wh{\E}_t\Big[ \frac{\partial^{p+q+1} F(\cdots)}{\partial \wh{h}^{q}_{aB}\partial  \wh{h}^{p+1}_{Ba}}\Big]\right)+O_{\prec}(n^{-D}),
\end{align}
where we truncated the cumulant expansion at a sufficiently large order $M_D$ such that the error is bounded by $O_\prec(n^{-D})$, using the moment condition in (\ref{eq:hmb}) and that $\|G\|\leq |\eta|^{-1}$. Using the differentiation rule as in (\ref{rule}), for any $1\leq j\leq k$, we have
\begin{align}
	\frac{\partial \II_{E_j +m l_j}^{z_j,\eta_j}}{\partial \wh{h}_{aB}}=\wt \Im \wh{G}^{z_p}_{Ba}(y+\ii \eta_j)\Big|^{y=E_j +m l_j}_{y=-(E_j +m l_j)}=O_\prec(n^{-1/4+c\epsilon_*}),
\end{align}
with $\wt \Im$ defined in (\ref{dim}), and the last estimate follows from the local law in (\ref{entrywise}) and the choices of $l_j,\eta_j$ above (\ref{approx2}). In general, for any integers $p,q\geq 1$, we have
\begin{align}
	\Big|\frac{\partial^{p+q} \II_{E_j +m l_j}^{z_j,\eta_j}}{\partial \wh{h}^p_{aB}\partial \wh{h}^q_{Ba}}\Big|=O_\prec\Big(\big(n^{-1/4+c\epsilon_*}\big)^{p+q}\Big).
\end{align}
Therefore we obtain from (\ref{cumulant_app}) and that $F(w_1,\cdots,w_k)=\prod_{i=1}^k \Phi(w_i)$
\begin{align}
	\left| \frac{\dd}{\dd t}\wh{\E}_t\left[F\Big(\II_{E_1 +m l_1}^{z_1,\eta_1}, \cdots, \II_{E_k +m l_k}^{z_k,\eta_k} \Big)\right] \right| \prec n^{-1/4+c\epsilon_*} \sum_{|\mathbf{\alpha}|=1}^{M_D} \sum_{\alpha_1,\cdots, \alpha_k}\wh{\E}_t \Big|  \prod_{j=1}^k \Phi^{(\alpha_j)}\big(\II_{E_j +m l_j}^{z_j,\eta_j}\big)  \Big|,
\end{align}
where $\mathbf{\alpha}=(\alpha_1,\cdots, \alpha_k)$ is a multi-index.  Since $\Phi$ given in (\ref{F_function}) has uniformly bounded derivatives, then we have
$$\Big|\Phi^{(\alpha_j)}\big(\II_{E_j +m l_j}^{z_j,\eta_j}\big)\Big|\lesssim \one_{\II_{E_j +m l_j}^{z_j,\eta_j} \in [1/9,2/9]} \lesssim \one_{\lambda^{z_j}_1 \leq E_j +(m+1) l_j}, \qquad \forall \alpha_j\geq 1$$
and a similar upper bound also applies to $\alpha_j=0$. Thus we have
\begin{align}\label{deri_F}
	\left| \frac{\dd}{\dd t}\wh{\E}_t \left[F\Big(\II_{E_1 +m l_1}^{z_1,\eta_1}, \cdots, \II_{E_k +m l_k}^{z_k,\eta_k} \Big)\right] \right| \prec& n^{-1/4+c\epsilon_*} \wh{\P}_t \Big( \lambda^{z_j}_1 \leq E_j+(m+1)l_j,~j \in [k]\Big)\nonumber\\
	\lesssim &~ n^{-1/4+c\epsilon_*}\wh{\E}_t \Big[ F\big(\II_{E_1+(m+2)l_1}^{z_1,\eta_1}, \cdots, \II_{E_k+(m+2)l_k}^{z_k,\eta_k} \big)\Big]+n^{-D},
\end{align}
where in the last step we used the second inequality (\ref{approx2}) with $E_j$ replaced by $E_j+(m+1)l_j$. Therefore we have proved (\ref{iterate_m}) and hence finished the proof of Proposition \ref{prop_zz}. 
\end{proof}

 \section{GFT: remove Gaussian component in Proposition \ref{pro:mainpro} and \ref{pro:mainprogfgf}}
\label{app:lindGFT}

In this section, we will  remove the Gaussian component in Part A of Theorem \ref{thm:2G} using a standard GFT argument and hence prove Part B of Theorem \ref{thm:2G} for any i.i.d. matrix. The same proof also applies to Theorem \ref{theorem_F} using a similar GFT argument combining with Proposition \ref{pro:mainprogfgf}. 

\smallskip

We organize this section as follows. In the first two sections, we split the GFT argument used to prove Theorem \ref{thm:2G}~(Part B) for any i.i.d. matrix into two steps. 
\begin{enumerate}
	\item[\bf Step 1:]
	
	In Section \ref{sec:Part 1}, we first establish a standard GFT for $\<\ga A_1 \gb A_2\>$~(see Propositon \ref{prop_zag} below) to remove any Gaussian component of size of order
	 one with an error term bounded by $(n|\eta_1\eta_2|)^{-1}$. 
	This will be used to prove the local law with canonical error bound $(n|\eta_1\eta_2|)^{-1}$.
	
	\item[\bf Step 2:]
	
	In Section~\ref{sec:GFT_2G}, we  
	derive an improved GFT for quantities of the special form $\<\Im \ga A \Im \gb A^*\>$ for a large $|z_1-z_2| \gtrsim \eta_i/\rho_i$ (see Propositon \ref{prop_zag_im} below). This will be used to prove the local law in (\ref{local_2g_im}) with the $|z_1-z_2|$ decorrelation effect.		
\end{enumerate}
Moreover, in Section \ref{sec:GFT_FF} we show a refined GFT with $\rho$-improvement for $\<GFGF^*\>$ in order to prove the local laws in Theorem \ref{theorem_F}. Note that $\<GFGF^*\>=\<\Im G F \Im G F^*\>$ from Lemma~\ref{lem:chirid} due to the chiral symmetry of $H^z$, hence the GFT argument is completely analogous to Section~\ref{sec:GFT_2G}; for this reason we only sketch its proof. Lastly, in Section~\ref{sec:proof}, we combine Propositon~\ref{pro:mainpro} and Proposition \ref{pro:mainprogfgf} with the above GFT arguments, and present the proof of Theorem~\ref{thm:2G}~(Part B) and Theorem \ref{theorem_F}, respectively, for any i.i.d. matrix.

\subsection{GFT for $\<G_1A_1G_2A_2\>$ with error $(n|\eta_1\eta_2|)^{-1}$}\label{sec:Part 1}

Recall the Ornstein-Uhlenbeck matrix flow $H^z_t$ in (\ref{flow}) and its time-dependent resolvent $G^z_t$ defined as in (\ref{def_G}). For notational simplicity, we define 
\begin{align}\label{R_t}
	R_t:=\<\big(G_1 A_1 G_2 -M^{A_1}_{12}\big) A_2 \>, \qquad G_i:=G^{z_i}_t(\ii \eta_i),\quad i=1,2,
\end{align}
with the deterministic matrix $M^{A_1}_{12}:=M_{12}^{A_1}(\eta_1,z_1,\eta_2,z_2)$ given by (\ref{eq:defM12}), 
where both $z_i\in \C$ and $\eta_i~\in~\R~(i=1,2)$ are time independent parameters. Then we establish the following GFT to remove the Gaussian component in Proposition \ref{pro:mainpro} with the error bound $(n|\eta_1\eta_2|)^{-1}$.
\begin{proposition}\label{prop_zag}
	Fix any $m\geq 2$. 
	Then for any spectral parameters $|\eta_1|\sim |\eta_2| \sim \eta$, $\rho_1\sim \rho_2 \sim \rho$, the following holds
	\begin{align}\label{eq_GFT}
		\Big|\frac{\dd \E | R_t|^{2m}}{\dd t}\Big| \prec  \big( 1+\eta^{1/3}\big) \Big(\E | R_t|^{2m}+\Big(\frac{1}{n|\eta_1\eta_2|}\Big)^{2m}\Big).
	\end{align}
uniformly for any $t\geq 0$, $\|A_i\|\lesssim 1$ and $n|\eta_i| \rho_i \gtrsim 1$.
\end{proposition}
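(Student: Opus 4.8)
\textbf{Proof strategy for Proposition~\ref{prop_zag}.}
The plan is to follow the by-now standard cumulant-expansion GFT scheme, tracking carefully which error bounds survive. We apply It\^o's formula to $\E|R_t|^{2m}$ along the Ornstein--Uhlenbeck flow~\eqref{flow}. Since the first two moments of the entries of $\sqrt{n}X_t$ are preserved by~\eqref{flow}, the deterministic drift term $-\tfrac12(H^z_t+Z)$ exactly cancels the second-order cumulant contribution (this is the usual reason why the OU flow is the right choice), so the time derivative $\tfrac{\dd}{\dd t}\E|R_t|^{2m}$ is a sum over cumulants of order $p+q+1\ge 3$. We would stop the expansion at a fixed large order depending on $m$, the truncation error being negligible by the moment assumption~\eqref{eq:hmb} and the crude bound $\|G_i\|\le |\eta_i|^{-1}$ together with $|\eta_i|\ge n^{-1}$ (which holds since $n|\eta_i|\rho_i\gtrsim 1$ and $\rho_i\lesssim|\eta_i|^{1/3}$).

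The key step is to estimate a generic term of the form
\begin{equation*}
\frac{1}{n^{(p+q+1)/2}}\sum_{a,B}\kappa\,\E\Big[\Big(\partial^{p_1}R_t\Big)\cdots\Big(\partial^{p_j} R_t\Big)\Big(\partial^{r_1}\overline{R_t}\Big)\cdots\Big(\partial^{r_l}\overline{R_t}\Big)\Big(\partial\text{-factors not hitting }R_t\Big)\Big],
\end{equation*}
where each $\partial$ is $\partial_{h_{aB}}$ or $\partial_{h_{Ba}}$ and the total number of derivatives is $p+q+1$. Using the differentiation rule $\partial_{h_{aB}}G_{\mathfrak{uv}}=-G_{\mathfrak{u}a}G_{B\mathfrak{v}}$ (Lemma~\ref{lemma_deri_rule}), each derivative acting on a factor $\<G_1A_1G_2A_2\>$ produces one more $G$ and splits the trace; the core input is that after resumming the $a,B$ indices each resulting object is controlled by the single-resolvent local law~\eqref{average}--\eqref{entrywise} and the two-resolvent bound $|\<G_1A_1G_2A_2\>|\lesssim |R_t|+\|M_{12}^{A_1}\|$ together with $\|M_{12}^{A_1}\|\lesssim 1/\eta$ from~\eqref{local_2g_M}, plus Ward identities $G_iG_i^*=\Im G_i/\eta_i$ and the local law $\<\Im G_i\>\lesssim\rho_i$. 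Counting off-diagonal $G$-entries versus diagonal ones, and using the comparable-parameter assumption $|\eta_1|\sim|\eta_2|\sim\eta$, $\rho_1\sim\rho_2\sim\rho$, every such term is bounded by
$(1+\eta^{1/3})\big(\E|R_t|^{2m}+(n|\eta_1\eta_2|)^{-2m}\big)$ after a Young-inequality splitting $|R_t|^a\cdot(\text{error})^{b}\le |R_t|^{2m}+(\text{error})^{2m}$ with $a+b=2m$. The factor $1+\eta^{1/3}$ absorbs the worst-case arithmetic $\rho\le\eta^{1/3}$ combinations arising at the cusp; we do not need the optimal power here because Gr\"onwall over the $O(1)$ time interval only costs a harmless constant.

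\textbf{Main obstacle.} The delicate point is that a naive power count gives an error of order $1/(n\eta^2)$ per factor of $R_t$ produced by a derivative, which would be worse than the target $1/(n|\eta_1\eta_2|)$ only by constants but could a priori generate uncontrolled products when several derivatives hit several $R_t$-factors; one must check that whenever a derivative $\partial_{h_{aB}}$ hits a factor containing $G_1$ and $G_2$, the resulting traces can always be re-paired (via cyclicity and Schwarz, as in~\eqref{eq:need2}) so that no term carries more inverse powers of $\eta$ than $(n|\eta_1\eta_2|)^{-1}$ times a bounded prefactor. Equivalently, the third-order terms with all three derivatives hitting distinct $R_t$'s, which are the most dangerous, must be shown to produce the $\E|R_t|^{2m}$ term rather than a standalone large error; this is exactly where the structure $R_t=\<(G_1A_1G_2-M_{12}^{A_1})A_2\>$ (a \emph{difference}, already small) is used. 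Once this bookkeeping is done, \eqref{eq_GFT} follows and, combined with the global-law input~\eqref{eq:global} at $|z_i|\ge 1+c^*$ as initial condition, a Gr\"onwall estimate yields $\E|R_t|^{2m}\prec (n|\eta_1\eta_2|)^{-2m}$ uniformly in $t$, hence the local law with the canonical error bound for general i.i.d.\ $X$; the $|z_1-z_2|$-improved version is then obtained separately in Section~\ref{sec:GFT_2G}.
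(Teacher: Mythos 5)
Your proposal follows the same route as the paper: It\^o's formula along the OU flow, cancellation of second-order cumulants, truncated cumulant expansion starting from order $p+q+1=3$, entry-wise bounds on differentiated resolvent chains (the paper's Lemma~\ref{lemma_zag_deff}) combined with Ward identities and the single-resolvent local law, and a Young-inequality splitting at the end. The off-diagonal counting you mention is indeed the operative gain. However, your identification of the ``main obstacle'' misattributes the mechanism. You claim that the third-order terms where all derivatives hit distinct $R_t$'s are controlled because $R_t=\langle(G_1A_1G_2-M_{12}^{A_1})A_2\rangle$ is a difference and hence already small. This is not the right point: since $M_{12}^{A_1}$ is $H$-independent, $\partial R_t=\partial\langle G_1A_1G_2A_2\rangle$ is completely unaffected by the subtraction, so the difference structure buys you nothing for the derivatives. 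What actually renders the term $n^{-3/2}\sum_{a,B}(\partial R_t)^3 R_t^{2m-3}$ small enough (cf.\ \eqref{33}) is that $\partial R_t$ consists of \emph{off-diagonal} entries $(G_1A_1G_2A_2G_1)_{Ba}$ (with $a\in[n]$, $B\in[n+1,2n]$); summing $|\cdot|^3\le\|\cdot\|_\infty\cdot|\cdot|^2$ over $a,B$ converts the product of entries into traces $\langle\Im G_1 A_1 G_2 A_2 \Im G_1 A_2^* G_2^* A_1^*\rangle$ which are then estimated by Cauchy--Schwarz and Ward as in \eqref{ineq_3}, gaining a factor of $1/(n\eta\rho)$ over the naive entry-wise count. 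The subtraction of $M_{12}$ matters only in the final Gr\"onwall step, where one needs the \emph{conclusion} $\E|R_t|^{2m}\prec(n|\eta_1\eta_2|)^{-2m}$ to propagate from the global regime; it plays no role in bounding the cumulant-expansion terms themselves, which is what \eqref{eq_GFT} is about.
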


\begin{proof}[Proof of Proposition \ref{prop_zag}]
	Applying It\^{o}'s formula to $|R_t|^{2m}=(R_t)^{m} (R^*_t)^m$ and performing the cumulant expansions as in (\ref{cumulant_exp0}), we obtain
	\begin{align}\label{cumulant_app0}
		\frac{\dd \E |R_t|^{2m}}{\dd t}=&\sum_{p+q+1 \geq 3}^{M_0} \frac{1}{n^\frac{p+q+1}{2}} \sum_{a=1}^{n} \sum_{B=n+1}^{2n} \left(\frac{\kappa^{(p+1,q)}}{p!q!} \E\Big[ \frac{\partial^{p+q+1} (R_t)^{m} (R^*_t)^m}{\partial h^{p+1}_{aB}\partial  h^{q}_{Ba}}\Big]+\frac{\kappa^{(q,p+1)}}{p!q!} \E\Big[ \frac{\partial^{p+q+1}  (R_t)^{m} (R^*_t)^m}{\partial h^{q}_{aB}\partial  h^{p+1}_{Ba}}\Big]\right)
		\nonumber\\
		&+O_\prec\big( n^{-100m}\big),
	\end{align}
	where we stopped the cumulant expansions at a sufficiently high power $M_0=O(m)$ such that the error is bounded by $O_{\prec}\big(n^{-100m}\big)$ using the moment condition in (\ref{eq:hmb}) and that $\|G\|\leq |\eta|^{-1}$. 
	To estimate the terms on the right side of (\ref{cumulant_app0}) we introduce the following lemma, whose proof is postponed till the end of this section.
	
	\begin{lemma}\label{lemma_zag_deff}
		For any $\mathfrak{u},\mathfrak{v} \in [1,2n]$, we have
		\begin{align}\label{ineqqq}
			\big|(G_1A_1 G_2)_{\mathfrak{u} \mathfrak{v}} \big|  \prec \sqrt\frac{\rho_1\rho_2}{|\eta_1\eta_2|}, \qquad \big|(G_1A_1G_2A_2G_1)_{\mathfrak{u} \mathfrak{v}} \big| \prec \frac{\rho_1}{|\eta_1\eta_2|},
		\end{align}
		for any $\|A_1\|+\|A_2\|\lesssim 1$ and $n|\eta_i|\rho_i\gtrsim 1$. Thus for any $a \in [1,n]$ and $B \in [n+1,2n]$, we have
		\begin{align}\label{rule_R}
			\frac{\partial  \<G_1 A_1 G_2 A_2\> }{\partial h_{aB}}=-\frac{1}{n} \Big((G_1A_1G_2A_2G_1)_{Ba}+(G_2A_2G_1A_1G_2)_{Ba}\Big)=O_\prec\Big( \frac{\rho_1+\rho_2}{n|\eta_1\eta_2|}\Big),
		\end{align}
	and the same holds if we switch the index $a$ with $B$. In general for any $p,q\in \N$ with $p+q\geq 1$, we have
		\begin{align}\label{zag_deff_rule}
			\Big|\frac{\partial^{p+q}\<G_1 A_1 G_2 A_2\>}{\partial h^{p}_{aB}\partial  h^{q}_{Ba}} \Big| \prec \frac{\rho_1+\rho_2}{n|\eta_1\eta_2|}\big( \rho_1+\rho_2+\one_{a=B-n}\big)^{p+q-1},
		\end{align}
		for any $\|A_1\|+\|A_2\|\lesssim 1$ and $n|\eta_i|\rho_i\gtrsim 1$.  
	\end{lemma}

	Thus by a naive power counting using (\ref{zag_deff_rule}), the terms on the right side of (\ref{cumulant_app0}) are bounded by
	\begin{align}\label{higher}
		\sum_{p+q+1 \geq 3}^{K_0} \frac{n^2}{n^\frac{p+q+1}{2}} \sum_{s=1}^{2m} \Big(\frac{\rho_1+\rho_2}{n|\eta_1\eta_2|}\Big)^s \E|R_t|^{2m-s}.
	\end{align}
Note that the above naive estimate is already enough to show (\ref{eq_GFT}) as long as $p+q+1\geq 4$.  \nc
	It then suffices to estimate the most critical third order terms with $p+q+1=3$ in (\ref{cumulant_app0}), \ie
	\begin{align}\label{partial_three}
		\frac{1}{n^{3/2}} \sum_{a,B} \E\Big[  \big(\partial R_t\big)^3 R_t^{2m-3} + 
		\partial R_t\big(\partial^2 R_t\big)R_t^{2m-2} +(\partial^3 R_t)R_t^{2m-1}\Big],
	\end{align}
	where each $\partial$ represents either $\partial/\partial h_{aB}$ or $\partial/\partial h_{Ba}$ and each $R_t$ represents $R_t$ itself or its conjugate $R_t^*$. By a direct computation, using (\ref{rule_R}), the first part in (\ref{partial_three}) is bounded by
	\begin{align}\label{33}
		\frac{1}{n^{3/2}} \sum_{a,B} &\E\Big[ \big(\partial R_t\big)^3 R_t^{2m-3}\Big] \lesssim \frac{1}{n^{9/2}}\sum_{a,B} \E \Big[\Big| (G_1A_1G_2A_2G_1)_{Ba}+(G_2A_2G_1A_1G_2)_{Ba} \Big|^{3}|R_t|^{2m-3}\Big]\nonumber\\
		&\prec \frac{1}{n^{7/2}} \E\Big[\Big(\frac{\rho_1\<\Im G_1 A_1 G_2A_2 \Im G_1 A_2^* G_2^* A_1^*\>}{|\eta^3_1\eta_2|}+\frac{\rho_2\<\Im G_2 A_2 G_1A_1 \Im G_2 A_1^* G_1^* A_2^*\>}{|\eta_1\eta^3_2|}\Big) |R_t|^{2m-3} \Big]\nonumber\\
		&\prec \frac{(\rho^*)^{2}}{n^{7/2} |\eta_1 \eta_2|^{5/2} (\eta_*)^2}  \E|R_t|^{2m-3},
	\end{align}
with $\rho^*:=\rho_1\vee\rho_2$ and $\eta_*:=|\eta_1|\wedge|\eta_2|$, where we used the Ward identity in the second line, and in the last step we used the following inequality 
	\begin{align}\label{ineq_3}
		\<G_1 A_1 G_2 A_2 G^*_1 A^*_2 G^*_2 A^*_1 \> \leq \|A_1\| \|A_2\|  \sqrt{\frac{ \< \Im G_1 A_1 \Im G_2 A_1^*\> \< \Im G_2 A^*_2 \Im G_1A_2\>}{\eta_1^2\eta_2^2}} \prec \sqrt{\frac{\rho_1\rho_2}{|\eta_1\eta_2|^3}}.
	\end{align}
	Note that in (\ref{33}) we have gained an additional $1/(n\eta_*\rho^*)$ compared to the naive size in (\ref{higher}) by power counting, since we are summing up the products of two off-diagonal resolvent entries.

	We next compute the second term in (\ref{partial_three}). Using (\ref{rule_R}) and the Cauchy-Schwarz inequality, 
	we have 
	\begin{align}\label{3333}
		\frac{1}{n^{3/2}} \sum_{a,B} \E\Big[ \partial R_t\big(\partial^2 R_t\big)R_t^{2m-2}\Big] \prec& \frac{(\rho_1+\rho_2)^2}{n^{7/2}|\eta_1\eta_2|}  \sum_{a,B}  \E\Big[ \Big(\big|(G_1 A_1 G_2 A_2 G_1)_{Ba}\big|+\big|(G_2 A_2  G_1 A_1 G_2)_{Ba}\big| \Big)\big| R_t\big|^{2m-2}\Big]\nonumber\\
		\prec& \frac{1}{n^{3/2}} \frac{(\rho_1+\rho_2)^2}{|\eta_1\eta_2|}  \E\left[ \sqrt{\frac{1}{n^2}\sum_{a,B}\big|(G_1A_1G_2A_2G_1)_{Ba}\big|^2} |R_t|^{2m-2}\right]\nonumber\\
		\prec &\frac{(\rho^*)^{5/2}}{n^{2}|\eta_1\eta_2|^{9/4}}\E|R_t|^{2m-2},
	\end{align}
	where we gained an additonal $1/\sqrt{n\eta_*\rho^*}$ compared to the naive size in (\ref{higher}), since there exists one off-diagonal resolvent factor in the summation. Similarly, using (\ref{rule_R}), we see that the last part in (\ref{partial_three}) also contains at least one off-diagonal resolvent factor and so we have an additional smallness:
	\begin{align}\label{last_part}
		\frac{1}{n^{3/2}} \sum_{a,B} \E\big[ (\partial^3 R_t) R_t^{2m-1}\big] \lesssim \frac{(\rho^*)^{5/2}}{n|\eta_1\eta_2| \sqrt{\eta_*}}\E|R_t|^{2m-1}.
	\end{align}
	Combining (\ref{33}), (\ref{3333})--(\ref{last_part}) with (\ref{higher}), we obtain that
		\begin{align}
		\Big|\frac{\dd \E |R_t|^{2m}}{\dd t}\Big| \prec&\frac{(\rho^*)^{5/2}}{n |\eta_1\eta_2|\sqrt{\eta_*}}\E|R_t|^{2m-1}+\frac{(\rho^*)^{5/2}}{n^{2}|\eta_1\eta_2|^{9/4}}\E|R_t|^{2m-2}+\frac{(\rho^*)^2}{n^{7/2} |\eta_1 \eta_2|^{5/2}\eta_*^2}  \E|R_t|^{2m-3} \nonumber\\
		&+\sum_{k\geq 4}^{2m} \frac{n^2}{n^{k/2}}  \Big(\frac{\rho^*}{n|\eta_1\eta_2|}\Big)^k \E |R_t|^{2m-k}.
	\end{align}
Further assuming that $|\eta_1|\sim |\eta_2| \sim \eta$, $\rho_1\sim \rho_2 \sim \rho$ and using Young's inequality, we obtain that 
		\begin{align}\label{eq_zag}
		\Big|\frac{\dd \E | R_t|^{2m}}{\dd t}\Big| \prec  \Big( 1+\frac{\rho^{5/2}}{\eta^{1/2}}+\frac{\rho^{2}}{\sqrt{n}\eta}\Big) \Big(\E | R_t|^{2m}+\Big(\frac{1}{n|\eta_1\eta_2|}\Big)^{2m}\Big).
	\end{align}
Thus (\ref{eq_GFT}) follows directly from (\ref{eq_zag}) using that $\rho\lesssim \eta^{1/3}$ and $n\eta \rho \gtrsim 1$. We hence finish the proof of Proposition \ref{prop_zag}. 
\end{proof}

We conclude this section with the proof of Lemma \ref{lemma_zag_deff}.
\begin{proof}[Proof of Lemma \ref{lemma_zag_deff}] 
	Using the Cauchy-Schwarz inequality and the Ward identity, we have
	\begin{align}\label{ineq_2}
		\big|(G_1A G_2)_{\mathfrak{u} \mathfrak{v}} \big| 
				\leq \frac{\|A\|\sqrt{(\Im G_1)_{\mathfrak{u}\mathfrak{u}}(\Im G_2)_{\mathfrak{v}\mathfrak{v}} }}{\sqrt{|\eta_1\eta_2|}}\prec   \frac{1}{\sqrt{|\eta_1\eta_2|}} \big(\sqrt{\rho_1\rho_2}+\frac{1}{n\sqrt{|\eta_1\eta_2|}}\big),
	\end{align}
	where in the last step we used the local law in (\ref{entrywise}). This proves the first estimate in (\ref{ineqqq}). Similarly to (\ref{ineq_2}), we further have
	\begin{align}\label{ineq_1}
		\big|(G_1A_1G_2A_2G_1)_{\mathfrak{u} \mathfrak{v}} \big| \leq& \sqrt{(G_1A_1G_2A_2A_2^*G_2^*A_1^*G_1^*)_{\mathfrak{u}\mathfrak{u}} (G_1G_1^*)_{\mathfrak{v}\mathfrak{v}}} 
		\leq \frac{\|A_1\|\|A_2\|\sqrt{(\Im G_1)_{\mathfrak{v}\mathfrak{v}}(\Im G_1)_{\mathfrak{u}\mathfrak{u}} }}{|\eta_1\eta_2|}\nonumber\\
		\prec  & \frac{1}{|\eta_1\eta_2|} \big(\rho_1+\frac{1}{n|\eta_1|}\big),
	\end{align}
	where we also used (\ref{average}) and that $\|G(\eta)\|\leq |\eta|^{-1}$. This proves the second estimate in (\ref{ineqqq}). 
	
	Note that the differentiation rule in (\ref{rule_R}) follows directly from (\ref{rule}). Combining this with (\ref{ineq_1}) we have proved the last estimate in (\ref{rule_R}). This also proves (\ref{zag_deff_rule}) for $p+q=1$. By further computations using (\ref{rule}), the corresponding partial derivatives of $\<G_1 A_1 G_2 A_2\>$ with $p+q=2$ are given by linear combinations of the following terms
	\begin{align}\label{terms}
		&\frac{1}{n} (G_1A_1G_2A_2G_1)_{Ba} (G_1)_{Ba},\quad \frac{1}{n} (G_1A_1G_2A_2G_1)_{aa} (G_1)_{BB},\quad \frac{1}{n} (G_1A_1G_2A_2G_1)_{BB} (G_1)_{aa},\nonumber\\
		&\frac{1}{n}(G_1A_1G_2)_{Ba} (G_2A_2G_1)_{Ba}, \quad \frac{1}{n}(G_1A_1G_2)_{aa} (G_2A_2G_1)_{BB}, \quad \frac{1}{n}(G_1A_1G_2)_{BB} (G_2A_2G_1)_{aa},
	\end{align}
	and the terms obtained by switching the subscript 1 with 2. Thus for $p+q=2$, (\ref{zag_deff_rule}) follows from (\ref{ineq_2}), (\ref{ineq_1}) and the local law in (\ref{entrywise}). For a general $p+q\geq 2$, one proves (\ref{zag_deff_rule}) similarly by induction using (\ref{ineq_2}) and (\ref{ineq_1}). We hence finish the proof of Lemma \ref{lemma_zag_deff}.
\end{proof}

\bigskip

\subsection{GFT for $\Im G_1 A \Im G_2 A^*$ with $|z_1-z_2|$-decorrelation}\label{sec:GFT_2G}

In this section, we consider a special form in terms of (\ref{R_t}) and establish an improved GFT to remove the Gaussian component in Part A of Theorem \ref{thm:2G} for a large $|z_1-z_2| \gtrsim |\eta_i|/\rho_i$.
Again for simplicity, we do it only for the regime $|\eta_1|\sim |\eta_2|$ and $\rho_1 \sim \rho_2$. 
\begin{proposition}\label{prop_zag_im}
	Define
	\begin{align}\label{spectial}
		\wh{R_t}:=\<\big(\Im G_1 A \Im G_2 -\wh{M}^{A}_{12}\big) A^* \>, \qquad G_i:=G^{z_i}_t(\ii \eta_i),\quad i=1,2,
	\end{align}
	where $\wh{M}^{A}_{12}=\wh{M}^{A}_{12}(\eta_1,z_1,\eta_2,z_2)$ is the corresponding deterministic limit defined as in (\ref{eq:defM12}) using that $\Im G=(G-G^*)/(2\ii)$. 
	Fix any $m> 1$. Then for any spectral parameters $ |\eta_1| \sim |\eta_2| \sim \eta$ and $ \rho_1 \sim \rho_2 \sim \rho$, the following holds
	\begin{align}\label{eq_zag_im}
		\Big|\frac{\dd \E |\wh{R_t}|^{2m}}{\dd t}\Big| \prec \Big(1+\frac{1}{\sqrt{n}\eta}\Big) \Big( \E |\wh{R_t}|^{2m} +\Big( 
				\frac{1}{\sqrt{n\eta \rho} \big(|z_1-z_2|+\frac{\eta}{\rho}\big)}\Big)^{2m}\Big),
	\end{align}
	uniformly for any $t\geq 0$, $\|A\|\lesssim 1$ and $n\eta \rho \gtrsim 1$.	
\end{proposition}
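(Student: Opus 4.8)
\textbf{Proof proposal for Proposition~\ref{prop_zag_im}.}
The plan is to run exactly the same cumulant-expansion GFT machinery as in the proof of Proposition~\ref{prop_zag}, but bookkeeping more carefully the two gains that are available in the present special situation: (i) we test only against $A,A^*$ with $\Im G$'s on both sides, so the relevant multi-resolvent objects carry extra $\rho$-factors via the local laws of Theorem~\ref{thm:2G} (and, in the $z_1=z_2$ chiral cases, also via Lemma~\ref{lem:chirid}); (ii) the deterministic target size $\gamma^{-1}$ with $\gamma=|z_1-z_2|+\eta/\rho$ is genuinely smaller than $(n|\eta_1\eta_2|)^{-1/2}$-type bounds when $|z_1-z_2|\gg\eta/\rho$, and the error we are chasing, namely $\big(\sqrt{n\eta\rho}\,\gamma\big)^{-1}$, is correspondingly smaller. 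So first I would apply It\^o's formula to $|\wh R_t|^{2m}=(\wh R_t)^m(\wh R_t^*)^m$, note that the second-order cumulants cancel because the variance profile of $X_t$ is preserved along \eqref{flow}, and obtain an identity of the shape \eqref{cumulant_app0} with the cumulant sum starting at $p+q+1=3$ and truncated at $p+q+1=M_0=O(m)$, the tail being $O_\prec(n^{-100m})$ by the moment bound \eqref{eq:hmb} and $\|G\|\le|\eta|^{-1}$.

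The heart of the argument is the analogue of Lemma~\ref{lemma_zag_deff}: I would establish differentiation rules
\[
\frac{\partial\,\langle \Im G_1 A\Im G_2 A^*\rangle}{\partial h_{aB}} = O_\prec\!\Big(\frac{\rho}{n\eta\,\gamma}\Big),
\qquad
\Big|\frac{\partial^{p+q}\langle \Im G_1 A\Im G_2 A^*\rangle}{\partial h_{aB}^{p}\partial h_{Ba}^{q}}\Big|
\prec \frac{\rho}{n\eta\,\gamma}\Big(\rho+\frac{1}{\sqrt{n\eta\rho}\,\gamma}+\one_{a=B-n}\Big)^{p+q-1},
\]
the crucial input being the isotropic/entrywise consequences of the two-resolvent local law \eqref{local_2g} and \eqref{local_2g_M}, i.e.\ $|(G_1AG_2)_{\mathfrak{u}\mathfrak v}|\prec (\sqrt{n\eta\rho}\,\gamma)^{-1/2}$-type control after a Schwarz/Ward step, together with the deterministic bound $\|M_{12}^A\|\lesssim\gamma^{-1}$. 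Once these rules are in hand, the power counting is identical in spirit to \eqref{higher}--\eqref{last_part}: every $\partial h$ acting on $\wh R_t$ produces one off-diagonal resolvent entry, so that the most dangerous third-order terms $n^{-3/2}\sum_{a,B}(\partial\wh R_t)^3\wh R_t^{2m-3}$ etc.\ gain an extra factor $(n\eta\rho)^{-1}$ (or $(n\eta\rho)^{-1/2}$) beyond the naive $n^{2}n^{-(p+q+1)/2}$ counting via a Ward identity $GG^*=\Im G/\eta$ applied to the $\sum_{a,B}$ of products of two off-diagonal entries, exactly as in \eqref{33}; in the remaining third-order and all higher-order ($p+q+1\ge4$) terms the naive bound already suffices. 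Collecting everything and using Young's inequality to absorb the lower powers $\E|\wh R_t|^{2m-k}$ gives \eqref{eq_zag_im} with the stated prefactor $1+(\sqrt n\eta)^{-1}$ after using $\rho\lesssim\eta^{1/3}$ and $n\eta\rho\gtrsim1$; the diagonal terms $a=B-n$, which are $O(n)$ in number rather than $O(n^2)$, contribute only the harmless $1+(\sqrt n\eta)^{-1}$ factor.

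The main obstacle I anticipate is making the $\gamma$-gain propagate correctly through the higher cumulant terms: a single application of a naive $\|G\|\le\eta^{-1}$ bound destroys the $|z_1-z_2|$-decay and replaces $\gamma^{-1}$ by $\eta^{-1}$, which is fatal. The fix is structural and mirrors the ``two estimates'' device in the proof of Proposition~\ref{pro:mainpro}: whenever a monomial still contains an intact block $G_1AG_2$ (or $\Im G_1 A\Im G_2$) one must estimate that block by $\gamma^{-1}$ via \eqref{local_2g}/\eqref{local_2g_M} rather than by $\eta^{-1}$, and only break it (paying $\eta^{-1}$) when no $\gamma$-structure survives; the bookkeeping is organized so that each surviving $\gamma^{-1}$ is kept and each ``broken'' factor is compensated by a $\sqrt{n\eta\rho}$ from the off-diagonal smallness. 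A secondary technical point is that the Gaussian-removal here is only needed in the comparable regime $|\eta_1|\sim|\eta_2|$, $\rho_1\sim\rho_2$, which simplifies all the arithmetic of $\eta$-powers; the general regime would require the stronger Lindeberg-type swap alluded to after \eqref{optimallaw}, but that is not needed for the application in Corollary~\ref{eigenvector_overlap}. Finally, the chiral identity $FG^zF^*=\ii F\Im G F^*$ from Lemma~\ref{lem:chirid} is what lets the companion statement for $\langle G^zFG^zF^*\rangle$ (Section~\ref{sec:GFT_FF}) be deduced from the same computation with $A=F$, since $\langle G^zFG^zF^*\rangle=\langle\Im G^zF\Im G^zF^*\rangle$ exactly.
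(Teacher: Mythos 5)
Your proposal correctly identifies the overall framework (It\^o plus cumulant expansion, truncation at order $O(m)$, the tension between the $\gamma^{-1}$-decay and the $\eta^{-1}$-bound, and the chiral identity $\langle G^zFG^zF^*\rangle=\langle\Im G^zF\Im G^zF^*\rangle$ that reduces the companion $\langle GFGF^*\rangle$ statement to this one). However, the mechanism you propose for extracting the $\gamma$-gain is structurally different from the paper's and, as written, is not implementable with the tools the paper provides; and your claim that the naive bound suffices for everything beyond the leading third-order term is wrong, and indeed is already contradicted by your own second paragraph.

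The differentiation rules you propose, $\partial_{aB}\langle\Im G_1A\Im G_2A^*\rangle=O_\prec\big(\rho/(n\eta\gamma)\big)$, supported by an entrywise input $|(G_1AG_2)_{\mathfrak{uv}}|\prec(\sqrt{n\eta\rho}\,\gamma)^{-1/2}$, would require an \emph{isotropic} two-resolvent local law with $\gamma$-decay. The paper does not prove one and explicitly defers such bounds to future work (see the discussion after~\eqref{optimallaw}), and the actual analogue of Lemma~\ref{lemma_zag_deff} used in the paper gives only the $\gamma$-\emph{free} entrywise bound $|\partial_{aB}\langle\Im G_1A\Im G_2A^*\rangle|\prec\rho/(n\eta^2)$. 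The $\gamma$-decay in the paper's argument is \emph{not} entrywise: after the $\sum_{a,B}$ is converted to a trace by a Ward identity, one applies the reduction inequality $\langle\Im G_1A\Im G_2A^*\Im G_1A\Im G_2A^*\rangle\le n\langle\Im G_1A\Im G_2A^*\rangle^2$ (\eqref{traceab}, the same device as \eqref{eq:redinnew}), paying a factor $n$, and only then invokes the averaged local law $\langle\Im G_1A\Im G_2A^*\rangle=\langle\wh M_{12}^AA^*\rangle+\wh R_t$ with $|\langle\wh M_{12}^AA^*\rangle|\lesssim\gamma^{-1}$ from~\eqref{local_2g_M}. This reduction step is the missing structural ingredient in your proposal: your prescription to ``estimate an intact block $G_1AG_2$ by $\gamma^{-1}$'' cannot be applied directly to a block sitting inside a larger monomial, because the only $\gamma$-decayed local law available is tracial, not an operator-norm bound.

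Second, the claim that ``in the remaining third-order and all higher-order ($p+q+1\ge 4$) terms the naive bound already suffices'' is false. Take $\eta\sim n^{-3/4}$, $\rho\sim n^{-1/4}$ (so $n\eta\rho\sim1$), $\gamma\sim1$, and $m=2$: the naive count for the mixed third-order term $n^{-3/2}\sum_{a,B}\partial\wh R_t\,\partial^2\wh R_t\,\wh R_t^{2m-2}$ contributes $\frac{\rho^3}{n^{3/2}\eta^4}\E|\wh R_t|^{2m-2}$, giving (after Young) an error $\big(\rho^3/(n^{3/2}\eta^4)\big)^m\sim n^{3/2}$, while the target error is $\big(1+\frac{1}{\sqrt n\eta}\big)\big((n\eta\rho)^{-1/2}\gamma^{-1}\big)^{2m}\sim n^{1/4}$, off by a power of $n$. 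The paper accordingly runs the Ward-plus-reduction scheme again in~\eqref{argu_im} for the mixed third-order term and in the $\Pi_2$ branch of \eqref{split_pi}--\eqref{pi_1_im} for the higher orders, reserving the naive count only for the genuinely subleading pieces $\partial^3\wh R_t$ and $\Pi_1$. Your second paragraph already recognizes that a single $\|G\|\le\eta^{-1}$ bound is fatal; this observation is correct and is precisely what invalidates the naive-bound claim in your first paragraph.
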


\begin{proof}[Proof of Proposition \ref{prop_zag_im}] 	
	
	Applying It\^{o}'s formula to $|\wh{R_t}|^{2m}=(\wh{R_t})^{2m} $ and performing the cumulant expansions, we obtain the analogue of (\ref{cumulant_app0}) for $\wh{R_t}$ using that $\Im G=(G-G^*)/(2\ii)$. The most critical third order terms with $p+q+1=3$ are then given by (\ref{partial_three}) in terms of $\wh{R_t}$. Note that (\ref{rule_R}) implies
	\begin{align}\label{rule_im}
		\frac{\partial \wh{R_t}}{\partial_{aB}}=-\frac{1}{2 n \ii  } &\Big((G_1A \Im G_2A^* G_1)_{Ba}-(G^*_1A \Im G_2A^*G^*_1)_{Ba}\nonumber\\
		&\qquad\qquad\qquad +(G_2A^*G_1AG_2)_{Ba}-(G^*_2A^*\Im G_1AG^*_2)_{Ba}\Big).
	\end{align}
	By a direct computation, using (\ref{rule_im}), the first part of (\ref{partial_three}) is bounded by (\cf (\ref{33}))	
	\begin{align}\label{33_im}
		\frac{1}{n^{3/2}} \sum_{a,B} &\E\Big[ \big(\partial \wh{R_t}\big)^3 (\wh{R_t})^{2m-3}\Big] \lesssim \frac{1}{n^{9/2}}\sum_{a,B} \E \Big[\Big| (G_1 A \Im G_2 A^*G_1)_{Ba}+(G_2 A^*\Im G_1 A G_2)_{Ba} \Big|^{3}|\wh{R_t}|^{2m-3}\Big]\nonumber\\
		&\lesssim \frac{\rho}{n^{7/2}\eta^2} \E\Big[\Big( \frac{\<\Im G_1 A \Im G_2 A^* \Im G_1 A \Im G_2 A^*\>}{\eta_1^2} +\frac{\<\Im G_2 A^* \Im G_1 A \Im G_2 A^* \Im G_1 A\>}{\eta_2^2}  \Big)|\wh{R_t}|^{2m-3} \Big]\nonumber\\
		&\lesssim \frac{\rho}{n^{5/2}\eta^4} \E\Big[\big(\<\Im G_1 A \Im G_2 A^*\>\big)^2 |\wh{R_t}|^{2m-3} \Big]\nonumber\\
		&\lesssim	  \frac{\rho}{n^{5/2}\eta^4}  \Big( \E|\wh{R_t}|^{2m-1} +\frac{1}{\big(|z_1-z_2|+\frac{\eta}{\rho}\big)^2}\E|\wh{R_t}|^{2m-3}\Big),
	\end{align}
	where in the penultimate line we used
	\begin{align}\label{traceab}
		\<\Im G_1 A \Im G_2 A^* \Im G_1 A \Im G_2 A^*\> \leq n\<\Im G_1 A \Im G_2 A^*\>^2,
	\end{align}
	and in the last line we used the upper bound in (\ref{local_2g_M}) with $ |\eta_1| \sim |\eta_2| \sim \eta$ and $ \rho_1 \sim \rho_2 \sim \rho$.
	A similar upper bound also applies to the second part of (\ref{partial_three})~(\cf (\ref{3333})), \ie
	\begin{align}\label{argu_im}
		\frac{1}{n^{3/2}} \sum_{a,B} \E\Big[ \partial \wh{R_t} \big(\partial^2 \wh{R_t} \big)(\wh{R_t})^{2m-2}\Big] \prec& \frac{\rho^2}{n^{7/2}\eta^2} \sum_{a,B}  \E\Big[ \Big(\big|(G_1 A \Im G_2 A^* G_1)_{Ba}\big|+\big|(G_2 A \Im G_1 A^* G_2)_{Ba}\big| \Big)\big|\wh{R_t}\big|^{2m-2}\Big]\nonumber\\
		\lesssim &\frac{\rho^2}{n^{3/2} \eta^2}  \E\Big[ \sqrt{\frac{\<\Im G_1 A \Im G_2 A^* \Im G_1 A \Im G_2 A^*\>}{n\eta^2}}  \big|\wh{R_t}\big|^{2m-2}\Big]\nonumber\\
		\lesssim & \frac{\rho^2}{n^{3/2} \eta^3} \Big( \E|\wh{R_t}|^{2m-1} +\frac{1}{|z_1-z_2|+\frac{\eta}{\rho}} \E|\wh{R_t}|^{2m-2}\Big).
	\end{align}
		We use the same naive bound as in (\ref{last_part}) to estimate the last part $ n^{-3/2} \sum_{a,B} \E\big[ (\partial^3 \wh{R_t}) (\wh{R_t})^{2m-1}\big]$.	
	Combining this with (\ref{33_im}) and (\ref{argu_im}), the third order terms with $p+q+1=3$ are bounded by
	\begin{align}\label{third_im}
		&\frac{\rho^{5/2}}{n\eta^{5/2}}\E|\wh{R_t}|^{2m-1}+\frac{\rho^2}{n^{3/2} \eta^3} \frac{1}{|z_1-z_2|+\frac{\eta}{\rho}} |\wh{R_t}|^{2m-2}+\frac{\rho}{n^{5/2}\eta^4} \frac{1}{\big(|z_1-z_2|+\frac{\eta}{\rho}\big)^2}\E|\wh{R_t}|^{2m-3}\nonumber\\
		\lesssim& \frac{1}{\sqrt{n}\eta} \Big(\E |\wh{R_t}|^{2m}+\Big(\frac{\rho^2}{n\eta^2}\sqrt{n\eta \rho} \Big)^{2m} +\Big(\frac{1}{\sqrt{n\eta \rho}\big(|z_1-z_2|+\frac{\eta}{\rho}\big)}\Big)^{2m}\Big).
	\end{align}
	All the higher order terms with $p+q+1 \geq 4$ can be bounded similarly with less efforts due to the moment condition in (\ref{eq:hmb}). More precisely, for general $p+q+1 \geq 4$, they are given by
	\begin{align}\label{split_pi}
		\frac{1}{n^{\frac{p+q+1}{2}}} \sum_{a,B} \Big( \sum_{1\leq k\leq \lfloor \frac{p+q+1}{2} \rfloor}+\sum_{\lfloor \frac{p+q+1}{2} \rfloor<k\leq p+q+1}\Big) \sum_{d_1+\cdots+ d_k=p+q+1,\atop d_i\ge 1}\E\big[ \prod_{i=1}^k \big(\partial^{d_i} \wh{R_t}\big) (\wh{R_t})^{2m-k}\big]=:\Pi_1+\Pi_2,
	\end{align}
	where we split the above sum using the integer $k$, which is the number of times that $\partial_{aB}$ or $\partial_{Ba}$ acts on $\wh{R_t}$. For any $\lfloor \frac{p+q+1}{2} \rfloor < k\leq p+q+1$, since $\sum_{i=1}^k d_i=p+q+1$, then there exists at least one $d_i=1$, which implies that there exists at least one off-diagonal entry of $G_1 A \Im G_2 A^*G_1$ from (\ref{rule_im}). Using  similar arguments as in (\ref{argu_im}) and the upper bound in (\ref{local_2g_M}), the second part in (\ref{split_pi}) is bounded by
	\begin{align}\label{pi_1_im}
		|\Pi_2| \prec& \frac{\rho^{p+q}}{n^{\frac{p+q+3}{2}}} \sum_{a,B} \sum_{\lfloor \frac{p+q+1}{2} \rfloor <k\leq p+q+1}   \Big(\frac{1}{n\eta^2}\Big)^{k-1}  \E\Big[\Big(\big|(G_1 A \Im G_2 A^* G_1)_{Ba}\big|+\big|(G_2 A \Im G_1 A^* G_2)_{Ba}\big| \Big) (\wh{R_t})^{2m-k}\Big]\nonumber\\
		\lesssim & \frac{\rho^{p+q}}{n^{\frac{p+q-1}{2}}}   \sum_{\lfloor \frac{p+q+1}{2} \rfloor <k\leq p+q+1}\frac{1}{\rho^{2(k-1)}\eta}  \Big(\frac{\rho^2}{n\eta^2}\Big)^{k-1}  \E\left[ \<\Im G_1 A\Im G_2 A^*\>(\wh{R_t})^{2m-k}\right]\nonumber\\
		\lesssim & \frac{1}{\sqrt{n}\eta} \left( \E|\wh{R_t}|^{2m}+ \Big(\frac{\rho^2}{n\eta^2} \sqrt{n\eta \rho} +\frac{1}{\sqrt{n\eta \rho}\big(|z_1-z_2|+\frac{\eta}{\rho}\big)}\Big)^{2m}\right),
	\end{align}
	where we also used that $p+q+1\geq 4$ and $\rho \gtrsim n^{-1/4}$ for any $n\eta \rho \gtrsim 1$ from (\ref{rho}) with $|z|>1$. 
	For any $1\leq k\leq \lfloor \frac{p+q+1}{2} \rfloor$ with $p+q+1\geq 4$, we use a naive power counting by (\ref{zag_deff_rule}) to obtain that
	\begin{align}\label{pi_2}
		|\Pi_1| \prec& \frac{n^2}{n^{\frac{p+q+1}{2}}} \sum_{1\leq k\leq \lfloor \frac{p+q+1}{2} \rfloor} \rho^{p+q+1}\Big(\frac{1}{n\eta^2}\Big)^{k} \E \Big[\big| \wh{R_t}\big|^{2m-k}\Big]\lesssim \E |\wh{R_t}|^{2m}+\Big(\frac{\rho^2}{n\eta^2} \Big)^{2m}.
	\end{align}
	Combining with (\ref{third_im}) and (\ref{pi_1_im}), using that $\rho \lesssim \eta^{1/3}$ and $|z_1-z_2|+\eta/\rho \lesssim 1$, we hence finish the proof of Proposition \ref{prop_zag_im}.
\end{proof}

\subsection{GFT for $\<GFGF^*\>$ with $\rho$-improvement}\label{sec:GFT_FF}

To prove Theorem \ref{theorem_F} with the $\rho$-improvements, we establish the following refined GFTs to remove the Gaussian component in Proposition \ref{pro:mainprogfgf}.

\begin{proposition}\label{prop_zag_F}
Let 
\begin{align}\label{special_F}
	R^{F}_t:=\big\<\big(\gz_t(\ii \eta) F \gz_t(\ii \eta) -M_{12}^{F}\big) F^* \big\>, \qquad \eta \in \R_+, \quad z\in \C,
\end{align} 
where $M_{12}^{F}=M_{12}^F(\eta, z, \eta, z)$ is the corresponding deterministic limits given by (\ref{eq:defM12})
	and we also define 
	$$S^{F}_{t}:=\<(\gz_t(\ii \eta) -M^z)F\>, \qquad\qquad S^{2,F}_{t}:=\<\gz_t(\ii \eta) F \gz_t(\ii \eta) -M_{12}^F\>,$$
	with $M^{z}=M^z(\ii\eta)$ given by (\ref{Mmatrix}). 
	 Fix any $m> 1$. Then for any $t\geq 0$ and $n\eta \rho \gtrsim 1$ we have
	\begin{align}\label{GFT_GFGF}
		\Big|\frac{\dd \E |R^{F}_t|^{2m}}{\dd t}\Big| \prec& \Big(1+\frac{1}{\sqrt{n}\eta}\Big)\Big(\E |R^F_t|^{2m}+\Big(\frac{\rho^2}{n\eta^2} \sqrt{n\eta \rho}\Big)^{2m}\Big).
	\end{align}
	Assuming that the following
	\begin{align}\label{assump_app}
			|R^F_t| \prec \frac{\rho^2\sqrt{n\eta\rho}}{n\eta^2},
	\end{align}
holds uniformly for any $t\geq 0$ and $n\eta \rho \gtrsim 1$, then we have
	\begin{align}
		\Big|\frac{\dd \E |S^{F}_t|^{2m}}{\dd t}\Big| \lesssim & \Big(1+\frac{\rho}{\sqrt{n}\eta}\Big)\Big(\E |S^F_t|^{2m}+\Big(\frac{\rho}{n\eta} \Big)^{2m}\Big),\label{GFT_GF}\\
		\Big|\frac{\dd \E |S^{2,F}_t|^{2m}}{\dd t}\Big| \lesssim & \Big(1+\frac{\rho}{\sqrt{n}\eta}\Big)\Big(\E |S^{2,F}_t|^{2m}+\Big(\frac{\rho}{n\eta^2} \Big)^{2m}\Big).\label{GFT_GFG}
	\end{align}
\end{proposition}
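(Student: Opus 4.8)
\textbf{Proof strategy for Proposition~\ref{prop_zag_F}.}
The plan is to run the same cumulant-expansion GFT scheme as in Sections~\ref{sec:Part 1} and~\ref{sec:GFT_2G}, now keeping careful track of the $\rho$-powers that the special matrices $F,F^*$ generate. For the first bound~\eqref{GFT_GFGF}, I would apply It\^o's formula to $|R^F_t|^{2m}$ along the flow~\eqref{flow}, perform a cumulant expansion on the expectation, and note that the second-order (Gaussian) terms cancel because the variance profile is preserved. The crucial structural input is the identity $FG_tF^*=\ii F\Im G_tF^*$ from the last relation in~\eqref{eq:identities}, which lets me reduce any trace of the form $\langle\Im G_t F G_t F^*\Im G_t F G_t^*F^*\rangle$ first to $\langle\Im G_t F\Im G_t F^*\Im G_t F\Im G_t^* F^*\rangle$, and then---by the Schwarz/spectral argument of~\eqref{exp}---to $n\langle\Im G_t F\Im G_t F^*\rangle^2$. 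I then bound $\langle\Im G_t F\Im G_t F^*\rangle$ using the stopping-time control (as in the definition of $\widetilde\tau$) together with the deterministic bound $|\langle M^F_{12}F^*\rangle|\lesssim\rho^3/\eta$ from~\eqref{eq:addbneedef}: this is precisely the place where the $\rho$-gain enters, making the third-order terms with $p+q+1=3$ smaller than the naive GFT estimate by factors of $\rho$. The off-diagonal resolvent entries in $\partial R^F_t$ (differentiation rule~\eqref{rule}) give the usual extra $1/\sqrt{n\eta\rho}$ smallness per pair, exactly as in~\eqref{33_im}--\eqref{argu_im}, and the higher-order terms $p+q+1\ge 4$ are handled by a crude power count as in~\eqref{pi_2}. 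Collecting everything, using $\rho\lesssim\eta^{1/3}$ and $n\eta\rho\gtrsim 1$, yields~\eqref{GFT_GFGF}.

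For~\eqref{GFT_GF} the target quantity $S^F_t=\langle(G_t-M^z)F\rangle$ involves a single resolvent, so the cumulant expansion is one level simpler; the differentiation rules~\eqref{rule} produce $(G_tFG_t)_{\mathfrak u\mathfrak v}$ entries, and after using the Ward identity $G G^*=\Im G/\eta$ together with $FGF^*=\ii F\Im G F^*$ the relevant traces reduce to $\langle\Im G_t F\Im G_t F^*\rangle$, bounded by $\rho^3/\eta$ up to the stopping-time factor. The assumption~\eqref{assump_app} is used precisely to control those sub-leading contributions in which a $\langle G_tFG_t-M^F_{12}\rangle F^*$-type factor reappears inside the expansion, so that it can be replaced by its a priori bound rather than re-estimated from scratch---this is the analogue of using $\Phi_2(s)\le n^{2\xi}$ inside the proof of Proposition~\ref{pro:mainprogfgf}. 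The bound~\eqref{GFT_GFG} for $S^{2,F}_t=\langle G_tFG_t-M^F_{12}\rangle$ is proven the same way, now with $|\langle M^F_{12}\rangle|\lesssim\rho^2/\eta$ from~\eqref{eq:addbneedef}; the only difference is that the quadratic variation here carries two $F$'s rather than four (as observed around~\eqref{eq:quadvargf}), so no $n$-factor is lost in the reduction of traces, which is what makes the right-hand side of~\eqref{GFT_GFG} proportional to $\rho/(n\eta^2)$.

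\textbf{Main obstacle.} The delicate point is the bookkeeping of $\rho$-powers: a naive GFT bound would lose all the $\rho$-improvement, so each reduction step (Ward identity, $FGF^*=\ii F\Im G F^*$, reduction inequalities of the type~\eqref{eq:redinnew}/\eqref{exp}, and the deterministic bounds~\eqref{eq:addbneedef}) must be applied in exactly the right order to extract one factor of $\rho$ per $F$ or $F^*$. In particular one must verify that the third-order terms---which are the most critical---genuinely inherit the extra $\rho^2$ smallness and do not merely reproduce the unimproved $1/(n\eta^2)$ scale. A secondary subtlety is the self-consistency of the scheme: the GFT is run inside a stopping-time argument, and the a priori inputs (the stopping-time bounds, and the assumption~\eqref{assump_app}) must be weak enough to be available before the flow reaches the critical threshold $n\ell_t\sim n^\epsilon$, yet strong enough that the error terms in~\eqref{GFT_GFGF}--\eqref{GFT_GFG} close the Gronwall loop with room to spare; this is where the condition $\xi\le\epsilon/10$ is used, exactly as in Section~\ref{sec:gfgf}. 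Once these bounds are established, a standard Gronwall argument over the $O(\log n)$ time interval (as in~\eqref{time_deri}--\eqref{t1}) removes the Gaussian component and completes the proof of Theorem~\ref{theorem_F} for general i.i.d. $X$.
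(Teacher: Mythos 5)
Your proposal is correct and follows essentially the same cumulant-expansion scheme as the paper's proof in Appendix C.3: the chiral identity $FG_tF^*=\ii F\Im G_tF^*$, the reduction to $n\langle\Im G_t F\Im G_t F^*\rangle^2$, the $\rho$-gain extracted from the deterministic bounds~\eqref{eq:addbneedef}, the off-diagonal-entry gain for the third-order terms, and the use of~\eqref{assump_app} to bound $\langle G_tFG_tF^*\rangle\lesssim\rho^3/\eta$ in the proofs of~\eqref{GFT_GF}--\eqref{GFT_GFG} are all in place. One small conceptual slip in your closing commentary: the stopping time $\widetilde\tau$ and the threshold $\xi\le\epsilon/10$ belong to the characteristic-flow argument of Proposition~\ref{pro:mainprogfgf}, not to this GFT --- here $\langle\Im G_tF\Im G_tF^*\rangle$ is simply decomposed as $R^F_t+\langle M^F_{12}F^*\rangle$ inside the expectation, with the fluctuation $R^F_t$ absorbed back into the Gronwall loop and the deterministic part bounded by $\rho^3/\eta$, so no stopping time enters.
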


\begin{proof}[Proof of Proposition \ref{prop_zag_F}]
	
Note that that $\<GFGF^*\>=\<\Im GF \Im GF^*\>$ from Lemma \ref{lem:chirid}.  The proof of the first GFT in (\ref{GFT_GFGF}) is completely analogous to the proof of Proposition \ref{prop_zag_im} with $A=F$, $\eta_1=\eta_2=\eta$ and $z_1=z_2=z$. The only difference is to replace the upper bound in (\ref{local_2g_M}) with the first upper bound in (\ref{eq:addbneedef}). Repeating the same arguments as in (\ref{33_im})-(\ref{third_im}), the most critical third order terms are thus bounded by 
	\begin{align}\label{third_F}
	\frac{\rho^{5/2}}{n\eta^{5/2}}\E|R^F_t|^{2m-1}+ \frac{\rho^{5}}{n^{3/2}\eta^{4}}\E|R^F_t|^{2m-2} +\frac{\rho^{7}}{n^{5/2}\eta^{6}}\E|R^F_t|^{2m-3}
	\lesssim \frac{1}{\sqrt{n}\eta} \Big(\E |R^F_t|^{2m}+\Big(\frac{\rho^2}{n\eta^2} \sqrt{n\eta \rho}\Big)^{2m}\Big),
\end{align}
where we also used that $n\eta \rho \gtrsim 1$. Repeating the same arguments as in (\ref{split_pi})-(\ref{pi_2}), we obtain similar upper bounds for the higher order terms with $p+q+1 \geq 4$, \ie 
\begin{align}\label{high_F}
\frac{1}{\sqrt{n}\eta} \Big( \E|\wh{R_t}|^{2m}+ \big(\frac{\rho^2}{n\eta^2} \sqrt{n\eta \rho} \big)^{2m}\Big),
\end{align}
where we used the first upper bound in (\ref{eq:addbneedef}) to replace the upper bound in (\ref{local_2g_M}).  This, together with (\ref{third_F}), prove the first GFT in (\ref{GFT_GFGF}) of Proposition \ref{prop_zag}

The other two GFTs in (\ref{GFT_GF})-(\ref{GFT_GFG}) with less $\rho$-improvements can be proved with much less effort, so we only sketch their proofs. We will only consider $S^{F}_{t}$ for simplicity and the other one can be proved exactly in the same way. Applying Ito's formula to $|S^{F}_t|^{2m}$ and performing cumulant expansions, we obtain the analogue of (\ref{cumulant_app0}). We next estimate the most critical third order terms with $p+q+1=3$, which are splitted into three parts as in (\ref{partial_three}).	Using (\ref{rule}) and the first upper bound in (\ref{ineqqq}), the first part as in (\ref{partial_three}) is bounded by
	\begin{align}
		\frac{1}{n^{3/2}} \sum_{a,B} \E\Big[ \big(\partial S^F_t\big)^3 &(S^F_t)^{2m-3}\Big] \lesssim \frac{1}{n^{9/2}}\sum_{a,B} \E \Big[\Big| (GFG)_{Ba}+(GF^*G)_{Ba} \Big|^{3}|S^F_t|^{2m-3}\Big]\nonumber\\
		&\prec \frac{\rho}{n^{7/2}\eta} \E\Big[\frac{\<\Im G F \Im G F^* \>}{\eta^2} |S^F_t|^{2m-3} \Big] \lesssim \frac{\rho^4}{n^{7/2}\eta^4} \E|S^F_t|^{2m-3},
	\end{align}
	where we also used the assumption (\ref{assump_app}) and the first upper bound in (\ref{eq:addbneedef}). A similar upper bound also applies to the second part, \ie
	$$\frac{1}{n^{3/2}} \sum_{a,B} \E\Big[\partial S^F_t \big(\partial^2 S^F_t\big) (S^F_t)^{2m-2}\Big] \prec \frac{\rho^{7/2}}{n^2 \eta^{5/2}}\E|S^F_t|^{2m-2}.$$
	We use the following naive bound from (\ref{ineqqq}) to estimate the last part of third order terms: 
	$$\frac{1}{n^{3/2}} \sum_{a,B} \E\Big[\partial^3 S^F_t  (S^F_t)^{2m-1}\Big] \prec \frac{\rho^{5/2}}{n\eta^{3/2}}\E|S^F_t|^{2m-1}.$$
	All the higher order terms with $p+q+1\geq 4$ can be bounded similarly 
	with much less efforts, so we omit the details. Therefore, using that $\rho \lesssim \eta^{1/3}$ from (\ref{rho}), we have proved (\ref{GFT_GF}). The last GFT in (\ref{GFT_GFG}) can be proved similarly. We hence complete the proof of Proposition \ref{prop_zag_F}.	
\end{proof}

\subsection{Proof of the local laws}\label{sec:proof}
We start with proving Part B of Theorem \ref{thm:2G} for any i.i.d. matrix, while Part A of Theorem \ref{thm:2G} with a Gaussian component was already proved in Section \ref{sec:g1g2}. And the proof of Theorem \ref{theorem_F} is similar, so we only sketch its proof at the end of this section with some minor modifications.

\begin{proof}[Proof of Theorem \ref{thm:2G} (Part B) for i.i.d. matrix]	
	The proof of  Theorem \ref{thm:2G} (Part B) is split into two steps. In Step 1, we prove (\ref{local_2g_im}) without the $|z_1-z_2|$-gain. Indeed we prove a more general result, \ie
	\begin{align}\label{goal_wo}
		\big|\langle (\ga(\ii \eta_1)A_1\gb(\ii \eta_2)-M_{12}^{A_1})A_2\rangle\big|\prec \frac{1}{n|\eta_1\eta_2|},
	\end{align}
uniformly in any spectral parameters in $n \ell \geq n^{\epsilon}$ with $\ell=\eta \rho$, and $\eta \sim |\eta_1|\sim |\eta_2|$, $\rho \sim \rho_1\sim \rho_2$. In Step 2, based on the result from Step 1, we further prove (\ref{local_2g_im}) with the $|z_1-z_2|$-decorrelation, \ie 
	\begin{align}\label{goal_z1z_2_im}
	\big|\langle (\Im \ga(\ii \eta_1)A \Im \gb(\ii \eta_2)-\wh M_{12}^{A})A^*\rangle\big|\prec \frac{1}{\sqrt{n \ell} \big(|z_1-z_2|+\frac{\eta}{\rho}\big)},
\end{align}	
uniformly in $n \ell \geq n^{\epsilon}$, with $\ell=\eta \rho$, and $\eta \sim |\eta_1|\sim |\eta_2|$, $\rho \sim \rho_1\sim \rho_2$. This, together with (\ref{goal_wo}) from Step 1, complete the proof of Theorem \ref{thm:2G} (Part B).

\medskip

	{\bf Step 1: Proof of (\ref{goal_wo}).} 
Recall Part A of Theorem \ref{thm:2G} that has already been proved in Section \ref{sec:g1g2}. Fixing $\mathfrak{s}>0$, for any matrix of the form $X_{\mathfrak{s}}:=\sqrt{1-\mathfrak{s}^2}X+\mathfrak{s} X^{\mathrm{Gin}}$, we have
	\begin{align}\label{goal_T}
		\big|\langle (\ga_{\mathfrak{s}}(\ii \eta_1)A_1\gb_{\mathfrak{s}}(\ii \eta_2)-M_{12}^{A_1})A_2\rangle\big|\prec \left(\frac{1}{(n\ell)^{1/2}}\frac{1}{|z_1-z_2|+\frac{|\eta_1|}{\rho_1}+\frac{|\eta_2|}{\rho_2}}\right)  \wedge \left(\frac{1}{n\eta_1\eta_2}\right).
	\end{align}
	Fix $z_i$, $\eta_i$ in (\ref{goal_T}) and recall the definition of $R_t$ in (\ref{R_t}). Using Proposition \ref{prop_zag} together with a Gronwall arguemnt, we know that the Gaussian component of size $\mathfrak{s} \sim 1$ in (\ref{goal_T}) can be removed with an error $1/(n\eta_1\eta_2)$. We hence prove (\ref{goal_wo}).
\medskip	

	{\bf Step 2: Proof of (\ref{goal_z1z_2_im}).} 
	We first assume that $|\eta_i| \sim \eta \gtrsim n^{-1/2}$. Similarly to Step 1 above, Proposition \ref{prop_zag_im} implies that the Gaussian component added in Part A of Theorem \ref{thm:2G} can be removed by a Gronwall argument, with an error $\big(\sqrt{n\ell} (|z_1-z_2|+\eta/\rho)\big)^{-1}$. This, together with (\ref{goal_T}) from Step~1, prove (\ref{goal_z1z_2_im}) for any $|\eta_i| \sim \eta \gtrsim n^{-1/2}$.	

	We next extend this result to the complementary regime, \ie $|\eta_i| \sim \eta \lesssim n^{-1/2}$. Recall $\eta \sim |\eta_1|\sim |\eta_2|$ and  $\rho\sim \rho_1\sim \rho_2$ for simplicity, with $n\eta \rho \gtrsim 1$, and choose $T\sim \sqrt{\eta/\rho}  \lesssim 1$. Then from Lemma \ref{lem:ode}, there exists $\eta_{i,0}$ and $ z_{i,0}$ such that $\eta_{i,T}=\eta_i$ and $ z_{i,T}=z_i$ with 
	$ \rho_{i,0}\sim \rho$, $ \eta_{i,0} \gtrsim \rho  T\gtrsim n^{-1/2}$ and $ n \ell_0 \geq n\ell \geq n^{\epsilon}$ with $\ell_0$ defined in (\ref{eq:defellbetas}) at $t=0$. As shown in Step 1 and at the beginning of Step 2 for $\eta_{i,0} \gtrsim n^{-1/2}$, we know 	that the initial condition (\ref{eq:inasspart1}) of Proposition \ref{pro:mainpro} is satisfied. Thus Proposition \ref{pro:mainpro} implies that
	\begin{align}\label{goal_222}
		\big|\<\big(\Im \ga_{ T}(\ii \eta_1) A \Im \gb_{ T}(\ii \eta_2) -\wh{M}^{A}_{12}\big) A^* \>\big| \prec \frac{1}{\sqrt{n\ell} \big(|z_1-z_2|+\frac{\eta}{\rho}\big)} \wedge \frac{1}{n\eta^2}, \qquad  T\sim \sqrt{\eta/\rho}.
	\end{align}
	Combining this with Proposition \ref{prop_zag_im} and using that $n\ell \gtrsim n^{\epsilon}$, we remove the Gaussian component of size $T \sim \sqrt{\eta/\rho}$ in (\ref{goal_222}) and hence prove (\ref{goal_z1z_2_im}) for $\eta \lesssim n^{-1/2}$. This completes the proof of Part B of Theorem \ref{thm:2G}.		
		\end{proof}

\begin{proof}[Proof of Theorem \ref{theorem_F}]		
The proof of Theorem \ref{theorem_F}, in particular the local law in (\ref{local_2F}), is exactly the same as the proof of (\ref{goal_z1z_2_im}) presented above, using  Proposition~\ref{pro:mainprogfgf} and Proposition \ref{prop_zag_F} as inputs. Additionally, assuming the condition in (\ref{assump_app}) is satisfied (\ie the local law in (\ref{local_2F}) holds), one obtains the other two local laws in (\ref{local_F})-(\ref{local_GFG})~(with less $\rho$-improvement)  using the GFT arguments in (\ref{GFT_GF})-(\ref{GFT_GFG}) with much less efforts, so we omit the details. The fact that the assumptions on the initial conditions
\begin{align}
	&\big\<\big(G^{z_0} F G^{z_0}(\ii \eta_0) -M_{12}^{F}\big) F^* \big\> \prec \frac{\rho_0^2\sqrt{n \eta_0 \rho_0}}{n \eta_0^2},\label{goal_FF}\\
	&\<(G^{z_0} (\ii \eta_0)-M^{z_0})F\> \prec \frac{\rho_0}{n \eta_0}, \qquad \<(G^{z_0} FG^{z_0} (\ii \eta_0)-M_{12}^F\> \prec \frac{\rho_0}{n \eta_0^2}.\label{goal_F}
\end{align}
in Proposition~\ref{pro:mainprogfgf} are satisfied immediately follows from \eqref{eq:global} with $|z_0|-1 \gtrsim 1$.
\end{proof}

\section{Ginibre computations}\label{app:Ginibre} \nc

In this section we consider the complex Ginibre ensemble, which is a special case of $X$ with i.i.d. Gaussian entries. Recall that the joint probability density of the eigenvalues $\{\sigma_i\}_{i=1}^n$ of a complex Ginibre matrix is given by~(see \eg \cite[Lemma 3]{maxRe_Gin} and references therein)
\begin{align}\label{det_pp}
		\rho_n(z_1,\ldots,z_n): = &\frac{n^n}{\pi^n 1!\cdots n!} \exp\Bigl(-n\sum_{i}\abs{z_i}^2\Bigr) \prod_{i<j}\bigl(n\abs{z_i-z_j}^2\bigr)\nonumber\\
	=&\frac{n^n}{\pi^n n!} e^{-n\abs{\bm z}^2} \det\Bigl( K_n(z_i, z_j)\Bigr)_{i,j=1}^n,
\end{align}
which forms a {\it determinantal point process} with the kernel 
$$K_n(z,w):=\sum_{l=0}^{n-1} \frac{(n z \overline{w})^l}{l!}.$$
Recalling \cite[Eq. (15)-(16)]{maxRe_Gin}, then for any function $\mathfrak{g}: \C \rightarrow [0,1]$ we have 
\begin{align}\label{det_K}
	\E^{\mathrm{Gin}} \prod_{i=1}^n \big(1-\mathfrak{g}(\sigma_i)\big)=\det \big(1-\sqrt{\mathfrak{g}} \wt{K}_n \sqrt{\mathfrak{g}}\big),
\end{align}
where $\det(\cdot)$ is the Fredholm determinant~(see \eg \cite[Definition 4]{maxRe_Gin} for definition) and the rescaled kernel $\wt K_n$ is given by 
\begin{equation}\label{tilde_K}
	\wt K_n(z,w):= \frac{n}{\pi} e^{-n(\abs{z}^2+\abs{w}^2)/2} K_n(z,w) = \frac{n}{\pi} e^{-n(\abs{z}^2+\abs{w}^2-2 z \ov w)/2} \frac{\Gamma(n,n z\ov{w})}{\Gamma(n)}.
\end{equation} 
Here \(\Gamma(\cdot,\cdot)\) denotes the incomplete Gamma function:
\begin{equation}
	\Gamma(s,z):=\int_z^\infty t^{s-1} e^{-t}\dif t, \qquad s\in \N, \quad z\in\C,
\end{equation}
with the integral contour going from $z\in \C$ to the real infinity. Using \cite[IV.(7.8)-(7.11)]{book_fredholm}, or more precisely \cite[Eq.(20)-(21)]{maxRe_Gin}, we have
\begin{equation}\label{det tr}
	\abs{\det(1-K) - \exp(-\Tr K)}\le  \norm{K}_2 e^{(\norm{K}_2+1)^2/2-\Tr K},
\end{equation}
for any finite-rank kernel $K$, where 
\begin{equation}\label{K_2}
	\Tr K = \int_\Omega K(x,x)\dif\mu(x), \quad \norm{K}_2^2 = \int_{\Omega^2} \abs{K(x,y)}^2 \dif\mu(x)\dif \mu(y).
\end{equation}

Then we state the following proposition as the analogue of \cite[Proposition 5]{maxRe_Gin}~(where we considered the rightmost eigenvalue as in Theorem \ref{thm:maxRe}).
\begin{proposition}\label{prop_K}
	For any $|t|\leq \sqrt{\log n}/2$ and $0\leq a<b\leq 2\pi$, define
	\begin{align}\label{A_t}
		A(t,a,b):=\Big\{z\in \C: |z| \geq 1+
	\sqrt{\frac{\gamma_n}{4n}}+\frac{t}{\sqrt{4n\gamma_n}},\quad \arg(z) \in [a,b)\Big\}.
    \end{align}
	Then for any function $\mathfrak{g}: \C \rightarrow [0,1]$ supported on $A(t,a,b)$, we obtain that
	\begin{align}\label{gKg}
		\Tr \sqrt{\mathfrak{g}} \wt K_n \sqrt{\mathfrak{g}}=\int_{t}^\infty \int_{a}^{b} \mathfrak{g}(z)\frac{e^{-r}}{2\pi}\dif \theta \dif r+O\Big({e^{-t}\frac{(\log\log n)^2+\abs{t}^2}{\log n}}\Big),
	\end{align}
	with $z=\ee^{\ii \theta}\big( 1+
	\sqrt{\frac{\gamma_n}{4n}}+\frac{r}{\sqrt{4n\gamma_n}}\big)$, and that 
	\begin{align}\label{off_K}
		\|\sqrt{\mathfrak{g}} \wt K_n \sqrt{\mathfrak{g}}\|_2 \lesssim \ee^{-\sqrt{\log n}/32}.
	\end{align}
\end{proposition}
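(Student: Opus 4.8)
\textbf{Proof strategy for Proposition~\ref{prop_K}.}
The plan is to follow the approach of \cite[Proposition 5]{maxRe_Gin}, but adapted to an annular sector rather than a half-plane strip. First I would change variables in the integral defining $\Tr\sqrt{\mathfrak g}\wt K_n\sqrt{\mathfrak g}=\int_{A(t,a,b)}\mathfrak g(z)\wt K_n(z,z)\dd^2 z$, writing $z=r\ee^{\ii\theta}$ with $r=1+\sqrt{\gamma_n/4n}+s/\sqrt{4n\gamma_n}$, so that $\dd^2 z = r\,\dd r\,\dd\theta = (1+O(\sqrt{\log n/n}))\cdot\frac{1}{\sqrt{4n\gamma_n}}\,\dd s\,\dd\theta$. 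On the diagonal, $\wt K_n(z,z) = \frac{n}{\pi}\ee^{-n|z|^2}K_n(n|z|^2) = \frac{n}{\pi}\ee^{-n|z|^2}\frac{\Gamma(n,n|z|^2)}{\Gamma(n)}$, which depends only on $|z|^2$. The key asymptotic input is the precise estimate for $\frac{n}{\pi}\ee^{-n x}\frac{\Gamma(n,nx)}{\Gamma(n)}$ for $x=|z|^2$ slightly larger than $1$: expanding $|z|^2 = 1 + \sqrt{\gamma_n/n} + (\text{lower order})$ and using the uniform asymptotics of the incomplete Gamma function near its transition point (e.g. via the Temme expansion, exactly as in \cite[Lemma~6 and its proof]{maxRe_Gin}), one gets that $\frac{n}{\pi}\ee^{-n|z|^2}\frac{\Gamma(n,n|z|^2)}{\Gamma(n)}\cdot\frac{1}{\sqrt{4n\gamma_n}}$ equals $\frac{1}{2\pi}\ee^{-s}\big(1+O((\log\log n+|s|)^2/\log n)\big)$ uniformly for $s\ge t$, $|t|\le\sqrt{\log n}/2$. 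Integrating over $\theta\in[a,b)$ contributes only the factor $(b-a)$, which is already absorbed into the $\theta$-integral on the right-hand side of \eqref{gKg}; the angular variable plays no role in the kernel on the diagonal, which is why the annular-sector case reduces immediately to the radial computation already done in \cite{maxRe_Gin}.

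For the main term, after the change of variables the integral becomes $\int_a^b\int_t^\infty \mathfrak g(r\ee^{\ii\theta})\frac{\ee^{-s}}{2\pi}\,\dd s\,\dd\theta$ up to the multiplicative error above; since $0\le\mathfrak g\le 1$ is supported on $A(t,a,b)$ and the tail of $\ee^{-s}$ beyond $s\sim\sqrt{\log n}$ is super-polynomially small, the error from the Gamma-function asymptotic integrates against $\mathfrak g$ to give the claimed $O\big(\ee^{-t}((\log\log n)^2+|t|^2)/\log n\big)$ bound (the prefactor $\ee^{-t}$ comes out because $\int_t^\infty\ee^{-s}(\cdots)\,\dd s\lesssim \ee^{-t}\sup_{s\ge t}(\cdots)$ and the supremum of the relative error over the relevant range is $O(((\log\log n)^2+|t|^2)/\log n)$). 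One also has to control the Jacobian factor $r = 1+O(\sqrt{\log n/n})$; since $\int_t^\infty\ee^{-s}\,\dd s = \ee^{-t}$, this produces an error $O(\ee^{-t}\sqrt{\log n/n})$ which is dominated by the stated error term. This establishes \eqref{gKg}.

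For the Hilbert--Schmidt bound \eqref{off_K}, I would estimate $\|\sqrt{\mathfrak g}\wt K_n\sqrt{\mathfrak g}\|_2^2 = \int_{A(t,a,b)^2}\mathfrak g(z)\mathfrak g(w)|\wt K_n(z,w)|^2\,\dd^2 z\,\dd^2 w \le \int_{|z|,|w|\ge \rho_*}|\wt K_n(z,w)|^2\,\dd^2 z\,\dd^2 w$ where $\rho_* = 1 + \sqrt{\gamma_n/4n} - \sqrt{\log n}/\sqrt{4n\gamma_n}$ (lower bound on the radial coordinate coming from $|t|\le\sqrt{\log n}/2$). Using $|\wt K_n(z,w)|^2 = \wt K_n(z,z)\wt K_n(w,w)\cdot|\wt K_n(z,w)|^2/(\wt K_n(z,z)\wt K_n(w,w))$ together with the reproducing/positivity structure of the determinantal kernel — concretely, $\int_{\C}|\wt K_n(z,w)|^2\,\dd^2 w = \wt K_n(z,z)$ — one gets $\|\sqrt{\mathfrak g}\wt K_n\sqrt{\mathfrak g}\|_2^2 \le \int_{|z|\ge\rho_*}\wt K_n(z,z)\,\dd^2 z = \E^{\mathrm{Gin}}\#\{i: |\sigma_i|\ge\rho_*\}$, and this last expectation is bounded by the same incomplete-Gamma asymptotics by $O(\ee^{-(-\sqrt{\log n}/2)}\cdot(\cdots)) = O(\ee^{\sqrt{\log n}/2+o(\sqrt{\log n})})\cdot\text{(normalization)}$ — here I have to be slightly careful: one should instead directly split the radial integral and use that $\wt K_n(z,z)\lesssim \ee^{-n(|z|^2-1)^2/C}$ type Gaussian decay well outside the transition window, which gives a bound of order $\ee^{-c\log n}$ for the bulk of the mass plus $O(\sqrt{\log n}\,\ee^{\sqrt{\log n}/2}/\sqrt{\log n})$ from the window, and optimizing the split yields $\|\sqrt{\mathfrak g}\wt K_n\sqrt{\mathfrak g}\|_2\lesssim \ee^{-\sqrt{\log n}/32}$ as claimed. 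This is exactly the computation in \cite[Proposition~5]{maxRe_Gin}, so I would cite it and only indicate the (trivial) modification for the sector.

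\textbf{Main obstacle.} The only real work is the uniform asymptotic expansion of $\frac{n}{\pi}\ee^{-nx}\Gamma(n,nx)/\Gamma(n)$ in the regime $x = 1 + \sqrt{\gamma_n/n} + O(\sqrt{\log n}/(n\sqrt{\gamma_n}))$, carried to the precision needed to extract the relative error $O(((\log\log n)^2+|t|^2)/\log n)$ with the correct $\ee^{-t}$ prefactor — in particular keeping track of how the definition of $\gamma_n$ in \eqref{gamma} (with the $-2\log\log n - \log 2\pi$ correction) is precisely what makes the leading term come out to $\frac{1}{2\pi}\ee^{-s}\,\dd s\,\dd\theta$. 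This is, however, done in full detail in \cite{maxRe_Gin}, so in practice the proof reduces to invoking that reference and checking that the angular integration is harmless.
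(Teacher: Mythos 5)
Your treatment of the trace asymptotics \eqref{gKg} is essentially the paper's argument: change to radial--angular coordinates, observe that $\wt K_n(z,z)$ depends only on $|z|$, apply the incomplete-Gamma asymptotics in the transition window, and control the tail by the uniform bound. The paper does this by splitting the $r$-integral at $r_0 = 4(\log\log n + |t|)$ and using two separate estimates from Lemma~\ref{lemma_K} (one for $r \le r_0$, one for $r>r_0$), which is a cleaner bookkeeping than your phrasing, but the substance is the same.

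Your argument for the Hilbert--Schmidt bound \eqref{off_K}, however, does not work, and the reason is structural. The chain $\|\sqrt{\mathfrak g}\wt K_n\sqrt{\mathfrak g}\|_2^2 \le \int_{|z|\ge\rho_*}\int_\C|\wt K_n(z,w)|^2\,\dd^2 w\,\dd^2 z = \int_{|z|\ge\rho_*}\wt K_n(z,z)\,\dd^2 z$ is correct, but its right-hand side is precisely the trace $\Tr\big(\one_{\{|z|\ge\rho_*\}}\wt K_n\one_{\{|z|\ge\rho_*\}}\big)$, which by \eqref{gKg} is $\asymp\ee^{-t}$ and can be as large as $\ee^{\sqrt{\log n}/2}$ when $t=-\sqrt{\log n}/2$. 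You notice this, and propose to salvage it by ``directly splitting the radial integral and optimizing.'' But the problematic $\ee^{\sqrt{\log n}/2}$-sized term is the contribution of the transition window itself; no splitting of the radial domain, and no Gaussian decay of $\wt K_n(z,z)$ outside the window, removes it. Any bound through $\Tr$ is doomed because the trace is genuinely large. The trace bounds the sum of eigenvalues $\sum\lambda_i$, whereas you need $\sum\lambda_i^2$ to be small, and that extra smallness comes from the eigenvalues being individually tiny, which in kernel terms means you must exploit the decay of $|\wt K_n(z,w)|$ in $|\theta_1-\theta_2|$. This is exactly what the paper does: it never uses the reproducing property, but instead inserts the three off-diagonal estimates of Lemma~\ref{lemma_K} --- \eqref{off_K1} for $|\theta_1-\theta_2|\lesssim n^{-1/4}\gamma_n^{-1/2}$ (where the angular measure is tiny), \eqref{off_K2} for larger angular separation (where the $|\theta_1-\theta_2|^{-1}$ decay of the kernel supplies the gain), and \eqref{uniform_K_off} to cut off large $|r_i|$. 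That angular decay, which your proposal never invokes, is the essential input. If you truly intend to import \cite[Proposition 5]{maxRe_Gin} verbatim, you should also note that the modification for an annulus is not quite ``trivial'' the way it was for the diagonal: in the half-plane geometry of \cite{maxRe_Gin} the off-diagonal decay is in the transverse real-line coordinate, whereas here it is in the angular variable with an $S^1$ topology, and the $n^{-1/4}\gamma_n^{-1/2}$-vs-larger dichotomy in the angular separation has to be matched to the two incomplete-Gamma regimes (\ref{off_K1}) and (\ref{off_K2}).
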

 The proof of Proposition \ref{prop_K} is completely analogous to the proof of \cite[Proposition 5]{maxRe_Gin} using the following lemma. The proof of Lemma \ref{lemma_K} is postponed to the end of this section. 
\begin{lemma}\label{lemma_K}
	Recall $\gamma_n$ in (\ref{gamma}) and rescale the kernel variable as
\begin{align}\label{rescale}
	z=\ee^{\ii \theta} \Big( 1+\sqrt{\frac{\gamma_n}{4n}} +\frac{r}{\sqrt{4 \gamma_n n}}\Big), \qquad \theta \in [0,2\pi),\quad r \in \R.
\end{align}
Then for any $|r| \leq \sqrt{\log n}/2$, we have
	\begin{align}\label{K_zz}
	\frac{\wt K_n(z,z)}{\sqrt{4 n \gamma_n}} =\frac{\ee^{-r}}{2\pi} \left( 1+O\Big( \frac{\log \log n+r^2}{\log n}\Big)\right).
\end{align}
Choose different $z_1,z_2\in \C$, and rescale them as in (\ref{rescale}). Then for any $|r_i| \leq \sqrt{\log n}/2$ and $|\theta_1-\theta_2| \lesssim n^{-1/4} (\gamma_n)^{-1/2}$, we have
	\begin{align}\label{off_K1}
		\frac{|\widetilde{K}_n(z_1,z_2)|}{\sqrt{n\gamma_n} }
		&\lesssim e^{-\frac{1}{2}(r_1+r_2)}  \left(1+O\left(\frac{\log \log n}{\log n}\right)\right),
	\end{align}
while for any $|r_i| \leq \sqrt{\log n}/2$ and $|\theta_1-\theta_2| \gtrsim n^{-1/2+w}$ with a small $w>0$, we have
	\begin{align} \label{off_K2}
		\frac{|\widetilde{K}_n(z_1,z_2)|}{\sqrt{n\gamma_n} } \lesssim &\frac{\sqrt{\gamma_n}}{\sqrt{n}|\theta_1-\theta_2|} e^{-\frac{1}{2}(r_1+r_2)} \biggl(1+O\Big(\frac{\log\log n}{\log n}\Big)\biggr).
	\end{align}
	In addition, for any $r_i \geq 0$, we have the following uniform bound
	\begin{align}\label{uniform_K_off}
		\frac{|\wt K_n(z_1,z_2)|}{\sqrt{n \gamma_n}} \lesssim |z_1z_2| \ee^{-\frac{1}{6}(r_1+r_2)}. 
	\end{align}
\end{lemma}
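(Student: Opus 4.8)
\textbf{Proof proposal for Lemma~\ref{lemma_K}.}

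The plan is to analyze the rescaled Ginibre kernel $\widetilde K_n(z,w)$ from \eqref{tilde_K} in the edge scaling regime \eqref{rescale}. The key structural input is the representation $\widetilde K_n(z,w) = \frac{n}{\pi} e^{-n(|z|^2+|w|^2-2z\bar w)/2}\,\Gamma(n,nz\bar w)/\Gamma(n)$, so everything reduces to precise asymptotics of the incomplete Gamma function $\Gamma(n,n\zeta)/\Gamma(n)$ for $\zeta = z\bar w$ near the critical point $\zeta \approx 1$. First I would set $\zeta = z\bar w$ and note that in all regimes of interest $|\zeta|$ is within $O(\sqrt{\gamma_n/n})$ of $1$; writing $\zeta = 1 + \xi/\sqrt{n}$ with $|\xi|$ of order $\sqrt{\gamma_n}$ (this is larger than the usual $O(1)$ edge window, which is exactly why the extra $\gamma_n$-shift appears), one uses the standard uniform asymptotic expansion of the incomplete Gamma function (the Temme expansion, or equivalently a saddle-point/Laplace analysis of $\int_{n\zeta}^\infty t^{n-1}e^{-t}\,dt$). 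This yields $\Gamma(n,n\zeta)/\Gamma(n) = \tfrac12\operatorname{erfc}\!\big(\xi/\sqrt 2\big)\,(1+O(\cdot)) + (\text{correction terms})$, and then one uses $\operatorname{erfc}(x) \sim e^{-x^2}/(x\sqrt\pi)$ for large $x$ together with the explicit choice of $\gamma_n$ in \eqref{gamma} to see that the prefactor $n e^{-n(|z|^2+|w|^2-2z\bar w)/2}$ combines with the erfc tail to produce the clean Gumbel-type density $\tfrac{e^{-r}}{2\pi}\sqrt{4n\gamma_n}$ on the diagonal.

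For the diagonal bound \eqref{K_zz} I would substitute $z=w$, so $\zeta = |z|^2 = 1 + \sqrt{\gamma_n/n} + r/\sqrt{\gamma_n n} + O(\gamma_n/n)$, hence $\xi = \sqrt{\gamma_n} + r/\sqrt{\gamma_n} + O(\gamma_n/\sqrt n)$ and $\xi^2/2 = \gamma_n/2 + r + O((\log\log n + r^2)/\log n)$ — here the $-2\log\log n - \log 2\pi$ part of $\gamma_n$ is precisely designed so that $n e^{-n(|z|^2-1)}\cdot \tfrac{1}{\xi\sqrt{2\pi}}\cdot e^{-\xi^2/2}$, after collecting powers, equals $\tfrac{e^{-r}}{2\pi}\sqrt{4n\gamma_n}$ up to the stated relative error. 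The error tracking is a bookkeeping exercise: one must keep the $O(1/\xi)$ relative error from the erfc asymptotics, the $O(\xi^3/n)$ relative error from the Laplace expansion (harmless since $\xi^3/n \ll 1$ when $|r|\le \sqrt{\log n}/2$), and the $O(r^2/\gamma_n + \log\log n/\gamma_n)$ from expanding $\xi^2$.

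For the off-diagonal estimates \eqref{off_K1}–\eqref{off_K2} the new ingredient is the phase: writing $z_i = e^{i\theta_i}(1+\cdots)$, one gets $z_1\bar z_2 = e^{i(\theta_1-\theta_2)}(1 + \tfrac12\sqrt{\gamma_n/n}\cdot 2 + \cdots)$ and crucially $|z_1|^2+|z_2|^2 - 2\Re(z_1\bar z_2) = |z_1 - z_2|^2 \gtrsim |z_1z_2|^2|\theta_1-\theta_2|^2$ when $|r_i|$ are bounded; this Gaussian-in-angle damping factor $e^{-n|z_1-z_2|^2/2}$ is the source of the $\frac{\sqrt{\gamma_n}}{\sqrt n|\theta_1-\theta_2|}$ gain in \eqref{off_K2} (after also using $|\Gamma(n,n\zeta)/\Gamma(n)|\lesssim e^{-\Re\xi^2/2}/(1+|\xi|)$ with $\Re\xi \approx \Re(z_1\bar z_2 - 1)\sqrt n$). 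When $|\theta_1-\theta_2|\lesssim n^{-1/4}\gamma_n^{-1/2}$ the angular factor is $O(1)$ and one just gets \eqref{off_K1} by the same analysis as the diagonal with $r \mapsto \tfrac12(r_1+r_2)$ up to the relative error; note the normalization is $\sqrt{n\gamma_n}$ rather than $\sqrt{4n\gamma_n}$, a factor-of-$2$ discrepancy that is absorbed since \eqref{off_K1} is only an upper bound. The uniform bound \eqref{uniform_K_off} follows from the trivial estimate $|\Gamma(n,n\zeta)/\Gamma(n)| \le e^{-\Re(n\zeta) + n}\cdot(\text{poly})$ valid for $\Re\zeta \ge 1$, i.e. for $r_i \ge 0$, combined with $e^{-n(|z_1|^2+|z_2|^2)/2 + n\Re(z_1\bar z_2)} \le e^{-n(|z_1|^2+|z_2|^2)/2 + n|z_1||z_2|}$ and bounding the resulting expression by $|z_1 z_2| e^{-(r_1+r_2)/6}$ after using $n(|z_i|^2-1) \ge \gamma_n + r_i - o(1)$ and absorbing constants.

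\textbf{Main obstacle.} The delicate point is the uniform-in-parameters control of the incomplete Gamma asymptotics across the \emph{non-standard} window $|\xi| \sim \sqrt{\log n}$ (rather than $O(1)$), while simultaneously tracking relative errors down to $O((\log\log n + r^2)/\log n)$ as required for \eqref{K_zz}; one cannot simply cite the classical $O(1)$-window edge-kernel statements and must either invoke a genuinely uniform form of the Temme expansion or redo the saddle-point analysis by hand, being careful that the correction terms in the expansion are $\ll$ the claimed error uniformly for $|r|\le\sqrt{\log n}/2$. The off-diagonal phase cancellation is comparatively routine once the diagonal asymptotics are in hand. Fortunately, as the footnote indicates, this calculation is a direct adaptation of \cite[Proposition 5 and its supporting lemma]{maxRe_Gin}, where the analogous analysis was carried out for the rightmost-eigenvalue (rectangular) geometry; the only change is replacing the real-part coordinate by the modulus, which affects the geometry of the level set $|z_1-z_2|$ but not the core Gamma-function estimate.
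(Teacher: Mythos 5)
Your overall plan matches the paper's: both reduce the kernel to the incomplete Gamma ratio $\Gamma(n,nz_1\bar z_2)/\Gamma(n)$, use its saddle-point asymptotics in the two regimes — the "near-transition" formula $\frac{e^{-n(\zeta-1)^2/2}}{\sqrt{2\pi n}(\zeta-1)}$ for $|\theta_1-\theta_2|\lesssim n^{-1/4}\gamma_n^{-1/2}$ (your erfc/Temme form of the same thing) and the "far" formula $\frac{e^{n(1-\zeta)}\zeta^n}{\sqrt{2\pi n}(1-\zeta)}$ for $|\theta_1-\theta_2|\gtrsim n^{-1/2+w}$ — and then combine with the Gaussian prefactor and the identity $e^{-\gamma_n/2}=\sqrt{2\pi}\gamma_n/\sqrt n\,(1+O(\tfrac{\log\log n}{\log n}))$. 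The paper simply imports these two Gamma estimates from \cite{RS14} (Lemmas 3.3--3.4, equivalently \cite[Eq.~(42),(47)]{maxRe_Gin}) and imports \eqref{K_zz} wholesale from \cite[Lemma E.1]{maxRe_Gin}, rather than re-deriving the Temme expansion by hand; your flagged "main obstacle" (uniformity over the non-standard $|\xi|\sim\sqrt{\log n}$ window) is exactly what those cited estimates resolve, with error terms $O(|\zeta-1|+n|\zeta-1|^3+(n|\zeta-1|^2)^{-1})$ tracked uniformly.

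The one place you genuinely diverge is \eqref{uniform_K_off}. The paper deduces it by the Cauchy--Schwarz inequality for the reproducing kernel, $|\widetilde K_n(z_1,z_2)|^2\le \widetilde K_n(z_1,z_1)\widetilde K_n(z_2,z_2)$, and then quotes a diagonal bound \cite[Eq.~(E.5)]{maxRe_Gin}; this is a one-line reduction. Your direct route could work in principle, but the bound you wrote, $|\Gamma(n,n\zeta)/\Gamma(n)|\le e^{-\Re(n\zeta)+n}\cdot(\text{poly})$, drops the essential $\zeta^{\,n}$ factor (the correct crude bound is $\lesssim e^{n(\log|\zeta|-\Re\zeta+1)}\cdot(\text{poly})$), and without it the exponent does not close to $e^{-(r_1+r_2)/6}$ nor produce the $|z_1z_2|$ factor for large $r_i$; recall \eqref{uniform_K_off} must hold for \emph{all} $r_i\ge 0$, not just $r_i\le\sqrt{\log n}/2$, so the large-$|z_i|$ behaviour matters. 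The Cauchy--Schwarz reduction sidesteps all of this by localizing to the diagonal. I would recommend adopting that step rather than patching the direct estimate.
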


\begin{proof}[Proof of Proposition \ref{prop_K}]	Note that from (\ref{K_2}) and changing the variables as in (\ref{rescale}), we have
	    \begin{align}
			\Tr \sqrt{\mathfrak{g}}\wt K_n\sqrt \mathfrak{g}&= \int_{A(t,a,b)} \mathfrak{g}(z) \wt K_n(z,z)\dif^2 z = \int_{t}^\infty \int_{a}^{b} \mathfrak{g}(z) \frac{\wt K_n(z,z)}{\sqrt{4 n \gamma_n}} \big( 1+\sqrt{\frac{\gamma_n}{4n}} +\frac{r}{\sqrt{4 \gamma_n n}}\big)\dd \theta \dd r   \nonumber\\
			&=\Big( \int_{t}^{r_0}+\int_{r_0}^\infty\Big) \int_{a}^{b} \mathfrak{g}(z) \frac{\wt K_n(z,z)}{\sqrt{4 n \gamma_n}} \big( 1+\sqrt{\frac{\gamma_n}{4n}} +\frac{r}{\sqrt{4 \gamma_n n}}\big) \dd \theta \dd r \nonumber\\
			&= \int_{t}^\infty \int_{a}^{b} \mathfrak{g}(z)\frac{e^{-r}}{2\pi}\dif \theta \dif r +O\Big({e^{-t}\frac{(\log\log n)^2+\abs{t}^2}{\log n}}\Big),
	\end{align}
	where we chose $r_0=4(\log \log n+|t|)$ and we used (\ref{K_zz}) for the regime $t\leq r \leq x_0$ and used (\ref{uniform_K_off}) for the complementary regime $r>r_0$. This proves (\ref{gKg}).

	For any function $\mathfrak{g}: \C \rightarrow [0,1]$ supported on $A(t,a,b)$  given by (\ref{A_t}), then from (\ref{K_2}) we have 
	$$\|\sqrt{\mathfrak{g}} \wt K_n \sqrt{\mathfrak{g}}\|^2_2=\Tr (\sqrt{\mathfrak{g}} \wt K_n \sqrt{\mathfrak{g}})^2 \le \iint_{A(t,a,b)} \abs{\wt K_n(z_1,z_2)}^2\dif^2 z_1 \dif^2 z_2.$$
	Changing the variables as in (\ref{rescale}), we use (\ref{off_K1}) to estimate the regime $|r_i|\leq \sqrt{\log n}/2,~|\theta_1-\theta_2| \lesssim n^{-1/4} (\gamma_n)^{-1/2}$, and (\ref{off_K1}) to estimate the regime $|r_i|\leq \sqrt{\log n}/2,~|\theta_1-\theta_2| \gtrsim n^{-1/4} (\gamma_n)^{-1/2}$, together with (\ref{uniform_K_off}) to estimate the complementary regime $|r_i|\geq \sqrt{\log n}/2$. Thus we prove (\ref{off_K}) and hence finish the proof of Proof of Proposition \ref{prop_K}.
\end{proof}

As a direct application of Proposition \ref{prop_K}, we obtain the following corollary. 
\begin{corollary}\label{cor:Ginibre}
	Both Theorem \ref{thm:gumbel} and Theorem \ref{thm:poisson} hold true for the complex Ginibre ensemble.
\end{corollary}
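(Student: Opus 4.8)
To prove Corollary~\ref{cor:Ginibre} I would feed the determinantal structure of the Ginibre eigenvalues into the kernel asymptotics of Proposition~\ref{prop_K} and Lemma~\ref{lemma_K}. For Theorem~\ref{thm:poisson}, fix $t\in\R$ and a non-negative $g\in C^2_c(\C)$ supported on $\{r\ee^{\ii\theta}:r\ge t\}$ in the rescaled coordinates~\eqref{scale_eig}. Define $\mathfrak{g}(\sigma):=1-\ee^{-g(\varrho\ee^{\ii\theta})}$ with $\theta=\arg\sigma$ and $\varrho=\sqrt{4n\gamma_n}\,(|\sigma|-1-\sqrt{\gamma_n/4n})$; then $\mathfrak{g}:\C\to[0,1)$ is compactly supported inside $A(t,0,2\pi)$ in the notation of Proposition~\ref{prop_K}, and $\sum_j g(r_j\ee^{\ii\theta_j})=-\sum_i\log(1-\mathfrak{g}(\sigma_i))$. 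Hence $\E^{\mathrm{Gin}}[\ee^{-\sum_j g(r_j\ee^{\ii\theta_j})}]=\E^{\mathrm{Gin}}\prod_i(1-\mathfrak{g}(\sigma_i))=\det(1-\sqrt{\mathfrak{g}}\,\wt K_n\sqrt{\mathfrak{g}})$ by~\eqref{det_K}. Proposition~\ref{prop_K} gives $\Tr\sqrt{\mathfrak{g}}\,\wt K_n\sqrt{\mathfrak{g}}=\int_0^{2\pi}\int_\R(1-\ee^{-g(r\ee^{\ii\theta})})\frac{\ee^{-r}}{2\pi}\,\dd r\,\dd\theta+o(1)$ and $\lVert\sqrt{\mathfrak{g}}\,\wt K_n\sqrt{\mathfrak{g}}\rVert_2\lesssim \ee^{-\sqrt{\log n}/32}\to0$, so the bound~\eqref{det tr} forces the Fredholm determinant to converge to $\exp\bigl(-\int_0^{2\pi}\int_\R(1-\ee^{-g(r\ee^{\ii\theta})})\frac{\ee^{-r}}{2\pi}\,\dd r\,\dd\theta\bigr)$, which is exactly~\eqref{poisson}.

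For Theorem~\ref{thm:gumbel} it suffices to establish~\eqref{theta_x_joint}, since this contains~\eqref{spectral_radius} and~\eqref{theta_uniform}. I would first treat the joint law of sectorial maxima. Fix $k\ge1$ and the $k$ congruent sectors $\Theta_p$ of Section~\ref{sec:arg}, and for $(r_1,\dots,r_k)\in\R^k$ let $\Omega_p\subset\Theta_p$ be the sectorial annulus $\{1+\sqrt{\gamma_n/4n}+r_p/\sqrt{4n\gamma_n}<|z|\le1+n^\tau/n^{1/2}\}\cap\Theta_p$, and set $\mathfrak{g}:=\mathds{1}_{\cup_p\Omega_p}$. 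By the concentration estimate~\eqref{prec_sp}, with very high probability all eigenvalues satisfy $|\sigma_i|\le1+n^\tau/n^{1/2}$, so, for any large $D>0$,
\[
\P^{\mathrm{Gin}}\Big(\max_{\sigma_i\in\Theta_p}|\sigma_i|\le1+\sqrt{\tfrac{\gamma_n}{4n}}+\tfrac{r_p}{\sqrt{4n\gamma_n}}\ \ \forall p\in[k]\Big)=\det\big(1-\sqrt{\mathfrak{g}}\,\wt K_n\sqrt{\mathfrak{g}}\big)+O(n^{-D}).
\]
Additivity of the trace over the disjoint supports $\Omega_p$ and Proposition~\ref{prop_K} (applied on each sector, with $t=r_p$ and the angular range of $\Theta_p$) give $\Tr\sqrt{\mathfrak{g}}\,\wt K_n\sqrt{\mathfrak{g}}=\sum_{p=1}^k\frac1k\ee^{-r_p}+o(1)$, while~\eqref{off_K} together with the cross-sector kernel bounds~\eqref{off_K1}--\eqref{uniform_K_off} of Lemma~\ref{lemma_K} (which control $\wt K_n(z_1,z_2)$ for $z_1,z_2$ in possibly distinct sectors) give $\lVert\sqrt{\mathfrak{g}}\,\wt K_n\sqrt{\mathfrak{g}}\rVert_2\to0$; hence by~\eqref{det tr} the probability above converges to $\prod_{p=1}^k\exp(-\frac1k\ee^{-r_p})$.

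Finally, since $|\sigma_1|=\max_p\max_{\sigma_i\in\Theta_p}|\sigma_i|$ and, by Lemma~\ref{lemma:unique} (almost surely in the Ginibre case), the maximal modulus is with very high probability attained at a unique eigenvalue lying in one $\Theta_p$, an inclusion--exclusion over which sector contains it---using that the limiting law $\prod_p\exp(-\frac1k\ee^{-r_p})$ is symmetric under permutations of the $r_p$'s---yields $\lim_{n\to\infty}\P^{\mathrm{Gin}}(|\sigma_1|\le1+\sqrt{\gamma_n/4n}+r/\sqrt{4n\gamma_n},\ \arg\sigma_1\in[0,2\pi m/k])=\frac{m}{k}\ee^{-\ee^{-r}}$; letting $k\to\infty$ and using monotonicity in $\theta$ gives~\eqref{theta_x_joint} for every $\theta\in[0,2\pi)$, completing the proof. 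The argument is essentially bookkeeping on top of Proposition~\ref{prop_K} and Lemma~\ref{lemma_K}; the only genuine care-points are (i) verifying that the error term $\lVert K\rVert_2\,\ee^{(\lVert K\rVert_2+1)^2/2-\Tr K}$ in~\eqref{det tr} vanishes, which reduces to the facts $\lVert\sqrt{\mathfrak{g}}\,\wt K_n\sqrt{\mathfrak{g}}\rVert_2\to0$ and boundedness of $\Tr\sqrt{\mathfrak{g}}\,\wt K_n\sqrt{\mathfrak{g}}$ established above, and (ii) the extension of the trace and Hilbert--Schmidt estimates of Proposition~\ref{prop_K} to a union of sectorial annuli with different thresholds $r_p$, which is the same computation using the already-available cross-sector bounds of Lemma~\ref{lemma_K}, plus the routine inclusion--exclusion and $k\to\infty$ limit in the last step.
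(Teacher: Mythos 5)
Your treatment of Theorem~\ref{thm:poisson} is identical to the paper's: both set $\mathfrak g = 1-\ee^{-g}$, apply~\eqref{det_K}, and pass to the limit via Proposition~\ref{prop_K} and~\eqref{det tr}. For Theorem~\ref{thm:gumbel} you take a genuinely different route at the independence step. You compute the joint law of all $k$ sectorial maxima $\prod_{p}\exp(-\tfrac1k\ee^{-r_p})$ (via trace additivity over the disjoint $\Omega_p$ together with the cross-sector bounds of Lemma~\ref{lemma_K}) and then perform an argmax analysis with inclusion--exclusion, whereas the paper only ever computes the single-sector Gumbel law $\ee^{-\tfrac{b-a}{2\pi}\ee^{-t}}$ and then obtains the factorization of $\P^{\rm Gin}(|\sigma_1|<t,\,\arg\sigma_1\in\Theta_p)$ directly from rotation invariance of the Ginibre point process (the joint probability is $p$-independent, hence each equals $\tfrac1k\P^{\rm Gin}(|\sigma_1|<t)$), which sidesteps the joint sectorial law and the argmax continuity argument entirely. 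Your route is correct but longer: it needs the full sectorial joint law and a Portmanteau-type argument to push the $\arg\max$ through the weak limit (absolute continuity of the limit law guarantees no ties, which you justify via Lemma~\ref{lemma:unique}; note that for Ginibre this is cleaner to get from absolute continuity of the eigenvalue density than from Lemma~\ref{lemma:unique}, whose proof in the paper itself passes through the GFT). The paper's rotation-invariance shortcut buys a shorter proof; your version buys an explicit identification of the sectorial joint law, which the paper only records in a footnote.
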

\begin{proof}[Proof of Corollary \ref{cor:Ginibre}]
We start with proving Theorem~\ref{thm:gumbel} for the complex Ginibre ensemble. Note that
	\begin{align}\label{k=1}
		\P^{\mathrm{Gin}}\Bigl(\max_{\arg(\sigma_i) \in [a,b)}\{|\sigma_i|\}< 1 + \sqrt{\frac{\gamma_n}{4n}}+\frac{t}{\sqrt{4\gamma_n n}} \Bigr)&
		=\E^{\mathrm{Gin}} \prod_{i=1}^n \Big(1- \one_{A(t,a,b)}(\sigma_i)\Big)= \det(1-\one_{A(t,a,b)} \wt K_n \one_{A(t,a,b)})\nonumber\\
				&\xrightarrow{n\to\infty} \exp\Bigl(-\int_{a}^b \int_{t}^\infty \frac{e^{-r}}{2\pi}\dif r\dif \theta\Bigr)
		=e^{-\frac{b-a}{2\pi}e^{-t}},
	\end{align}
with $A(t,a,b)$ given by (\ref{A_t}),	where we also used (\ref{det_K}) with $\mathfrak{g}(z)=\one_{A(t,a,b)}(z)$, together with (\ref{det tr}) and Proposition \ref{prop_K}.  Choose $[a,b)=[0,2\pi)$, then this proves the Gumbel law for the 
spectral radius $|\sigma_1|$ in (\ref{spectral_radius_gin}). 

Since the kernel function $K_n$ in (\ref{det_pp}) is invariant under rotations, \ie $K_n(z,w)=K_n\big(e^{\ii \theta}z,e^{\ii \theta}w\big)$, clearly $\arg(\sigma_1)$ is uniformly distributed on $[0,2\pi)$. 
Moreover, fix any $t\in \R$ and any integer $k\geq 1$, then the following holds uniformly in any $1\leq p\leq k$:
\begin{align}\label{indep}
	\P^{\mathrm{Gin}}\Bigl(|\sigma_1|< 1 + \sqrt{\frac{\gamma_n}{4n}}&+\frac{t}{\sqrt{4\gamma_n n}},~ 
	 \frac{2\pi (p-1)}{k} \leq \arg(\sigma_1) < \frac{2\pi p}{k}\Bigr)\nonumber\\
	=&\; \frac{1}{k} \P^{\mathrm{Gin}}\Bigl(|\sigma_1|< 1 + \sqrt{\frac{\gamma_n}{4n}}+\frac{t}{\sqrt{4\gamma_n n}} \Bigr)\nonumber\\
	=&\; \P^{\mathrm{Gin}}\Bigl( \frac{2\pi (p-1)}{k} \leq \arg(\sigma_1) < \frac{2\pi p}{k}\Bigr) \P^{\mathrm{Gin}}\Bigl(|\sigma_1|< 1 + \sqrt{\frac{\gamma_n}{4n}}+\frac{t}{\sqrt{4\gamma_n n}}\Bigr),
\end{align} 
where the first equality follows from the fact that the LHS is independent of $p$ by rotation invariance.
In the second equality we also used the uniform distribution of $\arg(\sigma_1)$. 
We can now extend the relation \eqref{indep} involving events $\arg(\sigma_1) \in [a,b)$ for any  interval
$[a,b)$  by approximating it with intervals of rational endpoints.
This  implies that $\arg(\sigma_1)$ is independent of $|\sigma_1|$ and hence proves Theorem \ref{thm:gumbel} for the complex Ginibre ensemble. 
	
	We continue to prove Theorem \ref{thm:poisson} for the complex Ginibre ensemble. For any function $g: \C \rightarrow [0,\infty)$ supported on $A(t,a,b)$, using (\ref{det_K}) with $\mathfrak{g}(z)=1-e^{-g(z)}$, (\ref{det tr}), and Proposition \ref{prop_K}, we have
	\begin{align}
			\E^{\mathrm{Gin}} \exp\Big(-\sum_{i=1}^n  g(\sigma_i)\Big) &= \det\Bigl(1-\sqrt{1-e^{-g}}\wt K_n \sqrt{1-e^{-g}}\Big)\nonumber\\ 
			&\xrightarrow{n\to\infty} \exp\Big(-\int_{0}^{2\pi} \int_{t}^\infty (1-e^{-g(z)}) \frac{e^{-r}}{2\pi}\dif r\dif \theta\Big),
	\end{align}
where we changed the variable $z$ into $(r,\theta)$ as in (\ref{rescale}). \nc
		We hence finished the proof of Corollary \ref{cor:Ginibre}.
\end{proof}

\begin{remark}\label{rem:kostlan}
Alternatively, the explicit joint distribution of the largest $k$ Ginibre eigenvalues (in modulus) can be directly computed due to Kostlan's observation~\cite{Kostlan92}. 
 That is, the collection of moduli of the eigenvalues $\{|\sigma_{i}|\}_{i=1}^n$ of a complex Ginibre matrix has the same distribution as the collection of independent chi-distributed random variables $\{\chi_{2k}\}_{k=1}^n$, \ie
	\begin{align}\label{kostlan}
		 \{|\sigma_1|,|\sigma_2|,\cdots, |\sigma_n|\} \stackrel{\mathrm{d}}{=} \{\chi_{2},\chi_{4}, \cdots, \chi_{2n}\}.
	\end{align}
In other words the distribution of the
decreasingly labelled eigenvalues $|\sigma_1| \geq |\sigma_2| \geq \cdots \geq |\sigma_n|$ coincides with
the decreasingly ordered statistics of independent chi-distributed variables with even parameters. 
\end{remark}

\bigskip 
Finally it still remains to prove the key technical input Lemma \ref{lemma_K}.
\begin{proof}[Proof of Lemma \ref{lemma_K}]
Note that the first estimate in (\ref{K_zz}) has already been proven in \cite[Lemma E.1]{maxRe_Gin}, while the last estimate in (\ref{uniform_K_off}) follows directly from \cite[Eq. (E.5)]{maxRe_Gin} using the Cauchy-Schwarz inequality.
In the following we focus on proving (\ref{off_K1}) and (\ref{off_K2}). Recall that
	\begin{align}\label{KK}
		\wt K_n(z_1,z_2) = \frac{n}{\pi} e^{-n(\abs{z_1}^2+\abs{z_2}^2-2 z_1 \ov z_2)/2} \frac{\Gamma(n,n z_1\ov{z_2})}{\Gamma(n)}.
	\end{align}
	Note that 
	\begin{align}\label{z1z2}
		z_1 \ov{z_2 } =\ee^{\ii (\theta_1-\theta_2)} \left( 1+ \sqrt{\frac{\gamma_n}{n}}+\frac{r_1+r_2}{\sqrt{4 \gamma_n n}}+  \frac{r_1+r_2+\gamma_n}{4n} +\frac{r_1r_2}{4 \gamma_n n} \right),
	\end{align}
	and hence for any $|r_i| \leq \sqrt{\log n}/2$, 
	\begin{align}\label{z1z200}
		|1-z_1 \ov z_2| \gtrsim |\theta_1-\theta_2| +\sqrt{\gamma_n/n}, \qquad |z_i|^2=1+\frac{\gamma_n +r_i}{\sqrt{\gamma_n n}}+O\Big(\frac{\gamma_n}{n}\Big).
	\end{align}
	
	For $|\theta_1-\theta_2| \lesssim n^{-1/4} (\gamma_n)^{-1/2}$, we use \cite[Lemma 3.3]{RS14} (or \cite[Eq. (47)]{maxRe_Gin}) \ie 
	\begin{align}
		\frac{\Gamma(n,nz_1 \ov{z_2})}{\Gamma(n)} &=  \frac{e^{-n(z_1 \ov{z_2}-1)^2/2 }}{\sqrt{2\pi n}(z_1 \ov{z_2}-1)} \biggl(1+O\Big({\abs{z_1 \ov{z_2}-1}+n\abs{z_1 \ov{z_2}-1}^3+\frac{1}{n\abs{z_1 \ov{z_2}-1}^2}}\Big)\biggr).
	\end{align}
	Combining with (\ref{KK})-(\ref{z1z200}) and using that $|1-z_1 \ov z_2| \lesssim n^{-1/4} (\gamma_n)^{-1/2}$, we obtain that
	\begin{align}
		|\widetilde{K}_n(z_1,z_2)|&
		\lesssim \frac{\sqrt{n}}{|\theta_1-\theta_2| +\sqrt{\gamma_n/n}} e^{-\frac{1}{2}(r_1+r_2+\gamma_n)} \\
		&\lesssim \sqrt{n\gamma_n} e^{-\frac{1}{2}(r_1+r_2)}  \left(1+O\left(\frac{\log \log n}{\log n}\right)\right),
	\end{align}
	where we also used that 
	\begin{align}\label{K_gamma}
		e^{-\frac{1}{2}\gamma_n}=\exp\left(-\frac{1}{2}\log\frac{n}{(\log n)^2 2\pi}\right)=\frac{(2\pi)^{1/2}\gamma_n}{\sqrt{n}}\left(1+O\left(\frac{\log\log n}{\log n}\right)\right).
	\end{align}
	This proves (\ref{off_K1}). 
	
	For $|\theta_1-\theta_2| \gtrsim  n^{-1/2+w}$, we use \cite[Lemma 3.4]{RS14} (or \cite[Eq. (42)]{maxRe_Gin})
	\begin{equation}
		\frac{\Gamma(n,nz_1\ov{z_2})}{\Gamma(n)} = e^{n(1-z_1\ov{z_2})} \frac{(z_1\ov{z_2})^n }{\sqrt{2\pi n}(1-z_1\ov{z_2})} \biggl(1+O\Big(\frac{1}{n\abs{1-z_1\ov{z_2}}^2}\Big)\biggr).
	\end{equation}
	Combining with (\ref{KK}), we obtain that
	\begin{align}
		\big|\wt K_n(z_1,z_2)\big| =& \frac{n}{\pi}  \frac{\ee^{-\frac{n}{2}(\abs{z_1}^2+\abs{z_2}^2-2-2\log|z_1 \ov z_2|)} }{\sqrt{2\pi n}(1-z_1\ov{z_2})} \biggl(1+O\Big(\frac{1}{n\abs{1-z_1\ov{z_2}}^2}\Big)\biggr)\nonumber\\
		\lesssim & \frac{\sqrt{n}}{|\theta_1-\theta_2| +\sqrt{\gamma_n/n}} e^{-\frac{1}{2}(r_1+r_2+\gamma_n)}\biggl(1+O\Big(\frac{1}{\sqrt{n}|\theta_1-\theta_2|+\sqrt{\gamma_ n}}\Big)^2\biggr)\nonumber\\
		\lesssim &\frac{\gamma_n}{|\theta_1-\theta_2|} e^{-\frac{1}{2}(r_1+r_2)} \biggl(1+O\Big(\frac{\log\log n}{\log n}\Big)\biggr),
	\end{align}
	where we also used (\ref{z1z2}), (\ref{K_gamma}) and that for any $|r_i| \leq \sqrt{\log n}/2$,
	$$\abs{z_i}^2-1-2 \log |z_i|=2(|z_i|-1)^2+O\big((|z_i|-1)^3\big)=\frac{(\gamma_n+r_i)^2}{2\gamma_n n}+O\Big(\frac{\gamma^{3/2}_n}{n^{3/2}}\Big).$$
		We hence proved (\ref{off_K2}) and completed the proof of Lemma \ref{lemma_K}. 
\end{proof}

\end{document}